%% file: arxivFile.tex
\documentclass[10pt,a4paper]{amsart}
\usepackage{amsmath,amsthm,verbatim,amscd,amssymb,setspace,enumitem,hyperref}
\usepackage{cleveref}
\usepackage{exscale,color}
\usepackage[colorinlistoftodos,prependcaption]{todonotes}
\usepackage{graphicx}
\usepackage{tikz}
\usetikzlibrary{patterns}
\usepackage{enumitem}
\usepackage{mathrsfs}

\renewcommand{\epsilon}{\varepsilon}
\newcommand{\N}{\mathbb{N}}

\newcommand{\R}{\mathbb{R}}

\renewcommand{\Re}{\operatorname{Re}}

\newcounter{mtheorem}
\newtheorem{mtheorem}[mtheorem]{Theorem}

\newtheorem{mcor}[mtheorem]{Corollary}

\newcommand{{\vol}}{\rm vol}

\newcommand{\Ric}{\operatorname{Ric}}

\newcommand{\Rm}{\operatorname{Rm}}

\def\tr{\operatorname{tr}}

\def\Rm{\operatorname{Rm}}

\newtheoremstyle{fancy}{}{}{\itshape}{}{\textbf\bgroup}{.\egroup}{ }{}
\newtheoremstyle{fancy2}{}{}{\rm}{}{\textbf\bgroup}{.\egroup}{ }{}

\theoremstyle{fancy}
\newtheorem{theorem}{Theorem}[section]
\newtheorem{lemma}[theorem]{Lemma}
\newtheorem{corollary}[theorem]{Corollary}
\newtheorem{defn-prop}[theorem]{Definition \& Proposition}
\newtheorem{prop}[theorem]{Proposition}

\crefname{maintheorem}{Theorem}{Theorems}
\Crefname{maintheorem}{Theorem}{Theorems}

\theoremstyle{fancy2}
\newtheorem{definition}[theorem]{Definition}

\newtheorem{remark}[theorem]{Remark}

\theoremstyle{definition}

\setlist{leftmargin=*}

\numberwithin{equation}{section}

\begin{document}
\title{Expanding Soliton Models for K\"ahler-Ricci Flow Near Conical Singularities}

\author{Longteng Chen}
\address[Longteng Chen]{Université Paris-Saclay, CNRS, Laboratoire de Mathématiques d'Orsay, 91405 Orsay, France }
\email{longteng.chen@universite-paris-saclay.fr}
\author{Max Hallgren}
\address[Max Hallgren]{University of Arizona, Department of Mathematics, 617 N. Santa Rita Ave., Tucson, AZ 85721}
\email{mhallgren@arizona.edu}
\author{Lucas Lavoyer}
\address[Lucas Lavoyer]{Mathematisches Institut, Universit\"at M\"unster, 48149 M\"unster, Germany}
\email{lucas.lavoyer@uni-muenster.de}

\begin{abstract}

Let $(Y,g_0)$ be a compact analytic space with a finite number of singular points, where the metric at each singular point is modeled on a K\"ahler cone with smooth canonical model. We show that the K\"ahler-Ricci flow with such initial data satisfies a $C/t$ curvature bound, and that the flow near each singular point is modeled on the unique K\"ahler-Ricci expander asymptotic to the corresponding cone. Our motivation is to give a geometric description of the K\"ahler--Ricci flow emerging from singularities arising in the analytic minimal model program.

\end{abstract}
\maketitle
\section{Introduction}

Given a smooth and closed Riemannian manifold $(M^n ,g_0)$, it was shown by Hamilton \cite{Ham3D} that a solution to the Ricci flow
\begin{equation*}
\frac{\partial}{\partial t}g(t)=-2\operatorname{Ric}(g(t)) \quad \textnormal{for  } t\in [0,T)
\end{equation*}
with initial data $g(0)=g_0$ exists uniquely for a short time. Since then, there has been substantial interest in extending the theory to allow singular initial data. In dimension three, Simon and Topping \cite{Simon1,Simon2, Simon3,simon-topping2} established the existence of smooth Ricci flows starting from metric spaces that arise as limits of noncollapsed manifolds with Ricci curvature bounded below. For other directions in flowing out of singular spaces, see \cite{DeruelleGuedjGuenanciaZeriahi} for the K\"ahler–Ricci flow from metric spaces and \cite{SimonSurvey} for a survey in the Riemannian setting.

Near singularities of $g_0$, the behavior of the Ricci flow is subtle and often difficult to describe geometrically. One situation where a description is available is when $g_0$ arises as a singular time-slice of a singular Ricci flow in dimension three (in the sense of \cite{KleinerLott,BamlerKleiner}) and in certain higher-dimensional symmetric settings \cite{minimallyinvasive,Hasl,Butts}. In these settings, the class of geometries modeling high-curvature regions is classified, and as a result, the flow out of such a singularity is known to be modeled on a steady Ricci soliton constructed by Bryant \cite{Bryant2005}. 

In dimensions four and higher, isolated conical singularities are expected to arise frequently at first singular times \cite{BamlerSurvey}. In this direction, Gianniotis and Schulze \cite{Gianniotis-Schulze} constructed Ricci flows emerging from metrics with isolated cone singularities, assuming the cone models have positive curvature (see also \cite{lavoyer} for the case where the singularities occur along a closed curve). This result suggests a way to continue the flow past the singular time in a way that it is immediately smooth again (see also \cite{FIK,SW1} for an example of this behavior). 
Their construction glues in expanding gradient Ricci solitons asymptotic to the cones, combining an existence result of Deruelle \cite{DeruelleGAFA} with a stability result due to Deruelle and Lamm \cite{Deruelle-Lamm}. This yields a detailed geometric description of the flow emanating from singular points, where the short-time behavior corresponds to desingularization by the expander. See also \cite{ChanLaiManChun} for a strengthening of this result, and \cite{ChodoshHolgateSchulze} for related work in mean curvature flow.

In this work, we consider this problem in the context of K\"ahler--Ricci flow. The formulation of the K\"ahler-Ricci flow as a parabolic complex Monge-Amp\`ere equation allowed Song and Tian \cite{songtian} to prove existence and uniqueness results for K\"ahler-Ricci flow with singular initial data in great generality. These results have been extended in later works \cite{BouGue,DiLu,GZTwisted}. On the other hand, the geometric properties of these solutions near singularities remains largely unknown. Therefore, our main goal is to give a good description of the behavior of the solution as it desingularizes the initial data.

Motivated by the Riemannian work of \cite{Gianniotis-Schulze}, we construct our solutions via a gluing and limiting procedure, where the initial data is obtained by inserting asymptotically conical (AC) expanding K\"ahler–Ricci solitons at small scales. A key difference is that, in contrast to the Riemannian setting, we do not assume any curvature sign condition on the cone models. This is possible because existence and uniqueness of asymptotically conical K\"ahler expanders is more robust \cite{ConlonDeruelleJDG,ConlonDeruelleSun}. However, the weak stability theory used in \cite{Gianniotis-Schulze} is not available in this generality, and a central difficulty in our work is to control the gluing without such stability assumptions.

We start by defining the type of initial data we will be considering. 

\begin{definition}\label{defn of sing space}
    Suppose $(S_i,g_{S_i})$ are compact Sasaki manifolds. We say that $(Y,g_0)$ is a compact analytic space with isolated conical singularities at $\{y_i\}_{i=1}^Q\subset Y$ modeled on the K\"ahler cones $(\mathcal{C}(S_i),g_{\mathcal{C}_i})$ if $Y$ is a compact analytic space and the following hold;
   \begin{enumerate}
       \item  \label{defn:singspace1} $\left(Y\setminus\{y_1,\ldots,y_Q\},g_0\right)$ is a smooth K\"ahler manifold.
       \item \label{defn:singspac3}
       There exist open holomorphic embeddings $\phi_i:\mathcal{C}(S_i)\supseteq \{r<r_0\}\to Y$ with pairwise disjoint images such that $\phi_i(o_i)=y_i$, where $o_i \in \mathcal{C}(S_i)$ denotes the $i$th vertex.
         Moreover, there exist $u_i \in C^{\infty}(\{0<r<r_0\})\cap C^0(\{r<r_0\})$
       such that $\phi_i^*\omega_0-\omega_{\mathcal{C}_i}=\sqrt{-1}\partial \bar\partial u_i$ and
       \begin{equation} \label{eq:estimatesforu1}
           \begin{split}
               r^{j-2}|(\nabla^{g_{\mathcal{C}_i}})^ju_i|_{g_{\mathcal{C}_i}}\le k_j(r)\quad \forall j\in\N,
           \end{split}
       \end{equation}
       where $k_j: (0,r_0]\to (0,1)$ is an increasing function satisfying $\lim_{r\to 0^+}k_j(r)=0$.
   \end{enumerate}
\end{definition}

Given the definition above, our main result of this paper is the following. 
\begin{mtheorem} \label{main theorem}
    Let $(Y,g_0)$ be a compact analytic space with isolated conical singularities at $\{y_i\}_{i=1}^Q\subset Y$, each modeled on a K\"ahler cone $(\mathcal{C}(S_i),g_{\mathcal{C}_i})$ with smooth canonical model in the sense of Definition \ref{defn smooth canonical model}. 
    
    Then there exists a compact K\"ahler manifold $M$, a smooth K\"ahler--Ricci flow $(g(t))_{t\in (0,T]}$ on $M$ and a constant $C_M \in (0,\infty)$ such that the following hold:
    \begin{enumerate}
        \item \label{thm:mainthm1} $(M,d_{g(t)})\to (Y,d_Y)$ as $t\to0^+$ in the Gromov--Hausdorff topology, where $(Y,d_Y)$ is the metric completion of $\left(Y\setminus\{y_1,...,y_Q\}, g_0\right)$.
        \item \label{thm:mainthm2} There exists a K\"ahler resolution $\pi: M\to Y$ such that $\lim_{t\to 0^+}[\omega(t)]=\pi^{\ast}[\omega_0]$ and $\pi_*g(t)$ converges to $g_0$ locally smoothly uniformly away from $\{y_i\}_{i=1}^Q$, as $t\to 0^+$.
        \item \label{thm:mainthm3} $\max_M|\Rm(g(t))|_{g(t)}\le\frac{C_M}{t}$ for all $t\in (0,T]$.
        \item \label{thm:mainthm4} For any $p\in \pi^{-1}(y_i)$, there exists $q\in E_i$ such that
        \begin{equation*}
             \left(M,\tau^{-1}g(\tau t),\ p\right)_{t\in (0,\tau^{-1}T]}
            \to \left(E_i,\ g_{E_i}(t),\ q\right)_{t\in (0,\infty)}
        \end{equation*}
        in the smooth pointed Cheeger--Gromov topology as $\tau\to 0^+$, where  
        $(E_i, g_{E_i}(t))$ is the self-similar K\"ahler--Ricci flow induced by the unique AC K\"ahler-Ricci expander $(E_i,g_{E_i})$ asymptotic to $(\mathcal{C}(S_i),g_{\mathcal{C}_i})$.

           \item \label{thm:mainthm5} Assume that $(\widetilde{M},\widetilde g(t))_{t\in (0,T]}$ is a K\"ahler-Ricci flow on a compact K\"ahler manifold $\widetilde{M}$, and suppose there is a modification $\widetilde{\pi}:\widetilde{M}\to Y$  with $\widetilde{\pi}^{\ast}[\omega_0]= \lim_{t\to 0^+}[\widetilde{\omega}(t)]$ and such that $\widetilde{\pi}_*\tilde g(t)$ converges to $g_0$ locally smoothly away from the singular set. If the scalar curvature of this flow is bounded from above by $\frac{C}{t}$, then there is a biholomorphism $H:M\to \widetilde{M}$ such that $\widetilde{\pi}\circ H=\pi$ and $H^{\ast}\tilde g(t)=g(t)$ for all $t\in (0,T]$.
    \end{enumerate}   
\end{mtheorem}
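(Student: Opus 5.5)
We will construct the flow by gluing and passing to a limit, following \cite{Gianniotis-Schulze}, but we replace the weak stability theory with estimates for the parabolic complex Monge--Amp\`ere equation. First we build $M$. Each cone $\mathcal{C}(S_i)$ has smooth canonical model, so it has a canonical equivariant resolution $E_i\to\mathcal{C}(S_i)$. By \cite{ConlonDeruelleJDG,ConlonDeruelleSun}, $E_i$ carries a unique AC K\"ahler--Ricci expander $g_{E_i}$ asymptotic to $g_{\mathcal{C}_i}$, and its self-similar flow $g_{E_i}(t)$ converges to $g_{\mathcal{C}_i}$ as $t\to0^+$ smoothly away from the exceptional set. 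We then glue $E_i$ into $Y$ through the charts $\phi_i$ to obtain $M$ and the resolution $\pi$. For each small parameter $\epsilon>0$ we form a smooth K\"ahler metric $\omega_\epsilon$ on $M$. On $\{r\lesssim \epsilon^{\alpha}\}$ it equals $\epsilon\,\omega_{E_i}$, that is, $\omega_{E_i}(\epsilon)$ pulled back. Away from the singular points it equals $\omega_0$. In the annulus between, we interpolate the potentials $u_i$ from \eqref{eq:estimatesforu1} and the AC decay potential of the expander, using cutoffs. The decay $k_j(r)\to0$ and the AC rate make the gluing error small in scale-invariant norms. The cohomology class is $[\omega_\epsilon]=\pi^*[\omega_0]+\epsilon\,\theta$, where $\theta$ is a class supported near the exceptional divisors.

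Second, we run the smooth K\"ahler--Ricci flow $\omega_\epsilon(t)$ from $\omega_\epsilon$. The central step is a uniform estimate $|\Rm(\omega_\epsilon(t))|\le C/t$ on a fixed interval $(0,T]$, with $C$ and $T$ independent of $\epsilon$. We prove it by contradiction with a blow-up argument. If the bound fails, we choose points with $t|\Rm|$ maximal in the usual point-picking way and rescale parabolically. The limit is one of two things, depending on where the points sit relative to the singularities. It may be a flow emerging from a smooth K\"ahler manifold of bounded geometry, which is excluded by Shi-type local estimates. Otherwise it is a K\"ahler--Ricci flow on $E_i$ or $\mathcal{C}(S_i)$ with $t|\Rm|\le 1$ and $\sup t|\Rm|=1$, starting from $\omega_{E_i}(\lambda)$ for some $\lambda\in[0,\infty)$ or from the cone. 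We then use uniqueness of the expander, in its form for flows coming out of the cone or out of $\lambda\omega_{E_i}$ with a $C/t$ bound, to identify this limit with the self-similar flow $g_{E_i}(t+\lambda)$. That flow has $t|\Rm|<1$ at the chosen time, which is a contradiction. I expect this step to be the main obstacle. There is no Deruelle--Lamm stability here, so the identification has to come from Monge--Amp\`ere uniqueness, namely Song--Tian type pluripotential uniqueness \cite{songtian} on the noncompact AC space. This needs uniform $C^0$ and scalar-curvature control of the potentials coming from the compact problem, and those would come from parabolic Monge--Amp\`ere estimates with $\epsilon$-independent constants.

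Third, we let $\epsilon\to0$. The uniform curvature bound, Shi estimates, and local estimates away from the $y_i$ (where $\omega_\epsilon=\omega_0$) give a subsequential limit flow $g(t)$ on $M$ with property \eqref{thm:mainthm3}. The class satisfies $[\omega(t)]=\pi^*[\omega_0]-t\,c_1(M)$, which gives \eqref{thm:mainthm2}. Local smooth convergence away from the singular points follows from pseudolocality or local estimates. Gromov--Hausdorff convergence \eqref{thm:mainthm1} follows from the standard distance distortion estimate $d_{g(t)}\ge d_{g(s)}-C\sqrt{t}$, which uses $|\Rm|\le C/t$, together with control on the diameter of the region near $\pi^{-1}(y_i)$, which shrinks like $\sqrt{t}$ by comparison with the expander.

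Property \eqref{thm:mainthm4} is the blow-up argument again, now applied at points of $\pi^{-1}(y_i)$. Rescaled limits exist by \eqref{thm:mainthm3}, emerge from the cone, and by uniqueness equal $g_{E_i}(t)$. Since the limit is unique, the convergence holds along all $\tau\to0^+$. Finally, for \eqref{thm:mainthm5}, we write both flows as solutions of the parabolic complex Monge--Amp\`ere equation on $Y$ with initial data $\omega_0$. The scalar bound $R\le C/t$ gives the control on $\dot\varphi$ needed for Song--Tian uniqueness of weak solutions. So the flows agree on the regular part, the identity extends to a bimeromorphic map $H$ with $\tilde\pi\circ H=\pi$, and the map is biholomorphic because it is an isometry of smooth metrics for $t>0$.
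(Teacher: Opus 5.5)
\textbf{There is a genuine gap, at exactly the step you flag, and everything else depends on it.} In your blow-up argument the case $\lambda>0$ is harmless: the rescaled initial metrics have uniformly bounded curvature, so the limit starts from $\omega_{E_i}(\lambda)$ with bounded curvature, and Chen--Zhu uniqueness identifies it with $g_{E_i}(t+\lambda)$. The case $\lambda=0$ is different. There you need a uniqueness theorem for K\"ahler--Ricci flows on the noncompact manifold $E_i$ that emerge from the cone and satisfy only $t|\Rm|\le A$, and no such result is available. The pluripotential uniqueness theorems of Song--Tian and Boucksom--Guedj are statements on compact manifolds, and the Riemannian analogue is false (Angenent--Knopf). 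Any noncompact version would need growth control of the potentials relative to $\omega_{E_i}(t)$, uniform in $\epsilon$, which you leave unproved. Your arguments for (1) (the $\sqrt{t}$-size of the region near $\pi^{-1}(y_i)$) and for (4) rest on the same missing identification.

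The paper supplies exactly these $\epsilon$-independent estimates and then needs no contradiction argument.
\begin{itemize}
\item Near $y_1$ it writes $\omega_s(t)=\omega_E(t+s)+\sqrt{-1}\partial\bar\partial\varphi_s(t)$, rescales by $s$, and pulls back by the soliton flow so the reference metric is the fixed $\omega_E$. This gives $\partial_\tau\psi_s=\log(\omega_{\psi_s}^n/\omega_E^n)+\frac{X}{2}\psi_s-\psi_s$.
\item Pseudolocality on the conical region $\{r^2\ge\lambda t\}$ controls the data on the parabolic boundary of the expander region $\mathcal{B}(\sqrt{\lambda})\times[0,T_s')$. The zeroth-order term $-\psi_s$ gives time-independent maximum-principle bounds there.
\item This yields $|\psi_s|\le\Psi(R,\lambda^{-1})f$, comparability of $f_{\psi_s}=f+\frac{X}{2}\psi_s$ with $f$, a Yau-type $C^2$ bound (a barrier $\Theta(\psi_s)$ absorbs the drift), a $C^3$ bound, and the interpolated estimate $f^{j/2-1}|(\nabla^{g_E})^j\psi_s|_{g_E}\le\Psi(R,\lambda^{-1})$.
\item Bamler--Kleiner's local Ricci--DeTurck estimates then give scale-invariant $C^k$-closeness of $g_s(t)$ to $g_E(t+s)$ on $\{r_M\le R\}$.
\end{itemize}
The $C/t$ bound, (1) and (4) are read off directly from this closeness.

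\textbf{There are three further errors.}
\begin{itemize}
\item Your initial metrics cannot in general equal $\omega_0$ away from the $y_i$. Since $\omega_E(\epsilon)-\omega_{\mathcal{C}}=-\epsilon\Ric(\omega_{\mathcal{C}})+\sqrt{-1}\partial\bar\partial u_E(\epsilon)$, the classes of $\omega_E(\epsilon)$ and $\omega_0$ on the gluing annulus differ by $\epsilon$ times the restriction of $-2\pi c_1$ to the link. This restriction is nonzero when $\mathcal{C}$ is not $\mathbb{Q}$-Gorenstein, e.g.\ a cone over $(V,L)$ with $K_V+L$ ample and $c_1(V)\notin\mathbb{R}c_1(L)$. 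So no compactly supported $\theta$ exists. The paper instead glues to $\omega_0-s\Ric(\omega_0)$, in the class $[\omega_0]-sc_1(M)$.
\item In (4), uniqueness of the limit flow does not fix the base point $q$. The paper proves separately that $\Phi_t(p)$ converges, using real analyticity of $f$ and a gradient-flow convergence argument.
\item For (5) you cannot run Song--Tian on $Y$. For $t>0$ the classes $\pi^*[\omega_0]-tc_1(M)$ are $\pi$-ample, so the flows do not descend to $Y$, and these singularities are in general not klt. The paper first uses Collins--Tosatti to show that $K_{\widetilde M}$ is $\widetilde\pi$-ample. Hence $\widetilde M$ is the canonical model and can be identified with $M$. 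It then applies Boucksom--Guedj uniqueness on $M$, where the upper scalar curvature bound $C/t$ bounds $\int_0^t\log(\omega^n/\Omega)$.
\end{itemize}
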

\begin{remark} 
    In Proposition \ref{prop:uniqueness}, we prove the uniqueness statement \ref{thm:mainthm5}, and show that our flows coincide with certain solutions constructed in \cite{songtian} and related works. Moreover, our argument implies that the scalar curvature bound can be replaced by the assumption that the flow has uniformly bounded local K\"ahler potentials. Thus, Theorem \ref{main theorem} provides conditions, stated purely in terms of the initial data, under which the solutions of \cite{songtian} satisfy the geometric properties \ref{thm:mainthm1}–\ref{thm:mainthm4}. This is in contrast to the Riemannian setting, where such uniqueness is not expected to hold in general \cite{AngenentKnopf}.
    \end{remark}
    
    \begin{remark}
        The proof of the above theorem further shows that the pointed Cheeger–Gromov convergence of \ref{thm:mainthm4} is in fact realized by biholomorphisms, and at the level of K\"ahler potentials. See Proposition \ref{prop:citablepotentialbound} for a more precise statement.
    \end{remark}

   \begin{remark} It is likely that our methods generalize to the case where the canonical model of the cone has orbifold singularities. In this case, the flow constructed in Theorem \ref{main theorem} will be a smooth orbifold K\"ahler-Ricci flow. Such flows have been shown to arise when flowing through K\"ahler-Ricci flow singularities in dimensions three and higher \cite{SW2}.
   \end{remark}

   \begin{remark} In the case where in addition each $\mathcal{C}(S_i)$ is positively curved, \cite{Gianniotis-Schulze} also gives a Ricci flow solution $(g(t))_{t\in (0,T]}$ satisfying \ref{thm:mainthm1}, \ref{thm:mainthm3}, \ref{thm:mainthm4}. However, the flow produced in that work is not known to be unique, and may not be K\"ahler, even if the initial data is K\"ahler. Theorem \ref{main theorem} implies that in this setting there is in fact a unique flow out of the singularity which is K\"ahler. 
   \end{remark}
    
One motivation for Theorem \ref{main theorem} is to describe geometrically the K\"ahler--Ricci flow emerging from singularities which arise in the analytic minimal model program. Song and Tian \cite{songtian} showed that if a projective K\"ahler manifold with rational K\"ahler class develops a finite-time singularity under the K\"ahler-Ricci flow, then the flow can be canonically continued on a new projective variety $X'$, related to $X$ by a birational map factoring through a singular space $Y$. Unlike the flow through singularities in three-dimensional Riemannian Ricci flow by Perelman \cite{Perelman2003}, the construction of this flow does not rely on a geometric description of the singularity formation, and in fact such a description is unknown for the flows constructed in \cite{songtian}. In this direction, our results identify conditions on the initial data under which the Song-Tian solutions admit a precise geometric description near isolated conical singularities.

While our main focus is to describe K\"ahler-Ricci flow through singularities, the notion of a compact analytic space with isolated conical singularities is much more general. In fact, we show that any asymptotically conical gradient K\"ahler-Ricci expander occurs as model for flowing out of such a space, contrasting sharply with the very limited number of expanders expected to arise when flowing through singularities.

\begin{mcor} \label{cor:examples}
    For any asymptotically conical gradient K\"ahler-Ricci expander $(E,g_E)$, there exists a K\"ahler-Ricci flow $(g_t)_{t\in (0,T]}$ on a projective manifold $M$ and a subvariety $D\subseteq M$ such that the following hold:

    \begin{enumerate}
        \item $\max_M |\Rm(g(t))|_{g(t)} \leq \frac{C_M}{t}$ for all $t\in (0,T]$,

        \item $g(t)$ converges smoothly to a smooth K\"ahler metric on $M\setminus D$ as $t\to 0+$,

        \item for any $y\in D$, we have
        \begin{equation*}
            (M,\tau^{-1}g(\tau t),y)_{t\in (0,\tau^{-1}T]} \to (E,g_E(t),q)_{t\in (0,\infty)}
        \end{equation*}
        in the smooth pointed Cheeger-Gromov topology as $\tau \to 0^+$ for some $q\in E$, where $(g_E(t))_{t\in (0,\infty)}$ is the self-similar flow corresponding to $g_E$. 
    \end{enumerate}
\end{mcor}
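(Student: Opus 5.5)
The plan is to derive Corollary \ref{cor:examples} from Theorem \ref{main theorem}. Given an AC expander $(E,g_E)$, we will build a compact projective analytic space $Y$ with one isolated conical singularity modeled on the asymptotic cone $\mathcal{C}(S)$ of $E$. We then identify the resolution $M$ and the exceptional set $D=\pi^{-1}(y)$ produced by the theorem.

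The first step uses the structure theory of AC K\"ahler expanders \cite{ConlonDeruelleJDG,ConlonDeruelleSun}. The cone $\mathcal{C}(S)$ is then an affine variety with an isolated singularity at the vertex. Its canonical model in the sense of Definition \ref{defn smooth canonical model} should be smooth, because $E$ is a smooth equivariant resolution of the cone that is minimal/canonical. We expect this to be essentially part of the existence and uniqueness theory: $E$ is the resolution whose existence is equivalent to the expander. We will check that this matches the definition used in the theorem, so that the hypothesis ``smooth canonical model'' is met.

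The second step compactifies the cone. The Reeb field can be approximated by quasi-regular ones, but only the complex structure is needed here. Since $\mathcal{C}(S)$ is affine, we embed it $\mathbb{C}^*$-equivariantly in some $\mathbb{C}^N$ via holomorphic functions of polynomial growth, take the projective closure $\overline{\mathcal{C}}\subset\mathbb{P}^N$, and resolve the singularities at infinity, away from the vertex, by Hironaka. This gives a projective variety $Y$ whose only singular point in a neighborhood of the vertex is $y=o$. If the closure picks up other singularities at infinity, we resolve only those.

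The third step builds the metric. We pick a K\"ahler form $\omega_Y$ on $Y$, for instance the restriction of Fubini--Study, smooth on $Y\setminus\{y\}$. Using a cutoff $\chi(r)$ supported in $\{r<r_0\}$, we set
\[
\omega_0=\sqrt{-1}\partial\bar\partial\big(\chi\, r^2/2+A(1-\chi)\psi\big)+\dots,
\]
where $\psi$ is a local potential for $\omega_Y$. More cleanly, we use the standard gluing $\omega_0=\omega_{\mathcal{C}}$ near $o$ and $\omega_0=\epsilon^{-1}\omega_Y$ elsewhere, interpolated by a regularized maximum of the potentials $r^2/2$ and $\epsilon^{-1}\psi+c$. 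Positivity of a regularized max of plurisubharmonic potentials gives a K\"ahler form. Near $o$ it equals $\omega_{\mathcal{C}}$ exactly, so $u=0$ and \eqref{eq:estimatesforu1} holds trivially. This step requires choosing the constants so that $r^2/2$ dominates near the vertex and $\epsilon^{-1}\psi+c$ dominates near the boundary of the chart. This is routine, provided $\psi$ is bounded near $o$, which holds since $\omega_Y$ is an ambient smooth form.

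Finally, we apply Theorem \ref{main theorem}. Parts \ref{thm:mainthm3} and \ref{thm:mainthm4} give items (1) and (3) of the corollary, with $E_i=E$ by uniqueness of the AC expander asymptotic to $\mathcal{C}(S)$. Part \ref{thm:mainthm2}, with $D=\pi^{-1}(y)$, gives item (2), since $\pi$ is biholomorphic off $D$. Here $M$ is a resolution of a projective variety, so it is projective, after choosing the resolution projective. If the theorem's resolution is only K\"ahler, we argue that it agrees with the canonical model, which is projective over $Y$.

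The step we expect to be the main obstacle is the first: confirming that every AC K\"ahler expander has a cone with smooth canonical model in the required sense, so that the theorem's hypotheses apply. Second to that is ensuring that the projective compactification introduces no new singularities interacting with the construction.
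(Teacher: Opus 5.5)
Your proposal follows essentially the same route as the paper (an equivariant embedding of the cone into $\mathbb{C}^N$ as in \cite{VanCoe}, projective closure with the singularities at infinity resolved away from the vertex as in \cite[Proof of Lemma 2.2]{CMM}, a regularized-maximum gluing of $\tfrac{1}{2}r^2$ with a large multiple of a Fubini--Study potential, then Theorem~\ref{main theorem}), and the step you flag as the main obstacle is immediate, since Theorem~\ref{theorem of conlonderuellesun} is an equivalence: the existence of $(E,g_E)$ already forces its asymptotic cone to have a smooth canonical model. Two small corrections in the gluing step: boundedness of $\psi$ near $o$ is not the right condition, you need $\psi(o)<\min_{\{a\le r\le b\}}\psi$ on the transition annulus (the paper gets this for free because $\varphi_{FS}=\log(1+\sum_i|u_i|^2)$ vanishes exactly at $o$, and in general you can arrange it by first subtracting a pluriharmonic function), and if you resolve at infinity before gluing, $\omega_Y$ must come from a projective embedding of $Y$ itself, because the pullback of $\omega_{FS}$ from $\mathbb{P}^N$ degenerates along the exceptional divisors; this is why the paper glues on $\overline{\mathcal{C}}$ first and only then modifies at infinity.
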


\noindent \textbf{Overview.} We now briefly explain the main steps of our proof and outline the structure of the paper. In Section \ref{preliminaries}, we present the relevant definitions and properties of K\"ahler cones and expanding gradient K\"ahler--Ricci solitons.
 
 As in \cite{Gianniotis-Schulze}, we construct our solution via a glueing procedure, which we describe in Section \ref{sectn; approx soltn}. For simplicity, we treat the case of a single singularity; since the results are local, the general case follows analogously. We remove a small neighborhood of the singular point and glue in, at a small scale $s>0,$ the expanding K\"ahler--Ricci soliton asymptotic to the corresponding K\"ahler cone. This is done at the level of K\"ahler potentials, and we show this yields a smooth K\"ahler manifold $(M,\omega_{s,0}).$ 

We can then study the K\"ahler--Ricci flow on $M$ with initial data $\omega_{s,0}$. Writing the flow in terms of the complex Monge--Ampère equation for $\varphi_s(t),$ $t\in [0,T_s),$ we first establish estimates in the region where the geometry is approximately conical, using pseudolocality and arguments closely following \cite{Gianniotis-Schulze}. To obtain estimates for the flow near the singular point, we study the complex Monge--Ampère equation with boundary data. By considering the normalized K\"ahler--Ricci flow instead, we fix the reference expander metric in time. The idea stems from the work of the first author in \cite{longteng2} and puts us in a better position to prove the desired $C^2$-estimates for our solution to the complex Monge--Ampère equation in the spirit of Yau's approach \cite{YauCalabiConjecture}. These estimates are obtained in Section \ref{sectn; uni estim on exp reg}.

The core of our analysis consists of a priori estimates for several quantities along this modified K\"ahler--Ricci flow. The techniques of this section might be of independent interest 
(we refer also to \cite{longteng2} for related computations). The price to pay for considering the modified flow is that the evolution of our new K\"ahler potential now involves a drift term coming from the soliton vector field of $(E,g_E,f_E)$ (see equation \eqref{equation of normalized complex monge ampere}). As an extra step, essentially because of this extra drift term, we need to carefully construct a barrier function (Lemma \ref{barrier function}) before proving our $C^2$-estimates. A maximum principle argument, together with good control on our barrier function, then allows us to obtain our desired control on the K\"ahler potential, and proving higher derivative bounds is then straightforward. 

In Section \ref{sectn; constructing the flow} we finish the proof of Theorem \ref{main theorem} by letting $s\searrow 0$ and showing that $\omega_s(t)$ converges to a limit K\"ahler--Ricci flow with the stated properties. We then prove the uniqueness of the solution in a natural class and relate it to the solutions of Song and Tian \cite{songtian}. Finally, we produce examples of K\"ahler-Ricci flows satisfying the assumptions of Theorem \ref{main theorem}, proving Corollary \ref{cor:examples}.\\

\textit{Acknowledgments}: The authors thank Ronan Conlon, Alix Deruelle, Hajo Hein, Jian Song, and Junsheng Zhang for useful discussions on the K\"ahler--Ricci flow, and Ronan Conlon for helpful comments and suggestions. LL is funded by the German Research Foundation (DFG) – Project-ID 427320536 – SFB 1442, and by Germany’s Excellence Strategy EXC 2044/2 390685587, Mathematics Münster: Dynamics–Geometry–Structure.\\

\textit{AI Disclosure:} The writing and mathematical content of this paper is due to the authors. ChatGPT was used only to assist with literature searches and to identify inconsistencies in notation and minor errors.

\section{Preliminaries}\label{preliminaries}

\subsection{Conventions and notation}

We collect here the basic notation and conventions used throughout the paper. For more specialized notation, we instead indicate the location in the paper where these definitions are given.

We let $\mathbb{N}$ denote the set of nonnegative integers.\\

\paragraph{\emph{K\"ahler geometry.}}
Unless otherwise stated, all manifolds are smooth and connected, and
$n$ denotes their complex dimension. If $(M,J,g)$ is K\"ahler, we denote its K\"ahler form
by
\begin{equation*}
\omega(\cdot,\cdot)=g(J\cdot,\cdot)
\end{equation*}
and its Riemannian volume form by
\begin{equation*}
dg=\frac{\omega^n}{n!}.
\end{equation*}
We use $\nabla^g$ for the Levi--Civita connection, and write
$\langle\cdot,\cdot\rangle_g$ and $|\cdot|_g$ for the corresponding
inner product and norm. When the metric is clear from the context, the
superscript or subscript $g$ may be omitted. The musical
isomorphisms corresponding to $g$ are denoted by $\sharp$ and $\flat$.

Our Laplacian is the K\"ahler Laplacian
\begin{equation*}
\Delta_\omega u
 :=
 \operatorname{tr}_{\omega}
 \bigl(\sqrt{-1}\partial\overline{\partial}u\bigr)
 =
 g^{i\overline{j}}
 \partial_i\partial_{\overline{j}}u,
\end{equation*}
which is equal to half of the Riemannian Laplace-Beltrami operator. We also write $\Delta_g$ for this operator when $g$ is the metric
associated with $\omega$. 
In particular, $-\Delta_g$ has
nonnegative spectrum on a compact manifold. 

The Ricci form and K\"ahler scalar curvature are
\begin{equation*}
\Ric(\omega)
 =
 -\sqrt{-1}\partial\overline{\partial}
 \log\omega^n,
\qquad
\operatorname{R}_\omega = \operatorname{R}_g
 =
 \operatorname{tr}_{\omega}\Ric(\omega).
\end{equation*}
We write $\Ric(g)$ and $\Rm(g)$ for the Ricci and Riemann curvature
tensors. Scalar curvature is always taken in the K\"ahler
normalization above; it is one half of the Riemannian scalar curvature.\\

\paragraph{\emph{Metric notation.}}
For a Riemannian metric $g$, we let $d_g$ denote the corresponding length metric and $B_g(x,r)$ the open geodesic ball. We similarly let
\begin{equation*}
\operatorname{Vol}_{g},
\qquad
\operatorname{diam}_{g},
\qquad
\operatorname{inj}_{g}
\end{equation*}
denote volume, diameter, and injectivity radius with respect to $g$.

For a tensor $T$ on an open set $U\subset M$, we define 
\begin{equation*}
\|T\|_{C^k(U,g)}
 :=
 \sum_{j=0}^k
 \sup_U
 \bigl|(\nabla^g)^jT\bigr|_g.
\end{equation*}
\\

\paragraph{\emph{K\"ahler cones and AC K\"ahler-Ricci expanders.}}

For a K\"ahler cone $(\mathcal{C},\omega_{\mathcal{C}})$, we let $o\in \mathcal{C}$ denote the vertex and define the radius function to be $r:=d_{g_{\mathcal{C}}}(o,\cdot)$. K\"ahler-Ricci expanders $(E,g_E,X)$ are defined in Subsection \ref{subsection:conesandexpanders}. We will always write $X:= \nabla^{g_E}f$, where $f$ is the soliton potential, normalized as in Lemma \ref{soliton indentities}. The biholomorphisms $(\Phi_t)_{t\in (-\infty,0)}$ associated to the soliton, and the corresponding time dependent metrics $g_{E,t}$ are also defined in Subsection \ref{subsection:conesandexpanders}.

The notations $\mathcal{B}(D),\overline{\mathcal{B}}(D),\mathcal{A}(D,D')$ are established in Subsection \ref{subsection:ACexpanders}.\\

\paragraph{\emph{Constants and asymptotic notation.}}
The notation
\begin{equation*}
\Psi(a_1,\ldots,a_k\mid b_1,\ldots,b_\ell)
\end{equation*}
denotes a nonnegative quantity depending on the displayed parameters
such that
\begin{equation*}
\lim_{(a_1,\ldots,a_k)\to(0,\ldots,0)}
\Psi(a_1,\ldots,a_k\mid b_1,\ldots,b_\ell)
=0
\end{equation*}
for every fixed choice of $b_1,\ldots,b_\ell$. 

\subsection{K\"ahler cones and AC K\"ahler-Ricci expanders}
\label{subsection:conesandexpanders}
We start by defining the K\"ahler cones appearing in Theorem \ref{main theorem}. The resolutions of K\"ahler cones that are consistent with admitting an expanding K\"ahler--Ricci soliton are of the following type.
\begin{definition}[cf. {\cite[Definition 8.2.4]{Ishii}}]\label{defn smooth canonical model}
Suppose $(C_0,o)$ is a pointed normal affine variety such that $C_0 \setminus \{o\}$ is smooth. A partial resolution $\pi:M\to C_{0}$ is called a \emph{canonical model} if
\begin{enumerate}[label=\textnormal{(\roman{*})}, ref=(\roman{*})]
\item $M$ has at worst canonical singularities;
\item $K_{M}$ is $\pi$-ample;
\item $\pi$ restricts to a biholomorphism $M\setminus \pi^{-1}(o)\to C_0 \setminus \{o\}$. 
\end{enumerate}
\end{definition}
\begin{remark}
    If $\pi':M'\to C_0$ is another canonical model, then there is a biholomorphism $F:M\to M'$ satisfying $\pi'\circ F=\pi$ (cf. \cite[Proposition 8.2.5]{Ishii}). 
\end{remark}
\begin{definition}
    We say $C_0$ has a smooth canonical model if there is a canonical model $\pi:M\to C_0$ such that $M$ is smooth.
\end{definition}

A triple $(E,g_E,X)$, where $(E,g_E)$ is a complete K\"ahler manifold and $X=\nabla^{g_E}f$ a gradient real-holomorphic vector field on $E$, is said to be a K\"ahler--Ricci expander if it satisfies the equation
\begin{equation*}
   \Ric(g_E)+g_E= \frac{1}{2}\mathcal{L}_Xg_E.
\end{equation*}
As a consequence, if $\omega_E$ denotes the K\"ahler form of $g_E$, the corresponding expanding soliton equation in terms of $(1,1)$ forms is stated as follows:
\begin{equation*}
    \Ric(\omega_E)+\omega_E=\sqrt{-1}\partial\bar\partial f.
\end{equation*}
In this paper, we primarily focus on K\"ahler--Ricci expanders that are asymptotic to cones at large scales.

\begin{definition} \label{def:ACexpander} We say $(E,g_E)$ is an AC K\"ahler-Ricci expander if it is noncompact and for some (hence any) $p\in E$, we have
\begin{equation} \label{eq:quadraticdecayofexpander}
\sup_{x\in E}|(\nabla^{g_E})^{k}\operatorname{Rm}(g_E)|_{g_E}(x)d_{g_E}^{2+k}(p,\,x)<\infty\quad\textrm{for all $k\in\mathbb{N}$}.
\end{equation}
\end{definition}
\begin{remark} By \cite[Theorem A]{ConlonDeruelleSun}, for any AC K\"ahler-Ricci expander $(E,g_E)$, there exists a unique K\"ahler cone $(\mathcal{C},g_{\mathcal{C}})$ such that for any $p\in E$, $(E,\lambda d_{g_E},p)\to (\mathcal{C},d_{g_{\mathcal{C}}},o)$ in the pointed Gromov-Hausdorff sense as $\lambda \to 0^+$. We call $(\mathcal{C},g_{\mathcal{C}})$ the asymptotic cone of $(E,g_E)$.
\end{remark}

The following gives a necessary and sufficient criterion for the existence of an AC K\"ahler-Ricci expander which is asymptotic to prescribed K\"ahler cone.

\begin{theorem}[{\cite[Corollary B]{ConlonDeruelleSun}}]\label{theorem of conlonderuellesun}
Let $(\mathcal{C},\,g_{\mathcal{C}})$ be a K\"ahler cone, with radial function $r$.
Then there exists an AC K\"ahler-Ricci expander $(E,\,g_E,\,X)$ with asymptotic cone
$(\mathcal{C},\,g_{\mathcal{C}})$ if and only if $\mathcal{C}$ has a smooth canonical model. When this is the case, there is a birational map $\pi:E\to \mathcal{C}$ which is a canonical model satisfying $d\pi(X)=r\partial_{r}$ and
\begin{equation*}
|(\nabla^{g_{\mathcal{C}}})^k(\pi_{*}g_E-g_{\mathcal{C}})|_{g_{\mathcal{C}}} \leq C_{k}r^{-2-k}\quad\textrm{for all $k\in\mathbb{N}$}.
\end{equation*}
\end{theorem}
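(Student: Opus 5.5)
The plan is to prove the two implications separately, and then read off the properties of $\pi$ from the construction used for sufficiency.

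\textbf{Necessity.} Let $(E,g_E,X)$ be an AC K\"ahler--Ricci expander. By \cite[Theorem A]{ConlonDeruelleSun}, as recalled in the remark after Definition \ref{def:ACexpander}, $E$ has a unique asymptotic cone $(\mathcal{C},g_{\mathcal{C}})$.

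1. I first promote the Gromov--Hausdorff convergence to a holomorphic statement. Since $X$ is real holomorphic and $JX$ is Killing, the flow of $JX$ generates a holomorphic torus action on $E$. Using the curvature decay \eqref{eq:quadraticdecayofexpander}, I then show that $f$ is proper and grows like $r^2/4$, and that $X$ is asymptotic to $r\partial_r$. Taking $X$-homogeneous holomorphic functions of polynomial growth gives a proper holomorphic map $\pi:E\to \mathcal{C}$. This map contracts a compact analytic set to the vertex and is a biholomorphism away from it, with $d\pi(X)=r\partial_r$.

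2. I then check that $\pi$ is a canonical model. Since $E$ is smooth, condition (i) of Definition \ref{defn smooth canonical model} is automatic. For $\pi$-ampleness of $K_E$, equip $K_E$ with the hermitian metric induced by the volume form $e^{-f}\omega_E^n$. Its curvature is $-\Ric(\omega_E)+\sqrt{-1}\partial\bar\partial f=\omega_E>0$ by the soliton equation. Hence $K_E$ is positive on every compact curve contracted by $\pi$, and so $K_E$ is $\pi$-ample.

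\textbf{Sufficiency.} Let $\pi:M\to \mathcal{C}$ be a smooth canonical model. The approach is that of Conlon--Deruelle \cite{ConlonDeruelleJDG}.

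1. I lift the radial holomorphic vector field. The cone's holomorphic torus action lifts to $M$ by the uniqueness of canonical models (the remark after Definition \ref{defn smooth canonical model}), applied to each automorphism. The lifted field $X$ satisfies $d\pi(X)=r\partial_r$.

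2. I build a background K\"ahler form $\omega$ on $M$. Near the exceptional set it is a form in $c_1(K_M)$ that is positive there, which exists because $K_M$ is $\pi$-ample. At infinity it equals $\pi^*\omega_{\mathcal{C}}$, glued by a cutoff in $r^2$. I choose it $JX$-invariant, and so that $\Ric(\omega)+\omega-\frac12\mathcal{L}_X\omega=\sqrt{-1}\partial\bar\partial F$ with $F$ decaying at infinity.

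3. The expander equation then becomes the drift Monge--Amp\`ere equation
\begin{equation*}
(\omega+\sqrt{-1}\partial\bar\partial\varphi)^n=e^{F+\frac{X}{2}\cdot\varphi-\varphi}\,\omega^n .
\end{equation*}
I solve it by a continuity method in $X$-weighted H\"older spaces. Openness comes from invertibility of the drift Laplacian $\Delta_\omega+\frac12 X-1$ on decaying functions. Closedness needs a priori estimates: a $C^0$ bound by the maximum principle, exploiting the $-\varphi$ term; a Yau/Aubin $C^2$ estimate; and Calabi/Evans--Krylov estimates for higher order.

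4. The weighted spaces force $\varphi$ and its derivatives to decay. This gives $|(\nabla^{g_{\mathcal{C}}})^k(\pi_*g_E-g_{\mathcal{C}})|\le C_kr^{-2-k}$.

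\textbf{Main obstacle.} I expect the hardest step to be the $C^0$ and $C^2$ estimates on the noncompact $M$ in the presence of the unbounded drift $X\cdot\varphi$. The maximum principle needs barriers that control $\varphi$ at infinity uniformly along the continuity path. I would try barriers built from $f$, or from $r^2$ at infinity, together with the weighted decay of $F$.
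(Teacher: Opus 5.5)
The paper gives no proof of this statement. It quotes \cite[Corollary B]{ConlonDeruelleSun}, which combines the structure theorem for AC expanders (Theorem A there) with the existence theorem of \cite{ConlonDeruelleJDG}. Your two-part plan has the same architecture, and your necessity half is a fair outline of the first ingredient. The key point there is that the $\Phi_t$ are biholomorphisms, so $\lim_{t\to 0^+}t\Phi_t^*g_E$ is a cone metric that is Kähler for the complex structure of $E$.

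\textbf{The gap: step 2 of sufficiency.} A background form $\omega$ equal to $\pi^*\omega_{\mathcal{C}}$ at infinity cannot have a decaying soliton defect $F$ unless the cone is Ricci-flat.
\begin{itemize}
\item Since $\frac12\mathcal{L}_{r\partial_r}\omega_{\mathcal{C}}=\omega_{\mathcal{C}}$, on the end you get $\Ric(\omega)+\omega-\frac12\mathcal{L}_X\omega=\Ric(\omega_{\mathcal{C}})$. This form is invariant under the cone dilations. If it equalled $\sqrt{-1}\partial\bar\partial F$ with $F\to 0$, pulling back by dilations that push everything to infinity would force $\Ric(\omega_{\mathcal{C}})=0$.
\item There is also a cohomological obstruction. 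Because $\mathcal{L}_X\omega_E$ is exact, every expander satisfies $[\omega_E]=2\pi c_1(K_M)$. The restriction of this class to $H^2$ of the link need not vanish, whereas $\pi^*\omega_{\mathcal{C}}$ is exact on the end. An example is $M$ the total space of a negative line bundle $L\to D$ with $K_D\otimes L^{-1}$ ample and $c_1(K_D)\notin\mathbb{R}\,c_1(L)$.
\item Consistently, your step 4 proves too much. A decaying $\varphi$ over $\pi^*\omega_{\mathcal{C}}$ would give $\pi_*g_E-g_{\mathcal{C}}=o(r^{-2})$. But Lemma \ref{appearance of uE} together with \eqref{eq: convergence rate of self-similar solution} gives $\pi_*\omega_E-\omega_{\mathcal{C}}=-\Ric(\omega_{\mathcal{C}})+\sqrt{-1}\partial\bar\partial u_E$ with $u_E=O(r^{-2})$. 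The $r^{-2-k}$ rate in the statement is exactly the size of this Ricci term.
\end{itemize}

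\textbf{The missing idea.} You need the Conlon--Deruelle correction, which the paper reuses when it subtracts $s\Ric(\omega_0)$ in $\omega_{s,0}$.
\begin{itemize}
\item Outside a compact set the background must be $\pi^*\omega_{\mathcal{C}}-\Ric(\omega_{\mathcal{C}})$. Here $-\Ric(\omega_{\mathcal{C}})$ is the curvature of the metric on $K$ induced by $\omega_{\mathcal{C}}^n$.
\item Glue this, as a hermitian metric on $K_M$, to a positively curved metric near the exceptional set, which exists by $\pi$-ampleness. Then add the exact form $A\,\pi^*\omega_{\mathcal{C}}$.
\item The result lies in $2\pi c_1(K_M)$ and is positive for $A$ large. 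You can then rescale back to $A=1$, since the dilations of $\mathcal{C}$ lift to $M$.
\item Since $\Ric(\omega_{\mathcal{C}})$ is $r\partial_r$-invariant, $\frac12\mathcal{L}_X\omega=\omega_{\mathcal{C}}$ at infinity. The defect becomes $\Ric(\omega)-\Ric(\omega_{\mathcal{C}})=-\sqrt{-1}\partial\bar\partial\log\frac{\omega^n}{\omega_{\mathcal{C}}^n}$, so $F=O(r^{-2})$.
\end{itemize}
With this background, your continuity-method outline matches \cite{ConlonDeruelleJDG} and yields the stated decay.

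\textbf{Smaller slips on the necessity side.}
\begin{itemize}
\item The weight must be $e^{f}\omega_E^n$. With $e^{-f}\omega_E^n$ the curvature of $K_E$ is $\omega_E-2\sqrt{-1}\partial\bar\partial f$, which is not positive in general.
\item $\pi$-ampleness should be deduced from the positively curved metric via Grauert's criterion over the Stein base. Positivity on contracted curves alone is not an ampleness criterion.
\item In this normalization $f\sim r^2/2$, not $r^2/4$.
\end{itemize}
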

\begin{remark}\label{self-similar sol}
Let $\Phi_X^{\cdot}$ denote the flow of the vector field $X$. The completeness of $X$ follows from the completeness of $g_E$ (see \cite{ZhuhongZhang}). For each $t>0$, define the biholomorphism
\begin{equation*}
    \Phi_t := \Phi_X^{-\frac{1}{2}\log t} : E \to E .
\end{equation*}
Then the self-similar solution defined as
\begin{equation*}
    g_E(t) := t\Phi_t^{*} g_E,
\end{equation*}
satisfies
\begin{equation}\label{eq: convergence rate of self-similar solution}
    \bigl|(\nabla^{g_{\mathcal{C}}})^{k}\bigl(\pi_{*} g_E(t) - g_{\mathcal{C}}\bigr)\bigr|_{g_{\mathcal{C}}}\le C_kt r^{-2-k}
\qquad \text{for all } k\in\N .
\end{equation}
This estimate implies that the self-similar solution converges locally smoothly to the conical metric $g_{\mathcal{C}}$.
\end{remark}

\begin{lemma}\label{appearance of uE}
    Let $(\mathcal{C},g_{\mathcal{C}})$ be a K\"ahler cone admitting a smooth canonical model, and let $(E,g_E,X)$ be the K\"ahler--Ricci soliton as in Theorem \ref{theorem of conlonderuellesun}. Let $\pi$ be the corresponding K\"ahler resolution. Then there exists a smooth function $u_E\in C^\infty(\mathcal{C}\setminus\{o\};\R)$ such that
    \begin{equation*}
        \pi_*\omega_E-\omega_{\mathcal{C}}+\Ric(\omega_\mathcal{C})=\sqrt{-1}\partial\bar\partial u_E,
    \end{equation*}
    holds on $\mathcal{C}\setminus\{o\}$.
    \end{lemma}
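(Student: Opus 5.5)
The plan is to read off $u_E$ directly from the soliton equation and the fact that a K\"ahler cone metric has a global potential given by a multiple of $r^2$. Throughout we work on $\mathcal{C}\setminus\{o\}$, which by Theorem \ref{theorem of conlonderuellesun} is biholomorphic via $\pi$ to $E\setminus\pi^{-1}(o)$. All the objects involved are therefore smooth there: $\pi_*\omega_E$, $\pi_*f$ (still denoted $f$) and $\omega_{\mathcal{C}}$.

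\emph{Step 1: use the soliton equation.} Pushing forward the expanding soliton equation $\Ric(\omega_E)+\omega_E=\sqrt{-1}\partial\bar\partial f$ by the biholomorphism gives
\begin{equation*}
\pi_*\omega_E=-\Ric(\pi_*\omega_E)+\sqrt{-1}\partial\bar\partial f \quad\text{on } \mathcal{C}\setminus\{o\}.
\end{equation*}

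\emph{Step 2: compare Ricci forms.} Set $F:=\log\bigl((\pi_*\omega_E)^n/\omega_{\mathcal{C}}^n\bigr)$. Both $(\pi_*\omega_E)^n$ and $\omega_{\mathcal{C}}^n$ are smooth, positive volume forms on $\mathcal{C}\setminus\{o\}$, so $F$ is a well-defined smooth real function there. From $\Ric(\omega)=-\sqrt{-1}\partial\bar\partial\log\omega^n$ we get
\begin{equation*}
\Ric(\pi_*\omega_E)=\Ric(\omega_{\mathcal{C}})-\sqrt{-1}\partial\bar\partial F.
\end{equation*}
Substituting this into Step 1 yields
\begin{equation*}
\pi_*\omega_E+\Ric(\omega_{\mathcal{C}})=\sqrt{-1}\partial\bar\partial (f+F).
\end{equation*}

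\emph{Step 3: potential for the cone.} On a K\"ahler cone, $\omega_{\mathcal{C}}=c\,\sqrt{-1}\partial\bar\partial r^2$ globally on $\mathcal{C}\setminus\{o\}$. Here $c>0$ is a universal constant fixed by the normalization $\omega=g(J\cdot,\cdot)$; with this paper's conventions one expects $c=\tfrac14$, but its precise value is irrelevant. This is the standard fact that $r^2$ is a K\"ahler potential for a cone metric, equivalent to $J(r\partial_r)$ being the Reeb field. Consequently we define
\begin{equation*}
u_E:=f+\log\frac{(\pi_*\omega_E)^n}{\omega_{\mathcal{C}}^n}-c\,r^2\in C^\infty(\mathcal{C}\setminus\{o\};\R),
\end{equation*}
which satisfies the required identity.

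The only point needing any care is Step 3, namely justifying the global $\partial\bar\partial$-potential $r^2$ for $\omega_{\mathcal{C}}$ and pinning down the constant $c$. This is standard cone geometry: $\mathcal{L}_{r\partial_r}\omega_{\mathcal{C}}=2\omega_{\mathcal{C}}$, and $r\partial_r$ is real holomorphic with $J r\partial_r$ Killing, which gives $\omega_{\mathcal{C}}=\tfrac12 d(J^*\! d(r^2/2))$ up to sign conventions. No asymptotic estimates are needed; the decay rates of $u_E$ at infinity, if required later, would follow from Theorem \ref{theorem of conlonderuellesun}, but they are not part of this statement.
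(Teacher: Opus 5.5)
Your argument is correct, and it takes a more direct route than the paper. The paper uses neither the soliton potential nor a potential for $\omega_{\mathcal C}$. Instead it integrates the K\"ahler--Ricci flow along the self-similar solution $\omega_E(t)=t\Phi_t^*\omega_E$, from $\omega_{\mathcal C}$ at $t=0$ to $\omega_E$ at $t=1$. After commuting $\sqrt{-1}\partial\bar\partial$ with the time integral, this gives $u_E=\int_0^1\log\frac{\omega_E(t)^n}{\omega_{\mathcal C}^n}\,dt$. Your version is purely algebraic and pointwise: it needs no limit $t\to0^+$ and no interchange of derivatives with integrals, only the soliton equation and the global potential of the cone.

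The two functions in fact agree up to a constant. Write $F=\log\frac{(\pi_*\omega_E)^n}{\omega_{\mathcal C}^n}$. Then $\log\frac{\omega_E(t)^n}{\omega_{\mathcal C}^n}=\Phi_t^*F$ and $X\cdot F=2R_{\omega_E}$, so $\frac{d}{dt}(t\Phi_t^*F)=\Phi_t^*F-\Phi_t^*R_{\omega_E}$. On the other hand, $\frac{d}{dt}(t\Phi_t^*f)=\Phi_t^*(R_{\omega_E}+n)$ with $t\Phi_t^*f\to\frac{r^2}{2}$, as in Corollary \ref{coro of comparison of f and r^2}. Integrating both over $[0,1]$ shows that the paper's $u_E$ equals $f+F-\frac12 r^2-n$, i.e.\ your function minus $n$.

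What the paper's representation buys is the corollary that follows the lemma. There the decay $|(\nabla^{g_{\mathcal C}})^k u_E|_{g_{\mathcal C}}\le C_k r^{-2-k}$ drops out immediately from \eqref{eq: convergence rate of self-similar solution}. From your closed formula you would additionally need the asymptotics $f-\frac12 r^2-n=O(r^{-2})$, with derivatives, and you would have to fix the additive constant.

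One small correction: with $\omega=g(J\cdot,\cdot)$ one has $\omega_{\mathcal C}=\frac{\sqrt{-1}}{2}\partial\bar\partial r^2$. On $\mathbb{C}$, for instance, $\sqrt{-1}\partial\bar\partial|z|^2=2\,dx\wedge dy$. This is the normalization the paper uses in Lemma \ref{lem:Kahlercurrent}, and it is consistent with $t\Phi_t^*f\to\frac{r^2}{2}$. So $c=\frac12$, not $\frac14$; as you say, this does not affect the existence argument.
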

    \begin{proof}
We identify $\mathcal{C}\setminus\{o\}$ with its image in $E$ via the biholomorphism $\pi^{-1}$. Under this identification, $r\partial_r$ is mapped to $X$.  Let $(\omega_E(t))_{t>0}$ be the self-similar solution to the K\"ahler--Ricci flow as in Remark \ref{self-similar sol}. By the K\"ahler--Ricci flow equation, for any $x\in \mathcal{C}\setminus\{o\}$ we have
        \begin{equation*}
            \begin{split}
                \left(\omega_E-\omega_{\mathcal{C}}+\Ric(\omega_{\mathcal{C}})\right) (x)&=\left(\omega_E(1)-\omega_{\mathcal{C}}+\Ric(\omega_{\mathcal{C}})\right)(x)\\
                &=\int_0^1\left(\Ric(\omega_{\mathcal{C}})-\Ric(\omega_E(t))\right)(x)dt\\
                &=\int_0^1\sqrt{-1}\partial\bar\partial\log\frac{\omega_E(t)^n}{\omega_{\mathcal{C}}^n}(x)dt\\
                &=\sqrt{-1}\partial\bar\partial\int_0^1\log\frac{\omega_E(t)^n}{\omega_{\mathcal{C}}^n}(x)dt.
            \end{split}
        \end{equation*}
        It follows immediately that $\pi_*\omega_E-\omega_{\mathcal{C}}+\Ric(\omega_{\mathcal{C}})=\sqrt{-1}\partial\bar\partial u_E$ holds for $u_E:=\int_0^1\log\frac{\omega_E(t)^n}{\omega_{\mathcal{C}}^n}(x)dt$ on $\mathcal{C}\setminus\{o\}$.
    \end{proof}
The convergence rate of the self-similar solution toward the conical metric yields the following corollary.

  \begin{corollary}
      Let $(\mathcal{C},g_{\mathcal{C}})$ be a K\"ahler cone admitting a smooth canonical model and let $(E,g_E,X)$ be the expanding soliton as in Theorem \ref{theorem of conlonderuellesun}. Let $u_E$ be the function as in Lemma \ref{appearance of uE}. Then there exist constants $\{C_k>0\}_{k\in\N}$ such that on $\{r^2\ge 1\}$,
      \begin{equation}\label{decay of uE}
          \begin{split}
              &|(\nabla^{g_{\mathcal{C}}})^ku_E|_{g_{\mathcal{C}}}\le \frac{C_k}{r^k},\quad \textnormal{for all $k\in\N$}.
          \end{split}
      \end{equation}
  \end{corollary}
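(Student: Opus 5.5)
We bound $u_E(x)=\int_0^1 \log\frac{\omega_E(t)^n}{\omega_{\mathcal{C}}^n}(x)\,dt$ directly from its defining formula in Lemma \ref{appearance of uE}, using the self-similar convergence estimate \eqref{eq: convergence rate of self-similar solution}. Throughout we identify $\mathcal{C}\setminus\{o\}$ with its image in $E$ via $\pi^{-1}$, as in the proof of Lemma \ref{appearance of uE}. We work on $\{r\ge 1\}$ and integrate in $t\in(0,1]$.

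\textbf{Step 1: the $C^0$ bound.} By \eqref{eq: convergence rate of self-similar solution} with $k=0$,
\begin{equation*}
|\pi_*g_E(t)-g_{\mathcal{C}}|_{g_{\mathcal{C}}}\le C_0\, t\, r^{-2}\le C_0
\quad\text{on } \{r\ge 1\},\ t\in(0,1].
\end{equation*}
Enlarging the threshold for $r$ if needed, the endomorphism $A_t=g_{\mathcal{C}}^{-1}\pi_*g_E(t)$ therefore satisfies $|A_t-\mathrm{Id}|\le\frac12$. Hence
\begin{equation*}
\Bigl|\log\frac{\omega_E(t)^n}{\omega_{\mathcal{C}}^n}\Bigr| = |\log\det A_t|\le C\,t\,r^{-2},
\end{equation*}
so $|u_E|\le C r^{-2}\le C$. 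This gives the $k=0$ case. On the compact leftover annulus $\{1\le r\le R_0\}$, the bound follows from smoothness of $u_E$ together with $\omega_E(t)\to\omega_{\mathcal{C}}$ uniformly there in $C^k$ as $t\to0$.

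\textbf{Step 2: higher derivatives.} Write $F(A)=\log\det A$, a smooth function near $\mathrm{Id}$. Differentiate $F(A_t)$ with respect to $\nabla^{g_{\mathcal{C}}}$, which is parallel for $g_{\mathcal{C}}$ itself. By the chain rule, $(\nabla^{g_{\mathcal{C}}})^k F(A_t)$ is a sum of products of $(\nabla^{g_{\mathcal{C}}})^{j_i}(\pi_*g_E(t)-g_{\mathcal{C}})$ with $\sum j_i=k$, and at least one factor per term. Each factor is bounded by $C t\, r^{-2-j_i}$ from \eqref{eq: convergence rate of self-similar solution}. Since $t\le 1$ and $r\ge1$, every term is bounded by $C_k t\, r^{-2-k}$. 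Integrating in $t\in(0,1]$ and passing the derivatives inside the integral (justified by uniform bounds on compact subsets of $\{r\ge1\}$) gives
\begin{equation*}
|(\nabla^{g_{\mathcal{C}}})^k u_E|\le C_k\, r^{-2-k}\le C_k\, r^{-k}.
\end{equation*}
This is in fact stronger than claimed.

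\textbf{Main obstacle.} The only point needing care is the uniformity of \eqref{eq: convergence rate of self-similar solution} as $t\to0^+$. It should follow from scaling: $g_E(t)=t\Phi_t^*g_E$, and $\Phi_t$ corresponds under $\pi$ to the dilation $r\mapsto t^{-1/2}r$. Hence the estimate at $(t,r)$ is the $t=1$ estimate of Theorem \ref{theorem of conlonderuellesun} at radius $t^{-1/2}r\ge 1$, rescaled. That $t=1$ estimate holds on $\{r\ge1\}$ with uniform constants, and for bounded $r$ one uses the compactness remark from Step 1. This scaling also confirms that the $t$-integral converges near $t=0$.
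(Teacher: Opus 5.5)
Your proposal is correct and follows essentially the same route as the paper: the paper also bounds the integrand of $u_E=\int_0^1\log\frac{\pi_*\omega_E(t)^n}{\omega_{\mathcal{C}}^n}\,dt$ using \eqref{eq: convergence rate of self-similar solution}, obtains $|(\nabla^{g_{\mathcal{C}}})^k u_E|_{g_{\mathcal{C}}}\le C_k r^{-k-2}$, and then weakens this to $C_k r^{-k}$ on $\{r^2\ge 1\}$. Your chain-rule expansion of $\log\det$, the scaling derivation of the $t$-uniform estimate, and the compactness argument on a bounded annulus supply details the paper leaves implicit. The paper asserts the $r^{-k-2}$ bound on all of $\mathcal{C}\setminus\{o\}$, whereas your restriction to $r\ge 1$ is the more careful version.
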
 
  \begin{proof}
      Let $\omega_E(t)$ denote the self-similar solution. Then, by previous computation, we have 
      \begin{equation*}
          u_E=\int_0^1\log\frac{\pi_*\omega_E(t)^n}{\omega_{\mathcal{C}}^n}dt.
      \end{equation*}
     Thanks to \eqref{eq: convergence rate of self-similar solution}, on $\mathcal{C}\setminus\{o\}$, we have
     \begin{equation*}
        |( \nabla^{g_{\mathcal{C}}})^k u_E|_{g_\mathcal{C}}\le C_kr^{-k-2},\quad \textnormal{for all $k\in\N.$}
     \end{equation*} In particular, on $\{r^2\ge 1\}$,
      \begin{equation*}
          \begin{split}
              &|(\nabla^{g_{\mathcal{C}}})^ku_E|_{g_{\mathcal{C}}}\le \frac{C_k}{r^k},\quad \textnormal{for all $k\in\N$}.
          \end{split}
      \end{equation*}
  \end{proof}
 \begin{corollary}\label{decay of uE(s)}
      For all $s>0$, we define $u_E(s):=s\Phi_s^*u_E$, where $\Phi_s$ is as in Remark \ref{self-similar sol}. Then, we have
      \begin{equation*}
          \omega_E(s)-\omega_{\mathcal{C}}+s\Ric(\omega_{\mathcal{C}})=\sqrt{-1}\partial\bar\partial u_E(s).
      \end{equation*}
      Moreover, there exist constants $\{C_k>0\}_{k\in\N}$ such that for all $k\in\N,$ the following   holds on $\{r^2\ge s\}$:
      \begin{equation*}
          \begin{split}
              &|(\nabla^{g_{\mathcal{C}}})^ku_E(s)|_{g_{\mathcal{C}}}\le \frac{C_ks}{r^k}.
          \end{split}
      \end{equation*}
  \end{corollary}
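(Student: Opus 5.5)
The plan is to deduce both statements from Lemma \ref{appearance of uE} and the decay estimate \eqref{decay of uE} by pulling back along $\Phi_s$ and using that, on the cone, $\Phi_s$ is just a dilation. As in the proof of Lemma \ref{appearance of uE}, I identify $\mathcal{C}\setminus\{o\}$ with its image in $E$ via $\pi^{-1}$. Since $d\pi(X)=r\partial_r$ by Theorem \ref{theorem of conlonderuellesun}, the map $\Phi_s=\Phi_X^{-\frac12\log s}$ corresponds on $\mathcal{C}\setminus\{o\}$ to the flow of $r\partial_r$ for time $-\tfrac12\log s$. This flow is the cone dilation $r\mapsto s^{-1/2}r$ along rays.

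The key facts I will use are the two scaling identities
\begin{equation*}
\Phi_s^*\omega_{\mathcal{C}}=s^{-1}\omega_{\mathcal{C}},\qquad \Phi_s^*\Ric(\omega_{\mathcal{C}})=\Ric(\Phi_s^*\omega_{\mathcal{C}})=\Ric(s^{-1}\omega_{\mathcal{C}})=\Ric(\omega_{\mathcal{C}}).
\end{equation*}
The first holds because $\Phi_s$ scales the cone metric, and the second because the Ricci form is invariant under constant rescaling. I then pull back the identity of Lemma \ref{appearance of uE} by the biholomorphism $\Phi_s$ and multiply by $s$. Pullback by a holomorphic map commutes with $\sqrt{-1}\partial\bar\partial$. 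Using $\omega_E(s)=s\Phi_s^*\omega_E$ from Remark \ref{self-similar sol}, this yields
\begin{equation*}
\omega_E(s)-\omega_{\mathcal{C}}+s\Ric(\omega_{\mathcal{C}})=\sqrt{-1}\partial\bar\partial\bigl(s\Phi_s^*u_E\bigr),
\end{equation*}
which is the claimed identity on $\mathcal{C}\setminus\{o\}$.

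For the derivative bounds, take $x$ with $r(x)^2\ge s$; then $r(\Phi_s(x))=s^{-1/2}r(x)\ge 1$, so \eqref{decay of uE} applies at $\Phi_s(x)$. The map $\Phi_s$ is an isometry from $(\mathcal{C},s^{-1}g_{\mathcal{C}})$ to $(\mathcal{C},g_{\mathcal{C}})$. Hence
\begin{equation*}
|(\nabla^{s^{-1}g_{\mathcal{C}}})^k\Phi_s^*u_E|_{s^{-1}g_{\mathcal{C}}}(x)=|(\nabla^{g_{\mathcal{C}}})^ku_E|_{g_{\mathcal{C}}}(\Phi_s(x))\le C_k s^{k/2}r(x)^{-k}.
\end{equation*}
The Levi-Civita connections of $g_{\mathcal{C}}$ and $s^{-1}g_{\mathcal{C}}$ coincide. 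For a covariant $k$-tensor $T$ one has $|T|_{g_{\mathcal{C}}}=s^{-k/2}|T|_{s^{-1}g_{\mathcal{C}}}$. Therefore $|(\nabla^{g_{\mathcal{C}}})^k\Phi_s^*u_E|_{g_{\mathcal{C}}}\le C_k r^{-k}$, and multiplying by $s$ gives the stated estimate $C_k s\,r^{-k}$ with the same constants.

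I expect no real obstacle. The only point needing care is the bookkeeping of the direction of the dilation, namely that $\Phi_s$ maps $\{r^2\ge s\}$ into $\{r^2\ge 1\}$, which matches the sign in $\Phi_s=\Phi_X^{-\frac12\log s}$ and the convergence \eqref{eq: convergence rate of self-similar solution} as $t\to0$. The other point is the scaling of tensor norms.
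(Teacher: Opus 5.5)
Your proposal is correct and is essentially the paper's argument. You pull back the identity of Lemma \ref{appearance of uE} and the bound \eqref{decay of uE} by the dilation $\Phi_s$, using $s\Phi_s^*g_{\mathcal{C}}=g_{\mathcal{C}}$, $r^2\circ\Phi_s=r^2/s$, and the scale invariance of the Ricci form. You also write out the scaling of tensor norms and of the Levi-Civita connection, which the paper leaves implicit as ``by previous computation''.
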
 
  \begin{proof}
     Recall that $\Phi_s$ is the flow of $-\frac{1}{2s}X$ for all $s>0$ and $X=r\partial_r$ on the K\"ahler cone $(\mathcal{C},g_{\mathcal{C}}).$ Thus,  $s\Phi_s^*g_{\mathcal{C}}=g_{\mathcal{C}}$ and $r^2\circ\Phi_s=\frac{r^2}{s}$ for all $s>0$. Then by previous computation, we have that
      \begin{equation*}
         \begin{split}
         &\omega_E(s)-\omega_{\mathcal{C}}+s\Ric(\omega_{\mathcal{C}})=\sqrt{-1}\partial\bar\partial u_E(s),\\
              &|(\nabla^{g_{\mathcal{C}}})^ku_E(s)|_{g_{\mathcal{C}}}\le \frac{C_ks}{r^k},\quad \textnormal{for all $k\in\N$},
          \end{split}
      \end{equation*}
     holds on $\{r^2\ge s\}$.
  \end{proof}
  
  \subsection{Further properties of AC expanders}
\label{subsection:ACexpanders}
  
  Let $(E,g_E,X)$ be an AC K\"ahler--Ricci expander with asymptotic cone $(\mathcal{C},g_{\mathcal{C}})$, let $f$ be a soliton potential satisfying $\nabla^{g_E}f=X$, and let $r$ be the radius function of the cone. We also take $\pi:E\to \mathcal{C}$ as in Theorem \ref{theorem of conlonderuellesun}. In this subsection, we recall some useful geometric properties of AC K\"ahler-Ricci expanders, beginning with some fundamental identities for gradient Ricci solitons (see \cite[Section 2 of Chapter 1]{ChowEtAl}).
  \begin{lemma}\label{soliton indentities} After adding a constant to $f$, the following identities hold:
    \begin{equation*}
        \begin{split}
            \Delta_{\omega_E} f=n+R_{\omega_E}, \qquad
            |\partial f|_{g_E}^2+R_{\omega_E}=f-n.
        \end{split}
    \end{equation*}
\end{lemma}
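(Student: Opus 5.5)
The plan is to derive both identities from the soliton equation $\Ric(g_E)+g_E=\nabla^2 f$ (recall $\frac12\mathcal{L}_Xg_E=\nabla^2 f$ since $X=\nabla f$), first in Riemannian form and then converted to the K\"ahler normalization of the paper. In $(1,1)$-form language the equation reads $\Ric(\omega_E)+\omega_E=\sqrt{-1}\partial\bar\partial f$.

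\emph{First identity.} Take the trace with respect to $\omega_E$. By the conventions above, $\tr_{\omega_E}\Ric(\omega_E)=R_{\omega_E}$, $\tr_{\omega_E}\omega_E=n$, and $\tr_{\omega_E}\sqrt{-1}\partial\bar\partial f=\Delta_{\omega_E} f$. This gives $\Delta_{\omega_E}f=n+R_{\omega_E}$ directly, and no normalization of $f$ is needed here.

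\emph{Second identity.} I would use the standard Hamilton argument. Take the divergence of $R_{ij}+g_{ij}=\nabla_i\nabla_j f$ in Riemannian notation. The contracted Bianchi identity gives $\nabla^iR_{ij}=\frac12\nabla_j R^{\mathrm{Riem}}$, and commuting derivatives gives $\nabla^i\nabla_i\nabla_j f=\nabla_j\Delta^{\mathrm{Riem}}f+R_{jk}\nabla^k f$. Substituting $\Delta^{\mathrm{Riem}}f=2n+R^{\mathrm{Riem}}$ (the real trace) and $R_{jk}=\nabla_j\nabla_k f-g_{jk}$ yields
\[
\nabla_j\bigl(R^{\mathrm{Riem}}+|\nabla f|^2-2f\bigr)=0 .
\]
Hence $R^{\mathrm{Riem}}+|\nabla f|^2-2f$ is constant on the connected manifold $E$, and after adding a constant to $f$ it equals $-2n$.

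\emph{Conversion to K\"ahler normalization.} Here $R^{\mathrm{Riem}}=2R_{\omega_E}$. With $|\partial f|^2_{g_E}=g^{i\bar j}\partial_i f\partial_{\bar j}f$ one has $|\nabla f|^2_{g_E}=2|\partial f|^2_{g_E}$. Dividing by $2$ gives $|\partial f|^2_{g_E}+R_{\omega_E}=f-n$. The only normalization step is the choice of additive constant, which is harmless since the first identity does not see it.

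The main hazard is bookkeeping of the factor-of-two conventions, that is, matching the paper's K\"ahler Laplacian, K\"ahler scalar curvature and $|\partial f|^2$ so that the constant comes out exactly as $n$. I would check this against the flat Gaussian expander on $\mathbb{C}^n$. There $\omega_E=\sqrt{-1}\partial\bar\partial\frac{|z|^2}{4}\cdot 4$ has Euclidean form, and $f=\frac{|z|^2}{4}\cdot c+n$ is chosen so that $\sqrt{-1}\partial\bar\partial f=\omega_E$. Then $\Delta f=n$ and $|\partial f|^2=f-n$, which confirms the constants.
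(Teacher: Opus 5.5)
Your proof is correct. Tracing $\Ric(\omega_E)+\omega_E=\sqrt{-1}\partial\bar\partial f$ gives the first identity. Hamilton's divergence computation gives $\nabla\bigl(R^{\mathrm{Riem}}+|\nabla f|^2-2f\bigr)=0$, and with $R^{\mathrm{Riem}}=2R_{\omega_E}$, $|\nabla f|^2=2|\partial f|^2_{g_E}$ and a choice of additive constant this is the second identity. The paper gives no proof of its own and cites \cite[Section 2 of Chapter 1]{ChowEtAl}, which uses exactly this argument. Your flat sanity check is loosely written, since $c$ must equal $4$ for $\omega_E=\sqrt{-1}\partial\bar\partial|z|^2$, but the proof does not depend on it.
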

An importance consequence is that $f$ is an eigenfunction of the drift Laplacian $\Delta_{\omega_E,X}:=\Delta_{\omega_E}+\frac{1}{2}X$.
\begin{corollary}
    Let $f$ be the normalized soliton potential, then $f$ satisfies the following elliptic equation:
    \begin{equation}\label{f is sol to elliptic eq}
        \Delta_{\omega_E,X}f=f.
    \end{equation}
\end{corollary}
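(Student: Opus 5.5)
Combining the two identities of Lemma \ref{soliton indentities} gives the claim once we have checked how the drift term $\frac{1}{2}X$ acts on $f$ under the paper's conventions. The first step is to interpret $\frac{1}{2}X$ as a first-order operator on functions, $\frac{1}{2}X(f)=\frac{1}{2}df(X)$. With $X=\nabla^{g_E}f$ the Riemannian gradient, this is $\frac{1}{2}g_E(\nabla f,\nabla f)=\frac{1}{2}|\nabla f|^2_{g_E}$. We then relate this Riemannian norm to the complex norm $|\partial f|^2_{g_E}=g_E^{i\bar j}\partial_i f\,\partial_{\bar j}f$ used in Lemma \ref{soliton indentities}.

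For a real function $f$ on a K\"ahler manifold, the two norms are related by a factor of two. With the normalization $\Delta_\omega=g^{i\bar j}\partial_i\partial_{\bar j}$, equal to half the Riemannian Laplacian, this convention gives $|\nabla f|^2_{g}=2|\partial f|^2_g$. It is consistent with the identities of Lemma \ref{soliton indentities}: tracing $\Ric(\omega_E)+\omega_E=\sqrt{-1}\partial\bar\partial f$ gives $\Delta_{\omega_E}f=n+R_{\omega_E}$. Hence $\frac{1}{2}X(f)=|\partial f|^2_{g_E}$.

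Adding the two identities then gives
\begin{equation*}
\Delta_{\omega_E,X}f=\Delta_{\omega_E}f+|\partial f|^2_{g_E}=(n+R_{\omega_E})+(f-n-R_{\omega_E})=f,
\end{equation*}
which is \eqref{f is sol to elliptic eq}.

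The main obstacle is the constant bookkeeping. One must confirm that the paper's convention for $|\partial f|^2$ matches $\frac{1}{2}|\nabla f|^2$, and that $\frac12 X$ is read as the derivation $u\mapsto \frac12 du(X)$. If instead the convention were $|\partial f|^2=|\nabla f|^2$, the drift coefficient would need to be $X$. The factor $\frac{1}{2}$ in $\Ric+g=\frac{1}{2}\mathcal{L}_Xg$ with $X=\nabla f$ is what fixes the normalization. I would check this directly: in local holomorphic coordinates, $\nabla f$ has $(1,0)$-part $g^{i\bar j}\partial_{\bar j}f\,\partial_i$, so $df(\nabla f)=2g^{i\bar j}\partial_i f\partial_{\bar j}f$. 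This confirms the factor two.
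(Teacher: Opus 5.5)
Your proposal is correct and follows essentially the paper's proof: both add the identities $\Delta_{\omega_E}f=n+R_{\omega_E}$ and $|\partial f|^2_{g_E}+R_{\omega_E}=f-n$, using $\frac{1}{2}X\cdot f=|\partial f|^2_{g_E}$. Your local-coordinate check that $df(\nabla^{g_E}f)=2g_E^{i\bar j}\partial_i f\,\partial_{\bar j}f$ makes explicit the factor of two that the paper uses without comment.
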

\begin{proof}
    Putting together $\nabla^{g_E}f=X$, $|\partial f|_{g_E}^2+R_{\omega_E}+n=f$ and $\Delta_{\omega_E} f=n+R_{\omega_E}$, we have that
    \begin{equation*}
        \Delta_{\omega_E,X}f=n+R_{\omega_E}+|\partial f|_{g_E}^2=f.
    \end{equation*}
\end{proof}

By Hopf’s maximum principle, we deduce that the normalized soliton potential is strictly bounded away from zero. Alternatively, this can be understood by examining the lower bound of the scalar curvature.
\begin{lemma}\label{lower bound of scalar curvature}
    There exists a constant $\varepsilon>0$ such that $R_{\omega_E}\ge-n+\varepsilon$ on $E$.
\end{lemma}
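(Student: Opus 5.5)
Since $R_{\omega_E} = \Delta_{\omega_E} f - n$ by Lemma \ref{soliton indentities}, it is enough to show $\inf_E R_{\omega_E} > -n$. The plan splits $E$ into a compact region and a neighbourhood of infinity, and treats each separately.

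\emph{Near infinity.} By Definition \ref{def:ACexpander}, $|\Rm(g_E)|_{g_E}(x) \le C\, d_{g_E}(p,x)^{-2}$. Hence $|R_{\omega_E}| \le n/2$ outside a sufficiently large compact set $K$, so there $R_{\omega_E} \ge -n/2$.

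\emph{On $K$.} I will prove the strict inequality $R_{\omega_E} > -n$ at every point of $E$. Continuity and compactness of $K$ then give a positive gap on $K$, and combined with the bound near infinity this produces a uniform $\varepsilon>0$.

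For the pointwise strict inequality, I use the standard evolution of scalar curvature on an expanding soliton. With our normalizations (K\"ahler Laplacian, $X=\nabla f$), the self-similar flow $g_E(t)$ of Remark \ref{self-similar sol} satisfies $\partial_t R = \Delta R + |\Ric|^2$. Differentiating $R(g_E(t)) = t^{-1}\Phi_t^* R_{\omega_E}$ at $t=1$ gives the static identity
\begin{equation*}
\Delta_{\omega_E,X} R_{\omega_E} + R_{\omega_E} = -|\Ric(\omega_E)|^2 \quad\text{up to sign and drift conventions.}
\end{equation*}
Write $\Ric = -\omega_E + \sqrt{-1}\partial\bar\partial f$ and set $u := R_{\omega_E} + n = \Delta_{\omega_E} f$. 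Then
\begin{equation*}
|\Ric|^2 \ge \frac{R_{\omega_E}^2}{n},
\end{equation*}
and this turns the identity into a drift-elliptic inequality for $u$ of the form
\begin{equation*}
\Delta_{\omega_E,X} u - u \le -\frac{u^2}{n} \le 0,
\end{equation*}
after checking constants; the correct sign comes out of $|\Ric|^2 - (\text{linear terms}) = |\sqrt{-1}\partial\bar\partial f - \omega_E|^2 + \cdots$.

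Then:
\begin{itemize}
\item $u \to n$ at infinity, since $R_{\omega_E} \to 0$ there.
\item The weak minimum principle on large sublevel sets of $f$ (which is proper and grows like $r^2/4$) gives $u \ge 0$.
\item Hopf's strong maximum principle then gives either $u>0$ everywhere or $u\equiv 0$. The latter contradicts $u \to n$, so $u>0$, i.e. $R_{\omega_E} > -n$.
\end{itemize}

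An alternative route uses $f$ directly. By \eqref{f is sol to elliptic eq}, $f$ solves $\Delta_{\omega_E,X} f = f$. The identity $|\partial f|^2 = f - n - R$ shows that $f$ attains a minimum, since it is proper. At a minimum point $x_0$ we have $\Delta f(x_0) \ge 0$ and $X(x_0)=0$, so $f(x_0) \ge 0$ and $R(x_0) = f(x_0) - n$. The strong maximum principle applied to $f$ gives $f>0$, but this only yields $R > -n$ at critical points, so it is weaker than the first route.

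The main obstacle is deriving the correct elliptic inequality for $R_{\omega_E}+n$ with the right signs under the paper's conventions; in particular, the factor $\tfrac12$ in the drift and the K\"ahler normalization of $R$ need care. Once that is in place, the minimum principle argument is routine given properness of $f$ and the quadratic curvature decay.
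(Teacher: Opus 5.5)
Your proof is correct. The paper gives no argument here and only cites \cite[Corollary 2.5]{longteng1}, so your proposal amounts to a self-contained proof rather than a reproduction of the paper's.

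The step you flag as the main obstacle works out exactly as you claim, with no leftover constants in the paper's K\"ahler normalizations:
\begin{itemize}
\item Differentiating $R(g_E(t))=t^{-1}\Phi_t^*R_{\omega_E}$ at $t=1$ gives $-R_{\omega_E}-\frac12 X\cdot R_{\omega_E}$.
\item Comparing with $\partial_tR=\Delta R+|\Ric|^2$ gives precisely $\Delta_{\omega_E,X}R_{\omega_E}+R_{\omega_E}=-|\Ric(\omega_E)|^2_{\omega_E}$.
\item Substituting $\Ric(\omega_E)=\sqrt{-1}\partial\bar\partial f-\omega_E$, with $u=R_{\omega_E}+n=\Delta_{\omega_E}f$, gives the exact identity $\Delta_{\omega_E,X}u-u=-|\sqrt{-1}\partial\bar\partial f|^2_{\omega_E}$.
\item The bound $|\sqrt{-1}\partial\bar\partial f|^2_{\omega_E}\ge u^2/n$ then yields your inequality. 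Equivalently, insert $|\Ric|^2\ge R^2/n$ directly.
\end{itemize}

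The rest goes through as written. At a negative global minimum, which must be interior since $u\to n$ at infinity, you would get $0\le\Delta_{\omega_E,X}u\le u-u^2/n<0$, a contradiction. Hopf's strong principle applies to $\Delta_{\omega_E,X}-1$ because its zero-order coefficient is nonpositive, and this rules out $u$ vanishing anywhere. Compactness together with quadratic curvature decay then gives the uniform gap.

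A minor point: in the paper's normalization $f$ grows like $r^2/2$ (Corollary \ref{coro of comparison of f and r^2}), not $r^2/4$. Only properness is used, so this does not matter.

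Your assessment of the alternative route is also accurate. Applying Hopf to $\Delta_{\omega_E,X}f=f$, which is what the paper's remark just before the lemma alludes to, bounds $f$ from below. It does not bound $R_{\omega_E}=f-n-|\partial f|^2_{g_E}$. The paper in fact goes in the opposite direction and deduces $f\ge\varepsilon$ from this lemma. Working with $\Delta_{\omega_E}f$ instead of $f$ is exactly what lets your argument reach the scalar curvature.
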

\begin{proof}
   See \cite[Corollary 2.5]{longteng1}.
\end{proof}
 Combining Lemma \ref{soliton indentities} and Lemma \ref{lower bound of scalar curvature} gives $f \ge \varepsilon$, where $\varepsilon>0$ is as in Lemma \ref{lower bound of scalar curvature}. 

We identify $\mathcal{C}\setminus\{o\}$ with the image of $\mathcal{C}\setminus\{o\}$ in $E$ via the biholomorphism $\pi^{-1}$. With this identification, the radial function $\frac{r^2}{2}$ of the K\"ahler cone arises as the limit of the soliton potential of $\omega_E(t)$ as $t\to 0$. Therefore, $r$ can be viewed as a continuous function on $E$. More precisely, we have the following comparison.
\begin{corollary}\label{coro of comparison of f and r^2}
Let $r$ denote the radial function of K\"ahler cone $(\mathcal{C},g_{\mathcal{C}})$. Let $\Phi_t$ be the flow of $-\frac{1}{2t}X$ for $t>0$.
    There exists a uniform constant $A>0$ such that for all $t>0$, on $\mathcal{C}\setminus\{o\}$,
    \begin{equation}\label{comparison of f and r^2}
       \frac{r^2}{2}\le t\Phi_t^*f\le \frac{r^2}{2}+At.
    \end{equation}
\end{corollary}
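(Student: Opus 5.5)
The plan is to reduce both inequalities in \eqref{comparison of f and r^2} to an exact formula for the $t$-derivative of $F_t:=t\Phi_t^*f$, and then to identify the limit of $F_t$ as $t\to 0^+$. Throughout, I use the identification of $\mathcal{C}\setminus\{o\}$ with its image in $E$ under $\pi^{-1}$. Under this identification $X=r\partial_r$ by Theorem \ref{theorem of conlonderuellesun}, and $r^2\circ\Phi_t=r^2/t$ as in the proof of Corollary \ref{decay of uE(s)}.

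\emph{Step 1: the derivative of $F_t$.} Since $\Phi_t$ is the time $-\tfrac12\log t$ flow of $X$, we have $\frac{d}{dt}\Phi_t^*f=-\frac{1}{2t}\Phi_t^*(Xf)$. Hence
\begin{equation*}
\frac{d}{dt}F_t=\Phi_t^*\Big(f-\tfrac12 Xf\Big)=\Phi_t^*\big(f-|\partial f|^2_{g_E}\big)=\Phi_t^*\big(R_{\omega_E}+n\big).
\end{equation*}
The middle equality uses $\tfrac12 Xf=|\partial f|^2_{g_E}$ in the Kähler normalization; this is the same convention that underlies $\Delta_{\omega_E,X}f=f$ in \eqref{f is sol to elliptic eq}. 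The last equality is Lemma \ref{soliton indentities}. By Lemma \ref{lower bound of scalar curvature}, $R_{\omega_E}+n\ge\varepsilon>0$. By the AC curvature decay \eqref{eq:quadraticdecayofexpander}, $R_{\omega_E}$ is bounded above on $E$. Setting $A:=n+\sup_E R_{\omega_E}$, we get
\begin{equation*}
0<\frac{d}{dt}F_t\le A \quad\text{pointwise on } \mathcal{C}\setminus\{o\}.
\end{equation*}
In particular, $F_t$ is increasing in $t$. Moreover $F_t\ge 0$, because $f\ge\varepsilon$. So the limit $L:=\lim_{t\to0^+}F_t$ exists pointwise. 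Integrating the derivative bound from $0$ to $t$ gives $L\le F_t\le L+At$. It therefore remains to show $L=r^2/2$.

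\emph{Step 2: identifying $L$.} I expect this to be the main point. First, $L$ is homogeneous. Indeed, $F_t\circ\Phi_s=t(\Phi_{st})^*f=s^{-1}F_{st}$ because $\Phi_t\circ\Phi_s=\Phi_{st}$. Letting $t\to0$ gives $L\circ\Phi_s=s^{-1}L$, and $r^2/2$ transforms in the same way.

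Second, I control the differential of $F_t$. Because $X$ is invariant under its own flow, $\nabla^{g_E(t)}F_t=X$, so
\begin{equation*}
dF_t=g_E(t)(X,\cdot).
\end{equation*}
By \eqref{eq: convergence rate of self-similar solution}, $g_E(t)\to g_{\mathcal{C}}$ locally uniformly on $\mathcal{C}\setminus\{o\}$. Hence $dF_t\to g_{\mathcal{C}}(r\partial_r,\cdot)=d(r^2/2)$ locally uniformly.

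Combining these, on a compact connected set the functions $F_t-r^2/2$ have differentials tending to zero uniformly, and they converge pointwise to $L-r^2/2$. So $L-r^2/2$ is locally constant on $\mathcal{C}\setminus\{o\}$. This set is connected, since the link $S$ is connected, so $L-r^2/2$ equals a constant $c$. By homogeneity, $c=c/s$ for all $s>0$, which forces $c=0$.

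Substituting $L=r^2/2$ into $L\le F_t\le L+At$ gives $\frac{r^2}{2}\le t\Phi_t^*f\le\frac{r^2}{2}+At$. The only delicate points are the sign convention relating $Xf$ and $|\partial f|^2$, and the justification of the locally uniform convergence of $dF_t$, which I take directly from \eqref{eq: convergence rate of self-similar solution}.
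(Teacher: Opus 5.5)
Your proof is correct. Step 1 is exactly the paper's computation $\frac{d}{dt}(t\Phi_t^*f)=\Phi_t^*(R_{\omega_E}+n)$ with $A=\sup_E(R_{\omega_E}+n)$, which yields $L\le F_t\le L+At$.

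Where you differ is in identifying $L$. The paper reads it off directly from the soliton identity $f=|\partial f|^2_{g_E}+R_{\omega_E}+n$. Together with $|\partial f|^2_{g_E}=\tfrac12 g_E(X,X)$ and $(\Phi_t)_*X=X$, this gives $t\Phi_t^*f=\tfrac12 g_E(t)(X,X)+t\Phi_t^*(R_{\omega_E}+n)$. Boundedness of $R_{\omega_E}$ kills the second term as $t\to0^+$, and $g_E(t)\to g_{\mathcal{C}}$ with $X=r\partial_r$ turns the first term into $\tfrac12 g_{\mathcal{C}}(r\partial_r,r\partial_r)=\tfrac{r^2}{2}$.

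You use only the one-form $dF_t=g_E(t)(X,\cdot)\to d(r^2/2)$. This determines $L$ only up to a locally constant function. You then need connectedness of $\mathcal{C}\setminus\{o\}$ and the scaling law $L\circ\Phi_s=s^{-1}L$ to force the constant to vanish. Each of these steps checks out, including $\Phi_t\circ\Phi_s=\Phi_{ts}$, the passage from $dh_t\to0$ locally uniformly to $L-\tfrac{r^2}{2}$ being constant along paths, and $c=c/s$ for all $s$ implying $c=0$.

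However, you already invoke $f-|\partial f|^2_{g_E}=R_{\omega_E}+n$ and $\tfrac12 Xf=|\partial f|^2_{g_E}$ in Step 1. Applying that same identity to $F_t$ itself, rather than to its $t$-derivative, would have given $L$ in one line, without the homogeneity and connectedness detour. Your route buys no extra generality here; the paper's argument is shorter and purely pointwise.
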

\begin{proof}
    On the one hand, by the soliton identities we have
    \begin{equation*}
        t\Phi_t^*f=\frac{1}{2}g_E(t)(X,X)+t(\Phi_t^*R_{\omega_E}+n).
    \end{equation*}
    Let $t\to 0^+$, since the scalar curvature is bounded, we have on $C\setminus\{o\}$
    \begin{equation*}
        \lim_{t\to 0^+}t\Phi_t^*f=\lim_{t\to 0^+}\frac{1}{2}g_E(t)(X,X)=\frac{r^2}{2}.
    \end{equation*}
    On the other hand, we compute
    \begin{equation*}
        \frac{d}{dt}(t\Phi_t^*f)=\Phi_t^*\left(f-\frac{1}{2}X\cdot f\right)=\Phi_t^*(R_{\omega_E}+n)
    \end{equation*}
    Since $R_{\omega_E}+n>0$ and let $A=\sup_E (R_{\omega_E}+n)$, we have that for all $0<s<t$
    \begin{equation*}
      s\Phi_s^*f\le  t\Phi_t^*f\le s\Phi_s^*f+A(t-s).
    \end{equation*}
    Let $s\to 0^+$, then \eqref{comparison of f and r^2} holds.
\end{proof}

\begin{definition} For any $D,D'\in (0,\infty)$, we define 
\begin{equation*}
    \mathcal{B}(D):= \mathcal{B}_E(D):=\pi^{-1}(\{x\in \mathcal{C}\: | \: r(x)<D\}), \qquad  \overline{\mathcal{B}}(D):= \overline{\mathcal{B}}_E(D):=\pi^{-1}(\{x\in \mathcal{C}\: | \: r(x)\leq D\}),
\end{equation*}
\begin{equation*}
    \overline{\mathcal{A}}(D,D') := \pi^{-1}(\{x \in \mathcal{C} \: | \: D\leq r(x) \leq D'\}).
\end{equation*}
\end{definition}

A last property that will be useful to us is the fact that, on the asymptotically conical gradient K\"ahler--Ricci expander $(E,g_E,X)$, the injectivity radius grows linearly. 
\begin{prop}\label{linear growth of injectivity radius}
    There exists a constant $\delta_0>0$ such that for all $x\in E$
    \begin{equation*}
        r_{\textnormal{inj}}^{g_E}(x)\ge \delta_0\sqrt{f(x)+1}.
    \end{equation*}
\end{prop}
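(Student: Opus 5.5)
We split $E$ into a compact region and an asymptotically conical end, and handle each by its own argument. Since $f\ge \varepsilon>0$ by Lemma \ref{soliton indentities} and Lemma \ref{lower bound of scalar curvature}, the quantity $\sqrt{f+1}$ is bounded above and below by positive constants on any compact set. By Corollary \ref{coro of comparison of f and r^2} with $t=1$, it is comparable to $r+1$ globally: $\frac{r^2}{2}\le \Phi_1^*f\le \frac{r^2}{2}+A$. Here $\Phi_1$ is the identity, so this reads $\frac{r^2}{2}\le f\le \frac{r^2}{2}+A$. It therefore suffices to prove $r_{\rm inj}^{g_E}(x)\ge c\,(1+r(x))$ for some $c>0$.

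On the compact set $\overline{\mathcal{B}}(D_0)$, with $D_0$ to be fixed, the injectivity radius is a positive continuous function. Hence it has a positive lower bound, and the estimate holds there after shrinking $\delta_0$.

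On the end $\{r\ge D_0\}$ we use scaling. Fix $x$ with $r(x)=\rho\ge D_0$ and consider the rescaled metric $\rho^{-2}g_E$. Since $g_E=g_E(1)$ and $g_E(t)=t\Phi_t^*g_E$, we have $\rho^{-2}g_E=\Phi_{\rho^{-2}}^{*}\bigl(g_E(\rho^{-2})\bigr)$ up to the identification by $\Phi$. The map $\Phi_{\rho^{-2}}$ moves $x$ to a point $x'$ with $r(x')=1$, because $r^2\circ\Phi_t=r^2/t$. So it is enough to bound below the injectivity radius of $g_E(t)$ at points with $r=1$, uniformly in $t\in(0,D_0^{-2}]$.

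By \eqref{eq: convergence rate of self-similar solution}, on the annulus $\overline{\mathcal{A}}(1/2,2)$ the metrics $g_E(t)$ converge smoothly to $g_{\mathcal{C}}$ as $t\to0$. Their difference is bounded by $C_kt$ in every $C^k$ norm. On this annulus $g_{\mathcal{C}}$ has bounded curvature and injectivity radius bounded below by some $i_0>0$ at points with $r=1$. This is the cone over a compact smooth Sasaki manifold, so the lower bound comes from compactness of the link together with homogeneity.

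For $t$ small, the metric $g_E(t)$ therefore has $|\Rm|\le K$ on the ball of radius $1/4$ about $x'$, and this ball is contained in the annulus. We need a volume lower bound on the ball of radius $i_0/2$ about $x'$, and $g_E(t)$ is $C^0$-close to $g_{\mathcal{C}}$ there, so the volume lower bound passes from $g_{\mathcal{C}}$ to $g_E(t)$. The Cheeger--Gromov--Taylor local injectivity radius estimate, which uses only the curvature bound and the volume bound on the ball, then gives a uniform lower bound $r_{\rm inj}^{g_E(t)}(x')\ge c_1>0$. The estimate is local, so it does not matter that the rest of the manifold is not controlled.

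Scaling back, $r_{\rm inj}^{g_E}(x)\ge c_1\rho$. Choosing $D_0$ large so that $t\le D_0^{-2}$ is in the good range finishes the argument.

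The main obstacle is making the uniformity in $t$ rigorous. The injectivity radius is not continuous under smooth local convergence in general. Short geodesic loops could a priori leave the annulus, and the cone's injectivity radius does not control them directly. We avoid this by using the curvature-plus-volume estimate of Cheeger--Gromov--Taylor, which needs only local data: the bound $|\Rm|\le K$ and the volume lower bound, both of which are stable under the $C^2$-close perturbation supplied by \eqref{eq: convergence rate of self-similar solution}.
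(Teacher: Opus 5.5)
Your proof is correct and follows the paper's route. You rescale so that $x$ sits at unit scale (using $\sqrt{f+1}\sim 1+r$), then apply the Cheeger--Gromov--Taylor estimate to a local curvature bound and a volume lower bound; you justify that volume bound, which the paper simply asserts, via the smooth convergence $g_E(t)\to g_{\mathcal{C}}$ on the unit annulus together with compactness of the core. One small index slip: since $r^2\circ\Phi_t=r^2/t$, the isometry you want is $\Phi_{\rho^2}$, i.e.\ $\rho^{-2}g_E=\Phi_{\rho^2}^*g_E(\rho^{-2})$ with $r(\Phi_{\rho^2}(x))=1$, not $\Phi_{\rho^{-2}}$.
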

\begin{proof}
   Fix $x\in E$.  By the Cheeger-Gromov-Taylor injectivity-radius estimate \cite[Theorem 4.7]{CheegerGromovTaylor}, applied after rescaling $(f(x)+1)^{-1}$, the uniform lower volume at scale $\sqrt{f(x)+1}$ implies
  \begin{equation*}
       r_{\textnormal{inj}}^{g_E}(x)\ge \delta_0\sqrt{f(x)+1},
  \end{equation*}
  for some uniform constant $\delta_0>0$.
\end{proof}

\section{The approximating solutions}\label{sectn; approx soltn}

Throughout this section, we fix a singular space $(Y,\omega_0)$ satisfying the hypotheses of Theorem \ref{main theorem}. We also fix open holomorphic embeddings 
\begin{equation*}
    \phi_i:B_{g_{\mathcal{C}(S_i)}}(o_i,r_0) \to Y
\end{equation*}
and relative K\"ahler potentials $u_i$ on $B_{g_{\mathcal{C}(S_i)}}(o_i,r_0)$ as in Definition \ref{defn of sing space}, so that $\phi_i^{\ast}\omega_0=\omega_{\mathcal{C}(S_i)}+\sqrt{-1}\partial \overline{\partial}u_i$, where Definition \ref{defn of sing space} \ref{defn:singspac3} holds.

\subsection{Construction of the approximating solutions}
In this subsection, we construct a family $(M,\omega_{s,0})_{s\in (0,s_0]}$ of smooth K\"ahler metrics which approximate $\omega_0$ as $s\to 0^+$. We begin by constructing the underlying complex manifold $M$, which will be a resolution of the complex space $Y$.

\begin{prop}\label{prop: existence of Kahler resolution}
    Let $(Y,g_0)$ be a compact K\"ahler space with isolated conical singularities modeled on K\"ahler cones with smooth canonical models. Then there exists a resolution of singularities $\pi: M\to Y$ such that $K_M$ is $\pi$-ample.
\end{prop}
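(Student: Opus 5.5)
The resolution will be built locally and glued. Near each singular point $y_i$, the embedding $\phi_i$ identifies a neighborhood $U_i:=\phi_i(\{r<r_0\})$ of $y_i$ biholomorphically with the truncated cone $\{r<r_0\}\subset\mathcal{C}(S_i)$. By hypothesis $\mathcal{C}(S_i)$ has a smooth canonical model. Theorem \ref{theorem of conlonderuellesun} realizes it concretely as $\pi_i:E_i\to\mathcal{C}(S_i)$, where $E_i$ is the underlying complex manifold of the AC expander. We take $\widetilde U_i:=\pi_i^{-1}(\{r<r_0\})$. By property (iii) of Definition \ref{defn smooth canonical model}, $\pi_i$ is a biholomorphism from $\widetilde U_i\setminus\pi_i^{-1}(o_i)$ onto $\{0<r<r_0\}$.

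Define $M$ by gluing: take $Y\setminus\{y_1,\dots,y_Q\}$ together with the disjoint sets $\widetilde U_i$, and identify $x\in\widetilde U_i\setminus\pi_i^{-1}(o_i)$ with $\phi_i(\pi_i(x))\in U_i\setminus\{y_i\}$. The gluing maps are biholomorphisms between open sets and the images $U_i$ are pairwise disjoint, so $M$ is a complex manifold. It is smooth because $E_i$ is smooth and $Y$ is smooth away from the $y_i$. Define $\pi:M\to Y$ as the identity on $Y\setminus\{y_i\}$ and as $\phi_i\circ\pi_i$ on $\widetilde U_i$. The two definitions agree on overlaps, so $\pi$ is holomorphic and proper; properness holds because each $\pi_i^{-1}(o_i)$ is compact, being the fiber of a proper birational morphism. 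Hence $M$ is compact, and $\pi$ is a biholomorphism off the exceptional set $\bigcup_i\pi^{-1}(y_i)$. So $\pi$ is a resolution of singularities. We also need $M$ Hausdorff, which follows from the properness of $\pi_i$ over $\{r<r_0\}$ and the Hausdorffness of $Y$.

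Next, $\pi$-ampleness. This is local over $Y$: one needs $K_M|_{\pi^{-1}(V)}$ to be relatively ample over a neighborhood $V$ of each point of $Y$. Over $Y\setminus\{y_i\}$ the map $\pi$ is an isomorphism, so every line bundle is relatively ample there. Over $U_i$ we have $K_M|_{\pi^{-1}(U_i)}=K_{E_i}|_{\widetilde U_i}$, and this is $\pi_i$-ample because $\pi_i$ is a canonical model (property (ii) of Definition \ref{defn smooth canonical model}). Relative ampleness restricts to open subsets of the base, so $K_M$ is $\pi$-ample.

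The only delicate step is this last transfer of relative ampleness. It relies on the canonical model of the affine cone restricting to a canonical model of the truncated neighborhood, and on the fact that $\pi$-ampleness is local on the base. If a more hands-on argument is preferred, one can use the Hermitian metric on $K_{E_i}$ whose curvature is positive on the fibers. This comes from the expander itself: $-\Ric(\omega_{E_i})=\omega_{E_i}-\sqrt{-1}\partial\bar\partial f$ is positive along the compact exceptional set, since $f$ is constant on compact complex subvarieties of the fiber. By Grauert's criterion this gives relative ampleness near $\pi_i^{-1}(o_i)$ directly.
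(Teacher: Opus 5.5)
Your proposal is correct and is essentially the paper's argument: you glue the pieces $\mathcal{B}_{E_i}(r_0)$ of the canonical models into $Y\setminus\{y_1,\dots,y_Q\}$ along $\phi_i\circ\pi_i$, and $K_M$ inherits $\pi$-ampleness locally over $Y$ from property (ii) of Definition~\ref{defn smooth canonical model}; you also supply details the paper leaves implicit (Hausdorffness, properness via properness of $\pi_i$, and locality of relative ampleness). You should drop the optional ``hands-on'' aside, though, because its justification is false and it is unnecessary anyway: $f$ is in general not constant on compact subvarieties of $\pi_i^{-1}(o_i)$, since this set is invariant under the flow of $X=\nabla^{g_E}f$ and $X$ need not vanish there (e.g.\ when the Reeb field acts nontrivially on the exceptional divisor), so the claimed fiberwise positivity of $-\Ric(\omega_{E_i})$ is unjustified.
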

\begin{proof}
After shrinking $r_0>0$, we may assume $\phi_i(B_{g_{\mathcal{C}(S_i)}}(o_i,r_0))\cap \phi_j(B_{g_{\mathcal{C}(S_j)}}(o_j,r_0))=\emptyset$ for every $i\neq j$. For each cone $\mathcal{C}(S_i)$, there exists a resolution $\pi_i:E_i\to \mathcal{C}(S_i)$. Now define
  \begin{equation*}
      M=\frac{\left(Y\setminus\{y_i\}_{i=1}^Q\right)\bigsqcup\left(\bigcup_{i=1}^Q \mathcal{B}_{E_i}(r_0)\right)}{\{y=(\phi_i\circ\pi_i)^{-1}(y)\ \textnormal{for some $i$}\}}.
  \end{equation*}
Then, $M$ is a smooth complex manifold. The maps $\operatorname{id}_{Y\setminus \{y_i\}_{i=1}^Q}$ and $\phi_i\circ \pi_i$ glue to produce the desired holomorphic map $\pi:M\to Y$. 
\end{proof}
From now on, we identify $M\setminus\cup_{i=1}^Q \{\pi^{-1}(y_i)\}$ with $Y\setminus\{y_i\}_{i=1}^Q$ via $\pi$. On $M,$ we define a continuous function $r_M$ (smooth on $\pi^{-1}(Y\setminus\{y_i\}_{i=1}^Q)$) by
\begin{equation}\label{radial function}
    r_M(x):=\begin{cases}

        r(x) \quad& \textnormal{ if } x \in \overline{\mathcal{B}}_{E_i}(r_0) \textnormal{ for some } i,\\
        r_0 \quad   &\: \textnormal{otherwise.}   \end{cases}
\end{equation}
We are now ready to define the family of approximating metrics $\omega_{s,0},$ for $s>0.$ Since the construction and the arguments are local, we only define this around one singular point $y_1\in Y,$ with all of our arguments extending easily to the case of multiple singular points. To further simplify our notation, we write $E_1=E$  and $\mathcal{C}:=\mathcal{C}(S_1)$. Let $\chi: \mathbb{R}^+\to [0,1]$ be a fixed, increasing cut-off function such that $\chi_{|_{[0,1]}}\equiv 0$ and $\chi_{|_{[2,\infty)}}\equiv 1.$
For any $j\in\N$, we then have
\begin{align} \label{eq:cutoffestimates}
    \left|(\nabla^{g_0})^j \left(\chi \left( \frac{r_M}{s^{\frac{1}{4}}} \right)\right)  \right| \leq C_js^{-\frac{j}{4}}.
\end{align}
Recalling $u_1$ from Definition \ref{defn of sing space}, we set $u_{1,s}:=u_1+s\log\frac{\omega_0^n}{\omega_{\mathcal{C}}^n}$.

In terms of the \textit{approximation parameter} $s>0$, we can then define

\begin{align}\label{defn approx metrics}
\omega_{s,0} := \omega_E(s) + \sqrt{-1}\partial \overline{\partial} \left(\chi\left(\frac{r_M(\cdot)}{s^{\tfrac{1}{4}}}\right) (u_{1,s} -u_E(s))\right),\end{align}
for $r_M\in [s^{\frac{1}{4}},2s^{\frac{1}{4}}]$, so that $\omega_{s,0}$ extends to a smooth closed $(1,1)$-form on $M$ satisfying $\omega_{s,0}=\omega_E(s)$ on $\{r_M \leq s^{\frac{1}{4}}\}$ and $\omega_{s,0}=\omega_0-s\Ric(\omega_0)$ on $\{r_M\geq 2s^{\frac{1}{4}}\}$. Here, $u_{E}(s)$ is defined as in Corollary \ref{decay of uE(s)}. The idea of including the Ricci form in the approximating metrics comes from the work of Conlon--Deruelle \cite{ConlonDeruelleJDG}.
Computations on the cohomology class give
\begin{equation}\label{eq: cohomological class of w_s,0}
    [\omega_{s,0}]=[\omega_0]-sc_1(M).
\end{equation}

\begin{remark}
    From the definition of $\omega_{s,0},$ we observe that 
    \[\lim_{s\to 0} \omega_{s,0}(x)=\omega_0(x)\] for all $x \in M\setminus \pi^{-1}(y_1).$ 
\end{remark}
The following proposition shows that, by fixing a sufficiently small upper bound $s_0>0$ for the approximation parameter $s$, one can ensure the positivity of $\omega_{s,0}$.
\begin{prop}\label{gluing metric; prop; 2}
    There exists $s_0>0$ such that for any $s\in (0,s_0]$, $\omega_{s,0}$ is a K\"ahler metric on $M\setminus\{r_M^2\le 4s^{1/2}\}$ satisfying
\begin{align}\label{eq: outside the glueing region}
\sup_{\{ r_M \geq 2s^{\frac{1}{4}}\}} r_M^k|(\nabla^{g_{0}})^k(g_{s,0}-g_0)|_{g_0} \leq C_k s^{1/2}.
\end{align}
\end{prop}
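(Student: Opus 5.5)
On the region $\{r_M \geq 2s^{\frac14}\}$ the metric $\omega_{s,0}$ turns out to equal $\omega_0 - s\Ric(\omega_0)$ exactly. So the proposition reduces to a scale-invariant curvature estimate for $g_0$ near the conical point, which I would obtain by rescaling to a fixed annulus of the cone.

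\emph{Step 1: identifying $\omega_{s,0}$.} Outside the chart $\phi_1$ the form is $\omega_0 - s\Ric(\omega_0)$ by definition. Inside the chart, with $r_M\ge 2s^{1/4}$, we have $\chi\equiv 1$. Using Corollary \ref{decay of uE(s)}, $\omega_E(s)-\sqrt{-1}\partial\bar\partial u_E(s)=\omega_{\mathcal{C}}-s\Ric(\omega_{\mathcal{C}})$, and therefore
\begin{equation*}
\omega_{s,0}=\omega_{\mathcal{C}}+\sqrt{-1}\partial\bar\partial u_1 - s\Ric(\omega_{\mathcal{C}}) + s\sqrt{-1}\partial\bar\partial\log\frac{\omega_0^n}{\omega_{\mathcal{C}}^n}=\omega_0-s\Ric(\omega_0).
\end{equation*}
Hence $g_{s,0}-g_0=-s\Ric(g_0)$ on the whole region, and it remains to show
\begin{equation*}
r_M^k\,|(\nabla^{g_0})^k\Ric(g_0)|_{g_0}\le C_k\, r_M^{-2}\quad\text{on } \{r_M\ge 2s^{1/4}\}.
\end{equation*}
Granting this, the bound $s\,r_M^{-2}\le s/(4s^{1/2})$ gives \eqref{eq: outside the glueing region}. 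Positivity also follows: the case $k=0$ yields $|g_{s,0}-g_0|_{g_0}\le C_0s^{1/2}<\tfrac12$ once $s_0$ is small, so $g_{s,0}$ is a Kähler metric there.

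\emph{Step 2: conical curvature decay of $g_0$.} On $\{r_M\ge r_0/2\}$ we are in a fixed compact region where $g_0$ is smooth, so all $|(\nabla^{g_0})^k\Ric(g_0)|$ are bounded. In particular $r_M^{k+2}|(\nabla^{g_0})^k\Ric|$ is bounded there, since $r_M\le r_0$.

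Near the vertex I would use scaling. For $\lambda\in(0,r_0/4]$ let $\delta_\lambda$ be the holomorphic dilation $r\mapsto\lambda r$ of $\mathcal{C}$; it satisfies $\lambda^{-2}\delta_\lambda^*g_{\mathcal{C}}=g_{\mathcal{C}}$. Set $u^\lambda:=\lambda^{-2}\delta_\lambda^*u_1$, so that $\lambda^{-2}\delta_\lambda^*\phi_1^*\omega_0=\omega_{\mathcal{C}}+\sqrt{-1}\partial\bar\partial u^\lambda$. The estimate \eqref{eq:estimatesforu1} is scale invariant, so on the annulus $\{1\le r\le 2\}$ it gives $|(\nabla^{g_{\mathcal{C}}})^j u^\lambda|\le C\,k_j(2\lambda)\le C$ for every $j$. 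Thus the rescaled metrics $g^\lambda$ are uniformly $C^k$-close to $g_{\mathcal{C}}$ on this fixed compact annulus, and in particular are uniformly equivalent to it. Consequently $|(\nabla^{g^\lambda})^k\Ric(g^\lambda)|_{g^\lambda}\le C_k$ on $\{1\le r\le 2\}$, with $C_k$ independent of $\lambda$.

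Scaling back, $\Ric$ is scale invariant as a $(0,2)$-tensor, and its $k$-th covariant derivative has norm scaling like $\lambda^{-2-k}$. This gives $|(\nabla^{g_0})^k\Ric(g_0)|_{g_0}\le C_k r^{-2-k}$ on $\{\lambda\le r\le 2\lambda\}$. Letting $\lambda$ range over $(0,r_0/4]$ covers $\{0<r\le r_0/2\}$, which is the claimed bound.

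The only mildly delicate point is Step 2: one must check that the scale-invariant hypothesis \eqref{eq:estimatesforu1} really yields uniform higher-derivative control of the rescaled metrics. It does, because derivatives of $\sqrt{-1}\partial\bar\partial u^\lambda$ are controlled by derivatives of $u^\lambda$ of two higher orders, and \eqref{eq:estimatesforu1} is assumed for all $j$. Everything else is bookkeeping.
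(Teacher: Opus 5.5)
Your proposal is correct and follows the paper's proof: on $\{r_M\ge 2s^{1/4}\}$ one has $\omega_{s,0}=\omega_0-s\Ric(\omega_0)$, so the estimate reduces to the scale-invariant decay $r_M^{k+2}|(\nabla^{g_0})^k\Ric(g_0)|_{g_0}\le C_k$. The paper simply asserts this decay, and your rescaling argument justifies it; the one point to tidy is that uniform equivalence of $g^\lambda$ with $g_{\mathcal{C}}$ on $\{1\le r\le 2\}$ follows from $k_2(2\lambda)\to 0$ only for small $\lambda$, while for $\lambda$ bounded below you should instead use compactness of the corresponding region in $Y\setminus\{y_1\}$.
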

\begin{proof}
    It suffices to verify \eqref{eq: outside the glueing region} on $\{r_0\ge r_M\ge 2s^{1/4}\}$. On this region, we have
    \begin{equation*}
        r_M^{k+2}|(\nabla^{g_0})^k\Ric(g_0)|_{g_0}\le C_k.
    \end{equation*}
    Therefore,
     \begin{equation*}
       \sup_{\{ r_M \geq 2s^{\frac{1}{4}}\}} r_M^k|(\nabla^{g_{0}})^k(g_{s,0}-g_0)|_{g_0}= \sup_{\{ r_M \geq 2s^{\frac{1}{4}}\}} r_M^ks|(\nabla^{g_{0}})^k\Ric(g_0)|_{g_0}\leq C_k s^{1/2}.
    \end{equation*}
\end{proof}
\begin{prop}\label{gluing metric; prop}
 {There exists $s_0>0$ such that for any $s\in (0,s_0]$, $\omega_{s,0}$ is a K\"ahler metric satisfying
\begin{align} \label{eq:initialtimeannulusclose}
\sup_{\{s^{\frac{1}{4}} \leq r_M \leq 2s^{\frac{1}{4}}\}} r_M^k|(\nabla^{g_{\mathcal{C}}})^k(g_{s,0}-g_{\mathcal{C}})|_{g_{\mathcal{C}}} \leq C_k \left( \sum_{j=0}^{k+4} k_j(2s^{\frac{1}{4}})+s^{\frac{1}{2}} \right).
\end{align}}

\end{prop}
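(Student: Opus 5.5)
On the annulus $\{s^{1/4}\le r_M\le 2s^{1/4}\}$ we work in the cone chart and identify it with its image in $E$ via $\pi^{-1}$. The plan is to rewrite $g_{s,0}-g_{\mathcal{C}}$ as a sum of explicit $\sqrt{-1}\partial\bar\partial$-terms and bound each one in the scale-invariant norm $r^k|(\nabla^{g_{\mathcal{C}}})^k\cdot|_{g_{\mathcal{C}}}$.

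From \eqref{defn approx metrics} and Corollary \ref{decay of uE(s)}, $\omega_E(s)=\omega_{\mathcal{C}}-s\Ric(\omega_{\mathcal{C}})+\sqrt{-1}\partial\bar\partial u_E(s)$. We therefore write
\begin{equation*}
\omega_{s,0}-\omega_{\mathcal{C}}=-s\Ric(\omega_{\mathcal{C}})+\sqrt{-1}\partial\bar\partial\Big(u_E(s)+\chi\big(r/s^{1/4}\big)\big(u_{1,s}-u_E(s)\big)\Big).
\end{equation*}
The first term is harmless. $\Ric(\omega_{\mathcal{C}})$ is homogeneous of degree $-2$ relative to $g_{\mathcal{C}}$, so $r^k|(\nabla^{g_{\mathcal{C}}})^k s\Ric(\omega_{\mathcal{C}})|\le C_k s r^{-2}\le C_k s^{1/2}$ for $r\ge s^{1/4}$. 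Next, Corollary \ref{decay of uE(s)} applies since $r^2\ge s^{1/2}\ge s$. It gives $r^{j-2}|(\nabla^{g_{\mathcal{C}}})^j u_E(s)|\le C_j s r^{-2}\le C_j s^{1/2}$ for every $j$. The cutoff satisfies $r^{j}|(\nabla^{g_{\mathcal{C}}})^j\chi(r/s^{1/4})|\le C_j$ on the annulus, because there $r\sim s^{1/4}$ and each derivative costs $s^{-1/4}\sim r^{-1}$.

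The main work is to estimate $u_{1,s}=u_1+s\log(\omega_0^n/\omega_{\mathcal{C}}^n)$ in the same weighted norms.
\begin{itemize}
\item For $u_1$ we use \eqref{eq:estimatesforu1} directly: $r^{j-2}|\nabla^j u_1|\le k_j(r)\le k_j(2s^{1/4})$, since $k_j$ is increasing.
\item For the logarithm, $\omega_0=\omega_{\mathcal{C}}+\sqrt{-1}\partial\bar\partial u_1$. Since $|\sqrt{-1}\partial\bar\partial u_1|_{g_{\mathcal{C}}}\le k_2(r)<1$ is small, $\log(\omega_0^n/\omega_{\mathcal{C}}^n)$ is a smooth function of $\sqrt{-1}\partial\bar\partial u_1$ vanishing at $0$. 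Hence $r^{j}|\nabla^j\log(\omega_0^n/\omega_{\mathcal{C}}^n)|\le C_j\sum_{i\le j+2}k_i(r)$, using that the weighted norms are multiplicative.
\item Multiplying by $s$ and using $s r^{-2}\le s^{1/2}$, this term contributes at most $C_j s^{1/2}$ to $r^{j-2}|\nabla^j(\cdot)|$.
\end{itemize}

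We then apply the Leibniz rule to $\sqrt{-1}\partial\bar\partial(\chi\cdot(u_{1,s}-u_E(s)))$ and take $k$ further covariant derivatives. Every term is a product of $\nabla^a\chi$ and $\nabla^b(u_{1,s}-u_E(s))$ with $a+b=k+2$. Using $r^a|\nabla^a\chi|\le C$ and $r^{b-2}|\nabla^b(u_{1,s}-u_E(s))|\le C(\sum_{i\le b+2}k_i+s^{1/2})$, each term is bounded by $C_k(\sum_{j\le k+4}k_j(2s^{1/4})+s^{1/2})$. The index $k+4$ arises because $b\le k+2$ and the log term needs two more derivatives of $u_1$. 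This proves \eqref{eq:initialtimeannulusclose}.

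Positivity follows from the case $k=0$: $|g_{s,0}-g_{\mathcal{C}}|_{g_{\mathcal{C}}}\le C(\sum_{j\le 4}k_j(2s^{1/4})+s^{1/2})\to0$, so $g_{s,0}$ is positive on the annulus once $s_0$ is small. On $\{r_M\le s^{1/4}\}$ we have $\omega_{s,0}=\omega_E(s)$, which is Kähler. On $\{r_M\ge 2s^{1/4}\}$ we argue as in Proposition \ref{gluing metric; prop; 2}: $g_{s,0}-g_0$ is $O(s^{1/2})$ relative to $g_0$. The global statement follows.

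The main obstacle is bookkeeping. We need uniform control of the nonlinear $\log\det$ term in the weighted norms, which requires the smallness $k_2<1$ (given by Definition \ref{defn of sing space}) so that $\log$ stays in a compact range. We also need the consistent scaling $s^{-1/4}\sim r^{-1}$ on the annulus.
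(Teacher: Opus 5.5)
Your proposal is correct and follows essentially the same route as the paper. Both split $\omega_{s,0}-\omega_{\mathcal{C}}$ into $\omega_E(s)-\omega_{\mathcal{C}}$ plus $\sqrt{-1}\partial\bar\partial$ of the cut-off potentials, bound the latter by Leibniz using the cutoff derivative bounds on the annulus and the weighted decay of $u_1$, $s\log(\omega_0^n/\omega_{\mathcal{C}}^n)$ and $u_E(s)$, and read off positivity from the $k=0$ case. The only differences are cosmetic: you expand $\omega_E(s)-\omega_{\mathcal{C}}=-s\Ric(\omega_{\mathcal{C}})+\sqrt{-1}\partial\bar\partial u_E(s)$ rather than quoting the $C s r_M^{-2}$ self-similar convergence rate directly, and you write out the $k>0$ bookkeeping that the paper leaves to ``similar considerations.''
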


\begin{proof}
Since $u_{1,s}=u_1+s\log\frac{\omega_0^n}{\omega_{\mathcal{C}}^n}$, on $\{s^{\frac{1}{4}} \leq r_M \leq 2s^{\frac{1}{4}}\}$, for all $j\in\N$, we have
\begin{equation}\label{eq: decay of u1s}
   \begin{split}
        r_M^{j-2}|(\nabla^{g_{\mathcal{C}}})^j u_{1,s}|_{g_{\mathcal{C}}}&\le r_M^{j-2}|(\nabla^{g_{\mathcal{C}}})^j u_{1}|_{g_{\mathcal{C}}}+r_M^{j-2}s|(\nabla^{g_{\mathcal{C}}})^j \log\frac{\omega_0^n}{\omega_{\mathcal{C}}^n}|_{g_{\mathcal{C}}}\\
        &\le  k_j(2s^{\frac{1}{4}})+ s^{\frac{1}{2}}k_{j+2}(2s^{\frac{1}{4}}).
   \end{split}
\end{equation}

We can write
\begin{align*}
    \omega_{s,0}-\omega_{\mathcal{C}} =\omega_E(s)-\omega_{\mathcal{C}}+ \sqrt{-1}\partial \overline{\partial} \left( \chi ( \frac{r_M}{s^{\frac{1}{4}}} ) u_{1,s} -\chi ( \frac{r_M}{s^{\frac{1}{4}}} ) u_E(s)\right),
\end{align*}
so we may combine \eqref{eq:estimatesforu1},Corollary \ref{decay of uE(s)}, \eqref{eq: decay of u1s} and \eqref{eq:cutoffestimates} to obtain
\begin{align*}
    |g_{s,0}-g_{\mathcal{C}}|_{g_{\mathcal{C}}} \leq &|(\nabla^{g_{\mathcal{C}}})^2 u_{1,s}|_{g_{\mathcal{C}}}+\frac{C}{s^{\frac{1}{4}}}|\nabla^{g_{\mathcal{C}}}u_{1,s}|_{g_{\mathcal{C}}}+\frac{C}{s^{\frac{1}{2}}}
|u_{1,s}| +Csr_M^{-2}\\&+ |(\nabla^{g_{\mathcal{C}}})^2 u_E(s)|_{g_{\mathcal{C}}} + \frac{C}{s^{\frac{1}{4}}}|\nabla^{g_{\mathcal{C}}}u_E(s)|_{g_{\mathcal{C}}} + \frac{C}{s^{\frac{1}{2}}}|u_E(s)| \\
\leq &k_2(2s^{\frac{1}{4}})+s^{\frac{1}{2}}k_4(2s^{\frac{1}{4}})+ Ck_1(2s^{\frac{1}{4}})+Cs^{\frac{1}{2}}k_{3}(2s^{\frac{1}{4}})+Ck_0(2s^{\frac{1}{4}})+Cs^{\frac{1}{2}}k_2(2s^{\frac{1}{4}})\\
&+C_2 s^{\frac{1}{2}} + C_1 s^{\frac{1}{2}} + C_0 s^{\frac{1}{2}}.
\end{align*}
In particular, if $s_0>0$ is sufficiently small, then $g_{s,0}$ is a K\"ahler metric, and \eqref{eq:initialtimeannulusclose} holds for $k=0$. The case $k>0$ follows from similar considerations.
\end{proof}
From now on, we consider the approximation parameter $s\le s_0$ and introduce a new parameter $R>0$ which localizes our estimates. 
\begin{definition}
    We define the \textit{localization parameter} $R>0$ satisfying $R^2> 4\sqrt{s}$ and the \textit{conical region} 
    \begin{equation*}
    \{ s^{\frac{1}{4}}\leq r_M \leq R\}.
    \end{equation*}
\end{definition}

\begin{remark}\label{remark on decays of approximation metrics}
By similar computations, we can also show that there exists $R_0>0$ such that for all $R \le R_0$ and $s \le s_0$ satisfying $R^2 > 4\sqrt{s}$ we have the following. For each $k \in \N$, there exists a constant $A_k>0$, depending on $s_0$ and $R_0$, such that
\begin{equation*}
     \bigl|(\nabla^{g_{\mathcal{C}}})^k (g_{s,0} - g_{\mathcal{C}})\bigr|_{g_{\mathcal{C}}} \le A_kr_M^{-k}
\end{equation*}
holds on the \textit{conical region}. In particular, for all $\varepsilon>0$, we can find $s_0>0$ such that for all $s\le s_0,R\le \frac{1}{4}R_0$ satisfying $R^2>4\sqrt{s}$,  we have
 \begin{equation*}
    |g_{s,0}-g_{\mathcal{C}}|_{g_{\mathcal{C}}}\le \varepsilon
\end{equation*}
on the region $\{16R^2\ge r_M^2\ge\frac{1}{4}\sqrt{s}\}$.
Therefore, by taking $\varepsilon \ll 1$ so that $g_{s,0}$ and $g_{\mathcal{C}}$ are bi-Lipschitz equivalent on the conical region, we can find $s_0,R_0>0$ satisfying the relations above such that for all $s\le s_0$ we have
\begin{equation*}
     \bigl|(\nabla^{g_{s,0}})^k \Rm(g_{s,0})\bigr|_{g_{s,0}} \le C_kr_M^{-2-k}
\end{equation*}
on the conical region, where $C_k=C_k(s_0,R_0)>0$ is as above. This essentially follows from a direct application of Perelman's pseudolocality; for a detailed proof, we refer to \cite[Lemma 3.1]{Gianniotis-Schulze}, \cite[Theorem 3.2.3]{Siepmann}, \cite[Lemma 5.3.2]{ToppingBook} and \cite[Lemma A.4]{Topping}.
\end{remark}

We can then consider a solution to the K\"ahler--Ricci flow $\omega_s(t),$ with $t\in [0,T_s)$ and $\omega_s(0)=\omega_{s,0}.$ We define $T_s>0$ to be the \textit{maximal existence time} of the flow, and recall that, by the work of Hamilton, if $T_s\neq\infty$, then $T_s$ can be characterised by
 \begin{equation*}
     \limsup_{t\to T_s} \sup_{M}|\Rm(g_s(t))|_{g_s(t)}=+\infty,
 \end{equation*}
where $g_s(t)$ is the Riemannian metric with respect to $\omega_s(t)$.

\subsection{Uniform estimates on the conical region}

\label{subsection:estimatesconeregion}

The main difficulty of the approach laid out in the introduction is obtaining estimates for the flow $\omega_s(t)$ in a region $\displaystyle{\{r_M^2\le R^2\}\times [0,T_s)}$ near the singularities which are uniform in $s$.
Since these estimates are local, we work only around $y_1$. Let $\mathcal{C}$ (resp. $(E,g_E,X)$) denote the corresponding K\"ahler cone (resp. K\"ahler--Ricci expander). When the initial data is close enough to the conical metric, Perelman's pseudolocality together with Shi's estimates will control the flow.

 \begin{prop}\label{estimate on the conical region}
There exist constants $R_0, s_0, \lambda_0 > 0$ and $C_k \in (0,\infty)$ such that for any $s\in (0,s_0]$, on the region
 \begin{equation*}
\{(x,t) \in M\times [0,T_s )\ |\  R_0^2\ge r_M^2(x)\ge \lambda_0 t\},
    \end{equation*}
the following estimates hold:
    \begin{equation} \label{eq:consequenceofpseudolocality}
        |(\nabla^{g_s(t)})^{k}\Rm(g_s(t))|_{g_s(t)}(x)\le C_k r_M(x)^{-2-k}.
    \end{equation}  
\end{prop}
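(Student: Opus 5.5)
The plan is to run Perelman's pseudolocality at every point of the conical region, at the natural scale $r_M(x)$, and then upgrade the resulting curvature bound to derivative bounds with Shi's local estimates. This is the argument of \cite[Lemma 3.1]{Gianniotis-Schulze}, and the K\"ahler structure plays no role in it. The input is Remark \ref{remark on decays of approximation metrics}. For $s\le s_0$ and $R\le R_0$, the initial metric $g_{s,0}$ is $\varepsilon$-close to $g_{\mathcal{C}}$ on $\{\tfrac14\sqrt{s}\le r_M^2\le 16R^2\}$ and satisfies $|(\nabla^{g_{s,0}})^k\Rm(g_{s,0})|\le C_k r_M^{-2-k}$ on the conical region.

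\textbf{Step 1 (initial data at scale $r_M(x)$).} Fix $x$ with $\tfrac12 s^{1/4}\le r_M(x)\le R_0$. Let $\rho=c\,r_M(x)$ with $c$ a small fixed constant. The ball $B_{g_{s,0}}(x,\rho)$ stays inside the region where the remark applies, so on it $|\Rm(g_{s,0})|\le C\rho^{-2}$. Since $g_{s,0}$ is bi-Lipschitz close to the cone, and the cone has Euclidean volume growth at scales comparable to $r$, we also get $\mathrm{Vol}\,B_{g_{s,0}}(x,\rho)\ge v_0\rho^{2n}$ with $v_0$ independent of $s$ and $x$. Together with the curvature bound this gives a uniform lower bound on the injectivity radius at scale $\rho$ (Cheeger--Gromov--Taylor), so the hypotheses of the bounded-curvature version of pseudolocality are met \cite[Lemma A.4]{Topping}.

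\textbf{Step 2 (pseudolocality).} Pseudolocality then yields $|\Rm(g_s(t))|(x)\le C\rho^{-2}$ for $t\in[0,\min(\epsilon_0\rho^2,T_s))$, with $\epsilon_0,C$ universal. One point needs care: the flow lives on the compact manifold $M$, and we need this bound for every $t<T_s$ with $\lambda_0t\le r_M^2(x)$, not just up to a fixed time. If $\lambda_0$ is chosen with $\lambda_0\ge (\epsilon_0c^2)^{-1}$, then $r_M^2(x)\ge\lambda_0 t$ forces $t\le\epsilon_0\rho^2$, which covers the whole claimed region. The points with $r_M(x)<\tfrac12 s^{1/4}$ are not in the conical region at all. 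They can still satisfy $r_M^2\ge\lambda_0t$ for small $t$, and there I would use the trivial estimate for $g_{s,0}=\omega_E(s)$ near $t=0$. Alternatively, and more simply, I would read the statement as restricted to the conical region $r_M\ge s^{1/4}$, which is how it is used later. Pseudolocality is applied on local balls of a complete compact manifold, so it raises no further issue.

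\textbf{Step 3 (Shi's estimates).} To bound derivatives, apply Shi's local derivative estimates on the parabolic cylinder $B_{g_{s,0}}(x,\rho/2)\times[0,\epsilon_0\rho^2]$. By Step 2 (applied at nearby points) the curvature is bounded there by $C\rho^{-2}$. The initial derivative bounds $|(\nabla^{g_{s,0}})^k\Rm|\le C_k\rho^{-2-k}$ come from the remark, and we use the version of Shi's estimates that takes initial derivative bounds into account, so no factor $t^{-k/2}$ appears. The result is $|(\nabla^{g_s(t)})^k\Rm(g_s(t))|(x)\le C_k\rho^{-2-k}\le C_k' r_M(x)^{-2-k}$.

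The main obstacle is Step 1. We need uniform, $s$-independent noncollapsing and curvature control of $g_{s,0}$ at scale $r_M(x)$ all the way down to the gluing annulus $r_M\sim s^{1/4}$. That is exactly where the error terms in \eqref{eq:initialtimeannulusclose}, together with the expander decay from Corollary \ref{decay of uE(s)}, have to be checked to be small after rescaling by $r_M^{-2}$.
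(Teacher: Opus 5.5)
Your route is the same as the argument the paper cites from \cite[Lemma 3.1]{Gianniotis-Schulze}: pseudolocality at scale $r_M(x)$, followed by Shi's estimates with initial derivative bounds. The gap is in how you dispose of points with $r_M(x)<\tfrac12 s^{1/4}$.

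A short-time estimate near $t=0$ only works for $t\lesssim s$, because $g_{s,0}=\omega_E(s)$ has curvature of order $s^{-1}$ near $\pi^{-1}(y_1)$. So it covers $r_M(x)\lesssim\sqrt{s}$. For $\sqrt{s}\ll r_M(x)<\tfrac12 s^{1/4}$, however, the region in the statement allows all $t\le r_M(x)^2/\lambda_0\gg s$, and nothing in your proposal controls the flow there.

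Nor can you restrict the statement to $r_M\ge s^{1/4}$, because that is not how it is used later:
\begin{itemize}
\item The normalized conical region $\mathcal{C}_{R,\lambda,s}$ contains points with $r<s^{-1/4}$, and Proposition \ref{rough estimate on the metrics} treats this case explicitly.
\item The lateral boundary $\partial\mathcal{B}(\sqrt\lambda)\times[0,T'_s)$ of the expander region corresponds to un-normalized points with $r_M^2=\lambda s e^{\tau}$ at time $s(e^{\tau}-1)$. These lie in $\{r_M<s^{1/4}\}$ whenever $e^{\tau}<(\lambda\sqrt{s})^{-1}$, i.e.\ up to un-normalized times of order $\sqrt{s}/\lambda$.
\end{itemize}
Proposition \ref{boundary data}, and with it every maximum-principle argument on $\Omega_{R,\lambda,s}$, needs the curvature bound exactly on this set.

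The repair is straightforward. Pseudolocality needs only a curvature bound and volume noncollapsing at scale $r_M(x)$, not closeness to the cone. On $\{r_M\le s^{1/4}\}$ we have $g_{s,0}=g_E(s)=s\Phi_s^*g_E$. Scaling \eqref{eq:quadraticdecayofexpander}, Proposition \ref{linear growth of injectivity radius} and $s\Phi_s^*f\ge r^2/2$ then gives, uniformly in $s$, $|(\nabla^{g_E(s)})^k\Rm(g_E(s))|\le C_k(s+r^2)^{-1-k/2}\le C_k r^{-2-k}$ and $\operatorname{inj}_{g_E(s)}\ge c\sqrt{s+r^2}\ge c\,r$. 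Combine these with Remark \ref{remark on decays of approximation metrics} on the gluing annulus. Your Steps 1--3 then run verbatim for every $x$ with $0<r_M(x)\le R_0$, which is how the cited argument is set up.
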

\begin{proof}
    The details of this standard argument are given in \cite[Lemma 3.1]{Gianniotis-Schulze}; see also \cite[Theorem 3.2.3]{Siepmann}, \cite[Lemma 5.3.2]{ToppingBook}, and \cite[Lemma A.4]{Topping}.
\end{proof}
As mentioned in the introduction, since we do not know if the expander $(E,g_E,X)$ is weakly stable in the sense of Deruelle--Lamm \cite{Deruelle-Lamm}, we need a different approach to obtain good estimates near the singular point. To introduce our approach, we start by reducing the Ricci flow equation to a complex Monge--Amp\`ere equation. 
    \begin{prop}
        There exists a smooth function $\varphi_s(t)$ with $t\in [0,T_s)$ which is defined on $\{r_M^2 \leq R^2\}$ such that $\omega_{s}(t)=\omega_{E}(t+s)+\sqrt{-1}\partial\bar\partial \varphi_s(t)$ and
        \begin{equation*}
            \frac{\partial}{\partial t}\varphi_s(t)=\log\frac{\omega_s(t)^n}{\omega_E({t+s})^n}.
        \end{equation*}
        Moreover, $\varphi_s(0)=\chi\left(\frac{r_M(\cdot)}{s^{\frac{1}{4}}}\right) (u_{1,s} -u_E(s))=:\psi_{s,0}$
    \end{prop}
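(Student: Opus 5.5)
The plan is to write down $\varphi_s$ explicitly by integrating the volume-form ratio in time. Work on the region $U:=\{r_M^2\le R^2\}$, which for $R\le r_0$ sits inside $\mathcal{B}_E(r_0)\subset M$, so $\omega_E(t+s)$ is defined on $U$ for all $t\ge0$. Set
\begin{equation*}
    \varphi_s(t):=\psi_{s,0}+\int_0^t \log\frac{\omega_s(\tau)^n}{\omega_E(\tau+s)^n}\,d\tau ,
\end{equation*}
which is smooth on $U\times[0,T_s)$, since both $\omega_s(\tau)$ and $\omega_E(\tau+s)$ are smooth K\"ahler metrics there. By construction $\varphi_s(0)=\psi_{s,0}$, and the evolution equation $\partial_t\varphi_s=\log\bigl(\omega_s(t)^n/\omega_E(t+s)^n\bigr)$ holds.

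The next step is to check the identity $\omega_s(t)=\omega_E(t+s)+\sqrt{-1}\partial\bar\partial\varphi_s(t)$. At $t=0$, \eqref{defn approx metrics} gives $\omega_{s,0}=\omega_E(s)+\sqrt{-1}\partial\bar\partial\psi_{s,0}$ on $U$. Here $\psi_{s,0}$ is supported in $\{r_M\ge s^{1/4}\}$, where $u_{1,s}$ and $u_E(s)$ are defined, and it extends by zero to $\{r_M\le s^{1/4}\}$. For $t>0$, differentiate the difference $\eta(t):=\omega_s(t)-\omega_E(t+s)-\sqrt{-1}\partial\bar\partial\varphi_s(t)$. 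Both families solve the K\"ahler--Ricci flow: $\omega_s$ by definition, and $\omega_E(\cdot+s)$ because it is a time-translate of the self-similar solution of Remark \ref{self-similar sol}. Hence
\begin{equation*}
    \partial_t\eta=-\Ric(\omega_s(t))+\Ric(\omega_E(t+s))-\sqrt{-1}\partial\bar\partial\log\frac{\omega_s(t)^n}{\omega_E(t+s)^n}=0,
\end{equation*}
using $\Ric(\omega)=-\sqrt{-1}\partial\bar\partial\log\omega^n$. Since $\eta(0)=0$, we get $\eta\equiv0$ on $U\times[0,T_s)$.

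The only point requiring care is the domain bookkeeping rather than any analytic obstacle. One must confirm that $\omega_E(t+s)$ makes sense on all of $U$, including $\pi^{-1}(y_1)$: the region $U$ is identified with $\mathcal{B}_E(R)\subset E$ via the gluing in Proposition \ref{prop: existence of Kahler resolution}, and $g_E(t)=t\Phi_t^*g_E$ is a global smooth metric on $E$. One must also confirm that $\psi_{s,0}$ is globally smooth on $U$, which holds because of the cutoff $\chi$. With these in place, the argument above is purely formal, and no uniqueness or maximum-principle input is needed.
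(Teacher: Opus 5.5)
Your proposal is correct and is essentially the paper's proof: the paper also defines $\varphi_s(t)$ as $\psi_{s,0}$ plus the time integral of $\log\bigl(\omega_s(\tau)^n/\omega_E(\tau+s)^n\bigr)$, and concludes because $\omega_s(t)-\omega_E(t+s)-\sqrt{-1}\partial\bar\partial\varphi_s(t)$ vanishes at $t=0$ and has zero time derivative. The domain bookkeeping you add is left implicit in the paper and is fine: $\omega_{s,0}=\omega_E(s)+\sqrt{-1}\partial\bar\partial\psi_{s,0}$ holds on all of $\{r_M\le R\}$, and $R$ must stay below $r_0$, which holds since $R\le R_0$ is small.
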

    \begin{proof}
Define $$\varphi_s(t):= \varphi_s(0)+\int_0^t \log\frac{\omega_s(\tau)^n}{\omega_E({\tau+s})^n} d\tau,$$ and note that $\omega_s(0)=\omega_E(s)+\sqrt{-1}\partial \overline{\partial} \varphi_s(0)$ and
\begin{equation*}
    \frac{\partial}{\partial t} \left( \omega_s(t)-\omega_E(t+s)-\sqrt{-1}\partial \overline{\partial} \varphi_s(t) \right) =0.
\end{equation*}
    Then $\omega_{s}(t):=\omega_{E}(t+s)+\sqrt{-1}\partial\bar\partial \varphi_s(t)$ holds on $\{r_M\le R\}\times [0,T_s)$.
    \end{proof}
  For sufficiently small $R$, the region $\{r_M\le R\}$ may be identified with the region $\mathcal{B}(R)$ in the expander $E$. Recall that $\Phi_t^*r^2=\frac{r^2}{t}$ for all $t>0,$ where $\Phi_t$ is the flow of $-\frac{X}{2t}$. We now normalize this solution to the complex Monge--Amp\`ere equation to let the small scale $s=1$. To do so, we consider the biholomorphism 
    \begin{equation*}
        \Phi_{\frac{1}{s}}: \mathcal{B}(\frac{R}{\sqrt{s}})\to \mathcal{B}(R).
    \end{equation*}

    \begin{definition}\label{definition of MA equation}
        The normalized potentials and metrics are defined by \begin{equation*} \overline\varphi_s(t):=\frac{1}{s}\Phi_{\frac{1}{s}}^*\varphi_s(ts), \qquad\overline \omega_s(t):=\omega_E(1+t)+\sqrt{-1}\partial\bar\partial \overline \varphi_s(t)\end{equation*} 
        on $\mathcal{B}(\frac{R}{\sqrt{s}})\times [0,\frac{T_s}{s})$. 
    \end{definition}

    \begin{prop} The normalized potentials and metrics satisfy
        \begin{equation}\label{equation of complex monge ampere}
            \frac{\partial}{\partial t}\overline{\varphi}_s(t)=\log\frac{\overline\omega_s(t)^n}{\omega_E(1+t)^n}, \qquad \partial_t \overline{\omega}_s(t)=-\Ric_{\overline{\omega}_s(t)}
        \end{equation}
    on $\mathcal{B}(\frac{R}{\sqrt{s}})\times [0,\frac{T_s}{s})$. On $\{(x,t)\in M\times [0,\frac{T_s}{s}) \: | \: \frac{R^2}{s} \geq r^2(x) \geq \lambda t\},$ we have:
              \begin{equation} \label{eq:normalizedpseudolocality}
           |(\nabla^{\overline g_s(t)})^k\Rm(\overline g_s(t))|_{\overline g_s(t)}(x)\le \frac{C_k}{r(x)^{2+k}},\quad \textrm{for all $k\in\N$},
        \end{equation}
        where $C_k$ is the same constant as in Theorem \ref{estimate on the conical region}.
    \end{prop}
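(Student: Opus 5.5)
The statement is a direct change of variables, so the plan is to verify the scaling identities for the self-similar expander and the parabolic rescaling of the flow.

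\textbf{Step 1: scaling of the expander family.} Since $\Phi_t$ is the flow of $-\frac{1}{2t}X$, the maps compose multiplicatively: $\Phi_a\circ\Phi_b=\Phi_{ab}$. Consequently
\begin{equation*}
\frac{1}{s}\Phi_{\frac1s}^*\omega_E(s+ts)=\frac{1}{s}\Phi_{\frac1s}^*\bigl((1+t)s\,\Phi_{(1+t)s}^*\omega_E\bigr)=(1+t)\Phi_{1+t}^*\omega_E=\omega_E(1+t).
\end{equation*}

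\textbf{Step 2: the rescaled potential and metric.} Define $\overline\omega_s(t):=\frac1s\Phi_{\frac1s}^*\omega_s(ts)$. Using $\omega_s(ts)=\omega_E(ts+s)+\sqrt{-1}\partial\bar\partial\varphi_s(ts)$ and the fact that $\Phi_{\frac1s}$ is a biholomorphism (so pullback commutes with $\partial\bar\partial$), Step 1 gives $\overline\omega_s(t)=\omega_E(1+t)+\sqrt{-1}\partial\bar\partial\overline\varphi_s(t)$. This matches Definition \ref{definition of MA equation}.

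\textbf{Step 3: the potential equation.} Differentiate in $t$:
\begin{equation*}
\partial_t\overline\varphi_s(t)=\Phi_{\frac1s}^*\bigl(\partial_\tau\varphi_s\bigr)(ts)=\Phi_{\frac1s}^*\log\frac{\omega_s(ts)^n}{\omega_E(ts+s)^n}.
\end{equation*}
Volume ratios are invariant under pullback by a biholomorphism and under multiplying both forms by the same constant $\frac1s$. Together with Step 1, the right side therefore equals $\log\frac{\overline\omega_s(t)^n}{\omega_E(1+t)^n}$.

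\textbf{Step 4: the flow equation.} Since $\Ric$ is invariant under constant scaling and natural under biholomorphisms, $\partial_t\overline\omega_s(t)=\Phi_{\frac1s}^*(\partial_\tau\omega_s)(ts)=-\Phi_{\frac1s}^*\Ric(\omega_s(ts))=-\Ric(\overline\omega_s(t))$. One can also check this from Step 3 combined with $\partial_t\omega_E(1+t)=-\Ric(\omega_E(1+t))$.

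\textbf{Step 5: curvature bounds.} These come from Proposition \ref{estimate on the conical region}, using $\Phi_{\frac1s}^*r^2=sr^2$ (so $r_M\circ\Phi_{\frac1s}=\sqrt{s}\,r$). The relevant scalings are:
\begin{itemize}
\item the region $R^2\ge r_M^2\ge\lambda_0\tau$ with $\tau=ts$ pulls back to $\frac{R^2}{s}\ge r^2\ge\lambda_0 t$, so $\lambda=\lambda_0$;
\item under $\overline g=\frac1s\Phi^*g$, the quantity $|\nabla^k\Rm|$ scales by $s^{1+k/2}$.
\end{itemize}
Hence $|(\nabla^{\overline g_s})^k\Rm(\overline g_s)|\le s^{1+k/2}C_k(\sqrt s\,r)^{-2-k}=C_k r^{-2-k}$, with the same constants.

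\textbf{Main obstacle.} The only delicate point is bookkeeping. One needs the composition law for $\Phi_t$ from the ODE $\frac{d}{d\tau}$ of the flow of $X$ at logarithmic time $-\frac12\log t$. One also needs to take $R\le R_0$ so that $\{r_M\le R\}$ is identified with $\mathcal{B}(R)$ and the estimates of Proposition \ref{estimate on the conical region} apply.
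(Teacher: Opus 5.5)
Your proposal is correct and takes the same approach as the paper. The paper also rescales the conical-region pseudolocality estimate through $\overline g_s(t)=\frac{1}{s}\Phi_{\frac{1}{s}}^*g_s(ts)$, with the condition $r^2(\Phi_{\frac{1}{s}}(x))\ge\lambda ts$ becoming $r^2(x)\ge\lambda t$. You additionally write out the verification of the potential and flow equations, using $\Phi_a\circ\Phi_b=\Phi_{ab}$ and the invariance of volume ratios and $\Ric$ under scaling and biholomorphic pullback, which the paper leaves implicit.
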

   \begin{proof} 
     By \eqref{eq:consequenceofpseudolocality} and the correspondence $\overline g_s(t)=\frac{1}{s}\Phi_{\frac{1}{s}}^*g_s(ts)$, we have
    \begin{equation*}
       | (\nabla^{\overline g_s(t)})^k\Rm(\overline g_s(t))|_{\overline g_s(t)}\le\frac{C_k}{r^{2+k}},
    \end{equation*}
    holds for all $(x,t)$ such that {$r^2(\Phi_{\frac{1}{s}}(x)) \geq \lambda ts$; that is, $r^2(x)\geq \lambda t$.}
    \end{proof}
     
    We observe that, after this normalisation, the reference metric $\omega_E(t+s)$ becomes $\omega_E(1+t)$. Figure \ref{fig:1} illustrates the un-normalized space-time and normalized space-time under consideration. By applying pseudolocality along the Ricci flow, we obtain curvature estimates for the conical region (resp. normalized conical region).
      \begin{figure}[htbp]
\centering
\begin{minipage}{1.0\textwidth}
    \centering
    \resizebox{\linewidth}{!}{\input{normalised_spacetime}}
      \caption{un-normalized and normalized space-time}
    \label{fig:1}
\end{minipage}
\end{figure}
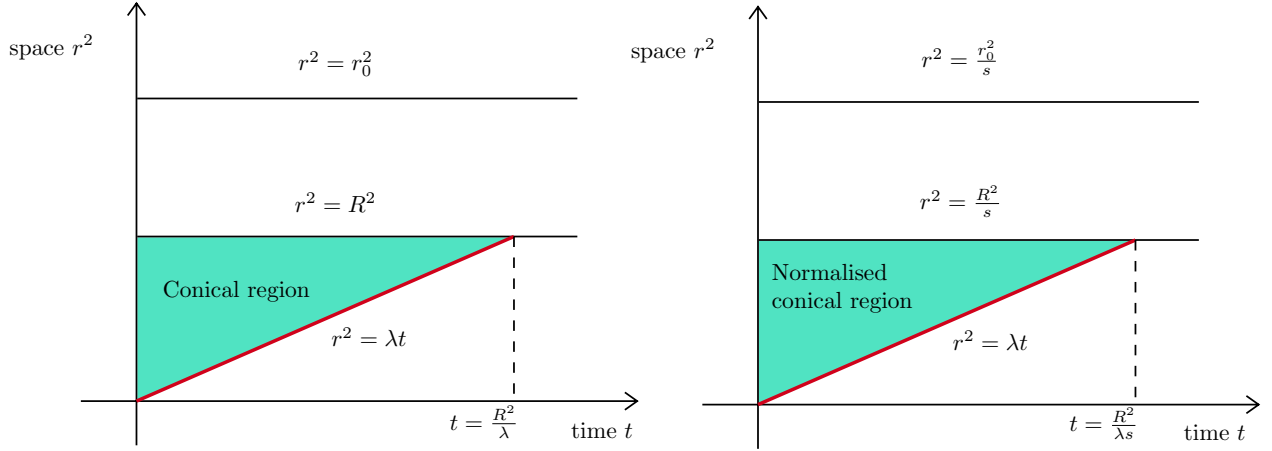

We also define $u_1(s):=\frac{1}{s}\Phi_{\frac{1}{s}}^*u_{1,s}.$ From the decay of $u_{1,s},$ on $\{\frac{1}{\sqrt{s}}\le r^2\le\frac{R^2}{s}\}$ we have:
    \begin{equation*}
       \begin{split}
            r^{j-2}|(\nabla^{g_{\mathcal{C}}})^ju_1(s)|_{g_{\mathcal{C}}}&\le k_j(r\sqrt{s})+k_{j+2}(r\sqrt{s})r^{-2}\\
            &\le k_j(r\sqrt{s})+k_{j+2}(r\sqrt{s})\sqrt{s}\le k_j(R),\quad\textnormal{for all $j\in\N$}.
       \end{split}
    \end{equation*}
    Here, $k_j(R)$ denotes a positive function which tends to 0 as $R$ goes to 0.
The initial K\"ahler potential $\overline{\varphi}_s(0)$ on $\mathcal{B}(\frac{R}{\sqrt{s}})$ is given by:
\begin{equation*}
    \overline\varphi_s(0)=\frac{1}{s}\Phi_{\frac{1}{s}}^*\varphi_s(0)=\chi\left(r(\cdot)s^\frac{1}{4}\right)(u_1(s) -u_E).
\end{equation*}
We can thus estimate the $j-$th derivative of $\overline{\varphi}_s(0)$ on $\{\sqrt{s}^{-1}\le r^2\le 4\sqrt{s}^{-1}\}$ for all $j\in\N$ by

\begin{equation*}
    r^{j-2}|(\nabla^{g_{\mathcal{C}}}
)^j \overline{\varphi}_s(0)|_{g_{\mathcal{C}}} \leq \sum_{\ell=0}^{j+2} k_{\ell}(2s^{\frac{1}{4}})+\sum_{\ell=0}^j C_{\ell}\sqrt{s} \leq \widetilde{k}_j(s),\end{equation*}
where $\lim_{s \to 0}\widetilde{k}_j(s)=0$.
On $\{\frac{R^2}{s}\ge r^2\ge 4\sqrt{s}^{-1}\}$, we can estimate the $j-$th derivative by
\begin{equation*}
     r^{j-2}|(\nabla^{g_{\mathcal{C}}}
)^j \overline{\varphi}_s(0)|_{g_{\mathcal{C}}} \leq k_j(R)+C_j\frac{1}{r^2}\leq k_j(R)+C_j\sqrt{s}.
\end{equation*}
Combining these expressions and using that $\sqrt{s}\le R^2$, we conclude on $\{r^2\le\frac{R^2}{s}\}$, we have
\begin{equation}\label{control of kahler potential at time 0}
     r^{j-2}|(\nabla^{g_{\mathcal{C}}}
)^j \overline{\varphi}_s(0)|_{g_{\mathcal{C}}} \le k_j(R),
\end{equation}
holds for some positive function $k_j(R)$ with $\lim_{R\to 0^+}k_j(R)=0$.

Let $R_0,\lambda_0,s_0$ be the constants from Proposition \ref{estimate on the conical region}. For the remainder of this subsection, we will assume 
\begin{equation} \label{eq:backgroundassumption}
    R\in (0,R_0], \qquad \lambda \in [\lambda_0,\infty), \qquad s\in \left(0, \min\left\{ \frac{R^4}{16},\frac{1}{\lambda^2},s_0 \right\} \right).
\end{equation}

 \begin{definition}[Normalized conical region]\label{conical region}
 The normalized conical region $\mathcal{C}_{R,\lambda,s}$ is defined to be
     \begin{equation*}
         \mathcal{C}_{R,\lambda,s}:=\left\{(x,t)\ |\ \lambda t\le r(x)^2\le \frac{R^2}{s},\ t\in [0,\frac{T_s}{s})\right\} .
     \end{equation*}

 \end{definition}

Our goal for the remainder of the section is to establish estimates for geometric quantities on the normalized conical region $\mathcal{C}_{R,\lambda,s}$ using \eqref{control of kahler potential at time 0} and Proposition \ref{eq:consequenceofpseudolocality}.
 
\begin{prop}\label{rough estimate on the metrics}
   For all $(x,t)\in \mathcal{C}_{R,\lambda,s}$, we have
    \begin{equation*}
        |\overline g_s(t)-g_E(1+t)|_{g_E(1+t)}(x)\le \Psi(R,\lambda^{-1}).
    \end{equation*}
\end{prop}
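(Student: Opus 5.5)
The plan is to compare both metrics to the cone metric $g_{\mathcal{C}}$ and to integrate the Ricci flow equation in time from $t=0$. The triangle inequality gives
\begin{equation*}
|\overline g_s(t)-g_E(1+t)|\le |\overline g_s(t)-\overline g_s(0)|+|\overline g_s(0)-g_E(1)|+|g_E(1)-g_E(1+t)|,
\end{equation*}
where all norms are taken with respect to $g_E(1+t)$. On the region $r^2\ge \lambda t$ we will also need $r$ to be large to make the last term small; the treatment of small $r$ is discussed below. The metrics involved are uniformly equivalent whenever the differences are small, so changing the reference metric only costs a constant.

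\emph{Initial term.} On $\{r^2\le R^2/s\}$ we have $\overline g_s(0)-g_E(1)=\sqrt{-1}\partial\bar\partial\overline\varphi_s(0)$. By \eqref{control of kahler potential at time 0} with $j=2$ this is bounded by $k_2(R)$ in $g_{\mathcal{C}}$-norm. Where $r$ is large, $g_E(1)$ is close to $g_{\mathcal{C}}$ by Theorem \ref{theorem of conlonderuellesun}; on a compact set $g_E(1)$ is comparable to a fixed metric and $\overline\varphi_s(0)$ vanishes there (since $\chi=0$ for $r\le s^{-1/4}$). So this term is at most $\Psi(R)$.

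\emph{Evolution terms.} Along the flow, $\partial_t\overline g_s=-\Ric(\overline g_s)$. At a point $x$ with $r(x)^2\ge\lambda t$, the points $(x,\tau)$ with $\tau\le t$ also lie in $\mathcal{C}_{R,\lambda,s}$. Hence \eqref{eq:normalizedpseudolocality} gives $|\Ric(\overline g_s(\tau))|\le C_0r^{-2}$. Integrating, and using that the metrics stay uniformly equivalent (a standard ODE comparison, $e^{C t r^{-2}}$), we get
\begin{equation*}
|\overline g_s(t)-\overline g_s(0)|_{\overline g_s(0)}\le e^{C_0t/r^2}-1\le e^{C_0/\lambda}-1=\Psi(\lambda^{-1}).
\end{equation*}
Similarly, the self-similar flow satisfies $|\Ric(g_E(1+\tau))|\le C(1+\tau)^{-1}\cdot\frac{1+\tau}{r^2+1+\tau}\lesssim r^{-2}$. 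This follows from the quadratic curvature decay \eqref{eq:quadraticdecayofexpander} rescaled, together with $\Phi_t^*r^2=r^2/t$ and Corollary \ref{coro of comparison of f and r^2}. Therefore $|g_E(1+t)-g_E(1)|\le\Psi(\lambda^{-1})$ on the same region.

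\emph{Main obstacle.} The key issue is the uniform equivalence needed to pass between the norms of $\overline g_s(\tau)$, $\overline g_s(0)$ and $g_E(1+t)$, especially near $r$ small (a compact set in $E$). For $t$ small ($t\le r^2/\lambda$), the integrated Ricci bound directly gives two-sided equivalence with constant $e^{\pm C_0/\lambda}$, so no a priori equivalence is needed. Some care is needed only where $r\lesssim 1$: there $t\le r^2/\lambda$ is tiny and $\overline g_s(0)=g_E(1)$, and the bound $C_0 r^{-2}$ still integrates to at most $C_0/\lambda$. Thus the estimate reduces to $\Psi(R)+\Psi(\lambda^{-1})$, as claimed.
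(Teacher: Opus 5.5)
Your proposal is correct and follows essentially the same route as the paper. You compare $\overline g_s(t)$ with $\overline g_s(0)$, and $g_E(1+t)$ with $g_E(1)$, through the two-sided bounds $e^{\pm Ct/r^2}$ obtained by integrating the pseudolocality Ricci bound and the expander's quadratic curvature decay with $t\le r^2/\lambda$, and you handle $\overline g_s(0)-g_E(1)$ by noting that it vanishes for $r<s^{-1/4}$ and is controlled elsewhere by the initial potential estimate (where $g_E$ is $C\sqrt{s}$-close to $g_{\mathcal{C}}$).
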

\begin{proof}
   By \eqref{eq:normalizedpseudolocality},
    \begin{equation*}
       e^{-\frac{Ct}{r(x)^2}}\overline{g}_s(x,0)\le \overline g_s(x,t)\le e^{\frac{Ct}{r(x)^2}}\overline{g}_s(x,0).
    \end{equation*}
    A similar argument using \eqref{eq:quadraticdecayofexpander} and $g_E(0)=g_{\mathcal{C}}$ yields $C'>0$ such that
     \begin{equation*}
     \begin{split}
       e^{-\frac{C't}{r(x)^2}}g_E(x)&\le g_E(x,1+t)\le e^{\frac{C't}{r(x)^2}}g_E(x).\\
     \end{split}
    \end{equation*}
 If $r(x) <s^{-\frac{1}{4}}$, then $\overline{g}_s(x,0)=g_E(x,1)$. If instead $r(x)  \geq s^{-\frac{1}{4}}$, then Corollary \ref{decay of uE(s)} gives
\begin{align*}
    |g_E(x)-g_{\mathcal{C}}(x)|_{g_{\mathcal{C}}(x)} \leq C_2\sqrt{s},
\end{align*}
which combines with \eqref{control of kahler potential at time 0} to give
\begin{align*}
    |\overline{g}_s(x,0)-g_E(x)|_{g_{E}(x)} \leq k_2(R)
\end{align*}
if $R>0$ is sufficiently small. Recall that we take $r(x)^2\ge\lambda t$. Combining expressions yields the claim.
\end{proof}
\begin{prop}\label{rough C^3 estimate}
     For all $(x,t)\in \mathcal{C}_{R,\lambda,s}$, we have
    \begin{equation*}
        \sum_{k=1}^2r^k(x)|(\nabla^{g_{E}(1+t)})^k\overline g_s|_{g_{E}(1+t)}(x,t)\le \Psi(R,\lambda^{-1}).
    \end{equation*}
    
\end{prop}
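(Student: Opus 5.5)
We follow the structure of the proof of Proposition \ref{rough estimate on the metrics}, now at the level of first and second covariant derivatives. The guiding principle is that the difference tensor $h(t):=\overline g_s(t)-g_E(1+t)$ satisfies a scale-invariant estimate at time $0$. Both flows have curvature bounds of the form $C_k r^{-2-k}$ on $\mathcal{C}_{R,\lambda,s}$, which lets us control how $h$ changes with time. We work on $\mathcal{C}_{R,\lambda,s}$ and write $\nabla=\nabla^{g_E(1+t)}$ throughout. By Proposition \ref{rough estimate on the metrics}, the metrics $\overline g_s(t)$, $g_E(1+t)$ and $g_{\mathcal{C}}$ are uniformly bi-Lipschitz there, so norms may be computed in any of them.

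\emph{Step 1 (time $0$).} If $r(x)<s^{-1/4}$, then $\overline g_s(0)=g_E(1)$ near $x$, so the left-hand side vanishes. Otherwise we write $\overline g_s(0)-g_E(1)=\sqrt{-1}\partial\bar\partial\overline\varphi_s(0)$. Using \eqref{control of kahler potential at time 0} with $j=3,4$, we get $r^k|(\nabla^{g_{\mathcal{C}}})^k(\overline g_s(0)-g_E(1))|_{g_{\mathcal{C}}}\le k_{k+2}(R)$. To pass from $\nabla^{g_{\mathcal{C}}}$ to $\nabla^{g_E(1)}$, we use Theorem \ref{theorem of conlonderuellesun}: the Christoffel difference satisfies $|\nabla^{g_E}-\nabla^{g_{\mathcal{C}}}|\le Cr^{-3}$, and its derivative is bounded by $Cr^{-4}$. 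Since $r\ge s^{-1/4}$ is large, these terms are $O(\sqrt s)$ after weighting. This gives the estimate at $t=0$ with $\Psi(R)$.

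\emph{Step 2 (time evolution).} Along each Ricci flow, Shi-type bounds give $\partial_t\nabla^{\overline g_s(t)}=\nabla \Ric * \overline g_s^{-1}$. By \eqref{eq:normalizedpseudolocality}, the evolution of the Christoffel symbols is therefore bounded by $Cr^{-3}$, and the evolution of their first derivatives by $Cr^{-4}$. The same bounds hold for $g_E(1+t)$ by \eqref{eq:quadraticdecayofexpander} and self-similarity. The cone region is backward closed in time for fixed $x$, so integrating from $0$ to $t\le r^2/\lambda$ yields
\begin{equation*}
|\nabla^{\overline g_s(t)}-\nabla^{\overline g_s(0)}|\le \frac{Ct}{r^3}\le\frac{C}{\lambda r},\qquad |\nabla(\nabla^{\overline g_s(t)}-\nabla^{\overline g_s(0)})|\le \frac{C}{\lambda r^2},
\end{equation*}
and similarly for $g_E$. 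To justify differentiating the time-integrated quantity, we compute the difference with the time-independent metric $g_E(1)$ as a reference and convert at the end, again using the bi-Lipschitz control.

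\emph{Step 3 (combine).} We write $\nabla h=\overline g_s(t) * (\nabla^{\overline g_s(t)}-\nabla^{g_E(1+t)})$ and decompose the connection difference as
\begin{equation*}
(\nabla^{\overline g_s(t)}-\nabla^{\overline g_s(0)}) + (\nabla^{\overline g_s(0)}-\nabla^{g_E(1)}) + (\nabla^{g_E(1)}-\nabla^{g_E(1+t)}).
\end{equation*}
The outer two terms are bounded by $C/(\lambda r)$ by Step 2. The middle term is bounded by $\Psi(R)/r$ by Step 1. The second derivative is handled the same way, differentiating once more and using Proposition \ref{rough estimate on the metrics} for the zeroth-order factors. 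Altogether this gives $\Psi(R,\lambda^{-1})$.

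The main obstacle I expect is Step 2. The spatial derivative of the curvature, $|\nabla^2\Ric|\le Cr^{-4}$, must be available uniformly along the whole time interval, including for small $r$ where the normalized region is only $\lambda t\le r^2$. This is exactly what \eqref{eq:normalizedpseudolocality} with $k=1,2$ provides, so the key check is that the time integral of the evolution is dominated by $t/r^4\le 1/(\lambda r^2)$.
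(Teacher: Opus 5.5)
Your proposal is correct and takes essentially the paper's route. The paper only cites the argument of \cite[Claim 3.9]{ConlonDeruelleSun} applied with the time-zero bound \eqref{control of kahler potential at time 0}, and your time-zero control plus pointwise time integration of the Christoffel-symbol evolution, using the $C_k r^{-2-k}$ curvature bounds over $t\le r^2/\lambda$ as in Proposition \ref{rough estimate on the metrics}, is that argument. Two points need more care. For $k=2$, converting $\nabla^{g_E(1)}$ into $\nabla^{\overline g_s(\tau)}$ produces a factor $|\Gamma_{\overline g_s(\tau)}-\Gamma_{g_E(1)}|\le C/r$, which comes from your already-established $k=1$ bound rather than from bi-Lipschitz control alone. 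Also, $g_{\mathcal{C}}$ is comparable to $g_E$ only for large $r$, not on the whole region, though that is all Step 1 uses.
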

\begin{proof} Given \eqref{control of kahler potential at time 0}, the proof follows by the same argument as in the proof of \cite[Claim 3.9]{ConlonDeruelleSun}. 
\end{proof}

\begin{prop}\label{rough estimates on Kahler potential}
   For all $(x,t)\in \mathcal{C}_{R,\lambda,s}$, we have
    \begin{equation*}
          \sum_{k=0}^2r^{k-2}|(\nabla^{g_{E}(1+t)})^k\overline{\varphi}_s|_{g_{E}(1+t)}(x,t) \leq \Psi(R,\lambda^{-1}).
    \end{equation*}
\end{prop}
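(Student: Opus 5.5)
The bound on $\overline{\varphi}_s$ should follow by integrating the parabolic equation \eqref{equation of complex monge ampere} in time from the initial estimate \eqref{control of kahler potential at time 0}. The metric bounds of Propositions \ref{rough estimate on the metrics} and \ref{rough C^3 estimate} control $\partial_t\overline{\varphi}_s$ and its first two spatial derivatives, and the condition $t\le r^2/\lambda$ defining $\mathcal{C}_{R,\lambda,s}$ turns each time integral into a factor of $r^2/\lambda$.

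\emph{Step 1 ($k=0$).} By Proposition \ref{rough estimate on the metrics}, the eigenvalues of $\overline{\omega}_s(t)$ with respect to $\omega_E(1+t)$ lie in $[1-\Psi,1+\Psi]$. Hence, on $\mathcal{C}_{R,\lambda,s}$,
\begin{equation*}
|\partial_t\overline{\varphi}_s| = \Bigl|\log\frac{\overline{\omega}_s(t)^n}{\omega_E(1+t)^n}\Bigr| \le \Psi(R,\lambda^{-1}).
\end{equation*}
For $(x,t)\in\mathcal{C}_{R,\lambda,s}$, every $(x,\tau)$ with $\tau\in[0,t]$ also lies in the region, since $\lambda\tau\le\lambda t\le r(x)^2$. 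Integrating and using \eqref{control of kahler potential at time 0} with $j=0$ gives
\begin{equation*}
|\overline{\varphi}_s(x,t)| \le k_0(R)\,r^2 + \Psi\, t \le \bigl(k_0(R)+\Psi\lambda^{-1}\bigr) r^2,
\end{equation*}
which is the claimed bound for $k=0$.

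\emph{Step 2 ($k=1,2$).} I would first prove the estimates with respect to a fixed reference metric and only switch to $g_E(1+t)$ at the end. On $\{r\ge 1\}$ I would use $g_{\mathcal{C}}$ as the reference. On the compact set $\{r\le 1\}$, any point of $\mathcal{C}_{R,\lambda,s}$ has $t\le\lambda^{-1}$, and there I would use $g_E=g_E(1)$.

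Differentiate the evolution equation in space, writing $\log\bigl(\overline{\omega}_s^n/\omega_E(1+t)^n\bigr)=\log\det\bigl(g_E(1+t)^{-1}\overline g_s\bigr)$. Then:
\begin{itemize}
\item $\nabla\partial_t\overline{\varphi}_s$ is a contraction of $\overline g_s^{-1}$ with $\nabla\overline g_s-\nabla g_E(1+t)$.
\item $\nabla^2\partial_t\overline{\varphi}_s$ additionally contains the quadratic terms $\nabla\overline g\ast\nabla\overline g$ and $\nabla g_E\ast\nabla g_E$.
\end{itemize}
Proposition \ref{rough C^3 estimate}, together with \eqref{eq: convergence rate of self-similar solution}, which gives $r^k|\nabla^k(g_E(1+t)-g_{\mathcal{C}})|\le C(1+t)r^{-2}$, should yield
\begin{equation*}
|\nabla\partial_t\overline{\varphi}_s|\le \frac{\Psi}{r}, \qquad |\nabla^2\partial_t\overline{\varphi}_s|\le \frac{\Psi}{r^2}.
\end{equation*}
Since the reference connection is time-independent, $\partial_t$ commutes with $\nabla$ and $\nabla^2$. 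Integrating over $[0,t]\subset[0,r^2/\lambda]$ and using \eqref{control of kahler potential at time 0} for $j=1,2$ gives
\begin{equation*}
r^{-1}|\nabla\overline{\varphi}_s| + |\nabla^2\overline{\varphi}_s| \le k(R)+\Psi\lambda^{-1}.
\end{equation*}

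\emph{Step 3 (change of metric).} To convert to $\nabla^{g_E(1+t)}$ and $|\cdot|_{g_E(1+t)}$, I would use that the reference metric and $g_E(1+t)$ are uniformly bi-Lipschitz on the region, with Christoffel difference bounded by $C(1+t)r^{-3}\le Cr^{-1}$. The extra lower-order term $|\Gamma\ast\nabla\overline{\varphi}_s|$ is then bounded by $\Psi$, which preserves the form of the estimate.

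\emph{Main obstacle.} I expect the difficulty to lie in Step 2 and Step 3 at scales $r\lesssim 1$ and at the inner boundary of the region. There the factor $(1+t)r^{-2}$ is not small, so the bound $|\nabla g_E(1+t)|\le C(1+t)r^{-3}$ only gives a bounded constant, not a small one. The way around this is to exploit smallness of $t$ instead. Since $t\le r^2/\lambda$, the difference $g_E(1+t)-g_E$ and its first two derivatives are $O(t)=O(\lambda^{-1})$ on compact sets, by smoothness of the self-similar flow near $t=0$. The resulting contributions are therefore $\Psi(\lambda^{-1})$. This must be checked carefully, because the estimate cannot depend on $s$.
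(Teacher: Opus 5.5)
Your proposal is correct, and its skeleton matches the paper's. Step~1 is exactly the paper's $k=0$ argument. For $k=1,2$ the paper likewise integrates the differentiated equation \eqref{equation of complex monge ampere} in time, using Propositions~\ref{rough estimate on the metrics} and~\ref{rough C^3 estimate}, the initial bound \eqref{control of kahler potential at time 0}, and $t\le r^2$ on $\mathcal{C}_{R,\lambda,s}$.

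The genuine difference is the choice of connection. The paper differentiates with the time-dependent connection $\nabla^{g_E(1+t)}$. Since $\nabla^{g_E(1+t)}g_E(1+t)=0$, the source term is exactly
\[
\nabla^{g_E(1+t)}\log\frac{\overline\omega_s(t)^n}{\omega_E(1+t)^n}=\tr_{\overline g_s(t)}\bigl(\nabla^{g_E(1+t)}\overline g_s(t)\bigr),
\]
which Proposition~\ref{rough C^3 estimate} bounds by $\Psi/r$ without any comparison of reference metrics. The price is that $\partial_t$ no longer commutes with the norm, nor, at second order, with the connection. This produces terms such as $\Ric(g_E(1+t))\ast\nabla^{g_E(1+t)}\overline\varphi_s$, of size $Cr^{-2}|\nabla^{g_E(1+t)}\overline\varphi_s|$, which a Gronwall-type integration absorbs because $t\le r^2$.

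Your fixed reference gives $[\partial_t,\nabla]=0$ for free, but you pay with a comparison of connections, and that is where your flagged obstacle comes from. The obstacle does not actually arise. At first order there is nothing to compare, since $d\,\partial_t\overline\varphi_s=\tr_{\overline g_s}\nabla^{g_E(1+t)}\overline g_s$ whatever the reference. At second order, do not expand $(\nabla^{g_{\mathcal C}})^2\log\det$ into individual terms in $\nabla^{g_{\mathcal C}}g_E(1+t)$; near $r\sim 1$ these are only $O(r^{-2})$, not $O(\Psi r^{-2})$. Instead write
\[
(\nabla^{g_{\mathcal C}})^2\partial_t\overline\varphi_s=\nabla^{g_E(1+t)}\bigl(d\,\partial_t\overline\varphi_s\bigr)+\bigl(\Gamma^{g_{\mathcal C}}-\Gamma^{g_E(1+t)}\bigr)\ast d\,\partial_t\overline\varphi_s .
\]
The first term is $\nabla^{g_E(1+t)}\tr_{\overline g_s}\nabla^{g_E(1+t)}\overline g_s=O(\Psi r^{-2})$ by Propositions~\ref{rough estimate on the metrics} and~\ref{rough C^3 estimate}. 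The second is $O(r^{-1})\cdot O(\Psi r^{-1})$ by the same Christoffel bound you already use in Step~3. So no smallness of $g_E(1+t)-g_{\mathcal C}$ is needed.

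Your proposed remedy via smallness of $t$ compares $g_E(1+t)$ with $g_E$, not with $g_{\mathcal C}$. On its own it would therefore not cover $\{1\le r\le K\}$, where your reference is $g_{\mathcal C}$. If you prefer that route, take $g_E$ as the fixed reference on the whole region. Since $g_E(1+t)$ moves by its Ricci curvature, $|(\nabla^{g_E})^k(g_E(1+t)-g_E)|\le Ct\,r^{-2-k}$, which is $O(\lambda^{-1})$ in the weighted norms on $\mathcal{C}_{R,\lambda,s}$.
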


\begin{proof}
    Recall the complex Monge--Amp\`ere equation:
    \begin{equation*}
        \frac{\partial}{\partial t}\overline\varphi_s(t)=\log\frac{\overline\omega_s^n(t)}{\omega_E^n(1+t)}
    \end{equation*}
    Since $(1-\Psi(R,\lambda^{-1}))g_E(1+\rho)\le\overline g_s(\rho)\le (1+\Psi(R,\lambda^{-1}))g_E(1+\rho)$ holds on $\mathcal{C}_{R,\lambda,s}$ for all $\rho\in[0,t]$, we conclude that for all $\rho\in [0,t]$,
    \begin{equation*}
       \left|\log\frac{\overline\omega_s^n(\rho)}{\omega_E^n(1+\rho)}\right|\le \Psi(R,\lambda^{-1}).
    \end{equation*}
    By integration, we have $|\overline \varphi_s(x,t)- \overline \varphi_s(x,0)|\le\Psi(R,\lambda^{-1})t$ for all $(x,t)\in \mathcal{C}_{R,\lambda,s}$. By \eqref{control of kahler potential at time 0}, we know $|\overline \varphi_s(x,0)|\le k_0(R)r^2$. Replacing $\Psi(R,\lambda^{-1})$ with $\Psi(R,\lambda^{-1})+2k_0(R)$ and recalling that on $\mathcal{C}_{R,\lambda,s}$ we have $t\leq r^2(x),$ we get
    \begin{equation*}
        |\overline \varphi_s(x,t)|\le \Psi(R,\lambda^{-1})\left(\frac{r^2}{2}+t\right)\leq \Psi(R,\lambda^{-1})r^2.
    \end{equation*}
For the bound on $\nabla^{g_{E}(1+t)} \overline{\varphi}_s(t),$ we consider its evolution equation:
\begin{equation*}
  \begin{split}
        \partial_t \left( \nabla^{g_{E}(1+t)} \overline{\varphi}_s(t)\right) &= \nabla^{g_{E}(1+t)} \log \frac{\overline{\omega}_s^n(t)}{\omega_E^n(1+t)}+\Ric(g_E(1+t))* \nabla^{g_{E}(1+t)} \overline{\varphi}_s(t)\\
        &= \operatorname{tr}_{\overline{g}_s(t)}(\nabla^{g_{E}(1+t)}\overline{g}_s(t)) +\Ric(g_E(1+t))* \nabla^{g_{E}(1+t)} \overline{\varphi}_s(t),
  \end{split}
\end{equation*}
then 
\begin{equation*}
   \begin{split}
        \partial_t |\nabla^{g_{E}(1+t)} \overline{\varphi}_s(t)|_{g_{E}(1+t)}& \leq |\overline{g}_s^{-1}(t)|_{g_{E}(1+t)}|\nabla^{g_{E}(1+t)}\overline{g}_s(t)|_{g_{E}(1+t)}\\
        &\quad +C(n)|\Ric(g_E(1+t))|_{g_E(1+t)}|\nabla^{g_{E}(1+t)} \overline{\varphi}_s(t)|_{g_{E}(1+t)}\\
        &\leq \frac{\Psi(R,\lambda^{-1})}{r}+\frac{C}{r^2}|\nabla^{g_{E}(1+t)} \overline{\varphi}_s(t)|_{g_{E}(1+t)}.
   \end{split}
\end{equation*}
where in the last inequality we used Proposition \ref{rough C^3 estimate}. Integrating the above and using again that $t\leq r^2,$ we obtain the result for $k=1.$ The case $k=2$ follows in an analogous way.
\end{proof}
We now introduce the following function, which is a modification of the soliton potential that will be more suitable for our estimates.
\begin{definition}\label{definition of fvarphi}
Let $f$ be the normalized soliton potential of $X$ as in Lemma \ref{soliton indentities}. Let $\Phi_t$ be the flow of $-\frac{X}{2t}$ for all $t>0$.
    We define for all $t\in [0,\frac{T_s}{s})$
    \begin{equation*}
        \begin{split}
        &f_{1+t}=(1+t)f\circ\Phi_{1+t}\\
            &f_{\overline \varphi_s}(t)=f_{1+t}+\frac{X}{2}\cdot\overline\varphi_{s}(t).
        \end{split}
    \end{equation*}
\end{definition}
\begin{corollary}\label{rough estimates on X derivative}
      For all $(x,t)\in \mathcal{C}_{R,\lambda,s}$, we have
    
     \begin{equation*}
       \begin{split}
       & |X\cdot \overline \varphi_s(x,t)|\le \Psi(R,\lambda^{-1}){f_{1+t}}(x),\\
       &   |\nabla^{\overline g_s}(X\cdot\overline{\varphi}_s)|^2_{\overline g_s}(x,t)\le \Psi(R,\lambda^{-1})f_{1+t}(x),\\
            &|JX\cdot \overline \varphi_s(x,t)|\le \Psi(R,\lambda^{-1})f_{1+t}(x),\\
            &|\nabla^{\overline g_s(t)}(JX\cdot\overline{\varphi}_s)|^2_{\overline g_s}(x,t)\le \Psi(R,\lambda^{-1})f_{1+t}(x).
       \end{split}
       \end{equation*}
\end{corollary}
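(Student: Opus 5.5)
The four bounds follow from Proposition \ref{rough estimates on Kahler potential} and Proposition \ref{rough estimate on the metrics}, once we control the vector fields $X$ and $JX$ against the time-dependent expander metric $g_E(1+t)$. The first step is therefore to record two facts about $X$. On $\mathcal{C}\setminus\{o\}$, identified with its image in $E$, we have $X=r\partial_r$. Since $g_E(1+t)$ is uniformly equivalent to $g_{\mathcal{C}}$ for $r^2\ge \lambda t$ with $\lambda$ large (by \eqref{eq: convergence rate of self-similar solution}),
\begin{equation*}
|X|_{g_E(1+t)}=|JX|_{g_E(1+t)}\le C r .
\end{equation*}
Next, $\nabla^{g_E(1+t)}X$ is bounded on the region. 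The self-similar metric satisfies $\frac12\mathcal{L}_Xg_E(1+t)$-type identities; concretely, $g_E(1+t)$ is the pullback of $g_E$ under a homothety generated by $X$, so $\nabla X=\nabla^2 f_{1+t}=g_E(1+t)+(1+t)\Ric(g_E(1+t))$, and the Ricci term is $O(r^{-2})(1+t)$. On the region we have $(1+t)\le 1+r^2/\lambda$, so this term is bounded. The same holds for $JX$, since $J$ is parallel. Finally, Corollary \ref{coro of comparison of f and r^2} gives $f_{1+t}\ge r^2/2$, so every $r^2$ below can be replaced by $2f_{1+t}$.

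For the zeroth-order bounds, write $X\cdot\overline\varphi_s=\langle X,\nabla^{g_E(1+t)}\overline\varphi_s\rangle_{g_E(1+t)}$. Proposition \ref{rough estimates on Kahler potential} with $k=1$ gives $|\nabla\overline\varphi_s|\le \Psi r$. Hence
\begin{equation*}
|X\cdot\overline\varphi_s|\le C r\cdot\Psi r\le \Psi f_{1+t},
\end{equation*}
and the identical argument with $JX$ gives the third inequality.

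For the gradient bounds, differentiate:
\begin{equation*}
\nabla(X\cdot\overline\varphi_s)=\nabla X\ast\nabla\overline\varphi_s+X\ast\nabla^2\overline\varphi_s .
\end{equation*}
Using $|\nabla X|\le C$, $|\nabla\overline\varphi_s|\le\Psi r$, $|X|\le Cr$ and $|\nabla^2\overline\varphi_s|\le \Psi$ (the $k=2$ case of Proposition \ref{rough estimates on Kahler potential}), we get $|\nabla^{g_E(1+t)}(X\cdot\overline\varphi_s)|\le \Psi r$. Proposition \ref{rough estimate on the metrics} lets us replace the norm with respect to $g_E(1+t)$ by the norm with respect to $\overline g_s$ at the cost of a factor $1+\Psi$. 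Squaring gives $\Psi r^2\le \Psi f_{1+t}$; the statement's bound is linear in $f$, which is weaker than what we obtain, so it holds a fortiori. Again $JX$ is treated the same way.

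The main point needing care is the uniform bound on $\nabla^{g_E(1+t)}X$ for large $t$ on the region $r^2\ge\lambda t$. I would verify it via the soliton identity together with the curvature decay \eqref{eq:quadraticdecayofexpander} pulled back by $\Phi_{1+t}$. This gives $(1+t)|\Ric(g_E(1+t))|\le C(1+t)/r^2\le C(1+\lambda^{-1})$ whenever $r^2\ge \max(1,\lambda t)$. On the compact piece $r\le 1$, which forces $t\le1/\lambda$, the bound holds trivially.
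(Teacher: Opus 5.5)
Your argument is the paper's argument. The zeroth-order bounds come from Cauchy--Schwarz with the $k=1$ case of Proposition \ref{rough estimates on Kahler potential}. The gradient bounds come from $\nabla(X\cdot\overline\varphi_s)=\nabla X\ast\nabla\overline\varphi_s+X\ast\nabla^2\overline\varphi_s$, the soliton equation, and Proposition \ref{rough estimate on the metrics}.

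\textbf{The flaw.} One intermediate claim is wrong as justified: the bound $|X|_{g_E(1+t)}\le Cr$ does not follow from comparison with $g_{\mathcal{C}}$ on all of $\mathcal{C}_{R,\lambda,s}$.
\begin{itemize}
\item The estimate \eqref{eq: convergence rate of self-similar solution} only gives $|g_E(1+t)-g_{\mathcal{C}}|_{g_{\mathcal{C}}}\le C(1+t)r^{-2}$, which says nothing for small $r$.
\item $\mathcal{C}_{R,\lambda,s}$ contains points with $r$ arbitrarily small (whenever $t\le r^2/\lambda$). At $t=0$ it contains the whole exceptional set $\pi^{-1}(o)$.
\item There $r=0$ and $g_{\mathcal{C}}$ degenerates, while $X$ is only tangent to $\pi^{-1}(o)$ and need not vanish on it. So the inequality $|X|_{g_E(1+t)}\le Cr$ can genuinely fail near the exceptional set.
\end{itemize}
Both your zeroth-order and gradient estimates pass through this claim.

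\textbf{The fix.} Use what the paper uses. Since $X=\nabla^{g_E(1+t)}f_{1+t}$, the soliton identity gives $|X|^2_{g_E(1+t)}\le 2f_{1+t}$ everywhere. Combined with $r^2\le 2f_{1+t}$ from Corollary \ref{coro of comparison of f and r^2}, this yields
\[
|X\cdot\overline\varphi_s|\le \sqrt{2f_{1+t}}\,\Psi r\le 2\Psi f_{1+t},
\qquad
|\nabla(X\cdot\overline\varphi_s)|\le C\Psi r+\Psi\sqrt{2f_{1+t}}\le C\Psi\sqrt{f_{1+t}}.
\]
Alternatively, treat $\{r\le 1\}$ separately, as you already did for $\nabla X$. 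There $t\le 1/\lambda$, $|X|_{g_E(1+t)}\le C$, and $f_{1+t}\ge\varepsilon$.

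\textbf{A simplification.} Your bound on $\nabla X$ needs no case analysis. Since $X$ is invariant under $\Phi_{1+t}$ and the norm of a $(1,1)$-tensor is scale invariant, $|\nabla^{g_E(1+t)}X|_{g_E(1+t)}=|\nabla^{g_E}X|_{g_E}\circ\Phi_{1+t}=|g_E+\Ric(g_E)|_{g_E}\circ\Phi_{1+t}$. This is bounded by a uniform constant because $\Ric(g_E)$ is bounded.
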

\begin{proof}
From $|\partial f_{1+t}|_{g_{E}(1+t)}\le\sqrt{f_{1+t}}$ and
\begin{equation*}
    |X\cdot\overline{\varphi}_s(t)|\leq |X|_{g_{E}(1+t)}|\nabla^{g_E(1+t)} \overline{\varphi}_s(t)|_{g_E(1+t)},
\end{equation*}
we can apply Proposition \ref{rough estimates on Kahler potential} with $k=1$ together with \eqref{comparison of f and r^2} and this yields the claim. The control of $|JX\cdot \overline \varphi_s|$ follows analogously. To control $\nabla^{\overline g_s}(X\cdot\varphi_s)$, we consider
\begin{equation*}
   \nabla^{g_E(1+t)}(X\cdot\overline \varphi_s)=\nabla^{g_E(1+t)}X*\nabla^{g_E(1+t)}\overline \varphi_s+X*(\nabla^{g_E(1+t)})^2\overline \varphi_s.
\end{equation*}
Then we can apply soliton equation, Proposition \ref{rough estimate on the metrics} and Proposition \ref{rough estimates on Kahler potential} with $k=1,2$ to get the results as required. We can control $|\nabla^{\overline g_s}(JX\cdot\varphi_s)|^2_{\overline g_s}$ in the same way.
\end{proof}

\begin{prop}\label{rough estimates on hamiltonian}
     For all $(x,t)\in \mathcal{C}_{R,\lambda,s}$, we have
    \begin{equation*}
     \begin{split}
         &|f_{\overline \varphi_s}(x,t)-f_{1+t}(x)|\le \Psi(R,\lambda^{-1})f_{1+t}(x)\\
         &|\nabla^{\overline g_s}f_{\overline \varphi_s}|^2_{\overline g_s}(x,t)\le  (2+\Psi(R,\lambda^{-1}))f_{\overline \varphi_s}(x,t).
     \end{split}
    \end{equation*}
\end{prop}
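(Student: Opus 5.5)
The first estimate is immediate from the definition. Since $f_{\overline\varphi_s}(t)-f_{1+t}=\frac{X}{2}\cdot\overline\varphi_s(t)$, the first bound of Corollary \ref{rough estimates on X derivative} gives
\[
|f_{\overline\varphi_s}-f_{1+t}|\le \tfrac12\Psi(R,\lambda^{-1})f_{1+t}
\]
on $\mathcal{C}_{R,\lambda,s}$. In particular $f_{\overline\varphi_s}$ and $f_{1+t}$ are comparable up to factors $1\pm\Psi$. We will use this repeatedly to pass between the two functions, absorbing constants into $\Psi$.

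For the gradient bound, write $\nabla^{\overline g_s}f_{\overline\varphi_s}=\nabla^{\overline g_s}f_{1+t}+\frac12\nabla^{\overline g_s}(X\cdot\overline\varphi_s)$. Then
\[
|\nabla^{\overline g_s}f_{\overline\varphi_s}|_{\overline g_s}\le |\nabla^{\overline g_s}f_{1+t}|_{\overline g_s}+\tfrac12|\nabla^{\overline g_s}(X\cdot\overline\varphi_s)|_{\overline g_s}.
\]
The second term is at most $\Psi^{1/2}f_{1+t}^{1/2}$ by Corollary \ref{rough estimates on X derivative}.

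For the first term, we compare $\overline g_s(t)$ with $g_E(1+t)$ using Proposition \ref{rough estimate on the metrics}. This gives $|\nabla^{\overline g_s}f_{1+t}|^2_{\overline g_s}\le(1+\Psi)|\nabla^{g_E(1+t)}f_{1+t}|^2_{g_E(1+t)}$. Next we compute the Riemannian gradient norm on the self-similar soliton. Since $f_{1+t}=(1+t)\Phi_{1+t}^*f$ and $g_E(1+t)=(1+t)\Phi_{1+t}^*g_E$, scaling gives
\[
|\nabla^{g_E(1+t)}f_{1+t}|^2_{g_E(1+t)}=(1+t)\,\Phi_{1+t}^*|\nabla f|^2_{g_E}.
\]
In the paper's complex normalization $|\partial f|^2$ is half the Riemannian norm, so $|\nabla f|^2_{g_E}=2|\partial f|^2_{g_E}=2(f-n-R_{\omega_E})$ by Lemma \ref{soliton indentities}. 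Since $R_{\omega_E}+n>0$ by Lemma \ref{lower bound of scalar curvature}, this is at most $2f$. Hence $|\nabla^{g_E(1+t)}f_{1+t}|^2\le 2f_{1+t}$, which is the source of the constant $2$ in the statement. One should double-check this factor-of-two convention against the bound $|\partial f_{1+t}|\le\sqrt{f_{1+t}}$ used in the proof of Corollary \ref{rough estimates on X derivative}; the norm convention there is what makes the factor $2$ appear.

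Combining,
\[
|\nabla^{\overline g_s}f_{\overline\varphi_s}|\le\sqrt{(2+\Psi)f_{1+t}}+\Psi^{1/2}f_{1+t}^{1/2}.
\]
Squaring yields $(2+\Psi')f_{1+t}$, and the first estimate converts this into $(2+\Psi'')f_{\overline\varphi_s}$. The main care needed is the bookkeeping of conventions (complex versus Riemannian gradient norms) and the use of the positive lower bound $f\ge\varepsilon$ on $f_{1+t}$. The lower bound $f_{1+t}\ge\varepsilon(1+t)$ ensures the relative errors are genuinely multiplicative, so that the cross term $2\sqrt{2f}\,\Psi^{1/2}\sqrt f$ is bounded by $\Psi f$ with no additive constant left over. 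No step appears to be a real obstacle, since all estimates hold pointwise on $\mathcal{C}_{R,\lambda,s}$.
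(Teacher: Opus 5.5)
Your proposal is correct and follows essentially the same route as the paper: the first bound from Corollary \ref{rough estimates on X derivative}, then the triangle inequality for the gradient, comparison of $\overline g_s(t)$ with $g_E(1+t)$ via Proposition \ref{rough estimate on the metrics}, the soliton identity giving $|df_{1+t}|^2_{g_E(1+t)}\le 2f_{1+t}$, and conversion back to $f_{\overline\varphi_s}$ via the first bound. Your appeal to the lower bound $f\ge\varepsilon$ is unnecessary: every error term is already a multiple of $f_{1+t}$, so the cross term is absorbed directly without it.
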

\begin{proof}
    Since $|f_{\overline \varphi_s}(t)-f_{1+t}|=|\frac{X}{2}\cdot\overline\varphi_{s}(t)|$, the first inequality follows directly from Corollary \ref{rough estimates on X derivative}. Now we estimate $|\nabla^{\overline g_s}f_{\overline \varphi_s}|^2_{\overline g_s}.$ Since \begin{equation*}
       \begin{split}
            |\nabla^{\overline g_s}f_{\overline \varphi_s}|_{\overline g_s}(x,t)&\le|\nabla^{\overline g_s(t)}f_{1+t}|_{\overline g_s(t)}(x)+|\nabla^{\overline g_s}(X\cdot\overline\varphi_s)|_{\overline g_s}(x,t)\\
            &\le |df_{1+t}|_{\overline g_s(t)}+\Psi(R,\lambda^{-1})\sqrt{f_{1+t}}\\
            &\le (1+\Psi(R,\lambda^{-1}))|df_{1+t}|_{g_E(1+t)}(x)+\Psi(R,\lambda^{-1})\sqrt{f_{1+t}}.
       \end{split}
    \end{equation*}
    Recalling that $|\partial f|_{g_E}^2\le|\partial f|_{g_E}^2+R_{\omega_E}+n= f$, we have
    \begin{equation*}
        |df_{1+t}|^2_{g_E(1+t)}\le 2f_{1+t}.
    \end{equation*}
    Since we also have $|f_{\overline \varphi_s}(t)-f_{1+t}|(x)\le \Psi(R,\lambda^{-1})f_{1+t}$,
    we conclude that
    \begin{equation*}
        |\nabla^{\overline g_s}f_{\overline \varphi_s}|^2_{\overline g_s}(x,t)\le (2+\Psi(R,\lambda^{-1}))f_{\overline\varphi_s}(x,t).
    \end{equation*}
\end{proof}

\section{Uniform estimates on the expander region}\label{sectn; uni estim on exp reg}
In this section, we establish uniform estimates for the complex Monge--Ampère equation corresponding to the modified K\"ahler--Ricci flow (see Definition \ref{definition of normalized complex monge ampere}) on the expander region defined below (see also figure \ref{normalized space time}). Let $R_0, s_0, \lambda_0 > 0$ be as in Proposition \ref{estimate on the conical region}. Throughout this section, we always assume that $R,\lambda,s$ satisfy \eqref{eq:backgroundassumption}.

\subsection{The modified flow}

Now we consider the following \textit{modified K\"ahler--Ricci flow} equation
    \begin{equation} \label{eq:modifiedKRF}
    \frac{\partial}{\partial\tau}\omega_{\psi_s}(\tau)=\mathcal{L}_{\frac{X}{2}}\omega_{\psi_s}(\tau)-\Ric(\omega_{\psi_s}(\tau))-\omega_{\psi_s}(\tau).
\end{equation}
The reason for considering \eqref{eq:modifiedKRF} is that it allows us to view the soliton metric $g_E$ as a \textit{fixed} reference metric. To relate this to previously considered solutions, we define the following space-time correspondence. For any $t\ge0$, let $\tau=\log(t+1)\ge0$ and consider the following biholomorphism:
\begin{equation*}
    \Phi_{e^{-\tau}}:\mathcal{B}(\frac{R}{\sqrt{s}e^{\frac{\tau}{2}}})\to \mathcal{B}(\frac{R}{\sqrt{s}}).
\end{equation*}
    \begin{definition}\label{definition of normalized complex monge ampere}
        The modified potentials and metrics are defined by
        \begin{equation*}
            \psi_s(\tau)=e^{-\tau}\Phi_{e^{-\tau}}^*\overline{\varphi}_s(e^\tau-1), \qquad \omega_{\psi_s}(\tau):=\omega_E+\sqrt{-1}\partial\bar\partial \psi_s(\tau)
        \end{equation*}
        on $\cup_{\tau \in [0,\log(1+\frac{T_s}{s}))}(\mathcal{B}(\frac{R}{\sqrt{s}e^{\frac{\tau}{2}}}) \times \{\tau\}).$ 
    \end{definition}
        
    \begin{prop}
        The modified potentials and metrics satisfy \eqref{eq:modifiedKRF}, and 
        \begin{equation}\label{equation of normalized complex monge ampere}
            \frac{\partial}{\partial\tau}\psi_s(\tau)=\log\frac{\omega^n_{\psi_s}(\tau)}{\omega^n_E}+\frac{X}{2}\psi_s(\tau)-\psi_s(\tau)
        \end{equation}
    and \eqref{eq:modifiedKRF} on $\cup_{\tau \in [0,\log(1+\frac{T_s}{s}))}(\mathcal{B}(\frac{R}{\sqrt{s}e^{\frac{\tau}{2}}}) \times \{\tau\}).$ On $ \{ (x,\tau)\in M\times [0,\log(\frac{T_s}{s}+1)) \: | \: \frac{R^2}{se^{\tau}} \geq r^2(x) \geq \frac{\lambda(e^\tau-1)}{e^\tau}\}$, one has
         \begin{equation}
           |(\nabla^{g_{\psi_s}(\tau)})^k\Rm(g_{\psi_s}(\tau))|_{ g_{\psi_s}(\tau)}(x)\le \frac{C_k}{r(x)^{2+k}},\quad \textrm{for all $k\in\N$},
        \end{equation}
        where $C_k$ is the same constant as in Theorem \ref{estimate on the conical region}.
    \end{prop}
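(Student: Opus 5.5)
This is a direct change-of-variables computation; we differentiate the definition of $\psi_s$ and feed in \eqref{equation of complex monge ampere}. Write $t=e^\tau-1$, so $dt/d\tau=e^\tau=1+t$. Let $\Phi_{e^{-\tau}}$ be the flow of $-\frac{e^{\tau}}{2}X$ from Remark \ref{self-similar sol}. Since $\Phi_{e^{-\tau}}=\Phi_X^{\tau/2}$, we have $\frac{d}{d\tau}\Phi_{e^{-\tau}}^*h=\Phi_{e^{-\tau}}^*\mathcal{L}_{\frac X2}h=\mathcal{L}_{\frac X2}\Phi_{e^{-\tau}}^*h$ for any tensor or function $h$; the last equality holds because $\Phi_{e^{-\tau}}$ commutes with the flow of $X$. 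We also use the self-similar identity $e^{-\tau}\Phi_{e^{-\tau}}^*\omega_E(1+t)=e^{-\tau}\Phi_{e^{-\tau}}^*\bigl(e^\tau\Phi_{e^{\tau}}^*\omega_E\bigr)=\omega_E$. This uses the group law $\Phi_{a}\circ\Phi_{b}=\Phi_{ab}$, which follows from $\Phi_t=\Phi_X^{-\frac12\log t}$. Consequently
\begin{equation*}
\omega_{\psi_s}(\tau)=e^{-\tau}\Phi_{e^{-\tau}}^*\overline\omega_s(e^\tau-1).
\end{equation*}

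For the potential equation, the product and chain rules give
\begin{equation*}
\partial_\tau\psi_s=-\psi_s+\tfrac X2\psi_s+e^{-\tau}\Phi_{e^{-\tau}}^*\bigl((1+t)\,\partial_t\overline\varphi_s(t)\bigr)
=-\psi_s+\tfrac X2\psi_s+\Phi_{e^{-\tau}}^*\log\frac{\overline\omega_s(t)^n}{\omega_E(1+t)^n}.
\end{equation*}
Volume ratios are invariant under simultaneous pullback and scaling by the same constant. Hence the last term equals $\log\bigl(\omega_{\psi_s}(\tau)^n/\omega_E^n\bigr)$, which is \eqref{equation of normalized complex monge ampere}.

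The metric equation \eqref{eq:modifiedKRF} follows from the same computation applied to the displayed expression for $\omega_{\psi_s}(\tau)$. Differentiating gives
\begin{equation*}
\partial_\tau\omega_{\psi_s}=-\omega_{\psi_s}+\mathcal{L}_{\frac X2}\omega_{\psi_s}-\Phi_{e^{-\tau}}^*\Ric(\overline\omega_s(t)).
\end{equation*}
Here we used $\partial_t\overline\omega_s=-\Ric$ from \eqref{equation of complex monge ampere}. Ricci forms are invariant under biholomorphic pullback and under constant rescaling, so the last term equals $-\Ric(\omega_{\psi_s}(\tau))$. Alternatively, one can apply $\sqrt{-1}\partial\bar\partial$ to the potential equation and use $\mathcal{L}_{\frac X2}\omega_E=\Ric(\omega_E)+\omega_E$; this is consistent because $X$ is real holomorphic, so $\mathcal{L}_{X}$ commutes with $\partial\bar\partial$.

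The domains match because $r^2\circ\Phi_{e^{-\tau}}=e^\tau r^2$, which follows from Corollary \ref{decay of uE(s)}'s relation $r^2\circ\Phi_t=r^2/t$. Thus $x\in\mathcal B(\frac{R}{\sqrt s e^{\tau/2}})$ exactly when $\Phi_{e^{-\tau}}(x)\in\mathcal B(\frac R{\sqrt s})$.

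For the curvature bound, $g_{\psi_s}(\tau)=e^{-\tau}\Phi_{e^{-\tau}}^*\overline g_s(t)$. Scaling a metric by $e^{-\tau}$ multiplies $|\nabla^k\Rm|$ by $e^{\tau(2+k)/2}$. At the point $y=\Phi_{e^{-\tau}}(x)$ we have $r(y)^2=e^\tau r(x)^2$. So \eqref{eq:normalizedpseudolocality} gives
\begin{equation*}
|(\nabla^{g_{\psi_s}})^k\Rm|(x)\le e^{\tau(2+k)/2}C_k\bigl(e^{\tau/2}r(x)\bigr)^{-2-k}=C_k r(x)^{-2-k}.
\end{equation*}
This holds whenever $\lambda t\le r(y)^2\le R^2/s$, that is, $\lambda(e^\tau-1)/e^\tau\le r(x)^2\le R^2/(se^\tau)$, which is exactly the stated region. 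The only delicate point is bookkeeping: getting the sign and direction of the flows $\Phi$ right so that the group law and the identity $e^{-\tau}\Phi_{e^{-\tau}}^*\omega_E(1+t)=\omega_E$ hold. Everything else is formal.
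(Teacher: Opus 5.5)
Your proposal is correct and follows the same change-of-variables argument as the paper. The paper's proof records only the domain bookkeeping via $r^2\circ\Phi_{e^{-\tau}}=e^{\tau}r^2$ and the transfer of \eqref{eq:normalizedpseudolocality} under the parabolic rescaling; you additionally write out the chain-rule derivation of \eqref{equation of normalized complex monge ampere} and \eqref{eq:modifiedKRF} from the identity $e^{-\tau}\Phi_{e^{-\tau}}^*\omega_E(1+t)=\omega_E$, which the paper leaves implicit.
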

\begin{proof}
 Since $\psi_s(\tau):=e^{-\tau}\Phi_{e^{-\tau}}^*\overline\varphi_s(e^\tau-1)$, the potential function $\psi_s$ is defined for $(x,\tau)$ such that $(\Phi_{e^{-\tau}}(x),e^\tau-1)\in \{r^2\le\frac{R^2}{s}\}\times[0,\frac{T_s}{s})$. We then need the following restriction:
 \begin{equation*}
     r(\Phi_{e^{-\tau}}(x))^2\le\frac{R^2}{s}\quad\textnormal{and }\quad 0\le e^{\tau}-1<\frac{T_s}{s},
 \end{equation*}  
 that is,
 \begin{equation*}
      r(x)^2\le\frac{R^2}{se^\tau}\quad\textnormal{and }\quad 0\le \tau<\log\left(\frac{T_s}{s}+1\right).
 \end{equation*}
 Since 
    \begin{equation*}
           | (\nabla^{\overline g_s(t)})^k\Rm(\overline g_s(t))|_{\overline g_s(t)}\le\frac{C_k}{r^{2+k}}
        \end{equation*}
        holds for $(x,t)$ satisfying $r(x)^2\ge \lambda t$, we have that
        \begin{equation*}
            |(\nabla^{g_{\psi_s}(\tau)})^k\Rm(g_{\psi_s}(\tau))|_{ g_{\psi_s}(\tau)}(x)\le \frac{C_k}{r(x)^{2+k}}
        \end{equation*}
        holds for all $(x,\tau)$ such that $r(\Phi_{e^{-\tau}}(x))^2\ge \lambda(e^\tau-1)$, that is, \(r(x)^2\ge\frac{\lambda(e^\tau-1)}{e^\tau}.\)
        \end{proof}
       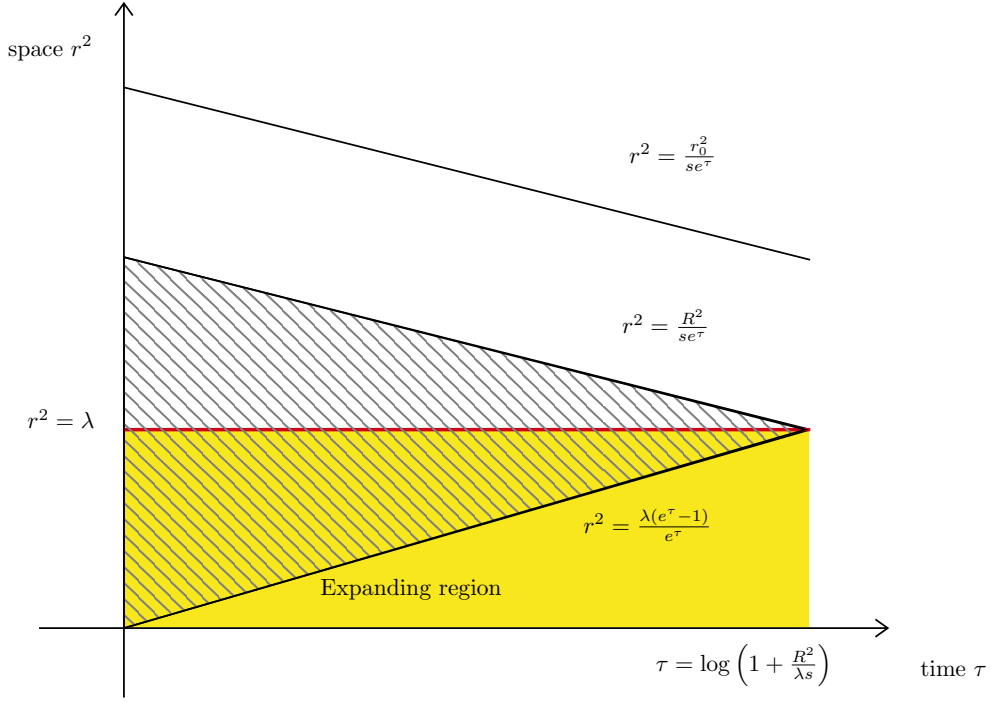
\begin{figure}[htbp]
\centering
\begin{minipage}{0.8\textwidth}
    \centering
    \resizebox{\linewidth}{!}{\input{modified_spacetime}}
      \caption{modified space-time}
    \label{normalized space time}
\end{minipage}
\end{figure}

Let $\psi_s(\tau)$ with $\tau\in [0,\log(\frac{T_s}{s}+1))$ denote the solution of the modified complex Monge--Ampère equation (see Definition~\ref{definition of normalized complex monge ampere}). In particular, $\omega_{\psi_s}(\tau)$ is a modified K\"ahler-Ricci flow satisfying
\begin{equation}
       \frac{\partial}{\partial\tau}\omega_{\psi_s}(\tau)=\mathcal{L}_{\frac{X}{2}}\omega_{\psi_s}(\tau)-\Ric(\omega_{\psi_s}(\tau))-\omega_{\psi_s}(\tau).
\end{equation}
Define $f_{\psi_s}(\tau):=e^{-\tau}\Phi_{e^{-\tau}}^*f_{\overline{\varphi}_s}(e^\tau-1)$, then by the definition of $f_{\overline{\varphi}_s}$ (see Definition \ref{definition of fvarphi}), we have $f_{\psi_s}=f+\frac{X}{2}\cdot\psi_s$.

    \begin{definition}[Expander region]
        We define the expander region $\Omega_{R,\lambda,s}\subset\{r^2\le\frac{R^2}{se^\tau}\}\times [0,\log(\frac{T_s}{s}+1))$ as follows:
        \begin{equation*}
            \Omega_{R,\lambda,s}:=\mathcal{B}(\sqrt{\lambda})\times [0,T'_s), 
        \end{equation*}
        where $T'_s=\min\{\log(\frac{T_s}{s}+1),\log(\frac{R^2 }{\lambda s})\}$. Its parabolic boundary is then defined as:
        \begin{equation*}
            \partial_\textrm{P}\Omega_{R,\lambda,s}:=(\mathcal{B}(\sqrt{\lambda})\times \{0\}) \cup (\partial \mathcal{B}(\sqrt{\lambda})\times [0,T_s')).
        \end{equation*}
    \end{definition}
The expander region is chosen so that its parabolic boundary is entirely contained in the conical region. In figure \ref{normalized space time}, the expander region is the yellow rectangle; its left and upper boundaries are contained in the modified conical region, which is dashed. This choice allows us to apply the maximum principle to extend our estimates to the entire region of interest.

\label{sectn; conical region}

Using the estimates of Subsection \ref{subsection:estimatesconeregion}, we get the following.
\begin{prop}\label{boundary data}
The following estimates hold on $\partial_\textnormal{P}\Omega_{R,\lambda,s}$:
    \begin{enumerate}
        \item \label{boundarydata1} $|g_{\psi_s}-g_E|_{g_E}\le \Psi(R,\lambda^{-1})$;
        \item \label{boundarydata2} $|\nabla^{g_E}g_{\psi_s}|^2_{g_{\psi_s}}\le \Psi(R,\lambda^{-1})f^{-1}$;
        \item \label{boundarydata3} $|\psi_s|\le \Psi(R,\lambda^{-1})f$;
        \item \label{boundarydata4} $|X\cdot \psi_s|\le \Psi(R,\lambda^{-1})f$;
        \item \label{boundarydata5} $|\dot \psi_s|\le \Psi(R,\lambda^{-1})f$;
        \item \label{boundarydata6} $|JX\cdot\psi_s|\le \Psi(R,\lambda^{-1})f$;
        \item \label{boundarydata7} $|\nabla^{g_{\psi_s}}(JX\cdot\psi_s)|^2_{g_{\psi_s}}\le \Psi(R,\lambda^{-1})f$;
        \item \label{boundarydata8} $|f_{\psi_s}-f|\le \Psi(R,\lambda^{-1})f$;
        \item \label{boundarydata9} $|\nabla^{g_{\psi_s}}f_{\psi_s}|_{g_{\psi_s}}^2\le (2+\Psi(R,\lambda^{-1}))f_{\psi_s}$.
    \end{enumerate}
\end{prop}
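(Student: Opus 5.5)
The plan is to transport the estimates of Subsection \ref{subsection:estimatesconeregion}, proved for the normalized flow on $\mathcal{C}_{R,\lambda,s}$, through the space-time correspondence of Definition \ref{definition of normalized complex monge ampere}. First I check that $\partial_\textrm{P}\Omega_{R,\lambda,s}$ is mapped into $\mathcal{C}_{R,\lambda,s}$. A point $(x,\tau)$ of the modified space-time corresponds to $(y,t)=(\Phi_{e^{-\tau}}(x),e^\tau-1)$, with $r(y)^2=e^\tau r(x)^2$.

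\emph{Bottom boundary.} On $\mathcal{B}(\sqrt\lambda)\times\{0\}$ we have $t=0$ and $y=x$, so the condition $\lambda t\le r^2$ is trivial. Also $r^2\le\lambda\le R^2/s$, using \eqref{eq:backgroundassumption}.

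\emph{Lateral boundary.} On $\partial\mathcal{B}(\sqrt\lambda)\times[0,T_s')$ we have $r(y)^2=\lambda e^\tau\ge\lambda(e^\tau-1)=\lambda t$. Since $\tau<T_s'\le\log(R^2/(\lambda s))$, we also get $r(y)^2\le R^2/s$.

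So every boundary point lies in the normalized conical region, and we may apply Propositions \ref{rough estimate on the metrics}, \ref{rough C^3 estimate}, \ref{rough estimates on Kahler potential}, Corollary \ref{rough estimates on X derivative} and Proposition \ref{rough estimates on hamiltonian} at $(y,t)$.

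Next I translate each quantity, using $g_{\psi_s}(\tau)=e^{-\tau}\Phi_{e^{-\tau}}^*\overline g_s(t)$ and $g_E=e^{-\tau}\Phi_{e^{-\tau}}^*g_E(1+t)$. The latter holds because $g_E(1+t)=(1+t)\Phi_{1+t}^*g_E$ and $\Phi_{e^{-\tau}}\circ\Phi_{e^\tau}=\mathrm{id}$.

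\emph{Items (1) and (2).} Tensor norms of $g_{\psi_s}-g_E$ are scale invariant, so item (1) is immediate. For item (2), the weighted derivative $r|\nabla g|$ is scale invariant, so Proposition \ref{rough C^3 estimate} gives $|\nabla^{g_E}g_{\psi_s}|\le\Psi r(x)^{-1}$. Since $r(x)^2\ge \frac{1}{2}\cdot$const$\cdot f$ by Corollary \ref{coro of comparison of f and r^2} (at $t=1$, $r^2/2\le f\le r^2/2+A$), and $f\ge\varepsilon$ near $r=0$, this yields $\Psi f^{-1}$. On the bottom boundary, where $r$ may be small, I need more care. There, however, $\psi_s(0)=\overline\varphi_s(0)$ vanishes on $\{r\le s^{-1/4}\}$, so $g_{\psi_s}=g_E$ and item (2) holds trivially. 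For $r\ge s^{-1/4}$, $r^{-2}\lesssim f^{-1}$.

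\emph{Items (3)–(9).} For the potential, $\psi_s(x,\tau)=e^{-\tau}\overline\varphi_s(y,t)$. Proposition \ref{rough estimates on Kahler potential} gives $|\overline\varphi_s|\le\Psi r(y)^2=\Psi e^\tau r(x)^2$, hence $|\psi_s|\le\Psi r(x)^2\le C\Psi f$, which is item (3). Since $X$ is preserved by $\Phi$, items (4) and (6)–(8) follow from Corollary \ref{rough estimates on X derivative} together with the relation $e^{-\tau}\Phi_{e^{-\tau}}^*f_{1+t}=f$. The latter holds because $f_{1+t}=(1+t)f\circ\Phi_{1+t}$. The gradient-squared terms scale by the same factor $e^{-\tau}$, so they transfer in the same way. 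Item (9) is Proposition \ref{rough estimates on hamiltonian} transported identically.

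For item (5) I use equation \eqref{equation of normalized complex monge ampere}:
\[
\dot\psi_s=\log\frac{\omega_{\psi_s}^n}{\omega_E^n}+\tfrac12X\psi_s-\psi_s .
\]
The first term is bounded by $\Psi\le\Psi f/\varepsilon$ by item (1), and the other two terms are controlled by items (3) and (4).

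The main obstacle is the bookkeeping of the weights $r$ versus $f$ near the tip on the bottom boundary. This is handled by noting that $f\ge\varepsilon>0$ and $f\sim r^2/2$ for large $r$, and that the initial data coincides with the expander where $r$ is not large.
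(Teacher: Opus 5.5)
Your proposal is correct and follows the paper's proof. Both transport the rough conical-region estimates on the lateral boundary through the correspondence $\psi_s(x,\tau)=e^{-\tau}\overline\varphi_s(\Phi_{e^{-\tau}}(x),e^\tau-1)$, using \eqref{equation of normalized complex monge ampere} for $\dot\psi_s$.

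The only difference is on the bottom boundary. The paper simply notes that $r^2\le\lambda<s^{-1/2}$ forces $\psi_s(0)\equiv0$ on all of $\mathcal{B}(\sqrt{\lambda})$, so items (1)--(8) vanish and (9) reduces to $|\nabla^{g_E}f|^2_{g_E}<2f$. This also makes your case $r\ge s^{-1/4}$ vacuous.
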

    \begin{proof}
On $\{(x,0)\ |\ r(x)^2\le\lambda\}$, since we choose $\lambda\le s^{-\frac{1}{2}}$, thanks to the cut-off function, all the above quantities except $|\nabla^{g_{\psi_s}}f_{\psi_s}|_{g_{\psi_s}}^2$ are zero. In this case 
\begin{equation*}|\nabla^{g_{\psi_s}}f_{\psi_s}|_{g_{\psi_s}}^2=|\nabla^{g_E}f|_{g_E}^2<2f=2f_{\psi_s}.\end{equation*}
If $(x,\tau)$ satisfies $r(x)^2=\lambda, \tau<T'_s$, then $(\Phi_{e^{-\tau}}(x),e^\tau-1)$ lies on the conical region. The above results then come from the correspondence $\psi_s(x,\tau)=e^{-\tau}\overline \varphi_s(\Phi_{e^{-\tau}}(x),e^\tau-1)$, \eqref{equation of normalized complex monge ampere} and our rough estimates on the conical region.
    \end{proof} 

\subsection{Potential estimates on the expander region}

The following maximum principle on the expander region will be essential throughout the rest of the paper. 

    \begin{lemma}[Maximum principle on expander region]\label{maximum principle on expander region}
        Let $g(\tau)_{\tau\in [0,T'_s)}$ be a smooth family of Riemannian metrics, and $Y(t)_{t\in [0,T'_s)}$ be a smooth vector field defined on $\textnormal{Int}\ \Omega_{R,\lambda,s}$.
        
        Assume that $u$ is a continuous function defined on $\Omega_{R,\lambda,s}$ such that:
       \begin{enumerate}
           \item The function $u$ is smooth on $\textnormal{Int}\ \Omega_{R,\lambda,s}$. On $\textnormal{Int}\ \Omega_{R,\lambda,s}$, there exist constants $A>0,B\ge 0$ such that
           \begin{equation*}
              \left( \frac{\partial}{\partial\tau}-\frac{1}{2}\Delta_{g(\tau)}-Y(\tau)\right)u\le B-Au.
           \end{equation*}
           \item There exists a constant $C>0$ such that on $ \partial_\textrm{P}\Omega_{R,\lambda,s}$, $u\le C$ holds.
        \end{enumerate}
       Then on $\Omega_{R,\lambda,s}$, we have $u\le\max\{C,\frac{B}{A}\}$.
    \end{lemma}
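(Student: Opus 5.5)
This is a standard weak maximum principle on a compact cylinder, proved by contradiction at a first interior time where a perturbed function exceeds the claimed bound.

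Set $K:=\max\{C,\frac{B}{A}\}$. Fix $\epsilon>0$ and consider the perturbed function
\begin{equation*}
u_\epsilon:=u-\epsilon e^{\tau}.
\end{equation*}
It suffices to show $u_\epsilon< K$ on $\Omega':=\overline{\mathcal{B}}(\sqrt{\lambda})\times[0,T]$ for every $T<T_s'$; then we let $\epsilon\to0$ and $T\to T'_s$. Since $\overline{\mathcal{B}}(\sqrt{\lambda})$ is compact, $\Omega'$ is compact and the continuous function $u_\epsilon$ attains its maximum there. On the parabolic boundary we have $u_\epsilon\le C-\epsilon<K$.

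Suppose, for contradiction, that $u_\epsilon\ge K$ somewhere in $\Omega'$. Let $\tau_0$ be the first time at which $\max_{\overline{\mathcal{B}}(\sqrt{\lambda})}u_\epsilon(\cdot,\tau)=K$. The set of such times is closed and excludes $\tau=0$, because $u_\epsilon(\cdot,0)<K$ by the boundary estimate. Hence $\tau_0>0$, and there is a point $x_0$ with $u_\epsilon(x_0,\tau_0)=K$. Because $u_\epsilon<K$ on $\partial\mathcal{B}(\sqrt{\lambda})\times[0,T]$, the point $x_0$ lies in the interior, so $(x_0,\tau_0)\in\textnormal{Int}\,\Omega_{R,\lambda,s}$ and $u$ is smooth near it.

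At $(x_0,\tau_0)$ the spatial maximum gives $\nabla u_\epsilon=0$ and $\Delta_{g(\tau_0)}u_\epsilon\le0$. Since $u_\epsilon(\cdot,\tau)<K$ for $\tau<\tau_0$, we also have $\partial_\tau u_\epsilon\ge0$. The spatial derivatives of $u_\epsilon$ and $u$ agree, so $Y(\tau_0)u=0$ and $\Delta u\le 0$ there. Therefore
\begin{equation*}
\partial_\tau u=\partial_\tau u_\epsilon+\epsilon e^{\tau_0}\ge \epsilon e^{\tau_0},
\end{equation*}
and so
\begin{equation*}
\Bigl(\tfrac{\partial}{\partial\tau}-\tfrac12\Delta_{g}-Y\Bigr)u\ \ge\ \epsilon e^{\tau_0}.
\end{equation*}

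On the other hand, the hypothesis gives
\begin{equation*}
\Bigl(\tfrac{\partial}{\partial\tau}-\tfrac12\Delta_{g}-Y\Bigr)u\le B-Au=B-AK-A\epsilon e^{\tau_0}\le -A\epsilon e^{\tau_0}<0,
\end{equation*}
where we used $u=K+\epsilon e^{\tau_0}$ at this point and $AK\ge B$. The two displays contradict each other. Hence $u_\epsilon<K$ on $\Omega'$, and letting $\epsilon\to0$ and $T\to T'_s$ gives $u\le K$ on $\Omega_{R,\lambda,s}$.

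The only delicate point is the boundary and regularity bookkeeping. $u$ is assumed smooth only on the interior, so we must check that the first touching point is interior. This holds because $u_\epsilon<K$ on the whole parabolic boundary, and $\tau_0>0$. In fact the $-Au$ term alone would already give the contradiction; the perturbation $-\epsilon e^{\tau}$ is what makes the boundary values strictly below $K$, which forces the first time $\tau_0$ to be positive and the touching point to be interior.
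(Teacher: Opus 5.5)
Your proof is correct and follows essentially the same route as the paper: both restrict to the compact truncations $\overline{\mathcal{B}}(\sqrt{\lambda})\times[0,T]$ with $T<T'_s$ and evaluate the differential inequality at an interior maximum point. The paper takes the maximum of $u$ itself directly: if it lies on the parabolic boundary then $u\le C$, and if it is interior then $0\le B-Au$ gives $u\le B/A$. Your $\epsilon e^{\tau}$ perturbation and first-touching-time argument are valid but not needed.
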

    \begin{proof}
        For any $T\in [0,T'_s)$, let us consider $\Omega_{R,\lambda,s}^T:=\{r^2\le \lambda\}\times [0,T]$ which is a compact set. Suppose that $(x_0,\tau_0)\in \Omega_{R,\lambda,s}^T$ is the maximum point, that is, $u(x_0,\tau_0)=\max_{\Omega_{R,\lambda,s}^T}u$. If $(x_0,\tau_0)\in \partial_\textnormal{P}\Omega_{R,\lambda,s}$, then we have $u(x_0,\tau_0)\le C$. If $(x_0,\tau_0)\in \textnormal{Int}\ \Omega_{R,\lambda,s}$, then by the weak maximum principle, we have
        \begin{equation*}
            0\le \left(\frac{\partial}{\partial\tau}-\frac{1}{2}\Delta_{g(\tau_0)}-Y(\tau_0)\right)u(x_0,\tau_0)\le B-Au(x_0,\tau_0).
        \end{equation*}
        Therefore, we get $u(x_0,\tau_0)\le \frac{B}{A}$, and hence $u(x_0,\tau_0)\le \max\{C,\frac{B}{A}\}$. For any $(x,\tau)\in\Omega_{R,\lambda,s}^T$, we have that 
        \begin{equation*}
            u(x,\tau)\le u(x_0,\tau_0)\le\max\{C,\frac{B}{A}\}.
        \end{equation*}
        Since $T \in [0,T_s')$ is arbitrary, we conclude that $u\le\max\{C,\frac{B}{A}\}$ holds on $\Omega_{R,\lambda,s}$.
    \end{proof}

We define the \textit{drift Laplacian} along the normalized K\"ahler--Ricci flow as 
\begin{align}
\Delta_{\omega_{\psi_s},X}=\Delta_{\omega_{\psi_s}}+\frac{X}{2}
\end{align}

    \begin{lemma}
    
    The function $f_{\psi_s}$ satisfies the following evolution equation:
    \begin{equation}\label{evolution equation of fpsi}
        \frac{\partial}{\partial \tau}f_{\psi_s}=\Delta_{\omega_{\psi_s},X}f_{\psi_s}-f_{\psi_s}.
    \end{equation}
        \end{lemma}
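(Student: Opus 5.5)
The plan is a direct computation. Differentiate the definition $f_{\psi_s}=f+\frac{X}{2}\cdot\psi_s$ in $\tau$, use the modified Monge--Amp\`ere equation \eqref{equation of normalized complex monge ampere}, and compare the result with $\Delta_{\omega_{\psi_s}}f_{\psi_s}$. That Laplacian is computed by realizing $f_{\psi_s}$ as a potential for $\mathcal{L}_{X/2}\omega_{\psi_s}$. Two facts are used throughout: $f$ and $X$ are independent of $\tau$ (only $\psi_s$ evolves), and $X$ is real holomorphic, so $\mathcal{L}_{X/2}$ commutes with $\sqrt{-1}\partial\bar\partial$.

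\emph{Step 1: time derivative.} Since $X$ and $f$ do not depend on $\tau$, \eqref{equation of normalized complex monge ampere} gives
\begin{equation*}
\frac{\partial}{\partial\tau}f_{\psi_s}=\frac{X}{2}\cdot\dot\psi_s=\frac{X}{2}\cdot\log\frac{\omega_{\psi_s}^n}{\omega_E^n}+\frac{X}{2}\cdot\Big(\frac{X}{2}\cdot\psi_s\Big)-\frac{X}{2}\cdot\psi_s .
\end{equation*}

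\emph{Step 2: Laplacian of $f_{\psi_s}$.} The soliton equation $\Ric(\omega_E)+\omega_E=\sqrt{-1}\partial\bar\partial f=\frac12\mathcal{L}_X\omega_E$ gives $\sqrt{-1}\partial\bar\partial f=\mathcal{L}_{X/2}\omega_E$. Because $X$ is real holomorphic, $\mathcal{L}_{X/2}\sqrt{-1}\partial\bar\partial\psi_s=\sqrt{-1}\partial\bar\partial(\frac{X}{2}\cdot\psi_s)$. Adding these two identities,
\begin{equation*}
\sqrt{-1}\partial\bar\partial f_{\psi_s}=\mathcal{L}_{X/2}\,\omega_{\psi_s},
\qquad\text{hence}\qquad
\Delta_{\omega_{\psi_s}}f_{\psi_s}=\operatorname{tr}_{\omega_{\psi_s}}\mathcal{L}_{X/2}\omega_{\psi_s}.
\end{equation*}
We evaluate the trace through volume forms. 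First,
\begin{equation*}
\mathcal{L}_{X/2}\,\omega_{\psi_s}^n=\big(\operatorname{tr}_{\omega_{\psi_s}}\mathcal{L}_{X/2}\omega_{\psi_s}\big)\,\omega_{\psi_s}^n .
\end{equation*}
Second, writing $\omega_{\psi_s}^n=\frac{\omega_{\psi_s}^n}{\omega_E^n}\,\omega_E^n$ and using $\mathcal{L}_{X/2}\omega_E^n=(\Delta_{\omega_E}f)\,\omega_E^n=(n+R_{\omega_E})\,\omega_E^n$, which follows from Lemma \ref{soliton indentities}, we get
\begin{equation*}
\Delta_{\omega_{\psi_s}}f_{\psi_s}=\frac{X}{2}\cdot\log\frac{\omega_{\psi_s}^n}{\omega_E^n}+n+R_{\omega_E}.
\end{equation*}

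\emph{Step 3: compare.} Substituting Step 2 into Step 1 gives
\begin{equation*}
\frac{\partial}{\partial\tau}f_{\psi_s}=\Delta_{\omega_{\psi_s}}f_{\psi_s}-(n+R_{\omega_E})+\frac{X}{2}\cdot\Big(\frac{X}{2}\cdot\psi_s\Big)-\frac{X}{2}\cdot\psi_s .
\end{equation*}
The target is
\begin{equation*}
\Delta_{\omega_{\psi_s},X}f_{\psi_s}-f_{\psi_s}=\Delta_{\omega_{\psi_s}}f_{\psi_s}+\frac{X}{2}\cdot f+\frac{X}{2}\cdot\Big(\frac{X}{2}\cdot\psi_s\Big)-f-\frac{X}{2}\cdot\psi_s .
\end{equation*}
So it suffices that $\frac{X}{2}\cdot f-f=-(n+R_{\omega_E})$. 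This is \eqref{f is sol to elliptic eq} rewritten using $\Delta_{\omega_E}f=n+R_{\omega_E}$.

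\emph{Main obstacle.} The only delicate point is Step 2. One must keep the normalizations consistent: the K\"ahler Laplacian, the factor $\frac12$ in $X/2$, and the identities $\sqrt{-1}\partial\bar\partial f=\mathcal{L}_{X/2}\omega_E$ and $\mathcal{L}_{X/2}\omega_E^n=(\Delta_{\omega_E}f)\,\omega_E^n$, each in the conventions of Lemma \ref{soliton indentities}. One must also justify commuting $\mathcal{L}_{X/2}$ with $\partial\bar\partial$, which holds because $X$ is real holomorphic. Once these identities are checked, the rest is bookkeeping.
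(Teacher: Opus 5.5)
Your proposal is correct and is essentially the paper's own computation. The paper also differentiates $f_{\psi_s}=f+\frac{X}{2}\cdot\psi_s$ using the modified Monge--Amp\`ere equation. It uses the same identity $\frac{X}{2}\cdot\log\frac{\omega_{\psi_s}^n}{\omega_E^n}=\Delta_{\omega_{\psi_s}}f_{\psi_s}-\Delta_{\omega_E}f$, and closes with the soliton identity $f=\Delta_{\omega_E}f+|\partial f|^2_{g_E}$, which is equivalent to your use of \eqref{f is sol to elliptic eq}. Your derivation of $\sqrt{-1}\partial\bar\partial f_{\psi_s}=\mathcal{L}_{X/2}\omega_{\psi_s}$ from the real holomorphicity of $X$ makes explicit a step the paper leaves implicit.
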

\begin{proof}
    First, we compute
    \begin{equation*}
        \frac{\partial}{\partial\tau}f_{\psi_s}=\frac{X}{2}\cdot\dot\psi_s=\frac{X}{2}\cdot\left(\log\frac{\omega_{\psi_s}^n}{\omega_E^n}+\frac{X}{2}\cdot\psi_s-\psi_s\right).
    \end{equation*}
    Since \begin{equation*}
        \frac{X}{2}\cdot\log\frac{\omega_{\psi_s}^n}{\omega_E^n}=\tr_{\omega_{\psi_s}}\mathcal{L}_{\frac{X}{2}}\omega_{\psi_s}-\tr_{\omega_E}\mathcal{L}_{\frac{X}{2}}\omega_E=\Delta_{\omega_{\psi_s}}f_{\psi_s}-\Delta_{\omega_E} f,
    \end{equation*} and 
    \begin{equation*}
        \frac{X}{2}\cdot\left(\frac{X}{2}\cdot\psi_s-\psi_s\right)=\frac{X}{2}\cdot\left(f_{\psi_s}-f\right)-f_{\psi_s}+f=\frac{X}{2}\cdot f_{\psi_s}-f_{\psi_s}+f-|\partial f|_{g_E}^2.
    \end{equation*}
    The soliton identity $f=\Delta_{\omega_E} f+|\partial f|_{g_E}^2$ implies that
    \begin{equation*}
        \frac{\partial}{\partial \tau}f_{\psi_s}=\Delta_{\omega_{\psi_s},X}f_{\psi_s}-f_{\psi_s},
    \end{equation*}
    holds.
\end{proof}
\begin{corollary}
  There exist $s_0,R_0,\lambda_0>0$ such that for all $s\le s_0,R\le R_0, \text{ and }\lambda\ge\lambda_0$ satisfying $R^2> 4\sqrt{s},\lambda\le\frac{1}{\sqrt{s}}$, on $\Omega_{R,\lambda,s}$, we have
  \begin{equation*}
      f_{\psi_s}\ge 0.
  \end{equation*}
\end{corollary}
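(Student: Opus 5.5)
We apply the maximum principle on the expander region, Lemma \ref{maximum principle on expander region}, to $u:=-f_{\psi_s}$. By \eqref{evolution equation of fpsi},
\begin{equation*}
\frac{\partial}{\partial\tau}f_{\psi_s}=\Delta_{\omega_{\psi_s}}f_{\psi_s}+\frac{X}{2}\cdot f_{\psi_s}-f_{\psi_s}.
\end{equation*}
The paper's Laplacian $\Delta_{\omega}$ is half of the Riemannian Laplace--Beltrami operator. We therefore take $g(\tau)$ to be the Riemannian metric $g_{\psi_s}(\tau)$ and $Y(\tau):=\frac{X}{2}$, rescaling the Laplacian convention if needed so that the operator matches the form $\frac{\partial}{\partial\tau}-\frac12\Delta_{g(\tau)}-Y(\tau)$ of the lemma. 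Then $u=-f_{\psi_s}$ satisfies
\begin{equation*}
\left(\frac{\partial}{\partial\tau}-\Delta_{\omega_{\psi_s},X}\right)u=-u,
\end{equation*}
which is hypothesis (1) of the lemma with $A=1$ and $B=0$. The function $u$ is smooth on the interior, since $\psi_s$ is smooth there, and it is continuous up to the boundary.

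For hypothesis (2) we must bound $u$ from above on $\partial_{\textnormal{P}}\Omega_{R,\lambda,s}$. By Proposition \ref{boundary data}\eqref{boundarydata8}, on the parabolic boundary
\begin{equation*}
f_{\psi_s}\ge (1-\Psi(R,\lambda^{-1}))f.
\end{equation*}
We now choose $s_0,R_0$ small and $\lambda_0$ large so that $\Psi(R,\lambda^{-1})\le \frac12$, subject to the constraints \eqref{eq:backgroundassumption}. This is where the stated relations $R^2>4\sqrt{s}$ and $\lambda\le s^{-1/2}$ enter. Since $f\ge\varepsilon>0$ by Lemmas \ref{soliton indentities} and \ref{lower bound of scalar curvature}, we get $f_{\psi_s}\ge \frac{\varepsilon}{2}>0$ on the parabolic boundary. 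Hence $u\le 0$ there, so we may take $C=0$.

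The conclusion of the lemma is $u\le \max\{C,\frac{B}{A}\}=0$, that is, $f_{\psi_s}\ge 0$ on all of $\Omega_{R,\lambda,s}$. If the lemma requires $C>0$ strictly, we instead apply it with an arbitrary $C>0$ and then let $C\to0$. A refinement of this argument would likely give $f_{\psi_s}\ge \frac{\varepsilon}{2}e^{-\tau}$, but that stronger bound is not needed here.

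The only point needing care is compatibility with the hypotheses of the lemma. The vector field $Y=\frac X2$ must be smooth on the interior, which it is. The domain $\{r^2\le\lambda\}\times[0,T]$ must be compact, which holds because $\overline{\mathcal B}(\sqrt\lambda)$ is compact in $E$. With these checked, the boundary estimate from Proposition \ref{boundary data} is the only real input, and it holds uniformly in $s$ once the parameters are fixed as above.
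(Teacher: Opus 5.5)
Your proof is correct and follows the paper's argument. The boundary estimate of Proposition \ref{boundary data} \ref{boundarydata8} with $\Psi(R,\lambda^{-1})\le\frac12$ gives $f_{\psi_s}\ge 0$ on $\partial_{\textnormal{P}}\Omega_{R,\lambda,s}$, and then \eqref{evolution equation of fpsi} together with Lemma \ref{maximum principle on expander region} propagates this to the whole expander region. The only cosmetic difference is that you bound $f$ below by $\varepsilon$ using the scalar curvature lower bound, whereas the paper uses $f\ge\frac{r^2}{2}$ from Corollary \ref{coro of comparison of f and r^2}; either suffices.
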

\begin{proof}
On the parabolic boundary $\partial_{\textnormal{P}}\Omega_{R,\lambda,s},$ we have
    \begin{equation*}
        f_{\psi_s}\ge f-\Psi(R,\lambda^{-1})f\ge (1-\Psi(R,\lambda^{-1}))\frac{r^2}{2}.
    \end{equation*}
Here, the last inequality is ensured by Corollary \ref{coro of comparison of f and r^2} if $\Psi\le1$. Therefore, fixing $R_0,s_0>0$ such that $\Psi(R,\lambda^{-1})\le \frac{1}{2}$ for all $R\le R_0,\lambda\ge\lambda_0$ we get, on $\partial_{\textnormal{P}}\Omega_{R,\lambda,s},$
    \begin{equation*}
        f_{\psi_s}\ge 0.
    \end{equation*}
    Since $f_{\psi_s}$ satisfies the evolution equation $ \frac{\partial}{\partial \tau}f_{\psi_s}=\Delta_{\omega_{\psi_s},X}f_{\psi_s}-f_{\psi_s},$ 
    the maximum principle in Lemma \ref{maximum principle on expander region}, implies that $f_{\psi_s}\ge 0$ holds on $\Omega_{R,\lambda,s}$.
\end{proof}
The following Lemma recalls the Bochner formula along the modified K\"ahler--Ricci flow. For a proof of this, we refer to \cite[Lemma 4.10]{longteng2}.
\begin{lemma}[Bochner formula along the normalized K\"ahler--Ricci flow]\label{bochner formula;lem}
    Let $u$ be a smooth function on $\Omega_{R,\lambda,s}$ satisfying the following evolution equation:
    \begin{equation*}
        \left( \frac{\partial}{\partial\tau}-\Delta_{\omega_{\psi_s},X}\right) u = -u
    \end{equation*}
    along the modified K\"ahler--Ricci flow. Then its gradient satisfies
    \begin{equation}\label{bochner formula eq}
         \left( \frac{\partial}{\partial\tau}-\Delta_{\omega_{\psi_s},X}\right)|\nabla^{g_{\psi_s}}u|_{g_{\psi_s}}^2=-|\nabla^{g_{\psi_s}}u|_{g_{\psi_s}}^2 -|(\nabla^{{g_{\psi_s}}})^2u|_{g_{\psi_s}}^2.
    \end{equation}
\end{lemma}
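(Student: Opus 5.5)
The plan is to prove \eqref{bochner formula eq} by a direct computation in local holomorphic coordinates, combining three ingredients: the evolution of the metric under \eqref{eq:modifiedKRF}, the classical K\"ahler Bochner identity for $\Delta_{\omega_{\psi_s}}$, and the holomorphicity of $X$. The drift term in the metric evolution should cancel against the commutator of $\partial$ with $\frac{X}{2}$, the Ricci term should cancel against the curvature term in Bochner, and the zeroth-order terms should combine to give $-|\nabla u|^2$. I will write $g=g_{\psi_s}(\tau)$ and $|\partial u|^2=g^{i\bar j}u_iu_{\bar j}$; the Riemannian norm $|\nabla^{g}u|^2_g$ differs from this by a fixed factor, so the identity reduces to its Kähler form.

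\emph{Step 1: rewrite the metric evolution.} Since $f_{\psi_s}=f+\frac{X}{2}\cdot\psi_s$, $\mathcal{L}_{X/2}\omega_E=\sqrt{-1}\partial\bar\partial f$ and $X$ is real holomorphic, we get
\begin{equation*}
\mathcal{L}_{\frac{X}{2}}\omega_{\psi_s}=\sqrt{-1}\partial\bar\partial f_{\psi_s}.
\end{equation*}
Hence \eqref{eq:modifiedKRF} reads $\partial_\tau g_{i\bar j}=(f_{\psi_s})_{i\bar j}-R_{i\bar j}-g_{i\bar j}$. The same identity shows that $\iota_{X^{1,0}}\omega_{\psi_s}$ is proportional to $\sqrt{-1}\bar\partial f_{\psi_s}$. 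So, with the normalization fixed by comparing with $\nabla^{g_E}f=X$, the vector field $\frac{X}{2}$ acts on functions as $\operatorname{Re}$ of $g^{i\bar j}(f_{\psi_s})_{\bar j}\partial_i$. In other words, $X^{1,0}$ is the $g$-gradient of $f_{\psi_s}$, and it is holomorphic.

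\emph{Step 2: time derivative.} From Step 1,
\begin{equation*}
\partial_\tau|\partial u|^2=-(f_{\psi_s})^{i\bar j}u_iu_{\bar j}+R^{i\bar j}u_iu_{\bar j}+|\partial u|^2+2\operatorname{Re}\langle\partial(\Delta u+\tfrac{X}{2}u-u),\partial u\rangle .
\end{equation*}

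\emph{Step 3: Bochner and the drift.} Use $\Delta|\partial u|^2=|\nabla\partial u|^2+|\nabla\bar\partial u|^2+2\operatorname{Re}\langle\partial\Delta u,\partial u\rangle+R^{i\bar j}u_iu_{\bar j}$. This cancels the Laplacian and Ricci terms in Step 2, and the two Hessian pieces together form $|(\nabla^{g})^2u|^2$.

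For the drift, differentiate $\frac{X}{2}u$. By holomorphicity of $X^{1,0}$ (that is, $\nabla_{\bar k}X^i=0$), the derivative $\partial_k(X^{1,0}u)$ equals $X^{1,0}(u_k)+\nabla_kX^i\,u_i$, where $\nabla_kX^i=(f_{\psi_s})^i{}_k$. Pairing with $\partial u$ and taking real parts:
\begin{itemize}
\item the terms $X^{1,0}(u_k)$ and their conjugates reproduce $\frac{X}{2}|\partial u|^2$;
\item the term $\nabla_kX^i\,u_i$ produces exactly $+(f_{\psi_s})^{i\bar j}u_iu_{\bar j}$, which cancels the first term of Step 2.
\end{itemize}
The contributions from the antiholomorphic part of $X$ are handled the same way via $\nabla_k X^{\bar i}=0$.

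\emph{Step 4: zeroth-order terms.} What remains is $|\partial u|^2-2|\partial u|^2=-|\partial u|^2$. Collecting everything gives $(\partial_\tau-\Delta_{\omega_{\psi_s},X})|\partial u|^2=-|\partial u|^2-|(\nabla^{g})^2u|^2$, which is \eqref{bochner formula eq}.

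The main obstacle is Step 3, specifically the drift cancellation. One has to check carefully that the factor $\frac12$ in $\frac{X}{2}$, the identification of $X^{1,0}$ with the $g_{\psi_s}$-gradient of $f_{\psi_s}$, and the Riemannian-versus-K\"ahler normalizations of $|\nabla u|^2$ and $\Delta$ all match. Only with these matching will the $\sqrt{-1}\partial\bar\partial f_{\psi_s}$ term from the metric evolution cancel exactly, rather than leave a multiple of itself. I would fix these constants first by testing the computation on the soliton itself ($\psi_s=0$, $u=f$ is not admissible, but $g=g_E$ static makes the check easy).
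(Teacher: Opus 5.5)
Your computation arrives at the right identity, but Step 1 makes a false claim that Step 3 relies on. $X^{1,0}$ is \emph{not} the $g_{\psi_s}$-gradient of $f_{\psi_s}$, and $\nabla_kX^i\neq (f_{\psi_s})^i{}_k$ in general.

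The identity $\mathcal{L}_{X/2}\omega_{\psi_s}=\sqrt{-1}\partial\bar\partial f_{\psi_s}$ only determines $d\,\iota_X\omega_{\psi_s}$, hence $\iota_X\omega_{\psi_s}$ only up to closed forms. Compute directly instead. Holomorphicity of $X^{1,0}$ gives $\iota_{X^{1,0}}\sqrt{-1}\partial\bar\partial\psi_s=\sqrt{-1}\,\bar\partial(X^{1,0}\psi_s)$. Here $X^{1,0}\psi_s=\frac12 X\cdot\psi_s-\frac{\sqrt{-1}}{2}JX\cdot\psi_s$ is complex-valued. So $\iota_{X^{1,0}}\omega_{\psi_s}=\sqrt{-1}\,\bar\partial F$ with $F:=f_{\psi_s}-\frac{\sqrt{-1}}{2}JX\cdot\psi_s$, and therefore $\nabla_kX^i=g_{\psi_s}^{i\bar j}F_{k\bar j}$. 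Equivalently, $\iota_X\omega_{\psi_s}=\sqrt{-1}(\bar\partial-\partial)f_{\psi_s}+\frac12 d(JX\cdot\psi_s)$, and the exact term is invisible to the Lie-derivative identity.

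This is precisely the paper's later lemma $\nabla^{g_{\psi_s}}f_{\psi_s}=X+\frac{J}{2}\nabla^{g_{\psi_s}}(JX\cdot\psi_s)$. Here $JX\cdot\psi_s\not\equiv 0$: the glued initial potential need not be invariant under the Reeb flow, which is why the paper estimates $JX\cdot\psi_s$ separately. Your proposed sanity check with $\psi_s=0$ would not detect this, since then $JX\cdot\psi_s\equiv 0$.

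The conclusion survives, and the repair is short. In Step 3 only $\Re\bigl(g_{\psi_s}^{k\bar l}\nabla_kX^i\,u_iu_{\bar l}\bigr)$ enters. That is, only the Hermitian part $\frac12(\nabla_kX_{\bar j}+\nabla_{\bar j}X_k)=\frac12(\mathcal{L}_Xg_{\psi_s})_{k\bar j}=(f_{\psi_s})_{k\bar j}$ matters, and this is exactly what Step 1 legitimately proves. The remaining piece $-\frac{\sqrt{-1}}{2}(JX\cdot\psi_s)_{k\bar j}$ is anti-Hermitian, so it contributes a purely imaginary amount that drops out.

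Simpler still, you can avoid $f_{\psi_s}$ entirely:
\begin{itemize}
\item For any vector field $Y$, $Y\cdot|\nabla u|^2=-(\mathcal{L}_Yg)(\nabla u,\nabla u)+2\langle\nabla(Yu),\nabla u\rangle$.
\item The metric evolves by $\partial_\tau g_{\psi_s}=\mathcal{L}_{X/2}g_{\psi_s}-\Ric(g_{\psi_s})-g_{\psi_s}$. Holomorphicity of $X$ is needed only to pass from the equation for $\omega_{\psi_s}$ to this one. With this, the Lie-derivative terms cancel identically.
\item Bochner's formula for the K\"ahler Laplacian, $\Delta_\omega|\nabla u|^2=|\nabla^2u|^2+2\langle\nabla u,\nabla\Delta_\omega u\rangle+\Ric(\nabla u,\nabla u)$, removes the Laplacian and Ricci terms.
\item What is left is $|\nabla u|^2-2|\nabla u|^2$, as you found.
\end{itemize}
The paper itself gives no computation here; it only refers to Lemma 4.10 of the first author's earlier paper.
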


As a consequence of the formula above, we obtain the following gradient estimates.
\begin{corollary}\label{uniform estimtes on gradient of fpsi}
    On $\Omega_{R,\lambda,s}$, we have
    \begin{equation*}
        |\nabla^{g_{\psi_s}}f_{\psi_s}|_{g_{\psi_s}}^2\le (2+\Psi(R,\lambda^{-1}))f_{\psi_s}.
    \end{equation*}
\end{corollary}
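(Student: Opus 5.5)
The plan is to apply the maximum principle on the expander region (Lemma \ref{maximum principle on expander region}) to the auxiliary function
\begin{equation*}
u:=|\nabla^{g_{\psi_s}}f_{\psi_s}|^2_{g_{\psi_s}}-(2+\Psi_0)f_{\psi_s}.
\end{equation*}
Here $\Psi_0=\Psi(R,\lambda^{-1})$ is the quantity appearing in item \ref{boundarydata9} of Proposition \ref{boundary data}. The point is that $f_{\psi_s}$ and its squared gradient satisfy linear heat-type equations with the \emph{same} zeroth-order term $-u$, up to a favourable sign. Combined with $f_{\psi_s}\geq0$, this makes the inequality preserved from the parabolic boundary into the interior.

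\emph{Step 1: evolution of $f_{\psi_s}$ and of its gradient.} By \eqref{evolution equation of fpsi}, $f_{\psi_s}$ satisfies
\begin{equation*}
\left(\frac{\partial}{\partial\tau}-\Delta_{\omega_{\psi_s},X}\right)f_{\psi_s}=-f_{\psi_s},
\end{equation*}
which is exactly the hypothesis of the Bochner formula, Lemma \ref{bochner formula;lem}. That lemma gives
\begin{equation*}
\left(\frac{\partial}{\partial\tau}-\Delta_{\omega_{\psi_s},X}\right)|\nabla^{g_{\psi_s}} f_{\psi_s}|^2=-|\nabla^{g_{\psi_s}} f_{\psi_s}|^2-|(\nabla^{g_{\psi_s}})^2f_{\psi_s}|^2.
\end{equation*}
Combining the two equations,
\begin{equation*}
\left(\frac{\partial}{\partial\tau}-\Delta_{\omega_{\psi_s},X}\right)u=-|\nabla^{g_{\psi_s}}f_{\psi_s}|^2-|(\nabla^{g_{\psi_s}})^2f_{\psi_s}|^2+(2+\Psi_0)f_{\psi_s}\le -u .
\end{equation*}

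\emph{Step 2: boundary values.} On $\partial_{\textnormal{P}}\Omega_{R,\lambda,s}$, item \ref{boundarydata9} of Proposition \ref{boundary data} states that $|\nabla^{g_{\psi_s}} f_{\psi_s}|^2\le(2+\Psi_0)f_{\psi_s}$. Hence $u\le 0$ on the parabolic boundary.

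\emph{Step 3: maximum principle.} The drift Laplacian $\Delta_{\omega_{\psi_s},X}=\Delta_{\omega_{\psi_s}}+\frac{X}{2}$ uses the K\"ahler Laplacian. Since this is half of the Riemannian Laplacian, the operator has exactly the form $\frac{\partial}{\partial\tau}-\frac12\Delta_{g(\tau)}-Y(\tau)$ required in Lemma \ref{maximum principle on expander region}, with $g(\tau)=g_{\psi_s}(\tau)$ and $Y=X/2$. The function $u$ is smooth on the interior and continuous up to the boundary. We apply that lemma with $A=1$, $B=0$, $C=0$ and obtain $u\le0$ on $\Omega_{R,\lambda,s}$, which is the claimed estimate.

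The only potential subtlety is bookkeeping rather than analysis. One must check that the Laplacian normalisation in the Bochner formula of \cite[Lemma 4.10]{longteng2} matches the operator in the maximum principle lemma. One must also check that the constant $\Psi_0$ is chosen once from the boundary data and kept fixed, so that it is a genuine constant and not a function. The nonnegativity of $f_{\psi_s}$ from the preceding corollary is not even needed for the argument. It only guarantees that the statement is meaningful, i.e.\ that the right-hand side is nonnegative.
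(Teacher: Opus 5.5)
Your proposal is correct and is essentially the paper's own proof. The paper likewise combines \eqref{evolution equation of fpsi} with the Bochner formula of Lemma \ref{bochner formula;lem} to get $\bigl(\frac{\partial}{\partial\tau}-\Delta_{\omega_{\psi_s},X}\bigr)u\le -u$ for $u=|\nabla^{g_{\psi_s}}f_{\psi_s}|^2_{g_{\psi_s}}-(2+\Psi(R,\lambda^{-1}))f_{\psi_s}$, and then applies Lemma \ref{maximum principle on expander region} with the boundary bound of Proposition \ref{boundary data}\,\ref{boundarydata9}. Your bookkeeping remarks are also fine: the factor $\frac12$ in the Laplacian is immaterial to the maximum principle, and taking $C=0$ where the lemma nominally asks for $C>0$ is harmless, since any $C>0$ works and one can let $C\to 0$.
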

\begin{proof} Combining \eqref{evolution equation of fpsi} with Lemma \ref{bochner formula;lem} yields
    \begin{equation*}
        \begin{split}
            \left(\frac{\partial}{\partial\tau}-\Delta_{\omega_{\psi_s},X}\right) |\nabla^{g_{\psi_s}}f_{\psi_s}|_{g_{\psi_s}}^2&=- |\nabla^{g_{\psi_s}}f_{\psi_s}|_{g_{\psi_s}}^2- |(\nabla^{g_{\psi_s}})^2f_{\psi_s}|_{g_{\psi_s}}^2\le - |\nabla^{g_{\psi_s}}f_{\psi_s}|_{g_{\psi_s}}^2.
        \end{split}
    \end{equation*}
    On $\partial_\textnormal{P}\Omega_{R,\lambda,s}$, we have that 
    \begin{equation*}
         |\nabla^{g_{\psi_s}}f_{\psi_s}|_{g_{\psi_s}}^2\le (2+\Psi(R,\lambda^{-1}))f_{\psi_s}.
    \end{equation*}
    Now we consider $ |\nabla^{g_{\psi_s}}f_{\psi_s}|_{g_{\psi_s}}^2-(2+\Psi(R,\lambda^{-1}))f_{\psi_s}$ and compute
    \begin{equation*}
         \begin{split}
              & \left(\frac{\partial}{\partial\tau}-\Delta_{\omega_{\psi_s},X}\right) \left(|\nabla^{g_{\psi_s}}f_{\psi_s}|_{g_{\psi_s}}^2-(2+\Psi(R,\lambda^{-1}))f_{\psi_s}\right)\\
              &\le -\left(|\nabla^{g_{\psi_s}}f_{\psi_s}|_{g_{\psi_s}}^2-(2+\Psi(R,\lambda^{-1}))f_{\psi_s}\right).
         \end{split}
    \end{equation*}
The claim therefore follows from Lemma \ref{maximum principle on expander region} and Proposition \ref{boundary data} \ref{boundarydata9}.
\end{proof}
    \begin{corollary}\label{coro C^0 bound}
        On $\Omega_{R,\lambda,s}$ we have
        \begin{equation*}
            \begin{split}
                -\Psi(R,\lambda^{-1})f_{\psi_s}&\le \psi_s\le \Psi(R,\lambda^{-1})f,\\
                -\Psi(R,\lambda^{-1})f_{\psi_s}&\le\dot\psi_s\le \Psi(R,\lambda^{-1})f_{\psi_s}.
            \end{split}
        \end{equation*}
    \end{corollary}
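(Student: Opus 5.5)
The plan is to obtain all four inequalities from the maximum principle on the expander region, Lemma \ref{maximum principle on expander region}, with $B=0$, $A=1$ and boundary constant $C=0$. The comparison functions will be $f$ or $f_{\psi_s}$, because each is a solution of the model equation $(\partial_\tau-\Delta_{\omega,X})u=-u$ for the relevant metric $\omega$:
\begin{itemize}
\item $f$ is stationary and satisfies $\Delta_{\omega_E,X}f=f$ by \eqref{f is sol to elliptic eq}, so $(\partial_\tau-\Delta_{\omega_E,X})f=-f$;
\item $f_{\psi_s}$ satisfies $(\partial_\tau-\Delta_{\omega_{\psi_s},X})f_{\psi_s}=-f_{\psi_s}$ by \eqref{evolution equation of fpsi}.
\end{itemize}
On the boundary we use Proposition \ref{boundary data}. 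By item \ref{boundarydata8}, after shrinking $R_0$ and enlarging $\lambda_0$ so that $\Psi\le\frac12$, we have $f\le 2f_{\psi_s}$ on $\partial_{\textnormal{P}}\Omega_{R,\lambda,s}$. Hence every bound of the form $\Psi f$ on the boundary can be rewritten as $\Psi f_{\psi_s}$, at the cost of renaming $\Psi$.

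\emph{Time derivative.} Differentiate \eqref{equation of normalized complex monge ampere} in $\tau$. Since $\partial_\tau\log(\omega_{\psi_s}^n/\omega_E^n)=\Delta_{\omega_{\psi_s}}\dot\psi_s$, this gives
\begin{equation*}
\Bigl(\frac{\partial}{\partial\tau}-\Delta_{\omega_{\psi_s},X}\Bigr)\dot\psi_s=-\dot\psi_s .
\end{equation*}
Set $u_\pm:=\pm\dot\psi_s-\Psi f_{\psi_s}$. By linearity and \eqref{evolution equation of fpsi}, $(\partial_\tau-\Delta_{\omega_{\psi_s},X})u_\pm=-u_\pm$. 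On the parabolic boundary, items \ref{boundarydata5} and \ref{boundarydata8} give $u_\pm\le 0$, after choosing the $\Psi$ in $u_\pm$ larger than the boundary one. The maximum principle then gives $u_\pm\le 0$, i.e.\ $|\dot\psi_s|\le\Psi f_{\psi_s}$.

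\emph{Upper bound for $\psi_s$.} Concavity of $\log\det$ gives $\log(\omega_{\psi_s}^n/\omega_E^n)\le\tr_{\omega_E}(\omega_{\psi_s}-\omega_E)=\Delta_{\omega_E}\psi_s$. Therefore $(\partial_\tau-\Delta_{\omega_E,X})\psi_s\le-\psi_s$, with the fixed metric $g_E$. Taking $u=\psi_s-\Psi f$ yields $(\partial_\tau-\Delta_{\omega_E,X})u\le -u$, and $u\le0$ on the boundary by item \ref{boundarydata3}. Hence $\psi_s\le\Psi f$.

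\emph{Lower bound for $\psi_s$.} Use the reverse inequality $\log(\omega_{\psi_s}^n/\omega_E^n)\ge n-\tr_{\omega_{\psi_s}}\omega_E=\Delta_{\omega_{\psi_s}}\psi_s$. This gives $(\partial_\tau-\Delta_{\omega_{\psi_s},X})\psi_s\ge-\psi_s$. Then $u=-\psi_s-\Psi f_{\psi_s}$ satisfies $(\partial_\tau-\Delta_{\omega_{\psi_s},X})u\le -u$, and $u\le0$ on the boundary by items \ref{boundarydata3} and \ref{boundarydata8}. Hence $\psi_s\ge-\Psi f_{\psi_s}$.

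The main point to watch is pairing each one-sided inequality with the correct comparison function. The upper bound uses the $g_E$-Laplacian and so must be compared with $f$; the lower bound uses the $g_{\psi_s}$-Laplacian and so must be compared with $f_{\psi_s}$. This is exactly why the two sides of the $\psi_s$ estimate have different right-hand sides.

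Two technical points remain. First, Lemma \ref{maximum principle on expander region} is stated with $\frac12\Delta_g$ for a Riemannian metric, so each Kähler drift Laplacian above must be matched to that form, which is a normalization check only. Second, the drift $X/2$ is smooth and bounded on the compact set $\overline{\mathcal{B}}(\sqrt\lambda)$, so the lemma applies without growth issues.
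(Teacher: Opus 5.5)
Your proposal is correct and follows the paper's proof essentially step for step. For $\dot\psi_s$ you use the same evolution equation $(\partial_\tau-\Delta_{\omega_{\psi_s},X})\dot\psi_s=-\dot\psi_s$, compared against $\Psi f_{\psi_s}$. For $\psi_s$ you use the same two-sided eigenvalue inequality $\Delta_{\omega_{\psi_s}}\psi_s\le\log(\omega_{\psi_s}^n/\omega_E^n)\le\Delta_{\omega_E}\psi_s$, with the lower bound paired with $f_{\psi_s}$ and the upper bound with $f$, and everything is closed by Lemma \ref{maximum principle on expander region}.
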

    \begin{proof}
        We notice that $\dot\psi_s$ satisfies 
        \begin{equation*}
            \left(\frac{\partial}{\partial\tau}-\Delta_{\omega_{\psi_s},X}\right)\dot\psi_s=-\dot\psi_s.
        \end{equation*}
       Moreover, on $\partial_\textnormal{P}\Omega_{R,\lambda,s}$, Proposition \ref{boundary data} \ref{boundarydata5} gives
       \begin{equation*}
           |\dot\psi_s|\le \Psi(R,\lambda^{-1})f.
       \end{equation*}
       Since on $\partial_\textnormal{P}\Omega_{R,\lambda,s}$ it holds that $|f_{\psi_s}-f|\le \Psi(R,\lambda^{-1})f$, we obtain
       \begin{equation*}
            |\dot\psi_s|\le\Psi(R,\lambda^{-1})f_{\psi_s}.
       \end{equation*}
    Now we consider the function $\dot\psi_s-\Psi(R,\lambda^{-1})f_{\psi_s}.$ On one hand, we have
       \begin{equation*}
          \left(\frac{\partial}{\partial\tau}-\Delta_{\omega_{\psi_s},X}\right) (\dot\psi_s-\Psi(R,\lambda^{-1})f_{\psi_s})=-(\dot\psi_s-\Psi(R,\lambda^{-1})f_{\psi_s}),
       \end{equation*}
       and on the other hand, on $\partial_\textnormal{P}\Omega_{R,\lambda,s}$, we have $\dot\psi_s-\Psi(R,\lambda^{-1})f_{\psi_s}\le 0$. Hence, by Lemma \ref{maximum principle on expander region}, we have that
       \begin{equation*}
           \dot\psi_s\le \Psi(R,\lambda^{-1}) f_{\psi_s},
       \end{equation*}
       holds on $\Omega_{R,\lambda,s}$. Similarly, we can prove $\dot\psi_s\ge -\Psi(R,\lambda^{-1})f_{\psi_s}$ on $\Omega_{R,\lambda,s}$.

       Recalling the modified complex Monge--Amp\`ere equation
       \begin{equation*}
           \frac{\partial}{\partial\tau}\psi_s=\log\frac{\omega_{\psi_s}^n}{\omega_E^n}+\frac{X}{2}\cdot\psi_s-\psi_s
       \end{equation*}
       and diagonalising $\omega_{\psi_s}$ with respect to $\omega_E,$ an elementary algebraic inequality for the eigenvalues yields $\Delta_{\omega_{\psi_s}}\psi_s\le\log \frac{\omega_{\psi_s}^n}{\omega_E^n}\le\Delta_{\omega_E}\psi_s$ (see, for instance, the discussion in \cite[Chapter 3]{SongWeinkove}). We can then conclude that
       \begin{equation*}
           \begin{split}
                &\frac{\partial}{\partial\tau}\psi_s\ge \Delta_{\omega_{\psi_s},X}\psi_s-\psi_s,\\
                &\frac{\partial}{\partial\tau}\psi_s\le \Delta_{\omega_E,X}\psi_s-\psi_s.
           \end{split}
       \end{equation*}

For the same reason as before, we have, on $\partial_\textnormal{P}\Omega_{R,\lambda,s}$,
       \begin{equation*}
        -\Psi(R,\lambda^{-1})f_{\psi_s}  \le \psi_s\le \Psi(R,\lambda^{-1})f.
       \end{equation*}
       Together with the fact that $(\frac{\partial}{\partial\tau}-\Delta_{\omega_E,X})f=-f$, on the expander region $\Omega_{R,\lambda,s}$, we obtain
        \begin{equation*}
          -\Psi(R,\lambda^{-1})f_{\psi_s}\le \psi_s\le \Psi(R,\lambda^{-1})f
        \end{equation*}
       \end{proof}
\begin{lemma}
    The functions $JX\cdot\psi_s$ and $|\nabla^{g_{\psi_s}}(JX\cdot\psi_s)|_{g_{\psi_s}}^2$ satisfy the following evolution equations:
    \begin{equation*}
        \begin{split}
            \left(\frac{\partial}{\partial\tau}-\Delta_{\omega_{\psi_s},X}\right)JX\cdot\psi_s&=-JX\cdot\psi_s;\\
            \left(\frac{\partial}{\partial\tau}-\Delta_{\omega_{\psi_s},X}\right)|\nabla^{g_{\psi_s}}(JX\cdot\psi_s)|_{g_{\psi_s}}^2&=-|\nabla^{g_{\psi_s}}(JX\cdot\psi_s)|_{g_{\psi_s}}^2-|(\nabla^{g_{\psi_s}})^2(JX\cdot\psi_s)|_{g_{\psi_s}}^2.
        \end{split}
    \end{equation*}
\end{lemma}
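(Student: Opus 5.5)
The plan is to differentiate the modified complex Monge--Amp\`ere equation \eqref{equation of normalized complex monge ampere} in the direction of $JX$. The key structural input is that $JX$ is a real holomorphic Killing field for $g_E$ which commutes with $X$. Indeed, $X=\nabla^{g_E}f$ is real holomorphic, so $JX$ is real holomorphic and $[X,JX]=0$. Moreover, on the asymptotic cone $JX$ is the Reeb field, and it is Killing for $g_E$ because $X$ is a holomorphic gradient field, whence $JX$ is Killing. Hence $\mathcal{L}_{JX}\omega_E=0$, and $\mathcal{L}_{JX}$ commutes with $\partial\bar\partial$ and with $\mathcal{L}_{X/2}$.

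\emph{First identity.} Apply $JX$ to \eqref{equation of normalized complex monge ampere}. Since $JX$ is time independent, $JX\cdot\dot\psi_s=\partial_\tau(JX\cdot\psi_s)$. Since $[JX,X]=0$, we have $JX\cdot(\tfrac{X}{2}\psi_s)=\tfrac{X}{2}(JX\cdot\psi_s)$. For the logarithm term,
\begin{equation*}
JX\cdot\log\frac{\omega_{\psi_s}^n}{\omega_E^n}=\tr_{\omega_{\psi_s}}\mathcal{L}_{JX}\omega_{\psi_s}-\tr_{\omega_E}\mathcal{L}_{JX}\omega_E .
\end{equation*}
Using $\mathcal{L}_{JX}\omega_E=0$ and $\mathcal{L}_{JX}\sqrt{-1}\partial\bar\partial\psi_s=\sqrt{-1}\partial\bar\partial(JX\cdot\psi_s)$, which holds since $JX$ is real holomorphic, this equals $\Delta_{\omega_{\psi_s}}(JX\cdot\psi_s)$. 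Combining the three terms gives $(\partial_\tau-\Delta_{\omega_{\psi_s},X})JX\cdot\psi_s=-JX\cdot\psi_s$. This is the same computation as in the proof of \eqref{evolution equation of fpsi}, but simpler, because the $JX$-analogue of the soliton correction vanishes: $JX\cdot f=0$.

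\emph{Second identity.} Given the first identity, the function $u:=JX\cdot\psi_s$ satisfies exactly the hypothesis of Lemma \ref{bochner formula;lem}. Applying that Bochner formula along the modified flow yields the stated evolution of $|\nabla^{g_{\psi_s}}(JX\cdot\psi_s)|^2_{g_{\psi_s}}$ directly.

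\emph{Main obstacle.} The only delicate point is justifying $\mathcal{L}_{JX}\omega_E=0$ and $[X,JX]=0$. For the commutator: $X$ is real holomorphic, so $\mathcal{L}_XJ=0$, and therefore $[X,JX]=J[X,X]=0$. For the Killing property, I will use the standard fact that on a complete gradient K\"ahler--Ricci soliton (in particular on a K\"ahler cone, via $r\partial_r$ and the Reeb field $J r\partial_r$), $J\nabla f$ is Killing. Concretely, $\mathcal{L}_{JX}\omega_E=d(\iota_{JX}\omega_E)$, and $\iota_{JX}\omega_E=\pm df$, so $\mathcal{L}_{JX}\omega_E=\pm d\,df=0$. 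This means $JX$ is the Hamiltonian field of $f$ and preserves $\omega_E$, and the argument above then goes through.
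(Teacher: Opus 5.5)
Your proposal is correct and follows essentially the same route as the paper: differentiate the modified Monge--Amp\`ere equation along $JX$, using $\mathcal{L}_{JX}\omega_E=0$ and $[X,JX]=0$, to get the first identity, and then apply the Bochner formula of Lemma \ref{bochner formula;lem} to $JX\cdot\psi_s$ for the second. Your justification of the Killing property via $\iota_{JX}\omega_E=-df$, which gives $\mathcal{L}_{JX}\omega_E=0$, spells out a fact the paper simply asserts.
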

\begin{proof}
    Since $\mathcal{L}_{JX}g_E=0$ and $\mathcal{L}_{JX}X=0$, we have
    \begin{equation*}
        \begin{split}
            \frac{\partial}{\partial\tau}JX\cdot\psi_s&=\tr_{\omega_{\psi_s}}\mathcal{L}_{JX}\omega_{\psi_s}-\tr_{\omega_E}\mathcal{L}_{JX}\omega_E+JX\cdot\frac{X}{2}\cdot\psi_s-JX\cdot\psi_s\\
            &=\Delta_{\omega_{\psi_s}}JX\cdot\psi_s+\frac{X}{2}\cdot(JX\cdot\psi_s)-JX\cdot\psi_s.
        \end{split}
    \end{equation*}
    We can then use Bochner's formula \eqref{bochner formula eq} to get the evolution of \(|\nabla^{g_{\psi_s}}(JX\cdot\psi_s)|_{g_{\psi_s}}^2.\)
\end{proof}
\begin{corollary}\label{uniform estimates on Killing part}
    On $\Omega_{R,\lambda,s}$, we have 
    \begin{equation*}
        \begin{split}
            &|JX\cdot\psi_s|\le\Psi(R,\lambda^{-1})f_{\psi_s};\\
            &|\nabla^{g_{\psi_s}}(JX\cdot\psi_s)|_{g_{\psi_s}}^2\le \Psi(R,\lambda^{-1})f_{\psi_s}.
        \end{split}
    \end{equation*}
\end{corollary}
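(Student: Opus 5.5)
The plan is to run the same maximum principle argument already used for $\dot\psi_s$ in Corollary \ref{coro C^0 bound} and for $|\nabla^{g_{\psi_s}}f_{\psi_s}|^2$ in Corollary \ref{uniform estimtes on gradient of fpsi}. We compare $JX\cdot\psi_s$ and $|\nabla^{g_{\psi_s}}(JX\cdot\psi_s)|^2_{g_{\psi_s}}$ with a small multiple of $f_{\psi_s}$, using the evolution equations of the preceding lemma together with \eqref{evolution equation of fpsi}. The key point is that all three quantities satisfy the same linear equation modulo a favorable sign:
\begin{equation*}
\left(\frac{\partial}{\partial\tau}-\Delta_{\omega_{\psi_s},X}\right)u=-u .
\end{equation*}
The difference of two such solutions therefore again satisfies this equation, so Lemma \ref{maximum principle on expander region} applies with $A=1$, $B=0$.

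For the first estimate, we first convert the boundary data. Proposition \ref{boundary data} \ref{boundarydata6} gives $|JX\cdot\psi_s|\le\Psi f$ on $\partial_{\textnormal{P}}\Omega_{R,\lambda,s}$. Item \ref{boundarydata8} gives $f\le(1-\Psi)^{-1}f_{\psi_s}$ there. After enlarging $\Psi$, we get $|JX\cdot\psi_s|\le \Psi f_{\psi_s}$ on the parabolic boundary. We then set $u_\pm:=\pm JX\cdot\psi_s-\Psi(R,\lambda^{-1})f_{\psi_s}$. By linearity, $(\partial_\tau-\Delta_{\omega_{\psi_s},X})u_\pm=-u_\pm$, and $u_\pm\le 0$ on $\partial_{\textnormal{P}}\Omega_{R,\lambda,s}$. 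Lemma \ref{maximum principle on expander region}, applied with $g(\tau)=2g_{\psi_s}(\tau)$ to match the normalization $\frac12\Delta$ (equivalently, with the Kähler Laplacian convention) and $Y=X/2$, then gives $u_\pm\le 0$ on $\Omega_{R,\lambda,s}$. This is the first bound.

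For the gradient, let $w:=|\nabla^{g_{\psi_s}}(JX\cdot\psi_s)|^2_{g_{\psi_s}}$. By the preceding lemma, $(\partial_\tau-\Delta_{\omega_{\psi_s},X})w\le -w$, after discarding the nonpositive Hessian term. Item \ref{boundarydata7}, combined with item \ref{boundarydata8} exactly as above, gives $w\le\Psi f_{\psi_s}$ on $\partial_{\textnormal{P}}\Omega_{R,\lambda,s}$. The function $v:=w-\Psi f_{\psi_s}$ then satisfies $(\partial_\tau-\Delta_{\omega_{\psi_s},X})v\le -v$ and $v\le 0$ on the parabolic boundary, so the maximum principle yields $v\le 0$ throughout.

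The only step that needs care is the conversion from $f$ to $f_{\psi_s}$ on the boundary. This requires $\Psi(R,\lambda^{-1})\le\frac12$, arranged by shrinking $R_0$ and enlarging $\lambda_0$ as in the corollary showing $f_{\psi_s}\ge0$. We also need $f_{\psi_s}\ge 0$ in the interior so that the comparison functions have the right sign, and that corollary provides it. Finally, we must check that $JX\cdot\psi_s$ is continuous up to the parabolic boundary, which holds since $\psi_s$ is smooth on a neighborhood of $\overline{\mathcal{B}}(\sqrt\lambda)$ for $\tau<T'_s$. Beyond these points, the argument is a routine repetition of the proof of Corollary \ref{coro C^0 bound}.
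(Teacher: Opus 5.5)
Your proposal is correct and is essentially the paper's proof: compare $\pm JX\cdot\psi_s$ and $|\nabla^{g_{\psi_s}}(JX\cdot\psi_s)|^2_{g_{\psi_s}}$ with $\Psi(R,\lambda^{-1})f_{\psi_s}$ using the evolution equations of the preceding lemma, the boundary data of Proposition \ref{boundary data}, and Lemma \ref{maximum principle on expander region} with $A=1$, $B=0$; your use of item \ref{boundarydata8} to pass from $f$ to $f_{\psi_s}$ on the boundary is a step the paper leaves implicit here. Two cosmetic remarks: with the K\"ahler-Laplacian convention, the choice matching $\frac12\Delta_{g(\tau)}$ is $g(\tau)=\frac12 g_{\psi_s}(\tau)$ rather than $2g_{\psi_s}(\tau)$ (immaterial for the maximum principle), and $f_{\psi_s}\ge 0$ is not actually needed, since your comparison functions satisfy $(\partial_\tau-\Delta_{\omega_{\psi_s},X})u=-u$ exactly (resp.\ with $\le$).
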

\begin{proof}
    First, on $\partial_\textnormal{P}\Omega_{R,\lambda,s}$, we have $|JX\cdot\psi_s|\le\Psi(R,\lambda^{-1})f_{\psi_s}$. By the evolution equation of $JX\cdot\psi_s$ and Lemma \ref{maximum principle on expander region}, we have $|JX\cdot\psi_s|\le\Psi(R,\lambda^{-1})f_{\psi_s}$ on $\Omega_{R,\lambda,s}$.
    Moreover, the gradient term satisfies
     \begin{equation*}
        \begin{split}
            \left(\frac{\partial}{\partial\tau}-\Delta_{\omega_{\psi_s},X}\right)|\nabla^{g_{\psi_s}}(JX\cdot\psi_s)|_{g_{\psi_s}}^2&=-|\nabla^{g_{\psi_s}}(JX\cdot\psi_s)|_{g_{\psi_s}}^2-|(\nabla^{g_{\psi_s}})^2(JX\cdot\psi_s)|_{g_{\psi_s}}^2\\
            &\le -|\nabla^{g_{\psi_s}}(JX\cdot\psi_s)|_{g_{\psi_s}}^2.
        \end{split}
    \end{equation*}
    Since $|\nabla^{g_{\psi_s}}(JX\cdot\psi_s)|_{g_{\psi_s}}^2\le\Psi(R,\lambda^{-1})f_{\psi_s}$ on $\partial_\textnormal{P}\Omega_{R,\lambda,s}$, we have that
    \begin{equation*}
        |\nabla^{g_{\psi_s}}(JX\cdot\psi_s)|_{g_{\psi_s}}^2\le \Psi(R,\lambda^{-1})f_{\psi_s},
    \end{equation*}
    holds on $\Omega_{R,\lambda,s}$ due to Lemma \ref{maximum principle on expander region} and Proposition \ref{boundary data} \ref{boundarydata6} and \ref{boundarydata7}.
\end{proof}

For the discussion below, we fix a positive constant $A>0$ such that, on the asymptotically conical gradient K\"ahler--Ricci expander $(E,g_E,X)$, we have 
\begin{equation*}
   -A g_E\le (\nabla^{g_E})^2f\le Ag_E,
\end{equation*}
whose existence follows from curvature bounds on the expander. The following propositions show that we can compare $f_{\psi_s}$ and $f$ on the expander region uniformly as long as we take $R,\lambda^{-1}$ sufficiently small.
\begin{prop}\label{control f with fpsi}
    There exist $s_0,R_0,\lambda_0>0$ and a uniform constant $D>0$ such that for all $s\le s_0,R\le R_0,\lambda\ge\lambda_0$ with $R^2> 4\sqrt{s}, \text{ and }\lambda\le\frac{1}{\sqrt{s}}$, we have
    \begin{equation*}
        f\le D(f_{\psi_s}+1),
    \end{equation*}
holds on $\Omega_{R,\lambda,s}.$
\end{prop}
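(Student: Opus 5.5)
The idea is to apply the maximum principle on the expander region (Lemma \ref{maximum principle on expander region}) to a combination of $f$, $\psi_s$ and $f_{\psi_s}$. The difficulty is that $f$ is not a supersolution of the relevant operator unless we control $\tr_{\omega_{\psi_s}}\omega_E$, and no $C^2$ estimate is available yet. I would absorb this trace term using $\psi_s$ itself, via $\Delta_{\omega_{\psi_s}}\psi_s=n-\tr_{\omega_{\psi_s}}\omega_E$. Write $L:=\frac{\partial}{\partial\tau}-\Delta_{\omega_{\psi_s},X}$ throughout.

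\emph{Step 1: the evolution of $f$.} Since $f$ is time independent, $\frac{X}{2}\cdot f=|\partial f|^2_{g_E}$ with the normalizations of Lemma \ref{soliton indentities}. The identity $f=\Delta_{\omega_E}f+|\partial f|^2_{g_E}$ then gives
\begin{equation*}
L f=-f+(n+R_{\omega_E})-\Delta_{\omega_{\psi_s}}f\le -f+C+A\,\tr_{\omega_{\psi_s}}\omega_E ,
\end{equation*}
where $A$ is the constant with $-Ag_E\le(\nabla^{g_E})^2 f\le Ag_E$ fixed above and $C=\sup(n+R_{\omega_E})$.

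\emph{Step 2: cancel the trace with $\psi_s$.} Equation \eqref{equation of normalized complex monge ampere} gives $\log\frac{\omega_{\psi_s}^n}{\omega_E^n}=\dot\psi_s-\frac{X}{2}\psi_s+\psi_s$. Together with $\Delta_{\omega_{\psi_s}}\psi_s=n-\tr_{\omega_{\psi_s}}\omega_E$, this yields
\begin{equation*}
L\psi_s=\dot\psi_s-\frac{X}{2}\psi_s+\psi_s-\psi_s - n+\tr_{\omega_{\psi_s}}\omega_E .
\end{equation*}
Hence, for $u:=f-A\psi_s$, the trace terms cancel. Using $\frac{X}{2}\psi_s=f_{\psi_s}-f$ and $|\dot\psi_s|\le\Psi f_{\psi_s}$ (Corollary \ref{coro C^0 bound}), one gets
\begin{equation*}
Lu\le -(1+A)f+C'+C'f_{\psi_s}.
\end{equation*}
Corollary \ref{coro C^0 bound} also gives $\psi_s\ge-\Psi f_{\psi_s}$, so $-f=-u-A\psi_s\le -u+A\Psi f_{\psi_s}$. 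Therefore
\begin{equation*}
Lu\le-(1+A)u+C'+C''f_{\psi_s}.
\end{equation*}

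\emph{Step 3: absorb the $f_{\psi_s}$ term.} Set $w:=u-Df_{\psi_s}$. Since $Lf_{\psi_s}=-f_{\psi_s}$ by \eqref{evolution equation of fpsi}, we get
\begin{equation*}
Lw\le-(1+A)w+C'+\bigl(C''+D-(1+A)D\bigr)f_{\psi_s}.
\end{equation*}
Choose $D\ge C''/A$. Because $f_{\psi_s}\ge0$ on $\Omega_{R,\lambda,s}$ by the preceding corollary, this gives $Lw\le C'-(1+A)w$.

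On $\partial_{\textnormal P}\Omega_{R,\lambda,s}$, Proposition \ref{boundary data} gives $|\psi_s|\le\Psi f$ and $f_{\psi_s}\ge(1-\Psi)f$. Hence $w\le (1+A\Psi-D(1-\Psi))f\le0$ once $D\ge2$ and $\Psi(R,\lambda^{-1})$ is small, which is arranged by taking $R_0$ small and $\lambda_0$ large. Lemma \ref{maximum principle on expander region} then yields $w\le C'/(1+A)$ on $\Omega_{R,\lambda,s}$, that is,
\begin{equation*}
f\le A\psi_s+Df_{\psi_s}+C.
\end{equation*}

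\emph{Step 4: conclude.} Use the upper bound $\psi_s\le\Psi f$ from Corollary \ref{coro C^0 bound}. Once $A\Psi\le\frac12$, the term $A\psi_s$ is absorbed into the left-hand side, giving $f\le 2Df_{\psi_s}+2C$, which is the claim after enlarging $D$.

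The main obstacle is Step 2. It is the sign bookkeeping that makes the uncontrolled term $\tr_{\omega_{\psi_s}}\omega_E$ cancel exactly with the coefficient $A$. It also requires that every leftover term be of the form $\Psi f_{\psi_s}$ (which Step 3 absorbs) or $\Psi f$ (which Step 4 absorbs). These leftovers come from $\dot\psi_s$, $X\cdot\psi_s$ and $\psi_s$, and their control relies on the two-sided bounds in Corollary \ref{coro C^0 bound}.
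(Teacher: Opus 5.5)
Your proof is correct and is essentially the paper's argument. The paper also forms $f-A\psi_s$ so that the $\tr_{\omega_{\psi_s}}\omega_E$ terms cancel, uses $\dot\psi_s,\psi_s\ge-\Psi f_{\psi_s}$ from Corollary \ref{coro C^0 bound} to reach $\bigl(\frac{\partial}{\partial\tau}-\Delta_{\omega_{\psi_s},X}\bigr)(f-A\psi_s)\le C-(A+1)(f-A\psi_s)+C''f_{\psi_s}$, subtracts $Bf_{\psi_s}$ (explicitly $B=2A+3$) using $f_{\psi_s}\ge0$, applies Lemma \ref{maximum principle on expander region} with the boundary data, and absorbs $A\psi_s\le A\Psi f$ at the end; the only differences are cosmetic, such as your writing $Lf$ via $\Delta_{\omega_E}f=n+R_{\omega_E}$ instead of $|\partial f|^2_{g_E}\ge f-C_1$, and the stray $+\psi_s-\psi_s$ in your Step 2 is harmless.
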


\begin{proof}
First by our initial setting, we have $\Psi(R,\lambda^{-1})\le \frac{1}{2}<1.$ We compute
    \begin{equation*}
        \begin{split}
            \left(\frac{\partial}{\partial\tau}-\Delta_{\omega_{\psi_s},X}\right)(f-A\psi_s)&=-\Delta_{\omega_{\psi_s}}f-\frac{X}{2}\cdot f-A\dot\psi_s+A\Delta_{\omega_{\psi_s}}\psi_s+A\frac{X}{2}\cdot\psi_s\\
            &=-\Delta_{\omega_{\psi_s}}f-|\partial f|^2_{g_E}-A\dot\psi_s+A(n-\tr_{\omega_{\psi_s}}\omega_E)+A(f_{\psi_s}-f).
        \end{split}
    \end{equation*}
    Since $\Delta_{\omega_{\psi_s}}f=\tr_{\omega_{\psi_s}}(\nabla^{g_E})^2f\ge -A\tr_{\omega_{\psi_s}}\omega_E$, it follows that
    \begin{equation*}
        \begin{split}
             \left(\frac{\partial}{\partial\tau}-\Delta_{\omega_{\psi_s},X}\right)(f-A\psi_s)&\le A\tr_{\omega_{\psi_s}}\omega_E-|\partial f|^2_{g_E}-A\dot\psi_s+A(n-\tr_{\omega_{\psi_s}}\omega_E)+A(f_{\psi_s}-f)\\
             &=An-|\partial f|^2_{g_E}-A\dot\psi_s+A(f_{\psi_s}-f).
        \end{split}
    \end{equation*}
    By the soliton identity $|\partial f|^2_{g_E}=f-R_{\omega_E}-n\ge f-C_1$ for some universal constant $C_1>0,$ and the fact that $\dot\psi_s\ge -\Psi(R,\lambda^{-1})f_{\psi_s}\ge -f_{\psi_s}$, there exists a universal constant $C>0$ such that
    \begin{equation*}
         \left(\frac{\partial}{\partial\tau}-\Delta_{\omega_{\psi_s},X}\right)(f-A\psi_s)\le C+2Af_{\psi_s}-(A+1)f.
    \end{equation*}
    Because we have $\psi_s\ge-\Psi(R,\lambda^{-1})f_{\psi_s}\ge -f_{\psi_s}$, it holds that
    \begin{equation*}
        \begin{split}
            & \left(\frac{\partial}{\partial\tau}-\Delta_{\omega_{\psi_s},X}\right)(f-A\psi_s)\\
            &\le C+2Af_{\psi_s}-(A+1)f\\
            &=C+2Af_{\psi_s}-(A+1)(f-A\psi_s)-A(A+1)\psi_s\\ 
            &\le C-(A+1)(f-A\psi_s)+(2A+A(A+1))f_{\psi_s}.
        \end{split}
    \end{equation*}
    Now we consider $u:=f-A\psi_s-B f_{\psi_s}$ for some positive constant $B>0$ to be determined. We compute
    \begin{equation*}
        \begin{split}
            &\left(\frac{\partial}{\partial\tau}-\Delta_{\omega_{\psi_s},X}\right)u\\
            &\le C-(A+1)(f-A\psi_s)
           +(2A+A(A+1))f_{\psi_s}+Bf_{\psi_s}\\
            &=C-(A+1)(f-A\psi_s-Bf_{\psi_s})\\
         &\quad +(2A+A(A+1))f_{\psi_s}+Bf_{\psi_s}-(A+1)Bf_{\psi_s}\\
            &=C-(A+1)u+(2A+A(A+1))f_{\psi_s}-ABf_{\psi_s}.\\
        \end{split}
    \end{equation*}
    On $\partial_{\textnormal{P}}\Omega_{R,\lambda,s}$ we have 
    \begin{equation*}
        \begin{split}
            u&=f-A\psi_s-B f_{\psi_s}\\
            &\le f+A\Psi(R,\lambda^{-1})f-Bf_{\psi_s}\\
            &\le f+A\Psi(R,\lambda^{-1})f-B(1-\Psi(R,\lambda^{-1}))f.\\
        \end{split}
    \end{equation*}
    
    Since $\Psi(R,\lambda^{-1})\le\frac{1}{2}$, we have $u\le f+Af-\frac{B}{2}f$ on $\partial_{\textnormal{P}}\Omega_{R,\lambda,s}$. Finally, we pick $B=2A+3.$ Then $u\le 0$ on $\partial_{\textnormal{P}}\Omega_{R,\lambda,s}$. Now on $\Omega_{R,\lambda,s}$, since $f_{\psi_s}\ge 0,$ we have
    \begin{equation*}
        \left(\frac{\partial}{\partial\tau}-\Delta_{\omega_{\psi_s},X}\right)u\le C-(A+1)u.
    \end{equation*}
   From Lemma \ref{maximum principle on expander region}, there exists a universal constant $C'>0$ such that
    \begin{equation*}
        f-A\psi_s-Bf_{\psi_s}=u\le C'.
    \end{equation*}
    Let us take $R_0,\lambda_0,s_0>0$ such that for all $s\le s_0,\lambda\ge\lambda_0,R\le R_0$ with $R^2> 4\sqrt{s},\lambda\le\frac{1}{\sqrt{s}}$, we have $A\Psi(R,\lambda^{-1})\le\frac{1}{2}.$ Then we get
    \begin{equation*}
        f\le A\psi_s+Bf_{\psi_s}+C'\le A\Psi(R,\lambda^{-1})f+Bf_{\psi_s}+C'\le\frac{1}{2}f+Bf_{\psi_s}+C'.
    \end{equation*}
    Taking $D=2B+2C'+2$, we have $f+1\le D(f_{\psi_s}+1)$ on $\Omega_{R,\lambda ,s}$.
\end{proof}
To control $f_{\psi_s}+1$ in terms of $f+1$, we require the following lemma, which shows that $f_{\psi_s}$ is an approximate Hamiltonian function for $X$.
\begin{lemma}
    We have
    \begin{equation*}
        \nabla^{g_{\psi_s}}f_{\psi_s}=X+\frac{J}{2}\nabla^{g_{\psi_s}}(JX\cdot\psi_{s})=: X+X_{\Delta}.
    \end{equation*}
\end{lemma}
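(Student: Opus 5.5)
The plan is to derive the identity from the moment-map (Hamiltonian) description of $JX$ with respect to $\omega_{\psi_s}$. Since $X=\nabla^{g_E}f$ is real holomorphic and $JX$ is $g_E$-Killing, we have
\begin{equation*}
\iota_{JX}\omega_E=\omega_E(JX,\cdot)=g_E(J^2X,\cdot)=-g_E(X,\cdot)=-df
\end{equation*}
with the convention $\omega(\cdot,\cdot)=g(J\cdot,\cdot)$. Likewise, for any metric, $\iota_{JX}\omega_{\psi_s}=-g_{\psi_s}(X,\cdot)$. Hence the claim is equivalent to the identity of $1$-forms
\begin{equation*}
g_{\psi_s}(X,\cdot)=df_{\psi_s}-g_{\psi_s}\Bigl(\tfrac{J}{2}\nabla^{g_{\psi_s}}(JX\cdot\psi_s),\cdot\Bigr),
\end{equation*}
and we only need to compute $\iota_{JX}\sqrt{-1}\partial\bar\partial\psi_s$.

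For this step we write $\sqrt{-1}\partial\bar\partial\psi_s=\tfrac12 dd^c\psi_s$, where $d^c u=-du\circ J$ (we will check that this normalization matches the paper's, and adjust the factor if not). Cartan's formula gives
\begin{equation*}
\iota_{JX}\tfrac12 dd^c\psi_s=\tfrac12\bigl(\mathcal{L}_{JX}d^c\psi_s-d\,\iota_{JX}d^c\psi_s\bigr).
\end{equation*}
Since $JX$ is real holomorphic, $\mathcal{L}_{JX}$ commutes with $J$, and therefore with $d^c$. This yields $\mathcal{L}_{JX}d^c\psi_s=d^c(JX\cdot\psi_s)$. Moreover,
\begin{equation*}
\iota_{JX}d^c\psi_s=-d\psi_s(J^2X)=X\cdot\psi_s.
\end{equation*}
Combining the two terms gives
\begin{equation*}
\iota_{JX}\omega_{\psi_s}=-df-\tfrac12 d(X\cdot\psi_s)+\tfrac12 d^c(JX\cdot\psi_s)=-df_{\psi_s}+\tfrac12 d^c(JX\cdot\psi_s).
\end{equation*}

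Finally, we convert $d^c u$ into a gradient with respect to $g_{\psi_s}$. Since $g_{\psi_s}$ is $J$-invariant,
\begin{equation*}
d^cu(Y)=-du(JY)=-g_{\psi_s}(\nabla^{g_{\psi_s}}u,JY)=g_{\psi_s}(J\nabla^{g_{\psi_s}}u,Y).
\end{equation*}
Inserting this with $u=JX\cdot\psi_s$ and using $\iota_{JX}\omega_{\psi_s}=-g_{\psi_s}(X,\cdot)$, we get
\begin{equation*}
g_{\psi_s}(X,\cdot)=g_{\psi_s}\Bigl(\nabla^{g_{\psi_s}}f_{\psi_s}-\tfrac{J}{2}\nabla^{g_{\psi_s}}(JX\cdot\psi_s),\cdot\Bigr).
\end{equation*}
Nondegeneracy of $g_{\psi_s}$ then gives the statement, with $X_\Delta=\tfrac J2\nabla^{g_{\psi_s}}(JX\cdot\psi_s)$.

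There is no analytic difficulty here. The only real obstacle is keeping the sign and factor conventions consistent: the normalization of $d^c$ relative to $\sqrt{-1}\partial\bar\partial$, the sign in $\iota_{JX}\omega$, and the direction of $J$. As a sanity check we will verify the signs in the flat model $\mathbb{C}$ with $f=|z|^2/2$ and $X=r\partial_r$. If the final sign in front of $X_\Delta$ comes out opposite, it will be absorbed by recomputing $\iota_{JX}d^c\psi_s$ under the paper's conventions. The resulting sign does not affect the later uses, which only involve bounds on $|X_\Delta|$.
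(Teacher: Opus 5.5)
Your proposal is correct and is essentially the paper's argument. The paper likewise applies Cartan's formula to $\iota_{JX}$ of $\sqrt{-1}\partial\bar\partial\psi_s=-\tfrac12 dJd\psi_s$, uses that $\mathcal{L}_{JX}$ commutes with $J$, and rewrites $Jd(JX\cdot\psi_s)$ as $-g_{\psi_s}(J\nabla^{g_{\psi_s}}(JX\cdot\psi_s),\cdot)$.

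Your conventions already agree with the paper's: $d^c u=-du\circ J$ gives $\tfrac12 dd^c=-\tfrac12 dJd=\sqrt{-1}\partial\bar\partial$ when $\omega=g(J\cdot,\cdot)$. So the sign you obtain is exactly the stated one, and the hedging about adjusting factors or signs is unnecessary.
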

\begin{proof}
    It suffices to show that 
\begin{equation*}
    i_X g_{\psi_s} + \frac{1}{2}g_{\psi_s}(J\nabla^{g_{\psi_s}}(JX\cdot \psi_s),\cdot)=df_{\psi_s}.
\end{equation*}
Using Cartan's formula and $\sqrt{-1}\partial \overline{\partial} =-\frac{1}{2}dJd$, we compute
\begin{align*}
    \frac{1}{2}g_{\psi_s}(J\nabla^{g_{\psi_s}}(JX\cdot \psi_s),\cdot) &= -\frac{1}{2}Jd\mathcal{L}_{JX}\psi_s=-\frac{1}{2}\mathcal{L}_{JX}(Jd\psi_s)\\
    &= -\frac{1}{2}i_{JX}dJd\psi_s - \frac{1}{2}di_{JX}Jd\psi_s\\
    &= i_{JX}(\omega_{\psi_s}-\omega_E) +\frac{1}{2} d(X \cdot \psi_s)\\
    &= -i_X g_{\psi_s} +df_{\psi_s},
\end{align*}
and the claim follows by combining expressions.
\end{proof}

\begin{corollary}
    There exists a uniform constant $D>0$ such that on $\Omega_{R,\lambda,s}$,
    \begin{equation*}
        |X\cdot f_{\psi_s}|\le Df_{\psi_s}.
    \end{equation*}
\end{corollary}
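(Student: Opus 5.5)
The plan is to write $X\cdot f_{\psi_s}$ as an inner product against the gradient of $f_{\psi_s}$, using the preceding lemma, and then bound each of the resulting terms by earlier estimates. Since $\nabla^{g_{\psi_s}}f_{\psi_s}=X+X_\Delta$, we have
\begin{equation*}
X\cdot f_{\psi_s}=g_{\psi_s}(X,\nabla^{g_{\psi_s}}f_{\psi_s})=|\nabla^{g_{\psi_s}}f_{\psi_s}|^2_{g_{\psi_s}}-g_{\psi_s}(X_\Delta,\nabla^{g_{\psi_s}}f_{\psi_s}).
\end{equation*}
By Cauchy--Schwarz, and because $J$ is a $g_{\psi_s}$-isometry,
\begin{equation*}
|X\cdot f_{\psi_s}|\le |\nabla^{g_{\psi_s}}f_{\psi_s}|^2_{g_{\psi_s}}+\tfrac12|\nabla^{g_{\psi_s}}(JX\cdot\psi_s)|_{g_{\psi_s}}|\nabla^{g_{\psi_s}}f_{\psi_s}|_{g_{\psi_s}}.
\end{equation*}

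The first term is at most $(2+\Psi(R,\lambda^{-1}))f_{\psi_s}$ by Corollary \ref{uniform estimtes on gradient of fpsi}. In the second term, Corollary \ref{uniform estimates on Killing part} gives $|\nabla^{g_{\psi_s}}(JX\cdot\psi_s)|_{g_{\psi_s}}\le \Psi^{1/2}f_{\psi_s}^{1/2}$. Together with the gradient bound $|\nabla^{g_{\psi_s}}f_{\psi_s}|_{g_{\psi_s}}\le (2+\Psi)^{1/2}f_{\psi_s}^{1/2}$, this makes the second term at most $\Psi^{1/2}(2+\Psi)^{1/2}f_{\psi_s}$.

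Both bounds use $f_{\psi_s}\ge0$ on $\Omega_{R,\lambda,s}$, which was established earlier, so that taking square roots is legitimate. After restricting $R\le R_0$ and $\lambda\ge\lambda_0$ so that $\Psi\le\frac12$, we get $|X\cdot f_{\psi_s}|\le Df_{\psi_s}$ with, for instance, $D=4$.

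There is no real obstacle here. The only point needing care is the sign convention relating $X_\Delta$ and $J\nabla(JX\cdot\psi_s)$, which is irrelevant once absolute values are taken. Alternatively, one can bound $|X|_{g_{\psi_s}}$ directly, but the route through the gradient avoids having to compare $g_{\psi_s}$ with $g_E$.
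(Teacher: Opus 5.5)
Your proposal is correct and is essentially the paper's argument. The paper also writes $X=\nabla^{g_{\psi_s}}f_{\psi_s}-X_\Delta$, applies Cauchy--Schwarz to get $|X\cdot f_{\psi_s}|\le|\nabla^{g_{\psi_s}}f_{\psi_s}|^2_{g_{\psi_s}}+|X_\Delta|_{g_{\psi_s}}|\nabla^{g_{\psi_s}}f_{\psi_s}|_{g_{\psi_s}}$, and concludes from Corollaries \ref{uniform estimtes on gradient of fpsi} and \ref{uniform estimates on Killing part}; your version just makes the constant explicit (e.g.\ $D=4$ once $\Psi\le\frac12$) and records where $f_{\psi_s}\ge 0$ is needed to take square roots.
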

\begin{proof}
    Notice that, by Cauchy-Schwarz inequality, we have
    \begin{equation*}
        |X\cdot f_{\psi_s}|\le |\nabla^{g_{\psi_s}}f_{\psi_s}|_{g_{\psi_s}}|\nabla^{g_{\psi_s}}f_{\psi_s}-X_\Delta|_{g_{\psi_s}}\le |\nabla^{g_{\psi_s}}f_{\psi_s}|_{g_{\psi_s}}^2+|X_\Delta|_{g_{\psi_s}}|\nabla^{g_{\psi_s}}f_{\psi_s}|_{g_{\psi_s}}.
    \end{equation*}
    By Corollaries \ref{uniform estimtes on gradient of fpsi} and \ref{uniform estimates on Killing part}, there exists a uniform constant $D>2$ such that 
    \begin{equation*}
         |X\cdot f_{\psi_s}|\le Df_{\psi_s}.
    \end{equation*}
\end{proof}
\begin{prop}\label{control fpsi with f}
    There exist $s_0,R_0,\lambda_0>0$ and a uniform constant $D>0$ such that for all $s\le s_0,R\le R_0,\lambda\ge\lambda_0$ with $R^2\ge 4\sqrt{s},\lambda\le\frac{1}{\sqrt{s}}$, we have that
    \begin{equation*}
        f_{\psi_s}+1\le Df
    \end{equation*}
  holds on $\Omega_{R,\lambda,s}.$
\end{prop}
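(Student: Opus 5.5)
We prove the bound by a maximum principle argument for a suitable combination of $f_{\psi_s}$ and $f$, in the same spirit as Proposition \ref{control f with fpsi}. The two inputs are the evolution equations
\begin{equation*}
\left(\frac{\partial}{\partial\tau}-\Delta_{\omega_{\psi_s},X}\right)f_{\psi_s}=-f_{\psi_s}
\end{equation*}
from \eqref{evolution equation of fpsi}, and the corresponding equation for $f$. The latter is obtained from $\Delta_{\omega_{\psi_s}}f=\tr_{\omega_{\psi_s}}(\nabla^{g_E})^2 f$ and $\frac{X}{2}\cdot f=|\partial f|_{g_E}^2$:
\begin{equation*}
\left(\frac{\partial}{\partial\tau}-\Delta_{\omega_{\psi_s},X}\right)f=-\tr_{\omega_{\psi_s}}(\nabla^{g_E})^2f-|\partial f|^2_{g_E}.
\end{equation*}

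The difficulty is that the term $-\tr_{\omega_{\psi_s}}(\nabla^{g_E})^2 f$ is only bounded below by $-A\tr_{\omega_{\psi_s}}\omega_E$. No $C^2$ control on $\tr_{\omega_{\psi_s}}\omega_E$ is available yet, so working with $f$ alone would fail. To absorb this term, we use the potential exactly as in Proposition \ref{control f with fpsi}. Since $\Delta_{\omega_{\psi_s}}\psi_s=n-\tr_{\omega_{\psi_s}}\omega_E$, the function $f+A\psi_s$ satisfies
\begin{equation*}
\left(\frac{\partial}{\partial\tau}-\Delta_{\omega_{\psi_s},X}\right)(f+A\psi_s)\ge -A\tr_{\omega_{\psi_s}}\omega_E-|\partial f|^2+A\dot\psi_s-An+A\tr_{\omega_{\psi_s}}\omega_E-A(f_{\psi_s}-f).
\end{equation*}
The traces cancel. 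Using $|\partial f|^2\le f$ and Corollary \ref{coro C^0 bound}, which gives $|\dot\psi_s|\le\Psi f_{\psi_s}$ and $\psi_s\le \Psi f$, we obtain
\begin{equation*}
\left(\frac{\partial}{\partial\tau}-\Delta_{\omega_{\psi_s},X}\right)(f+A\psi_s)\ge -C-C' f_{\psi_s}-(A+1)(f+A\psi_s)+\dots
\end{equation*}
Here $C,C'$ depend only on $A$, and the omitted terms are controlled by $f$ or $f_{\psi_s}$ with coefficients of the shape above.

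Next consider $v:=f_{\psi_s}-B(f+A\psi_s)-K$, with $B$ large and $K$ a constant to be chosen. The aim is to show
\begin{equation*}
\left(\frac{\partial}{\partial\tau}-\Delta_{\omega_{\psi_s},X}\right)v\le -\epsilon\, v+C''
\end{equation*}
for some $\epsilon>0$. The potentially bad term is $+BC'f_{\psi_s}$ coming from the $f$-equation. If it cannot be absorbed directly, we use the lower bound $f_{\psi_s}\ge 0$ together with the sign of $-f_{\psi_s}$. The cleanest way is to run the maximum principle on $v$ with decay rate $1$ and take $B$ so large that $-(A+1)B(f+A\psi_s)$ dominates. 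This is possible because, by Proposition \ref{control f with fpsi}, $f_{\psi_s}$ and $f$ are comparable from one side.

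If this linear combination does not close, the fallback is a ratio-type argument. Consider $w=f_{\psi_s}/(f+A\psi_s+c)$ at an interior maximum. There the gradient terms are controlled by Corollary \ref{uniform estimtes on gradient of fpsi}, which gives $|\nabla f_{\psi_s}|^2\le 3f_{\psi_s}$, and by the comparison $f\le D(f_{\psi_s}+1)$.

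On the parabolic boundary, Proposition \ref{boundary data} gives $f_{\psi_s}\le(1+\Psi)f$ and $|\psi_s|\le\Psi f$, so $v\le 0$ there once $B\ge 2$. Lemma \ref{maximum principle on expander region} then yields $v\le C'''$ on $\Omega_{R,\lambda,s}$, that is,
\begin{equation*}
f_{\psi_s}\le B(1+A\Psi)f+C'''.
\end{equation*}
Finally, $f\ge\varepsilon>0$ by Lemma \ref{lower bound of scalar curvature}, so the additive constants $C'''$ and $1$ can be absorbed into $Df$. This gives $f_{\psi_s}+1\le Df$.

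The main obstacle is the middle step: absorbing the $+f_{\psi_s}$ contribution, which arises from $A(f_{\psi_s}-f)$ in the evolution of $f+A\psi_s$, while keeping the coefficient in front of $v$ strictly negative. This may require choosing $B$ in terms of $A$ carefully, or replacing $f_{\psi_s}$ by $f_{\psi_s}-\psi_s$ in the test function.
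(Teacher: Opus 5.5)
There is a genuine gap, and it sits in the step you flag as the main obstacle. With the operator $L:=\partial_\tau-\Delta_{\omega_{\psi_s},X}$, the inequality $Lv\le-\epsilon v+C''$ cannot be obtained for any admissible $B$.

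Your own computation, together with $|\partial f|^2_{g_E}\le f$ and $\dot\psi_s\ge-\Psi f_{\psi_s}$, gives
\[
L(f+A\psi_s)\ \ge\ -C-A(1+\Psi)f_{\psi_s}+(A-1)f .
\]
Hence, for $v=f_{\psi_s}-B(f+A\psi_s)-K$,
\[
Lv+\epsilon v\ \le\ \bigl(AB(1+\Psi)-1+\epsilon\bigr)f_{\psi_s}-B(A-1+\epsilon)f-\epsilon AB\,\psi_s+C_{B,\epsilon,K}.
\]
Consider the coefficient of $f_{\psi_s}$.
\begin{itemize}
\item You take $B\ge2$. In fact the boundary condition forces $B\gtrsim1$ anyway: at $\tau=0$ one has $\psi_s=0$ and $f_{\psi_s}=f$ on $\mathcal{B}(\sqrt\lambda)$, where $f$ is of size up to $\lambda/2$.
\item The trace cancellation needs $A\ge1$, because $\sqrt{-1}\partial\bar\partial f=\omega_E+\Ric(\omega_E)$ and $\Ric(\omega_E)\to0$ at infinity.
\end{itemize}
So this coefficient is strictly positive. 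The only negative term, $-B(A-1+\epsilon)f$, can absorb it only if $f_{\psi_s}\lesssim f$, which is the statement being proved.

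Proposition \ref{control f with fpsi} does not help here. It gives $f\le D(f_{\psi_s}+1)$, which is the opposite direction. The ratio fallback runs into the same term. At an interior maximum of $w=f_{\psi_s}/(f+A\psi_s+c)$ one gets roughly $Lw\lesssim Aw(w-1)$. This gives no bound once $w>1$, while the lateral boundary data only give $w\le 1+\Psi$. The root of the problem is that with drift $+\frac X2$, the term $\frac X2\cdot\psi_s=f_{\psi_s}-f$ enters the evolution of $f+A\psi_s$ with the wrong sign.

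The missing idea is to reverse the drift. Take $L_-:=\partial_\tau-\Delta_{\omega_{\psi_s}}+\frac X2$. The same trace cancellation, $\frac X2\cdot f=|\partial f|^2_{g_E}\ge0$, and $\dot\psi_s,\psi_s\ge-\Psi f_{\psi_s}$ then give
\[
L_-(f+A\psi_s)\ \ge\ -C+\tfrac A2\, f_{\psi_s}-A(f+A\psi_s).
\]
The price is that $f_{\psi_s}$ is no longer an eigenfunction: $L_-f_{\psi_s}=X\cdot f_{\psi_s}-f_{\psi_s}$. So one needs $|X\cdot f_{\psi_s}|\le Df_{\psi_s}$, which is the ingredient absent from your proposal. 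It follows from the identity $\nabla^{g_{\psi_s}}f_{\psi_s}=X+\frac J2\nabla^{g_{\psi_s}}(JX\cdot\psi_s)$, which says $f_{\psi_s}$ is an approximate Hamiltonian for $X$, combined with Corollaries \ref{uniform estimtes on gradient of fpsi} and \ref{uniform estimates on Killing part}.

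Now take $v=\alpha f_{\psi_s}-f-A\psi_s$. This is your test function up to the factor $B=\alpha^{-1}$ and the constant $K$.
\begin{itemize}
\item For $\alpha=\min\{\frac14,\frac{A}{2(A+D)}\}$, using $f_{\psi_s}\ge0$, it satisfies $L_-v\le C-Av$.
\item It satisfies $v\le0$ on $\partial_{\textnormal{P}}\Omega_{R,\lambda,s}$.
\end{itemize}
Lemma \ref{maximum principle on expander region} allows an arbitrary drift field, so it applies and gives $\alpha f_{\psi_s}\le(1+A\Psi)f+C'$. Your final absorption of constants via $f\ge\varepsilon$ then completes the proof.
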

\begin{proof}
    We define $\Delta_{\omega_{\psi_s},-X}:=\Delta_{\omega_{\psi_{s}}}-\frac{X}{2}$ and compute
    \begin{equation*}
        \begin{split}
            &\left(\frac{\partial}{\partial\tau}-\Delta_{\omega_{\psi_s},-X}\right)(f+A\psi_s)\\
            &=-\Delta_{\omega_{\psi_s}}f+\frac{X}{2}\cdot f+A\dot\psi_s-A\Delta_{\omega_{\psi_s}}\psi_s+A\frac{X}{2}\cdot\psi_s\\
            &=-\Delta_{\omega_{\psi_s}}f+|\partial f|^2_{g_E}+A\dot\psi_s-A(n-\tr_{\omega_{\psi_s}}\omega_E)+Af_{\psi_s}-Af.
        \end{split}
    \end{equation*}
    Since $(\nabla^{g_E})^2f\le Ag_E$ for some $A>0$, we have
    \begin{equation*}
         \begin{split}
            &\left(\frac{\partial}{\partial\tau}-\Delta_{\omega_{\psi_s},-X}\right)(f+A\psi_s)\\
            &\ge -A\tr_{\omega_{\psi_s}}\omega_E+|\partial f|^2_{g_E}+A\dot\psi_s-A(n-\tr_{\omega_{\psi_s}}\omega_E)+Af_{\psi_s}-Af\\
            &=-An+|\partial f|^2_{g_E}+A\dot\psi_s+Af_{\psi_s}-Af\\
            &\ge -An+A\dot\psi_s+Af_{\psi_s}-Af.
        \end{split}
    \end{equation*}
    Using that $\dot\psi_s\ge -\Psi(R,\lambda^{-1})f_{\psi_s},$ where $\Psi(R,\lambda^{-1})\le\frac{1}{2}$, we have  
    \begin{equation*}
         \begin{split}
            &\left(\frac{\partial}{\partial\tau}-\Delta_{\omega_{\psi_s},-X}\right)(f+A\psi_s)\\
            &\ge-An+A\dot\psi_s+Af_{\psi_s}-Af\\
            &\ge -C+(A-A\Psi(R,\lambda^{-1}))f_{\psi_s}-Af\\
            &= -C+(A-A\Psi(R,\lambda^{-1}))f_{\psi_s}-A(f+A\psi_s)+A^2\psi_s,
        \end{split}
    \end{equation*}
    with $C>0$ a uniform constant. Since $\psi_s\ge-\Psi(R,\lambda^{-1})f_{\psi_s}$, we have 
    \begin{equation*}
          \begin{split}
            &\left(\frac{\partial}{\partial\tau}-\Delta_{\omega_{\psi_s},-X}\right)(f+A\psi_s)\\
            &\ge -C+(A-A\Psi(R,\lambda^{-1}))f_{\psi_s}-A(f+A\psi_s)+A^2\psi_s\\
            &\ge -C+(A-A\Psi(R,\lambda^{-1})-A^2\Psi(R,\lambda^{-1}))f_{\psi_s}-A(f+A\psi_s).
        \end{split}
    \end{equation*}
    Taking $R_0,s_0,\lambda_0>0$ so that $A\Psi(R,\lambda^{-1})+A^2\Psi(R,\lambda^{-1})\le \frac{A}{2}$ for all $s\le s_0,R\le R_0,\lambda\ge\lambda_0$ with $R^2> 4\sqrt{s},\lambda\le\frac{1}{\sqrt{s}},$ we obtain that
    \begin{equation*}
            \left(\frac{\partial}{\partial\tau}-\Delta_{\omega_{\psi_s},-X}\right)(f+A\psi_s)
            \ge -C+\frac{A}{2}f_{\psi_s}-A(f+A\psi_s),
    \end{equation*}
    holds for uniform constant $C>0$.
    
    Now fix $D>0$ a uniform constant such that the following inequality holds:
    \begin{equation*}
        \left(\frac{\partial}{\partial\tau}-\Delta_{\omega_{\psi_s},-X}\right)f_{\psi_s}=X\cdot f_{\psi_s}-f_{\psi_s}\le Df_{\psi_s}.
    \end{equation*}
    Taking $\alpha \in (0,1)$ to be determined later and considering $v:=-f-A\psi_s+\alpha f_{\psi_s},$ on the one hand, we have
    \begin{equation*}
        \begin{split}
             \left(\frac{\partial}{\partial\tau}-\Delta_{\omega_{\psi_s},-X}\right)v&\le \alpha Df_{\psi_s}+C-\frac{A}{2}f_{\psi_s}+A(f+A\psi_s)\\
             &=C+(\alpha D-\frac{A}{2})f_{\psi_s}+A(f+A\psi_s-\alpha f_{\psi_s})+\alpha A f_{\psi_s}\\
             &=C+(\alpha D-\frac{A}{2}+\alpha A)f_{\psi_s}-Av.
        \end{split}
    \end{equation*}
    On the other hand, on the parabolic boundary $\partial_{\textnormal P}\Omega_{R,\lambda,s}$, we have that 
    \begin{equation*}
       \begin{split}
            v&=\alpha f_{\psi_s}-f-A\psi_s \le \alpha (1+\Psi(R,\lambda^{-1}))f+A\Psi(R,\lambda^{-1})f-f\\
            &\le 2\alpha f+\frac{1}{2}f-f< 2\alpha f-\frac{1}{2}f.
       \end{split}
    \end{equation*}
    Now we fix $\alpha=\min\{\frac{1}{4},\frac{A}{2(A+D)}\}>0.$ It follows that $v\le 0$ on $\partial_\textnormal P\Omega_{R,\lambda,s},$ and 
    \begin{equation*}
          \left(\frac{\partial}{\partial\tau}-\Delta_{\omega_{\psi_s},-X}\right)v\le C-Av
    \end{equation*}
    on $\Omega_{R,\lambda,s}$. Lemma \ref{maximum principle on expander region} then implies that there exists a uniform constant $C'>0$ such that on $\Omega_{R,\lambda,s}$, we have
    \begin{equation*}
        \alpha f_{\psi_s}\le f+A\psi_s+C'\le f+A\Psi(R,\lambda^{-1})f+C'.
    \end{equation*}
    Since $\alpha>0$, we conclude that there exists a uniform constant $D>0$ such that
    \begin{equation*}
        f_{\psi_s}+1\le D(f+1)
    \end{equation*}
    holds on $\Omega_{R,\lambda,s}$.
\end{proof}

\subsection{\texorpdfstring{$C^2$}{C\^2}-estimates on the expander region}
The next theorem is the main result of this section, and the method can be traced back to Yau's celebrated $C^2$-estimate. Let $R_0, s_0 \in (0,\infty)$  and $\lambda_0 \in (0,\infty)$ be the minimums and maximum, respectively, of the quantities appearing in Proposition \ref{control f with fpsi} and Proposition \ref{control fpsi with f}. Throughout this subsection, we will choose parameters $R, s, \lambda > 0$ satisfying \eqref{eq:backgroundassumption}.

\begin{theorem}[$C^2$-estimates]\label{C^2 estimate}
    There exists a uniform constant $C>1$ such that, on $\Omega_{R,\lambda,s}$, we have
    \begin{equation*}
        \frac{1}{C}\omega_E\le\omega_{\psi_s}\le C\omega_E.
    \end{equation*}
\end{theorem}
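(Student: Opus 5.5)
We will run a parabolic Aubin--Yau/Chern--Lu argument for $\tr_{\omega_E}\omega_{\psi_s}$ along the modified flow \eqref{eq:modifiedKRF}, using $\omega_E$ as a fixed reference metric. The lower bound then comes from the upper bound together with the volume form control. Indeed, from \eqref{equation of normalized complex monge ampere} we have
\begin{equation*}
\log\frac{\omega_{\psi_s}^n}{\omega_E^n}=\dot\psi_s-\frac{X}{2}\cdot\psi_s+\psi_s=\dot\psi_s+\psi_s+f-f_{\psi_s}.
\end{equation*}
By Corollary \ref{coro C^0 bound}, Proposition \ref{control f with fpsi} and Proposition \ref{control fpsi with f}, this quantity is bounded by $\Psi(R,\lambda^{-1})(f+1)+|f-f_{\psi_s}|$. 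The remaining term $|f-f_{\psi_s}|$ is not obviously small. So I will first aim for a bound of the form $|\log(\omega_{\psi_s}^n/\omega_E^n)|\le C$, obtained by comparing $f_{\psi_s}$ and $f$ additively rather than only multiplicatively. The strategy is to apply the maximum principle (Lemma \ref{maximum principle on expander region}) to $f-f_{\psi_s}=-\tfrac{X}{2}\cdot\psi_s$. This function satisfies a drift-heat equation, with a forcing term involving $\Delta_{\omega_{\psi_s}}f-\Delta_{\omega_E}f$. That forcing term is controlled by $A\,\tr_{\omega_{\psi_s}}\omega_E$, so this step will have to be coupled with the trace estimate below.

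For the upper bound, I set $Q:=\log\tr_{\omega_E}\omega_{\psi_s}$. The computation is the standard one, with the drift handled by the fact that $X$ is real holomorphic and $\mathcal{L}_{X/2}\omega_E=\omega_E+\Ric(\omega_E)$. Using the bisectional curvature bound $B$ of $g_E$ (bounded by \eqref{eq:quadraticdecayofexpander}), it gives
\begin{equation*}
\left(\frac{\partial}{\partial\tau}-\Delta_{\omega_{\psi_s},X}\right)Q\le C_0\,\tr_{\omega_{\psi_s}}\omega_E+C_0.
\end{equation*}
Here the terms $\pm\omega$ and the Ricci terms coming from the drift cancel up to bounded errors, and the gradient term is absorbed in the usual Aubin--Yau way. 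To kill the bad term I use the barrier from Lemma \ref{barrier function}, which the paper constructs before this point. In practice I would take $H:=Q-K\psi_s+K'\,\Theta$, with $\Theta$ built from $f$ and $f_{\psi_s}$, for instance $\Theta=-(f_{\psi_s}-f)$ or $\log(f_{\psi_s}+1)$. The point of the term $-K\psi_s$ is the identity
\begin{equation*}
\left(\frac{\partial}{\partial\tau}-\Delta_{\omega_{\psi_s},X}\right)(-\psi_s)=-\dot\psi_s+n-\tr_{\omega_{\psi_s}}\omega_E+\frac{X}{2}\cdot\psi_s.
\end{equation*}
This yields $-K\tr_{\omega_{\psi_s}}\omega_E$, which absorbs the $C_0\tr_{\omega_{\psi_s}}\omega_E$ term once $K>C_0$. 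The price is the unbounded terms $-K\dot\psi_s+\tfrac{K}{2}X\cdot\psi_s$, which are only of size $\Psi f+C f$. These are exactly what the barrier must compensate. I will use Lemma \ref{barrier function} to produce a function $\Theta$ whose drift-heat operator is $\le -c f+C$ and which is bounded on the region by $C\log(f+1)$ or by a constant. Then $H$ satisfies $(\partial_\tau-\Delta_{\omega_{\psi_s},X})H\le C-\epsilon H$ wherever $H$ is large, after using $\tr_{\omega_{\psi_s}}\omega_E\ge n\,(\tr_{\omega_E}\omega_{\psi_s})^{1/(n-1)}e^{-\log(\omega_{\psi_s}^n/\omega_E^n)/(n-1)}$ to convert the leftover good trace term into a negative multiple of $e^{Q/(n-1)}$.

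On the parabolic boundary, Proposition \ref{boundary data}\ref{boundarydata1} gives $Q\le\log n+\Psi$, and items \ref{boundarydata3}--\ref{boundarydata5} control $\psi_s$ and the barrier. So $H$ is bounded there, and Lemma \ref{maximum principle on expander region} bounds $H$ on $\Omega_{R,\lambda,s}$. Removing $K\psi_s$ and $K'\Theta$ is possible only if these are bounded, or at most offset by the $e^{Q/(n-1)}$ gain. I expect this to be the main obstacle: $\psi_s$ is only known to be $\le\Psi f$, which is unbounded on $\mathcal{B}(\sqrt\lambda)$ as $\lambda\to\infty$. If it is needed, I would first localize with the uniform constant $D$ from Propositions \ref{control f with fpsi} and \ref{control fpsi with f}. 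Concretely, I would replace $-K\psi_s$ by $-K\psi_s/(f+1)$, or choose $K\propto 1/\sup\Psi$, so that the unbounded contributions carry a small factor $\Psi(R,\lambda^{-1})$ and are dominated by the linear damping $-f_{\psi_s}$ coming from the $-u$ term in the equations. Once $\tr_{\omega_E}\omega_{\psi_s}\le C$ and the volume ratio is bounded, the elementary eigenvalue inequality gives $\tr_{\omega_{\psi_s}}\omega_E\le C$, which is the asserted two-sided bound.
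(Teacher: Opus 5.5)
\textbf{There is a genuine gap.} Your framework is the right one: a Chern--Lu/Aubin--Yau quantity plus a barrier, closed by Lemma \ref{maximum principle on expander region} and the boundary data of Proposition \ref{boundary data}. As written, however, it does not give a constant independent of $R,\lambda,s$, and the step meant to control the volume form cannot work.

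\emph{The volume bound.} On $\partial\mathcal{B}(\sqrt{\lambda})$ you only know $|f_{\psi_s}-f|\le\Psi(R,\lambda^{-1})f$, which is of size $\Psi(R,\lambda^{-1})\lambda$. So the maximum principle for $f-f_{\psi_s}$ can only give a bound of the form $\Psi(R,\lambda^{-1})f+C(1+\sup\tr_{\omega_{\psi_s}}\omega_E)$, never a uniform additive one. Since $\dot\psi_s$ and $\psi_s$ are themselves only $O(\Psi f)$, a uniform bound on $\log\frac{\omega_{\psi_s}^n}{\omega_E^n}$ is not available before the $C^2$ estimate.

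\emph{The term $-K\psi_s$.} The same obstruction applies to $H=Q-K\psi_s$. Even if $H$ were bounded, removing $K\psi_s$ only gives $\tr_{\omega_E}\omega_{\psi_s}\le Ce^{CK\Psi(R,\lambda^{-1})\lambda}$. Rescaling $K$ does not help: a large $K$ enlarges $K\psi_s$, and a small $K$ cannot absorb $C_0\tr_{\omega_{\psi_s}}\omega_E$. There is also no damping for $Q$ itself, because in the modified flow the $\pm\omega$ contributions cancel exactly in the evolution of the traces.

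\emph{The barrier.} Lemma \ref{barrier function} does not give a function with drift-heat operator $\le -cf+C$, and a bounded function cannot be expected to do that against the drift $\frac{X}{2}$. What it gives is the lower bound $\frac{1}{4(f_{\psi_s}+1)}\bigl(\tr_{\omega_{\psi_s}}\omega_E+(\log\frac{\omega_{\psi_s}^n}{\omega_E^n})_+-C\bigr)$, whose good term decays.

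\emph{The missing idea} is to make the error terms decay at the same rate. You bounded the reference curvature by a uniform bisectional constant, so your error is $C_0\tr_{\omega_{\psi_s}}\omega_E$. The paper instead uses that $g_E$ is AC, so $(f+1)|\Rm(g_E)|_{g_E}\le A$ and $\Ric(g_E)\ge-\frac{B}{f+1}g_E$. Lemma \ref{parabolic Schwarz lemmas} then bounds the drift-heat operator of both $\log\tr_{\omega_E}\omega_{\psi_s}$ and $\log\tr_{\omega_{\psi_s}}\omega_E$ by $\frac{A}{f+1}(\tr_{\omega_{\psi_s}}\omega_E+1)\le\frac{AD}{f_{\psi_s}+1}(\tr_{\omega_{\psi_s}}\omega_E+1)$, using Proposition \ref{control fpsi with f}.

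This decaying error is matched exactly by the bounded barrier $\Theta(\psi_s)=\frac{\psi_s}{f_{\psi_s}+1}-2\varepsilon\frac{\psi_s^2}{(f_{\psi_s}+1)^2}$. Your fallback $-K\psi_s/(f+1)$ points in this direction, but two adjustments are needed:
\begin{enumerate}
\item the denominator must be $f_{\psi_s}+1$, since $f_{\psi_s}$ solves $(\partial_\tau-\Delta_{\omega_{\psi_s},X})f_{\psi_s}=-f_{\psi_s}$;
\item the quadratic correction is needed to absorb the cross term $\frac{|\nabla^{g_{\psi_s}}\psi_s|^2_{g_{\psi_s}}}{(f_{\psi_s}+1)^2}$.
\end{enumerate}
Boundedness of $\Theta$ comes from $|\psi_s|\le D(f_{\psi_s}+1)$, by Corollary \ref{coro C^0 bound} and Proposition \ref{control f with fpsi}.

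The argument then closes as follows. At an interior maximum of $\log\tr_{\omega_{\psi_s}}\omega_E-5AD\Theta$ one gets $\tr_{\omega_{\psi_s}}\omega_E\le C$. At an interior maximum of $\log\tr_{\omega_E}\omega_{\psi_s}-5AD\Theta$ one gets simultaneously $\tr_{\omega_{\psi_s}}\omega_E+(\log\frac{\omega_{\psi_s}^n}{\omega_E^n})_+\le C$. Hence $\tr_{\omega_E}\omega_{\psi_s}\le\frac{\omega_{\psi_s}^n}{\omega_E^n}(\tr_{\omega_{\psi_s}}\omega_E)^{n-1}\le C$. The volume ratio is thus controlled only at the maximum point, through the $(\cdot)_+$ term built into the barrier, and never globally in advance as your first step attempts.
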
 
Before proving the theorem, we will need to introduce a barrier function to deal with the extra drift term coming from the drift Laplacian $\Delta_{\omega_{\psi_s},X}.$ See also the results in \cite[Section 4]{longteng2} for a similar approach.

\begin{lemma}[Barrier function]\label{barrier function}
    There exists a smooth, uniformly bounded barrier function $\Theta(\psi_s)$ defined on $\Omega_{R,\lambda,s}$ such that 
    \begin{equation}\label{equation of barrier function}
         \left(\frac{\partial}{\partial\tau}-\Delta_{\omega_{\psi_s},X}\right)\Theta(\psi_s)\ge \frac{1}{4(f_{\psi_s}+1)}\left(\tr_{\omega_{\psi_s}}\omega_E+\left( \log \frac{\omega_{\psi_s}^n}{\omega_E^n} \right)_+-C\right)
    \end{equation}
    holds on $\Omega_{R,\lambda,s},$ for some uniform constant $C>0$.
\end{lemma}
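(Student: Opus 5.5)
I will take $\Theta(\psi_s)$ to be the quotient
\begin{equation*}
\Theta(\psi_s):=\frac{G}{h},\qquad G:=-\psi_s-2\dot\psi_s,\qquad h:=f_{\psi_s}+1 .
\end{equation*}
Uniform boundedness of $\Theta$ then follows directly from Corollary \ref{coro C^0 bound}, which gives $|\psi_s|,|\dot\psi_s|\le \Psi(R,\lambda^{-1})(f+f_{\psi_s})$ on $\Omega_{R,\lambda,s}$, together with Proposition \ref{control f with fpsi}, which gives $f\le D(f_{\psi_s}+1)$. The combination $-\psi_s-2\dot\psi_s$ is chosen so that its heat operator produces $\tr_{\omega_{\psi_s}}\omega_E$ and $+\log\frac{\omega_{\psi_s}^n}{\omega_E^n}$ with the correct signs.

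\textbf{Evolution of $G$.} Using $\Delta_{\omega_{\psi_s}}\psi_s=n-\tr_{\omega_{\psi_s}}\omega_E$ and \eqref{equation of normalized complex monge ampere}, I get
\begin{equation*}
\Bigl(\frac{\partial}{\partial\tau}-\Delta_{\omega_{\psi_s},X}\Bigr)(-\psi_s)=\tr_{\omega_{\psi_s}}\omega_E-n-\log\frac{\omega_{\psi_s}^n}{\omega_E^n}+\psi_s .
\end{equation*}
For the second term I use $(\partial_\tau-\Delta_{\omega_{\psi_s},X})\dot\psi_s=-\dot\psi_s$ and substitute $\dot\psi_s=\log\frac{\omega_{\psi_s}^n}{\omega_E^n}+\frac X2\psi_s-\psi_s$. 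Adding the two contributions gives
\begin{equation*}
\Bigl(\frac{\partial}{\partial\tau}-\Delta_{\omega_{\psi_s},X}\Bigr)G=\tr_{\omega_{\psi_s}}\omega_E+\log\frac{\omega_{\psi_s}^n}{\omega_E^n}-n+X\cdot\psi_s-\psi_s .
\end{equation*}
The last two terms are bounded by $\Psi(R,\lambda^{-1})h$. This uses $X\cdot\psi_s=2(f_{\psi_s}-f)$, Corollary \ref{coro C^0 bound} and Proposition \ref{control f with fpsi}.

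The term $\log$ is not yet $\log_+$. To fix this I use $\tr_{\omega_{\psi_s}}\omega_E\ge n e^{-\frac1n\log(\omega_{\psi_s}^n/\omega_E^n)}$. Where the log is negative, its absolute value is therefore dominated by $\tfrac12\tr_{\omega_{\psi_s}}\omega_E+C$. Hence
\begin{equation*}
\tr_{\omega_{\psi_s}}\omega_E+\log\frac{\omega_{\psi_s}^n}{\omega_E^n}\ge \tfrac12\bigl(\tr_{\omega_{\psi_s}}\omega_E+(\log\tfrac{\omega_{\psi_s}^n}{\omega_E^n})_+\bigr)-C .
\end{equation*}

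\textbf{Quotient rule.} Writing $\square:=\partial_\tau-\Delta_{\omega_{\psi_s},X}$, I have
\begin{equation*}
\square\frac Gh=\frac{\square G}{h}-\frac{G\,\square h}{h^2}+\frac{2\Re\langle\nabla G,\nabla h\rangle_{g_{\psi_s}}}{h^2}-\frac{2G|\nabla h|^2}{h^3}.
\end{equation*}
From \eqref{evolution equation of fpsi}, $\square h=-f_{\psi_s}$, so the second term is $O(\Psi)$. The last term is $O(\Psi)$ by Corollary \ref{uniform estimtes on gradient of fpsi}.

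\textbf{Main obstacle: the cross term.} The difficulty is $\langle\nabla G,\nabla h\rangle/h^2$, because $\nabla\psi_s$ and $\nabla\dot\psi_s$ are not controlled in $g_{\psi_s}$. My plan is to use $\nabla^{g_{\psi_s}}f_{\psi_s}=X+X_\Delta$.

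For the $X$-part, $\langle\nabla G,X\rangle=X\cdot G$ can be computed explicitly. First, $X\cdot\psi_s=2(f_{\psi_s}-f)$, which is $O(\Psi h)$. Second, $X\cdot\dot\psi_s=2\partial_\tau f_{\psi_s}=2(\Delta_{\omega_{\psi_s},X}f_{\psi_s}-f_{\psi_s})$. Here $\Delta_{\omega_{\psi_s}}f_{\psi_s}=\Delta_{\omega_{\psi_s}}f+\frac X2\log\frac{\omega_{\psi_s}^n}{\omega_E^n}+\Delta_{\omega_E}f$-type terms, and the Hessian term is bounded by $A\tr_{\omega_{\psi_s}}\omega_E$. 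This piece thus contributes at most $\frac{C\Psi}{h}(\tr_{\omega_{\psi_s}}\omega_E+h)$, which is absorbed after shrinking $R$ and $\lambda^{-1}$.

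For the $X_\Delta$-part, I would use Corollary \ref{uniform estimates on Killing part} and Cauchy--Schwarz, absorbing what is needed into the good terms. If the derivatives of $\frac X2\log$ are not controllable, I would fall back on adding $\varepsilon|\nabla\Theta|^2$-type terms. These are harmless at the maximum points where the barrier is used in Theorem \ref{C^2 estimate}, and I would then state the inequality modulo $\langle\nabla\Theta,\nabla h\rangle/h$.

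Collecting all terms and choosing $\Psi$ small yields the factor $\frac1{4h}$ in \eqref{equation of barrier function}.
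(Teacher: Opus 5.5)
There is a genuine gap, and it starts with a sign error that already rules out your choice of $G$. Since $\Delta_{\omega_{\psi_s}}\psi_s=n-\tr_{\omega_{\psi_s}}\omega_E$, equation \eqref{equation of normalized complex monge ampere} gives $(\partial_\tau-\Delta_{\omega_{\psi_s},X})\psi_s=\log\frac{\omega_{\psi_s}^n}{\omega_E^n}-\psi_s+\tr_{\omega_{\psi_s}}\omega_E-n$. Hence $(\partial_\tau-\Delta_{\omega_{\psi_s},X})(-\psi_s)=n-\tr_{\omega_{\psi_s}}\omega_E-\log\frac{\omega_{\psi_s}^n}{\omega_E^n}+\psi_s$, which is not what you wrote. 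Because $\dot\psi_s$ solves $(\partial_\tau-\Delta_{\omega_{\psi_s},X})\dot\psi_s=-\dot\psi_s$, it never produces a trace term. So the coefficient of $\tr_{\omega_{\psi_s}}\omega_E$ in the heat operator of $a\psi_s+b\dot\psi_s$ is exactly $a$, and your choice $a=-1$ gives $-\tr_{\omega_{\psi_s}}\omega_E$.

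You also may not discard terms as $O(\Psi(R,\lambda^{-1}))$. The right-hand side of \eqref{equation of barrier function} only tolerates errors of size $O(1/h)$, while $h=f_{\psi_s}+1$ reaches size $\sim\lambda$ on $\Omega_{R,\lambda,s}$, and $\lambda\Psi(R,\lambda^{-1})$ is not bounded in general, since $\Psi$ contains $\lambda$-independent terms such as $k_2(R)$. An additive error $\Psi$ would only give $\tr_{\omega_{\psi_s}}\omega_E\le C+C\Psi h$ in Theorem \ref{C^2 estimate}. If you track the quotient exactly, $-G\,\square h/h^2=G/h-G/h^2$, and $G/h$ combined with $(X\cdot\psi_s-\psi_s)/h$ produces $-2\log\frac{\omega_{\psi_s}^n}{\omega_E^n}/h$. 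This gives
\[
\square\frac{G}{h}=-\frac{1}{h}\Bigl(\tr_{\omega_{\psi_s}}\omega_E+\log\frac{\omega_{\psi_s}^n}{\omega_E^n}-n\Bigr)+O\Bigl(\frac1h\Bigr)+(\text{cross term}).
\]
The $\dot\psi_s/h$ part is neutral at leading order, so your $\Theta$ behaves like $-\psi_s/h$, and both good terms come with the wrong sign.

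The cross term is also left open, and the decomposition $\nabla^{g_{\psi_s}}f_{\psi_s}=X+X_\Delta$ does not close it. The $X$-part needs $X\cdot\dot\psi_s=2\partial_\tau f_{\psi_s}$. But $\Delta_{\omega_{\psi_s}}f_{\psi_s}=\frac X2\cdot\log\frac{\omega_{\psi_s}^n}{\omega_E^n}+\Delta_{\omega_E}f$ involves third derivatives of $\psi_s$, which are not controlled by $\tr_{\omega_{\psi_s}}\omega_E$. They are only bounded in Theorem \ref{thm: C^3 estimates}, which itself uses the $C^2$ estimate. The $X_\Delta$-part needs control of $|\nabla^{g_{\psi_s}}\psi_s|$, and nothing in your $\Theta$ produces a good gradient term to absorb it. Stating the inequality ``modulo $\langle\nabla\Theta,\nabla h\rangle/h$'' does not prove the lemma. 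It is also not harmless in Theorem \ref{C^2 estimate}: at a maximum of $\log\tr_{\omega_{\psi_s}}\omega_E-5AD\,\Theta$ one only has $\nabla\Theta=(5AD)^{-1}\nabla\log\tr_{\omega_{\psi_s}}\omega_E$, which is uncontrolled.

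The missing idea is the paper's quadratic correction: take $\Theta=\frac{\psi_s}{h}-2\varepsilon\frac{\psi_s^2}{h^2}$ with $\varepsilon=\frac{1}{8D}$.
\begin{itemize}
\item For $\psi_s/h$, the term $\psi_s f_{\psi_s}/h^2=\psi_s/h-\psi_s/h^2$ cancels the $-\psi_s/h$ exactly. This leaves $\frac1h\bigl(\tr_{\omega_{\psi_s}}\omega_E+\log\frac{\omega_{\psi_s}^n}{\omega_E^n}-n\bigr)$ up to $O(D/h)$, using $|\psi_s|\le Dh$ and $|\nabla^{g_{\psi_s}}h|^2\le 3h$.
\item Cauchy--Schwarz bounds the cross term of $\psi_s/h$ below by $-\frac{3}{4\varepsilon h}-\varepsilon\frac{|\nabla\psi_s|^2}{h^2}$.
\item The heat operator of $\psi_s^2/h^2$ is at most $-\frac12\frac{|\nabla\psi_s|^2}{h^2}+\frac{2D}{h}\bigl(|\log\frac{\omega_{\psi_s}^n}{\omega_E^n}|+\tr_{\omega_{\psi_s}}\omega_E\bigr)+O(D^2/h)$.
\item Subtracting $2\varepsilon$ times this cancels the gradient term, at the cost of $\frac{1}{2h}\bigl(|\log\frac{\omega_{\psi_s}^n}{\omega_E^n}|+\tr_{\omega_{\psi_s}}\omega_E\bigr)$.
\end{itemize}
Your AM--GM step then finishes the proof as you describe.
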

\begin{proof}
    We start by considering the function $\frac{\psi_s}{f_{\psi_s}+1}$ which is well-defined since $f_{\psi_s}\ge 0$. Then,
    \begin{equation*}
        \begin{split}
            \left(\frac{\partial}{\partial\tau}-\Delta_{\omega_{\psi_s},X}\right)\frac{\psi_s}{f_{\psi_s}+1}&=\frac{ \left(\frac{\partial}{\partial\tau}-\Delta_{\omega_{\psi_s},X}\right)\psi_s}{f_{\psi_s}+1}+\psi_s\left(\frac{\partial}{\partial\tau}-\Delta_{\omega_{\psi_s},X}\right)\frac{1}{f_{\psi_s}+1}\\
            &\quad -2\Re<\partial\psi_s,\bar\partial\frac{1}{f_{\psi_s}+1}>_{g_{\psi_s}}\\
            &=\frac{\dot\psi_s-\frac{X}{2}\cdot\psi_s-\Delta_{\omega_{\psi_s}}\psi_s}{f_{\psi_s}+1}+\psi_s\left(-\frac{\left(\frac{\partial}{\partial\tau}-\Delta_{\omega_{\psi_s},X}\right)f_{\psi_s}}{(f_{\psi_s}+1)^2}-2\frac{|\partial f_{\psi_s}|_{g_{\psi_s}}^2}{(f_{\psi_s}+1)^3}\right)\\
            &\quad +\frac{\nabla^{g_{\psi_s}}f_{\psi_s}\cdot\psi_s}{(f_{\psi_s}+1)^2}\\
            &=\frac{\dot\psi_s-\frac{X}{2}\cdot\psi_s+\tr_{\omega_{\psi_s}}\omega_E-n}{f_{\psi_s}+1}+\psi_s\left(\frac{f_{\psi_s}}{(f_{\psi_s}+1)^2}-\frac{|\nabla^{g_{\psi_s}} f_{\psi_s}|_{g_{\psi_s}}^2}{(f_{\psi_s}+1)^3}\right)\\
            &\quad +\frac{\nabla^{g_{\psi_s}}f_{\psi_s}\cdot\psi_s}{(f_{\psi_s}+1)^2}.\\
        \end{split}
    \end{equation*}
    Since initially we chose $R,\lambda$ such that $\Psi(R,\lambda^{-1})<1$, it follows that
    \[|\nabla^{g_{\psi_s}}f_{\psi_s}|_{g_{\psi_s}}^2\le (2+\Psi(R,\lambda^{-1}))f_{\psi_s}+\Psi(R,\lambda^{-1})\le 3(f_{\psi_s}+1).\]
Then, we have 
    \begin{equation*}
        \begin{split}
             \left(\frac{\partial}{\partial\tau}-\Delta_{\omega_{\psi_s},X}\right)\frac{\psi_s}{f_{\psi_s}+1}  &=\frac{\dot\psi_s-\frac{X}{2}\cdot\psi_s+\psi_s+\tr_{\omega_{\psi_s}}\omega_E-n}{f_{\psi_s}+1}+\psi_s\left(-\frac{1}{(f_{\psi_s}+1)^2}-\frac{|\nabla^{g_{\psi_s}} f_{\psi_s}|_{g_{\psi_s}}^2}{(f_{\psi_s}+1)^3}\right)\\
            &\quad +\frac{\nabla^{g_{\psi_s}}f_{\psi_s}\cdot\psi_s}{(f_{\psi_s}+1)^2}\\
            &\ge \frac{\dot\psi_s-\frac{X}{2}\cdot\psi_s+\psi_s+\tr_{\omega_{\psi_s}}\omega_E-n}{f_{\psi_s}+1}-\frac{4|\psi_s|}{(f_{\psi_s}+1)^2}+\frac{\nabla^{g_{\psi_s}}f_{\psi_s}\cdot\psi_s}{(f_{\psi_s}+1)^2}\\
            &=\frac{\log\frac{\omega_{\psi_s}^n}{\omega_E^n}+\tr_{\omega_{\psi_s}}\omega_E-n}{f_{\psi_s}+1}-\frac{4|\psi_s|}{(f_{\psi_s}+1)^2}+\frac{\nabla^{g_{\psi_s}}f_{\psi_s}\cdot\psi_s}{(f_{\psi_s}+1)^2}
        \end{split}
    \end{equation*}
    Recalling that $|\psi_s|\le D\Psi(R,\lambda^{-1})(f_{\psi_{s}}+1)\le D(f_{\psi_{s}}+1)$, we obtain
    \begin{equation*}
         \left(\frac{\partial}{\partial\tau}-\Delta_{\omega_{\psi_s},X}\right)\frac{\psi_s}{f_{\psi_s}+1}  \ge \frac{\log\frac{\omega_{\psi_s}^n}{\omega_E^n}+\tr_{\omega_{\psi_s}}\omega_E-n}{f_{\psi_s}+1}-\frac{4D}{f_{\psi_s}+1}-\frac{|\nabla^{g_{\psi_s}}f_{\psi_s}|_{g_{\psi_s}}|\nabla^{g_{\psi_s}}\psi_s|_{g_{\psi_s}}}{(f_{\psi_s}+1)^2}.
    \end{equation*}
   Applying the Cauchy-Schwarz inequality, we have, for any $\varepsilon>0$
    \begin{equation*}
        \begin{split}
            \left(\frac{\partial}{\partial\tau}-\Delta_{\omega_{\psi_s},X}\right)\frac{\psi_s}{f_{\psi_s}+1} 
            &\ge \frac{\log\frac{\omega_{\psi_s}^n}{\omega_E^n}+\tr_{\omega_{\psi_s}}\omega_E}{f_{\psi_s}+1}-\frac{4D+n}{f_{\psi_s}+1}-\frac{1}{4\varepsilon}\frac{|\nabla^{g_{\psi_s}}f_{\psi_s}|_{g_{\psi_s}}^2}{(f_{\psi_s}+1)^2}-\frac{\varepsilon|\nabla^{g_{\psi_s}}\psi_s|^2_{g_{\psi_s}}}{(f_{\psi_s}+1)^2}\\
            &\ge \frac{\log\frac{\omega_{\psi_s}^n}{\omega_E^n}+\tr_{\omega_{\psi_s}}\omega_E}{f_{\psi_s}+1}-\frac{4D+n}{f_{\psi_s}+1}-\frac{1}{4\varepsilon}\frac{3}{f_{\psi_s}+1}-\frac{\varepsilon|\nabla^{g_{\psi_s}}\psi_s|^2_{g_{\psi_s}}}{(f_{\psi_s}+1)^2}.
        \end{split}
    \end{equation*}
    Analogously, we consider the function $\frac{\psi_s^2}{(f_{\psi_s}+1)^2}$ and compute
    \begin{equation*}
        \begin{split}
             \left(\frac{\partial}{\partial\tau}-\Delta_{\omega_{\psi_s},X}\right)\frac{\psi_s^2}{(f_{\psi_s}+1)^2}&=\frac{\left(\frac{\partial}{\partial\tau}-\Delta_{\omega_{\psi_s},X}\right)\psi_s^2}{(f_{\psi_s}+1)^2}+\psi_s^2\left(\frac{\partial}{\partial\tau}-\Delta_{\omega_{\psi_s},X}\right)\frac{1}{(f_{\psi_s}+1)^2}\\
             &\quad -2\Re \langle\partial \psi_s^2,\bar \partial \frac{1}{(f_{\psi_s}+1)^2}\rangle_{g_{\psi_s}}\\
             &=\frac{2\psi_s\dot\psi_s-\psi_sX\cdot\psi_s-2\psi_s\Delta_{\omega_{\psi_s}}\psi_s-|\nabla^{g_{\psi_s}}\psi_s|_{g_{\psi_s}}^2}{(f_{\psi_s}+1)^2}\\
             &\quad +\psi_s^2\left(-2\frac{\left(\frac{\partial}{\partial\tau}-\Delta_{\omega_{\psi_s},X}\right)f_{\psi_s}}{(f_{\psi_s}+1)^3}-6\frac{|\partial f_{\psi_s}|_{g_{\psi_s}}^2}{(f_{\psi_s}+1)^4}\right)\\
             &\quad +4\psi_s\frac{\nabla^{g_{\psi_s}}f_{\psi_s}\cdot\psi_s}{(f_{\psi_s}+1)^3}\\
             &\le \frac{2\psi_s\dot\psi_s-\psi_sX\cdot\psi_s-2\psi_s\Delta_{\omega_{\psi_s}}\psi_s-|\nabla^{g_{\psi_s}}\psi_s|_{g_{\psi_s}}^2}{(f_{\psi_s}+1)^2}\\
             &\quad +\frac{2\psi_s^2f_{\psi_s}}{(f_{\psi_s}+1)^3}+4\psi_s\frac{\nabla^{g_{\psi_s}}f_{\psi_s}\cdot\psi_s}{(f_{\psi_s}+1)^3}\\
             &\le \frac{2\psi_s^2+2\psi_s\dot\psi_s-\psi_sX\cdot\psi_s-2\psi_s\Delta_{\omega_{\psi_s}}\psi_s-|\nabla^{g_{\psi_s}}\psi_s|_{g_{\psi_s}}^2}{(f_{\psi_s}+1)^2}\\
             &\quad +4\psi_s\frac{\nabla^{g_{\psi_s}}f_{\psi_s}\cdot\psi_s}{(f_{\psi_s}+1)^3}\\
             &=\frac{2\psi_s\log\frac{\omega_{\psi_s}^n}{\omega_E^n}-2\psi_s\Delta_{\omega_{\psi_s}}\psi_s-|\nabla^{g_{\psi_s}}\psi_s|_{g_{\psi_s}}^2}{(f_{\psi_s}+1)^2}\\
             &\quad +4\psi_s\frac{\nabla^{g_{\psi_s}}f_{\psi_s}\cdot\psi_s}{(f_{\psi_s}+1)^3}.\\
        \end{split}
    \end{equation*}
    Since $|\psi_s|\le D(f_{\psi_s}+1)$, we have
    \begin{equation*}
         \frac{2\psi_s\log\frac{\omega_{\psi_s}^n}{\omega_E^n}-2\psi_s\Delta_{\omega_{\psi_s}}\psi_s}{(f_{\psi_s}+1)^2}\le\frac{2D\left|\log\frac{\omega_{\psi_s}^n}{\omega_E^n}\right|}{f_{\psi_s}+1}+\frac{2D|\Delta_{\omega_{\psi_s}}\psi_s|}{f_{\psi_s}+1}.
    \end{equation*}
    Moreover,
    \begin{equation*}
       4\psi_s\frac{\nabla^{g_{\psi_s}}f_{\psi_s}\cdot\psi_s}{(f_{\psi_s}+1)^3}\le 4D\frac{|\nabla^{g_{\psi_s}}f_{\psi_s}\cdot\psi_s|}{(f_{\psi_s}+1)^2}.
    \end{equation*}
   Putting everything together yields
    \begin{equation*}
         \left(\frac{\partial}{\partial\tau}-\Delta_{\omega_{\psi_s},X}\right)\frac{\psi_s^2}{(f_{\psi_s}+1)^2}\le \frac{2D\left|\log\frac{\omega_{\psi_s}^n}{\omega_E^n}\right|}{f_{\psi_s}+1}+\frac{2D|\Delta_{\omega_{\psi_s}}\psi_s|}{f_{\psi_s}+1}-\frac{|\nabla^{g_{\psi_s}}\psi_s|_{g_{\psi_s}}^2}{(f_{\psi_s}+1)^2}+4D\frac{|\nabla^{g_{\psi_s}}f_{\psi_s}\cdot\psi_s|}{(f_{\psi_s}+1)^2}.
    \end{equation*}
   Again by the Cauchy-Schwarz inequality,
   \begin{equation*}
        \begin{split}
            |\nabla^{g_{\psi_s}}f_{\psi_s}\cdot\psi_s|\le |\nabla^{g_{\psi_s}}f_{\psi_s}|_{g_{\psi_s}}|\nabla^{g_{\psi_s}}\psi_s|_{g_{\psi_s}}&\le \frac{1}{8D}|\nabla^{g_{\psi_s}}\psi_s|_{g_{\psi_s}}^2+2D |\nabla^{g_{\psi_s}}f_{\psi_s}|^2_{g_{\psi_s}}\\
            &\le \frac{1}{8D}|\nabla^{g_{\psi_s}}\psi_s|_{g_{\psi_s}}^2+6D(f_{\psi_s}+1).
        \end{split}
    \end{equation*}
    Therefore, we have 
    \begin{equation*}
        \left(\frac{\partial}{\partial\tau}-\Delta_{\omega_{\psi_s},X}\right)\frac{\psi_s^2}{(f_{\psi_s}+1)^2}\le \frac{2D\left|\log\frac{\omega_{\psi_s}^n}{\omega_E^n}\right|+2D|\Delta_{\omega_{\psi_s}}\psi_s|+24D^2}{f_{\psi_s}+1}-\frac{1}{2}\frac{|\nabla^{g_{\psi_s}}\psi_s|_{g_{\psi_s}}^2}{(f_{\psi_s}+1)^2}.
    \end{equation*}
    Finally, using that $|\Delta_{\omega_{\psi_s}}\psi_s|\le n+\tr_{\omega_{\psi_s}}\omega_E$ we have
    \begin{equation*}
         \left(\frac{\partial}{\partial\tau}-\Delta_{\omega_{\psi_s},X}\right)\frac{\psi_s^2}{(f_{\psi_s}+1)^2}\le \frac{24D^2+2Dn}{f_{\psi_s}+1}+\frac{2D\left|\log\frac{\omega_{\psi_s}^n}{\omega_E^n}\right|+2D\tr_{\omega_{\psi_s}}\omega_E}{f_{\psi_s}+1}-\frac{1}{2}\frac{|\nabla^{g_{\psi_s}}\psi_s|_{g_{\psi_s}}^2}{(f_{\psi_s}+1)^2}.
    \end{equation*}
    Taking $\varepsilon=\frac{1}{8D}$ and $C=4D+n+\frac{3}{4\varepsilon}+2\varepsilon(24D^2+2Dn)$, we have
    \begin{equation*}
         \left(\frac{\partial}{\partial\tau}-\Delta_{\omega_{\psi_s},X}\right)\left(\frac{\psi_s}{f_{\psi_s}+1}-2\varepsilon\frac{\psi_s^2}{(f_{\psi_s}+1)^2}\right)\ge -\frac{C}{f_{\psi_s}+1}+\frac{\log\frac{\omega_{\psi_s}^n}{\omega_E^n}-\frac{1}{2}\left|\log\frac{\omega_{\psi_s}^n}{\omega_E^n}\right|+\frac{1}{2}\tr_{\omega_{\psi_s}}\omega_E}{f_{\psi_s}+1}.
    \end{equation*}
    We can then define $\Theta(\psi_s):=\frac{\psi_s}{f_{\psi_s}+1}-2\varepsilon\frac{\psi_s^2}{(f_{\psi_s}+1)^2}$. Since $|\psi_s|\le D(f_{\psi_s}+1)$, the boundedness of $\Theta(\psi_s)$ follows immediately. Now we show that
    \begin{equation}\label{psotive part of barrier function}
        \log\frac{\omega_{\psi_s}^n}{\omega_E^n}-\frac{1}{2}\left|\log\frac{\omega_{\psi_s}^n}{\omega_E^n}\right|+\frac{1}{2}\tr_{\omega_{\psi_s}}\omega_E\ge \frac{1}{4}\left(\tr_{\omega_{\psi_s}}\omega_E+\left(\log\frac{\omega_{\psi_s}^n}{\omega_E^n}\right)_+\right)-C(n),
    \end{equation}
    holds for some dimensional constant. Then \eqref{equation of barrier function} holds naturally. At points where $\log \frac{\omega_{\psi_s}^n}{\omega_E^n} \geq 0$, the inequality \eqref{psotive part of barrier function} follows. If instead $\log \frac{\omega_{\psi_s}^n}{\omega_E^n} \leq 0$, we can use the Arithmetic Mean-Geometric Mean inequality and the fact that $\sup_{y>0}(-\frac{1}{4}y+\frac{3n}{2}\log y) \leq C(n)$ to get
       \begin{equation*}
        \frac{3}{2}\log \frac{\omega_{\psi_s}^n}{\omega_E^n} =-\frac{3}{2}\log \frac{\omega_E^n}{\omega_{\psi_s}^n} \geq -\frac{3n}{2}\log (\frac{1}{n}\operatorname{tr}_{\omega_{\psi_s}}\omega_E) \geq -\frac{1}{4}\operatorname{tr}_{\omega_{\psi_s}}\omega_E -C(n).
    \end{equation*}
    Then \eqref{equation of barrier function} holds as expected.
    \end{proof}
\begin{lemma}\label{parabolic Schwarz lemmas}
    There exists a constant $A>0$ that only depends on $g_E$ such that
    \begin{equation*}
        \begin{split}
            &\left(\frac{\partial}{\partial\tau}-\Delta_{\omega_{\psi_s},X}\right)\log\tr_{\omega_E}\omega_{\psi_s}\le\frac{A}{f+1}\left(\tr{\omega_{\psi_s}}\omega_E+1\right),\\
             &\left(\frac{\partial}{\partial\tau}-\Delta_{\omega_{\psi_s},X}\right)\log\tr_{\omega_{\psi_s}}\omega_E\le\frac{A}{f+1}\left(\tr{\omega_{\psi_s}}\omega_E+1\right).
        \end{split}
    \end{equation*}
\end{lemma}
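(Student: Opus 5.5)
The plan is to redo the classical Aubin--Yau and Chern--Lu computations along the modified flow \eqref{eq:modifiedKRF}. The drift $\frac{X}{2}$ and the extra $-\omega_{\psi_s}$ term should cancel exactly against the soliton identity, leaving only terms that involve the curvature of $g_E$. The one geometric input is that $(E,g_E)$ is asymptotically conical, so \eqref{eq:quadraticdecayofexpander} together with \eqref{comparison of f and r^2} gives
\begin{equation*}
|\Rm(g_E)|_{g_E}+|\Ric(g_E)|_{g_E}\le \frac{A}{f+1}\quad\text{on } E,
\end{equation*}
for some $A$ depending only on $g_E$. The other ingredients are that $X$ is real holomorphic and that $\mathcal{L}_{\frac{X}{2}}\omega_E=\sqrt{-1}\partial\bar\partial f=\omega_E+\Ric(\omega_E)$.

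\emph{First inequality.} Write $\omega_{\psi_s}$ in holomorphic coordinates and compute $\partial_\tau \tr_{\omega_E}\omega_{\psi_s}=\tr_{\omega_E}\bigl(\mathcal{L}_{\frac X2}\omega_{\psi_s}\bigr)-\tr_{\omega_E}\Ric(\omega_{\psi_s})-\tr_{\omega_E}\omega_{\psi_s}$. Since $\omega_E$ is not $X$-invariant, commuting the Lie derivative through the trace gives
\begin{equation*}
\tr_{\omega_E}\bigl(\mathcal{L}_{\frac X2}\omega_{\psi_s}\bigr)=\tfrac{X}{2}\cdot\tr_{\omega_E}\omega_{\psi_s}+\bigl\langle \mathcal{L}_{\frac X2}\omega_E,\omega_{\psi_s}\bigr\rangle_{\omega_E}=\tfrac{X}{2}\cdot\tr_{\omega_E}\omega_{\psi_s}+\tr_{\omega_E}\omega_{\psi_s}+\langle\Ric(\omega_E),\omega_{\psi_s}\rangle_{\omega_E}.
\end{equation*}
The $\tr_{\omega_E}\omega_{\psi_s}$ term cancels the $-\omega_{\psi_s}$ contribution. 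What remains is the usual quantity $-\tr_{\omega_E}\Ric(\omega_{\psi_s})$, plus the error $\langle\Ric(\omega_E),\omega_{\psi_s}\rangle_{\omega_E}$, whose size is at most $\frac{A}{f+1}\tr_{\omega_E}\omega_{\psi_s}$.

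Next I would apply Yau's formula for $\Delta_{\omega_{\psi_s}}\log\tr_{\omega_E}\omega_{\psi_s}$. Its Ricci term cancels $-\tr_{\omega_E}\Ric(\omega_{\psi_s})$, and the gradient terms are absorbed by the standard Cauchy--Schwarz trick. The leftover curvature term $g_{\psi_s}^{k\bar l}R(g_E)_{k\bar l}{}^{i\bar j}(g_{\psi_s})_{i\bar j}/\tr_{\omega_E}\omega_{\psi_s}$ is bounded by $|\Rm(g_E)|\tr_{\omega_{\psi_s}}\omega_E\le \frac{A}{f+1}\tr_{\omega_{\psi_s}}\omega_E$. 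Dividing the Ricci error by $\tr_{\omega_E}\omega_{\psi_s}$ gives $\frac{A}{f+1}$, which is the $+1$ term in the statement.

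\emph{Second inequality.} I would argue the same way with the Chern--Lu formula for the identity map $(E,\omega_{\psi_s})\to(E,\omega_E)$. Here $\partial_\tau\tr_{\omega_{\psi_s}}\omega_E=-\langle \partial_\tau\omega_{\psi_s},\omega_E\rangle_{\omega_{\psi_s}}$, where the pairing contracts both indices of the two $(1,1)$-forms with the inverse of $g_{\psi_s}$. The Ricci term $\langle\Ric(\omega_{\psi_s}),\omega_E\rangle_{\omega_{\psi_s}}$ cancels against the corresponding term in Chern--Lu. For the drift term, I would use
\begin{equation*}
\tfrac X2\cdot\tr_{\omega_{\psi_s}}\omega_E=-\bigl\langle\mathcal{L}_{\frac X2}\omega_{\psi_s},\omega_E\bigr\rangle_{\omega_{\psi_s}}+\tr_{\omega_{\psi_s}}\bigl(\omega_E+\Ric(\omega_E)\bigr).
\end{equation*}
With this, the Lie derivative term becomes $-\frac X2\cdot\tr$, which is absorbed into $\Delta_{\omega_{\psi_s},X}$, and the $\tr_{\omega_{\psi_s}}\omega_E$ terms cancel with $+\langle\omega_{\psi_s},\omega_E\rangle_{\omega_{\psi_s}}$. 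Chern--Lu also produces a bisectional curvature term, bounded by $|\Rm(g_E)|(\tr_{\omega_{\psi_s}}\omega_E)^2$. The Ricci term from the drift contributes a similar amount through the $\Ric(\omega_E)$ piece. After dividing by $\tr_{\omega_{\psi_s}}\omega_E$, both give at most $\frac{A}{f+1}\tr_{\omega_{\psi_s}}\omega_E$. The gradient term in Chern--Lu is again controlled by Cauchy--Schwarz.

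\emph{Main obstacle.} I expect the main difficulty to be the bookkeeping of the drift. One must check that $\mathcal{L}_{\frac X2}$ commutes with $\partial\bar\partial$ (true because $X$ is real holomorphic), and that its failure to preserve $\omega_E$ produces exactly $\omega_E+\Ric(\omega_E)$ with the correct sign, so that no uncontrolled term linear in $f$ survives. I would first verify this against the case $\psi_s\equiv0$, where both sides must reduce to identities for the static soliton $g_E$.
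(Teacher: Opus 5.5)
Your proposal is correct and follows essentially the paper's route. Both arguments run the same Aubin--Yau/Chern--Lu computation along the modified flow, use $\mathcal{L}_{X/2}\omega_E=\omega_E+\Ric(\omega_E)$ so that $(\partial_\tau-\tfrac X2)\tr_{\omega_{\psi_s}}\omega_E$ reduces to $\Ric(g_{\psi_s})^{i\bar j}(g_E)_{i\bar j}-g_{\psi_s}^{i\bar j}\Ric(g_E)_{i\bar j}$, absorb the gradient terms with the Song--Weinkove inequality, and invoke $f|\Rm(g_E)|\le C$. The paper writes out only the second inequality and defers the first to \cite[Lemma 4.18]{longteng2}.

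One small slip: in the second inequality, the term $-\tr_{\omega_{\psi_s}}\Ric(\omega_E)$, after dividing by $\tr_{\omega_{\psi_s}}\omega_E$, contributes $\frac{A}{f+1}$ (the ``$+1$'' part of the bound), not $\frac{A}{f+1}\tr_{\omega_{\psi_s}}\omega_E$. This does not affect the conclusion.
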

\begin{proof}
    The first evolution inequality is a straightforward computation, where we are crucially using that the expanding soliton is asymptotically conical and, therefore, $f|\operatorname{Rm}(g_E)|\leq C(g_E).$ For a detailed proof, see \cite[Lemma 4.18]{longteng2}.

The second inequality is nothing more than the Parabolic Schwarz Lemma adapted to this setting. Recall the modified K\"ahler--Ricci flow equation
 \begin{equation*}
     \frac{\partial}{\partial\tau}\omega_{\psi_s}=\mathcal{L}_{\frac{X}{2}}\omega_{\psi_s}-\omega_{\psi_s}-\Ric(\omega_{\psi_s}).
 \end{equation*}
  To simplify notation, below we use $g=g_E$ and $g_\psi=g_{\psi_s}.$ For the time derivative, we have
      \begin{equation}\label{time derivative of tr wpsi w}
          \frac{\partial}{\partial\tau}\tr_{\omega_{\psi}}\omega=\frac{\partial}{\partial\tau}(g_\psi^{i\bar j}g_{i\bar j})=-\mathcal{L}_{\frac{X}{2}}g_\psi^{i\bar j}g_{i\bar j}+\Ric(g_\psi)^{i\bar j}g_{i\bar j}+g_\psi^{i\bar j}g_{i\bar j}.
      \end{equation}
    On the holomorphic coordinates of $g_\psi$, the Laplacian of $\tr_{\omega_{\psi_s}}\omega$ is given by the following formula:
      \begin{equation}\label{lapalcian of tr wpsi w}
          \begin{split}
              \Delta_{\omega_{\psi}}\tr_{\omega_{\psi}}\omega&=g_\psi^{i\bar j}\partial_i\partial_{\bar j}(g_\psi^{p\bar q}g_{p\bar q})\\
              &=g_\psi^{i\bar j}g_{p\bar q}\partial_i\partial_{\bar j}g_\psi^{p\bar q}+g_\psi^{i\bar j}g_\psi^{p\bar q}\partial_i\partial_{\bar j} g_{p\bar q}\\
              &=g_{p\bar q}\Ric(g_\psi)^{p\bar q}+g^{m\bar s}g_\psi^{i\bar j}g_\psi^{p\bar q}\partial_i g_{p\bar s}\partial_{\bar j}g_{m\bar q}-g_\psi^{i\bar j}g_\psi^{p\bar q}\Rm(g)_{i\bar j p\bar q}.\\
              &=g_{p\bar q}\Ric(g_\psi)^{p\bar q}+g^{m\bar s}g_\psi^{i\bar j}g_\psi^{p\bar q}\nabla^{g_\psi}_i g_{p\bar s}\nabla^{g_\psi}_{\bar j}g_{m\bar q}-g_\psi^{i\bar j}g_\psi^{p\bar q}\Rm(g)_{i\bar j p\bar q}.
          \end{split}
      \end{equation}
      We compute 
      \begin{equation}\label{X derivative of tr wpsi w}
         \begin{split}
              \frac{X}{2}\cdot\tr_{\omega_{\psi}}\omega= \frac{X}{2}\cdot(g_\psi^{i\bar j}g_{i\bar j})&=-\mathcal{L}_{\frac{X}{2}}g_\psi^{i\bar j}g_{i\bar j}+g_\psi^{i\bar j}\mathcal{L}_{\frac{X}{2}}g_{i\bar j}\\
              &=-\mathcal{L}_{\frac{X}{2}}g_\psi^{i\bar j}g_{i\bar j}+g_\psi^{i\bar j}\Ric(g)_{i\bar j}+g_\psi^{i\bar j}g_{i\bar j}.
              \end{split}
      \end{equation}
      Combining \eqref{time derivative of tr wpsi w}, \eqref{lapalcian of tr wpsi w} and \eqref{X derivative of tr wpsi w}, we get
        \begin{equation}\label{eq tr wpsi w}
          \left(\frac{\partial}{\partial \tau}-\Delta_{\omega_{\psi},X}\right)\tr_{\omega_{\psi}}\omega=g_\psi^{i\bar j}g_\psi^{p\bar q}\Rm(g)_{i\bar j p\bar q}-g^{m\bar s}g_\psi^{i\bar j}g_\psi^{p\bar q}\nabla^{g_\psi}_i g_{p\bar s}\nabla^{g_\psi}_{\bar j}g_{m\bar q}-g_\psi^{i\bar j}\Ric(g)_{i\bar j}.
      \end{equation}
      Now, we compute
      \begin{equation*}
          \begin{split}
         \left(\frac{\partial}{\partial \tau}-\Delta_{\omega_{\psi},X}\right)\log\tr_{\omega_{\psi}}\omega& =\frac{ \left(\frac{\partial}{\partial \tau}-\Delta_{\omega_{\psi},X}\right)\tr_{\omega_{\psi}}\omega}{\tr_{\omega_{\psi}}\omega}+\frac{|\partial \tr_{\omega_{\psi}}\omega|_{g_\psi}^2}{(\tr_{\omega_{\psi}}\omega_E)^2}\\ 
         &=\frac{g_\psi^{i\bar j}g_\psi^{p\bar q}\Rm(g)_{i\bar j p\bar q}-g^{m\bar s}g_\psi^{i\bar j}g_\psi^{p\bar q}\nabla^{g_\psi}_i g_{p\bar s}\nabla^{g_\psi}_{\bar j}g_{m\bar q}-g_\psi^{i\bar j}\Ric(g)_{i\bar j}}{\tr_{\omega_{\psi}}\omega}\\
         &\quad +\frac{|\partial \tr_{\omega_{\psi}}\omega|_{g_\psi}^2}{(\tr_{\omega_{\psi}}\omega_E)^2}.
          \end{split}
      \end{equation*}
    From the proof of the Parabolic Schwarz lemma in \cite[Theorem 2.6]{SongWeinkove}, we know that the following inequality always holds:
      \begin{equation*}
          -g^{m\bar s}g_\psi^{i\bar j}g_\psi^{p\bar q}\nabla^{g_\psi}_i g_{p\bar s}\nabla^{g_\psi}_{\bar j}g_{m\bar q}+\frac{|\partial \tr_{\omega_{\psi}}\omega|_{g_\psi}^2}{\tr_{\omega_{\psi}}\omega}\le 0.
      \end{equation*}
      Therefore, we have 
      \begin{equation*}
           \left(\frac{\partial}{\partial \tau}-\Delta_{\omega_{\psi},X}\right)\log\tr_{\omega_{\psi}}\omega\le\frac{g_\psi^{i\bar j}g_\psi^{p\bar q}\Rm(g)_{i\bar j p\bar q}-g_\psi^{i\bar j}\Ric(g)_{i\bar j}}{\tr_{\omega_{\psi}}\omega}.
      \end{equation*}
    Note that in holomorphic coordinates for $g_\psi$, we have
      \begin{equation*}
          g_\psi^{i\bar j}g_\psi^{p\bar q}\Rm(g)_{i\bar j p\bar q}=\sum_{i,p}\Rm(g)_{i\bar i p\bar p}\le \frac{A}{f+1}\sum_{i p}g_{i\bar i}g_{p\bar p}=\frac{A}{f+1} (\tr_{\omega_{\psi}}\omega_E)^2,
      \end{equation*}
     where $A=A(g_E).$ Moreover,
      \begin{equation*}
          -g_{\psi}^{i\bar j}\Ric(g)_{i\bar j}\le \frac{B}{f+1}\tr_{\omega_{\psi}}\omega,
      \end{equation*}
      where $B$ is a constant such that $\Ric(g)+\frac{B}{f+1}g\ge 0$. Thus, after rescaling if necessary, it follows that
      \begin{equation*}
           \left(\frac{\partial}{\partial \tau}-\Delta_{\omega_{\psi},X}\right)\log\tr_{\omega_{\psi}}\omega\le \frac{A}{f+1}(\tr_{\omega_{\psi}}\omega+1).
    \end{equation*}
  \end{proof}

We are now ready to prove our main theorem of the section, regarding the $C^2$-estimates for our solution.
\begin{proof}[Proof of Theorem \ref{C^2 estimate}]For any $0\le T<T'_s$, consider also the compact set 
          \[\Omega_{R,\lambda,s}^T=\{(x,\tau)\ |\ (x,\tau)\in\Omega_{R,\lambda,s},\tau\in [0,T]\}.\]
    Since $f_{\psi_s}+1\le D(f+1),$ it follows from Lemma \ref{parabolic Schwarz lemmas} that 
      \begin{equation*}
        \begin{split}
            &\left(\frac{\partial}{\partial\tau}-\Delta_{\omega_{\psi_s},X}\right)\log\tr_{\omega_E}\omega_{\psi_s}\le\frac{AD}{f_{\psi_s}+1}\left(\tr{\omega_{\psi_s}}\omega_E+1\right),\\
             &\left(\frac{\partial}{\partial\tau}-\Delta_{\omega_{\psi_s},X}\right)\log\tr_{\omega_{\psi_s}}\omega_E\le\frac{AD}{f_{\psi_s}+1}\left(\tr{\omega_{\psi_s}}\omega_E+1\right).
        \end{split}
          \end{equation*}
    Along with Lemma \ref{barrier function}, this implies that 
    \begin{equation*} u:= \log \operatorname{tr}_{\omega_{\psi_s}}\omega_E-5AD\Theta(\psi_s)
    \end{equation*}
    satisfies
    \begin{equation*}
        (\partial_\tau - \Delta_{\omega_{\psi_s},X})u \leq \frac{AD}{f_{\psi_s}+1}\left( -\frac{1}{4}\operatorname{tr}_{\psi_s}\omega_E +C\right).
    \end{equation*}
    If $(x_0,\tau_0) \in \Omega_{R,\lambda,s}^T \setminus \partial_{\textnormal{P}}\Omega_{R,\lambda,s}$ is a maximum point for $u$, it then follows that
    \begin{equation*}
        \operatorname{tr}_{\psi_s}\omega_E \leq C,
    \end{equation*}
    hence $u\leq C$. Since the barrier function is uniformly bounded, we have $\omega_E \leq C\omega_{\psi_s}$ on all of $\Omega_{R,\lambda,s}^T$. Similarly, at a maximum point $\Omega_{R,\lambda,s}^T\setminus \partial_{\textnormal{P}}\Omega_{R,\lambda,s}$ of the quantity
    \begin{equation*}
        v:= \log \operatorname{tr}_{\omega_E}\omega_{\psi_s} - 5AD\Theta(\psi_s),
    \end{equation*}
   we have
   \begin{equation*}
       \operatorname{tr}_{\psi_s}\omega_E + \left( \log \frac{\omega_{\psi_s}^n}{\omega_E^n} \right)_+ \leq C,
   \end{equation*}
   so that 
   \begin{align*}
       \operatorname{tr}_{\omega_E}\omega_{\psi_s} \leq \frac{\omega_{\psi_s}^n}{\omega_E^n}(\operatorname{tr}_{\psi_s}\omega_E)^{n-1} \leq C.
   \end{align*}
    If $u$ or $v$ attains their minimum/maximum on $\partial_{\textnormal{P}}\Omega_{R,\lambda,s}$, the estimates instead follow from Proposition \ref{boundary data} \ref{boundarydata1} and the boundedness of $\Theta(\psi_s)$.
\end{proof}

\subsection{Higher order and improved estimates}
It is then standard to obtain higher order estimates for our solution. We will use these to improve our original $C^2$-estimate via an interpolation argument. 

Let $R_0, s_0, \lambda_0 > 0$ be as in Proposition \ref{control f with fpsi} and Proposition \ref{control fpsi with f}, and choose parameters $R, s, \lambda > 0$ satisfying \eqref{eq:backgroundassumption}.
\begin{theorem}[$C^3-$estimate]\label{thm: C^3 estimates}
    There exists a uniform constant $C>0$ such that on $\Omega_{R,\lambda,s}$
    \begin{equation*}
   f |\nabla^{g_E}{g_{\psi_s}}|_{g_E}^2\le C
    \end{equation*}
\end{theorem}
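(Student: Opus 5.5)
This is a weighted Calabi-type third-order estimate: we bound $S:=|\nabla^{g_E}g_{\psi_s}|^2_{g_{\psi_s}}$ by a maximum principle, using the uniform equivalence $C^{-1}\omega_E\le\omega_{\psi_s}\le C\omega_E$ of Theorem \ref{C^2 estimate}. By that equivalence $S$ is comparable to $|\nabla^{g_E}g_{\psi_s}|^2_{g_E}$, so it suffices to prove $fS\le C$. On $\partial_{\textnormal P}\Omega_{R,\lambda,s}$ this is exactly Proposition \ref{boundary data} \ref{boundarydata2}.

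\textbf{Step 1: evolution of $S$.} Write $\Gamma_\psi-\Gamma_E=g_\psi^{-1}\nabla^{g_E}g_\psi$. The modified flow \eqref{eq:modifiedKRF} differs from the usual Kähler--Ricci flow by the Lie derivative $\mathcal L_{X/2}$ and a zeroth-order term. Since $X$ is real holomorphic and $\mathcal L_{X/2}$ commutes with $\nabla^{g_E}$ up to the term $\nabla^{g_E}\mathcal L_X g_E=\nabla^{g_E}(2\Ric(g_E)+2g_E)$, the standard Calabi/Phong--Sesum--Sturm computation should give
\begin{equation*}
\left(\frac{\partial}{\partial\tau}-\Delta_{\omega_{\psi_s},X}\right)S\le -S-\tfrac12\left(|\overline\nabla\Theta|^2+|\nabla\Theta|^2\right)+C|\Rm(g_E)|S+C|\nabla^{g_E}\Rm(g_E)|\sqrt{S}.
\end{equation*}
Here $\Theta$ denotes $\Gamma_\psi-\Gamma_E$, and the drift contributes the $-S$ term, coming from the scaling weight of the $-\omega_{\psi_s}$ term. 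Because $E$ is AC, $|\Rm(g_E)|\le C/(f+1)$ and $|\nabla^{g_E}\Rm(g_E)|\le C/(f+1)^{3/2}$ by \eqref{eq:quadraticdecayofexpander}. Hence the last two terms are bounded by $\frac{C}{f+1}S+\frac{C}{(f+1)^{3/2}}\sqrt S$.

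\textbf{Step 2: weight.} Set $Q=(f+1)S$ and use $(\partial_\tau-\Delta_{\omega_{\psi_s},X})f=f-\Delta_{\omega_{\psi_s}}f-|\partial f|^2$. By the soliton identities and $|\nabla^2 f|\le A$, together with the $C^2$ estimate, this is bounded by $C$; note that the $f$ and $|\partial f|^2$ terms cancel up to bounded scalar curvature. The cross term $-2\Re\langle\partial f,\bar\partial S\rangle$ is handled in the usual way. We bound $|\nabla S|\le C\sqrt S\,(|\nabla\Theta|+|\overline\nabla\Theta|)$, and then
\begin{equation*}
|\nabla f|\,|\nabla S|\le \tfrac14(f+1)(|\nabla\Theta|^2+|\overline\nabla\Theta|^2)+C\frac{|\nabla f|^2}{f+1}S.
\end{equation*}
Since $|\nabla f|^2\le C(f+1)$, the second term is at most $CS$. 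The result is
\begin{equation*}
\left(\frac{\partial}{\partial\tau}-\Delta_{\omega_{\psi_s},X}\right)Q\le -(f+1)S+CS+C\sqrt{S}\,(f+1)^{-1/2}\le -Q+C_1 S+C.
\end{equation*}

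\textbf{Step 3: absorb $C_1S$.} This is the main obstacle: $S$ carries no decaying weight, so $C_1S$ cannot be absorbed by $-Q$ where $f$ is bounded. I would follow Yau and Phong--Sesum--Sturm and add $K\tr_{\omega_E}\omega_{\psi_s}$, or equivalently $K\tr_{\omega_{\psi_s}}\omega_E$. From \eqref{eq tr wpsi w} and the $C^2$ bound, its heat operator is at most $-c S+C$, since the good gradient term $-g^{m\bar s}g_\psi^{i\bar j}g_\psi^{p\bar q}\nabla_ig_{p\bar s}\nabla_{\bar j}g_{m\bar q}$ is comparable to $-S$. The curvature terms are bounded by $C/(f+1)\le C$. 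Taking $K$ large, the quantity $W=Q+K\tr_{\omega_{\psi_s}}\omega_E$ satisfies $(\partial_\tau-\Delta_{\omega_{\psi_s},X})W\le -Q+C\le -W+C'$, using that $\tr$ is bounded.

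On $\partial_{\textnormal P}\Omega_{R,\lambda,s}$ we have $W\le C$ by Proposition \ref{boundary data} \ref{boundarydata1} and \ref{boundarydata2}. Lemma \ref{maximum principle on expander region} therefore yields $W\le C$, hence $fS\le C$. The delicate points are checking that the drift term really produces a favorable $-S$ rather than a $+S$, and checking the sign bookkeeping of the $f$-weight against the $X$-drift.
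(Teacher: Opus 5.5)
There is a genuine gap. The damping term $-Q$ on which Steps 2--3 rest does not exist, and both points you flagged as delicate go the wrong way.

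\emph{The sign in Step 1.} $S=|\nabla^{g_E}g_{\psi_s}|^2_{g_{\psi_s}}$ contains two inverse metrics and one metric. So the term $-\omega_{\psi_s}$ in \eqref{eq:modifiedKRF} contributes $+S$, not $-S$. The $\mathcal L_{X/2}$ term is absorbed by the drift, up to a term $\Theta*\nabla^{g_E}\Ric(g_E)$. Equivalently, $S(\tau)=e^{\tau}\Phi_{e^{-\tau}}^*\overline S(e^\tau-1)$, where $\overline S$ is the analogous quantity for $\overline g_s$ relative to $g_E(1+t)$. Hence $(\partial_\tau-\Delta_{\omega_{\psi_s},X})S=S+e^{2\tau}\Phi_{e^{-\tau}}^*[(\partial_t-\Delta_{\overline\omega_s})\overline S]$. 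The $-|\nabla u|^2$ in Lemma \ref{bochner formula;lem} comes from the opposite weight $e^{-\tau}$.

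\emph{The weight in Step 2.} $f$ does not depend on $\tau$, so $(\partial_\tau-\Delta_{\omega_{\psi_s},X})f=-\Delta_{\omega_{\psi_s}}f-|\partial f|^2_{g_E}\le C-f$, not $\le C$. For $\psi_s=0$ this is the identity $(\partial_\tau-\Delta_{\omega_E,X})f=-f$ used in Corollary \ref{coro C^0 bound}.

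With the correct signs, the terms $+(f+1)S$ and $-fS$ cancel. You only get $(\partial_\tau-\Delta_{\omega_{\psi_s},X})Q\le CS+C\sqrt S\,(f+1)^{-1/2}$, with no $-Q$. This is forced by scaling, since $(f+1)S$ is the pull-back of the scale-invariant quantity $(f_{1+t}+1+t)\overline S$. Consequently the inequality $(\partial_\tau-\Delta_{\omega_{\psi_s},X})W\le -W+C'$ is not available. Lemma \ref{maximum principle on expander region} cannot be applied in its $B-Au$ form, and integrating in time is no substitute because $T'_s\to\infty$ as $s\to0$.

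The argument can be repaired, but only by keeping the decay of the inhomogeneous terms that you discarded (``$C/(f+1)\le C$''). Write $C\sqrt S\,(f+1)^{-1/2}\le S+\frac{C}{f+1}$. Note also that the curvature terms in \eqref{eq tr wpsi w} are $O(1/(f+1))$, by Theorem \ref{C^2 estimate} and the quadratic decay of $\Rm(g_E)$. Then for $K$ large, $W=(f+1)S+K\tr_{\omega_{\psi_s}}\omega_E$ satisfies
\[(\partial_\tau-\Delta_{\omega_{\psi_s},X})W\le -S+\frac{C}{f+1}.\]
At an interior maximum of $W$ on $\Omega^T_{R,\lambda,s}$ this gives $S\le C/(f+1)$, hence $W\le C$ there. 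This is the max-point argument from the proof of Theorem \ref{C^2 estimate}, and the parabolic boundary is handled as you did.

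The decay is essential. With a source $+C$ you would only get $S\le C$ at the maximum, i.e.\ $fS\lesssim\lambda$ on $\mathcal B(\sqrt\lambda)$. That is not uniform in $\lambda$ and would ruin Theorem \ref{interpolation inequality}.

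With these corrections your route becomes the weighted Calabi-type estimate that the paper imports from \cite[Proposition 4.21]{longteng2}. The paper uses the comparability of $f$ and $f_{\psi_s}$ from Propositions \ref{control f with fpsi} and \ref{control fpsi with f}.
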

\begin{proof}
The proof is the same as in \cite[Proposition 4.21]{longteng2}, in light of Proposition \ref{control f with fpsi} and Proposition \ref{control fpsi with f}. 
\end{proof}
We can use the theorem above to prove the following stronger $C^2$-estimate.

\begin{theorem}\label{interpolation inequality}
    Setting $T''_s=\min\{\log\left(\frac{T_s}{s}+1\right),\log\frac{R^2}{2\lambda s}\}$, we have 
    \begin{equation}\label{eq: interpolation for C^2}
        \sup_{\tau \in [0,T_s'')} \sup_{\mathcal{B}(\frac{R}{\sqrt{s}e^{\frac{\tau}{2}}})} \sum_{j=0}^2 f^{\frac{j}{2}-1}|(\nabla^{g_E})^j \psi_s(\tau)|_{g_E}\le \Psi(R,\lambda^{-1}).
    \end{equation}
\end{theorem}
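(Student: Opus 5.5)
The plan is to split the domain $\mathcal{B}(\frac{R}{\sqrt{s}e^{\tau/2}})\times[0,T''_s)$ into two overlapping pieces, the region $\{r^2\ge \lambda/2\}$ and the region $\{r^2\le\lambda\}$, and to prove \eqref{eq: interpolation for C^2} on each.

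\textbf{The conical piece.} On $\{r^2\ge\lambda/2\}$, I will use the modified conical region from the proposition following Definition \ref{definition of normalized complex monge ampere}. The restriction $\tau<T''_s\le\log\frac{R^2}{2\lambda s}$ guarantees $\frac{R^2}{se^\tau}\ge 2\lambda$. Also $r^2\ge \lambda/2\ge \frac{\lambda}{2}(1-e^{-\tau})$, so these points correspond, under $\psi_s(x,\tau)=e^{-\tau}\overline\varphi_s(\Phi_{e^{-\tau}}(x),e^\tau-1)$, to points of $\mathcal{C}_{R,\lambda/2,s}$. Proposition \ref{rough estimates on Kahler potential} then gives $\sum_{k\le2} r^{k-2}|(\nabla^{g_E(1+t)})^k\overline\varphi_s|\le\Psi(R,\lambda^{-1})$. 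I will pull this back by the scaling: the weights $r^{k-2}$ are scale invariant, $g_E(1+t)$ pulls back to $g_E$, and $f\simeq r^2/2$ up to an additive constant by \eqref{comparison of f and r^2}. This gives \eqref{eq: interpolation for C^2} there, since $r^2\gtrsim\lambda$ is large.

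\textbf{The expander piece: inputs.} On $\{r^2\le\lambda\}\times[0,T''_s)\subset\Omega_{R,\lambda,s}$ I will collect three bounds.
\begin{itemize}
\item A $C^0$ bound: by Corollary \ref{coro C^0 bound} and Proposition \ref{control fpsi with f}, $|\psi_s|\le \Psi(R,\lambda^{-1})(f_{\psi_s}+f)\le \Psi(R,\lambda^{-1}) f$, using $f\ge\varepsilon>0$ to absorb constants.
\item A uniform $C^2$ bound: $C^{-1}\omega_E\le\omega_{\psi_s}\le C\omega_E$ by Theorem \ref{C^2 estimate}.
\item A $C^3$ bound: $|\nabla^{g_E}\sqrt{-1}\partial\bar\partial\psi_s|_{g_E}\le Cf^{-1/2}$ by Theorem \ref{thm: C^3 estimates}.
\end{itemize}

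\textbf{The expander piece: rescaled balls.} Fix $x$ and set $\rho=\eta\sqrt{f(x)+1}$ with $\eta\le\delta_0$ small. By Proposition \ref{linear growth of injectivity radius} and the curvature decay \eqref{eq:quadraticdecayofexpander}, the rescaled ball $(B_{g_E}(x,\rho),\rho^{-2}g_E)$ has bounded geometry uniformly in $x$. Since $|\nabla\sqrt f|\le 1$, $f$ is comparable to $f(x)$ on this ball. If the ball leaves $\{r^2\le\lambda\}$, the part outside lies in $\{r^2\ge\lambda/2\}$, where the conical estimates already apply with even better bounds, so the same bounds hold on the whole ball.

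\textbf{The expander piece: the main obstacle.} Theorems \ref{C^2 estimate} and \ref{thm: C^3 estimates} control only the complex Hessian $\sqrt{-1}\partial\bar\partial\psi_s$ and its derivative, not the full real Hessian $(\nabla^{g_E})^2\psi_s$. To bridge this, I will work in the rescaled ball in harmonic (or holomorphic) coordinates, where
\begin{equation*}
\Delta_{\omega_E}\psi_s=\tr_{\omega_E}(\omega_{\psi_s}-\omega_E)
\end{equation*}
has right-hand side bounded in $C^{1}$. Interior Schauder estimates then give, in the rescaled metric, $\|\psi_s\|_{C^{2,\alpha}}\le C(\sup|\psi_s|+\|\tr_{\omega_E}(\omega_{\psi_s}-\omega_E)\|_{C^{0,\alpha}})$. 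After unscaling, the real Hessian and the third derivatives are bounded by $C(\eta^{-2}\Psi f/\rho^{0}\cdots)$. Concretely, I expect $|(\nabla^{g_E})^2\psi_s|\le C$ and $|(\nabla^{g_E})^3\psi_s|\le Cf^{-1/2}$, where the $C^3$ input of Theorem \ref{thm: C^3 estimates} is exactly what makes the Hölder norm scale correctly.

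\textbf{Interpolation.} With these bounds, the standard Taylor interpolation on the ball of radius $\rho$ gives
\begin{equation*}
|(\nabla^{g_E})^2\psi_s|(x)\le C\Big(\rho^{-2}\sup_{B}|\psi_s|+\rho\sup_B|(\nabla^{g_E})^3\psi_s|\Big)\le C(\eta^{-2}\Psi+\eta).
\end{equation*}
Choosing $\eta=\Psi^{1/3}$ makes this $\Psi(R,\lambda^{-1})$. Similarly,
\begin{equation*}
|\nabla^{g_E}\psi_s|\le C(\rho^{-1}\Psi f+\rho\Psi)\le \Psi\sqrt f,
\end{equation*}
using the already improved Hessian bound. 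Multiplying by the weights $f^{j/2-1}$ gives \eqref{eq: interpolation for C^2} on the expander piece, and combining with the conical piece finishes the proof.
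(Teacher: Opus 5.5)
Your route is essentially the paper's. You write $\Delta_{g_E}\psi_s=\tr_{\omega_E}\omega_{\psi_s}-n$ and rescale balls of radius comparable to $\sqrt{f(x)+1}$ to unit size, using Proposition \ref{linear growth of injectivity radius} and the curvature decay. Theorem \ref{C^2 estimate} and Theorem \ref{thm: C^3 estimates} then bound the right-hand side in $C^{0,1}$, and you apply interior Schauder estimates. Finally you interpolate against the small $C^0$ bound $|\psi_s|\le\Psi(R,\lambda^{-1})f$ from Corollary \ref{coro C^0 bound} and Proposition \ref{control fpsi with f}. Your explicit treatment of $\{r^2\ge\lambda/2\}$, via Proposition \ref{rough estimates on Kahler potential} and the scale invariance of $r^{k-2}|\nabla^k\cdot|$, is a cleaner version of what the paper does only implicitly.

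One step does not follow as written: the bound $|(\nabla^{g_E})^3\psi_s|\le Cf^{-1/2}$. Theorem \ref{thm: C^3 estimates} controls only $\nabla^{g_E}\sqrt{-1}\partial\bar\partial\psi_s$, so $u=\tr_{\omega_E}\omega_{\psi_s}-n$ is merely Lipschitz on the rescaled ball. Schauder theory with $C^{0,1}$ data gives $\widetilde\psi\in C^{2,\alpha}$ for every $\alpha<1$, but no $C^3$ bound. The $(2,0)$-part of the Hessian is controlled only through its $\bar\partial$, which gives $W^{1,p}$, not $W^{1,\infty}$. So the Taylor interpolation with $\rho\sup|\nabla^3\psi_s|$ is not justified.

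The fix is to interpolate with the H\"older seminorm instead. On a ball of $\widetilde g$-radius $\eta$ you get $|\nabla^2\widetilde\psi|(x)\le C(\eta^{-2}\sup|\widetilde\psi|+\eta^{\alpha}[\nabla^2\widetilde\psi]_{C^{0,\alpha}})\le C(\eta^{-2}\Psi+\eta^{\alpha})$. Choosing $\eta=\Psi^{1/(2+\alpha)}$ gives $C\Psi^{\alpha/(2+\alpha)}$. This is exactly the paper's interpolation with $\theta=\alpha/(2+\alpha)$. You could instead first prove higher-order bounds for $g_{\psi_s}$, but that extra work is unnecessary. With this change, your gradient bound goes through as you describe.
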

\begin{proof} Fix $\tau \in [0,T_s'')$. By Proposition \ref{boundary data}  \ref{boundarydata1}, \ref{boundarydata3} and standard Schauder estimates, it suffices to prove \eqref{eq: interpolation for C^2} on $\mathcal{B}(\frac{R}{\sqrt{2s}e^{\frac{\tau}{2}}})$. For any $\tau\in [0,T_s'')$, we set
\begin{equation}\label{eq: poisson eq}
    u:=\Delta_{g_E}\psi_s,
\end{equation}
on $\mathcal{B}(\frac{R}{\sqrt{2s}e^{\frac{\tau}{2}}})$.  By Theorem \ref{C^2 estimate} and \ref{thm: C^3 estimates}, there is a uniform constant $A>0$ such that
\begin{equation} \label{eq:interp1}
  \sup_{\mathcal{B}(\frac{R}{\sqrt{s}e^{\frac{\tau}{2}}})}  \left(|u|+\sqrt{f}|\nabla^{g_E}u|_{g_E} \right)\le A.
\end{equation}
Fix $x\in\{r^2\le \frac{R^2}{se^\tau}\}$ and consider the geodesic ball $B_{g_E}(x,\delta\sqrt{f(x)+1})$ with $\delta<\min\{\frac{1}{4},\frac{1}{2}\delta_0\}$ as in Proposition \ref{linear growth of injectivity radius}. 
    For any $y\in B_{g_E}(x,\delta\sqrt{f(x)+1})$, let $\gamma$ be the $g_E-$geodesic connecting $x$ and $y$. Then we have
    \begin{equation*}
        \begin{split}
            \left|\sqrt{f(x)+1}-\sqrt{f(y)+1}\right|&=\left|\sqrt{f(\gamma(0))+1}-\sqrt{f(\gamma(1))+1}\right|\\
            &\le\int_0^1\frac{1}{2}\frac{|\nabla^{g_E}f|_{g_E}|\dot\gamma|_{g_E}}{\sqrt{f(\gamma (t))+1}}dt\\
            &\le\frac{\sqrt{2}}{2}d_{g_E}(x,y)<\delta\sqrt{f(x)+1}.
        \end{split}
    \end{equation*}
    Therefore,
   \begin{equation*}
       \frac{1}{2}\sqrt{f(x)+1}\le\sqrt{f(y)+1}\le \frac{5}{4}\sqrt{f(x)+1}\le 2\sqrt{f(x)+1},
   \end{equation*}
   and in particular,
   \begin{equation*}
       \frac{r(y)^2}{2}\le \frac{25}{16}({f(x)+1})\le \frac{25}{16}\left(\frac{r(x)^2}{2}+C+1\right)= \frac{25}{32}r(x)^2+C',
   \end{equation*}
   where $C>0$ is such that $f\le\frac{r^2}{2}+C$ holds on $E$. By taking $\lambda>1$ sufficiently large and $s$ sufficiently small such that $\frac{25}{32}r(x)^2+C'\le\frac{R^2}{2se^\tau}$, we have that $y\in\{r^2\le\frac{R^2}{se^\tau}\}$. Hence $B_{g_E}(x,\delta\sqrt{f(x)+1})\subset\{r^2\le\frac{R^2}{se^\tau}\}$. For instance, we can take $\lambda>0$ such that $\lambda\ge 100C'$. In this case, $r(x)^2\le \lambda\le\frac{R^2}{2se^\tau}$ and $s\lambda\le R^2$, it follows that
\begin{equation*}
    \begin{split}
       & \frac{25}{32}r(x)^2+C'\le\frac{25}{32}\frac{R^2}{2se^\tau}+C'\le \frac{R^2}{2se^\tau},\\
    \end{split}
\end{equation*}
where the last inequality follows from the fact that $\frac{7}{64}\frac{R^2}{se^\tau}\ge\frac{1}{10} \frac{R^2}{se^{T_s''}}\ge\frac{\lambda}{5} $. Moreover, in this geodesic ball, for all $k\in \N$,
\begin{equation*}
    \sup_{B_{g_E}(x,\delta\sqrt{f(x)+1})} (f(x)+1)^{\frac{k}{2}+1}|(\nabla^{g_E})^k\Rm(g_E)|_{g_E}\le A_k,
\end{equation*}
holds for some uniform constant $A_k>0$. Set $\widetilde{\psi}:=(f(x)+1)^{-1}\psi_s$ and $\widetilde{g}:=(f(x)+1)^{-1}g_E$, so that \eqref{eq: poisson eq} becomes
\begin{equation}\label{eq: new poisson eq}
    \Delta_{\widetilde{g}}\widetilde{\psi}=u,
\end{equation}
and \eqref{eq:interp1} becomes
\begin{equation}\label{wq: new holder bound}
    \sup_{B_{\widetilde{g}}(x,\delta)}|u|+|\nabla^{\widetilde{g}}u|_{\widetilde{g}}\le A, \quad  \sup_{B_{\widetilde{g}}(x,\delta)}|\widetilde{\psi}|\le\Psi(R,\lambda^{-1})
\end{equation}
On the ball $B_{\widetilde{g}}(x,\delta)$, we have
\begin{equation*}
    \sup_{B_{\widetilde{g}}(x,\delta)} |(\nabla^{\widetilde{g}})^k\Rm(\widetilde{g})|_{\widetilde{g}}\le A_k,\quad \textnormal{for all $k\in\N$}.
\end{equation*}
Then for any choice of $\alpha \in (0,1)$, standard interior Schauder estimates for elliptic equations (see \cite[Theorem 6.2]{GilbargTrudinger}) give
\begin{equation*}
   \|\widetilde{\psi}\|_{C^{2,\alpha}(B_{\widetilde{g}}(x,\delta/2),\widetilde{g})}\le C(\alpha,\delta,A,A_0,A_1)\left( \sup_{B_{\widetilde{g}}(x,\delta)}|\widetilde{\psi}|+ \|u\|_{C^{0,\alpha}(B_{\widetilde{g}}(x,\delta),\widetilde{g})}\right)\le C(\alpha,\delta,A,A_0,A_1),
\end{equation*}
where $\|\cdot\|_{C^{k,\alpha}(\cdot,\widetilde{g})}$ denotes the $(k,\alpha)$-H\"older norm with respect to $\widetilde{g}$. By the general interpolation inequality \cite[p.141]{GilbargTrudinger} for H\"older norms, we get
\begin{equation}
    |(\nabla^{\widetilde{g}})^2\widetilde{\psi}|_{\widetilde{g}}(x)\le C\left(\sup_{B_{\widetilde{g}}(x,\delta/2)}|\widetilde{\psi}|\right)^\theta\|\widetilde{\psi}\|_{C^{2,\alpha}(B_{\widetilde{g}}(x,\delta/2),\widetilde{g})}^{1-\theta} \leq C(\alpha,\delta,A,A_0,A_1)\Psi(R,\lambda^{-1})^\theta,
\end{equation}
where $\theta=\frac{\alpha}{2+\alpha}$. By rewriting this in terms of $g_E$ and taking $\alpha=\frac{2}{3}$, we get that
\begin{equation*}
     \sum_{j=0}^2(1+f)^{\frac{j}{2}-1}|(\nabla^{g_E})^j \psi_s|_{g_E}(x)\le C\Psi(R,\lambda^{-1})^{\frac{1}{4}}
\end{equation*}
holds as expected.
\end{proof}

\subsection{Curvature control and local stability}
We now use the estimates from the previous subsections to prove space-time curvature bounds for our solutions. Let $R_0, s_0, \lambda_0 > 0$ be as in Proposition \ref{control f with fpsi} and Proposition \ref{control fpsi with f}, and choose parameters $R, s, \lambda > 0$ satisfying \eqref{eq:backgroundassumption}.

By \eqref{comparison of f and r^2} and Theorem \ref{interpolation inequality}, on the space-time region $\{r_M\le 2R\}\times (0,\min\{T_s,\frac{R^2}{2\lambda}-s\})$, we have
\begin{equation}\label{eq: initial C^2 closedness}
    \sum_{j=0}^2(r_M+\sqrt{t+s})^{j-2}|(\nabla^{g_E(t+s)})^j\varphi_s(t)|_{g_E(t+s)]}\le\Psi(R,\lambda^{-1}).
\end{equation}

 \begin{prop}\label{prop: BK bound}
  For each $k\in\N$, there exist $s_0(k),R_0(k),\lambda_0(k)>0$ such that for all $R \in (0, R_0(k)]$, $\lambda\in [\lambda_0(k),\infty)$ and $s\in (0,\min\{s_0(k),\frac{R^4}{4},\frac{1}{\lambda^2}\})$, the following holds on $\mathcal{B}(R)\times (0,\min\{T_s,\frac{R^2}{4\lambda}\})$:
    \begin{equation}\label{eq: BK induces C^k bound}
       \sum_{j=0}^k (r_M+\sqrt{t})^{j}|(\nabla^{g_E(t+s)})^j\left(g_s(t)-g_E(t+s)\right)|_{g_E(t+s)}\le \Psi(R,\lambda^{-1}|k).
    \end{equation}
\end{prop}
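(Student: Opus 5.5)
The proof will be an induction on $k$, upgrading the $C^2$-closeness \eqref{eq: initial C^2 closedness} of the potential (equivalently, $C^0$-closeness of metrics) to scale-invariant $C^k$-closeness of $g_s(t)$ to $g_E(t+s)$. The case $k=0$ is exactly \eqref{eq: initial C^2 closedness}, because $g_s(t)-g_E(t+s)=\sqrt{-1}\partial\bar\partial\varphi_s(t)$. Here we use $r_M+\sqrt{t}\ge \frac12(r_M+\sqrt{t+s})$ when $t\ge s$. In the range $t<s$ we instead work in the normalized picture of Definition \ref{definition of MA equation}, where the initial data already satisfy \eqref{control of kahler potential at time 0}.

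The estimates are local and scale invariant. Fix $(x_0,t_0)$ in $\mathcal{B}(R)\times(0,\min\{T_s,\frac{R^2}{4\lambda}\})$ and set $\rho:=r_M(x_0)+\sqrt{t_0}$. Parabolically rescale by $\rho^{-2}$ about $(x_0,t_0)$. On a parabolic neighborhood $P$ of unit size in the rescaled picture, the reference flow $\rho^{-2}g_E(\rho^2\cdot+s)$ has all derivatives of curvature bounded. This follows from \eqref{eq:quadraticdecayofexpander} and \eqref{eq: convergence rate of self-similar solution}, since $r^2+t+s\gtrsim \rho^2$ there. The rescaled potential $\tilde\varphi$ solves
\[
\partial_t\tilde\varphi=\log\frac{(\tilde\omega_E+\sqrt{-1}\partial\bar\partial\tilde\varphi)^n}{\tilde\omega_E^n},
\]
with $\|\tilde\varphi\|_{C^2}\le\Psi(R,\lambda^{-1})$ by \eqref{eq: initial C^2 closedness}. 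This is a uniformly parabolic concave fully nonlinear equation. Evans--Krylov theory, or more directly the $C^3$ bound of Theorem \ref{thm: C^3 estimates} transported back through the correspondence, gives a $C^{2,\alpha}$ bound. Differentiating the equation and applying parabolic Schauder estimates then bootstraps to uniform $C^{k+2,\alpha}$ bounds on a smaller neighborhood $P'$, with constants $C_k$ independent of $s$.

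To obtain smallness rather than mere boundedness, we interpolate as in Theorem \ref{interpolation inequality}. The $C^2$ norm is bounded by $\Psi$ and the $C^{k+3}$ norm by $C_k$. Interpolation inequalities for Hölder norms then give
\[
\|\nabla^{j}\sqrt{-1}\partial\bar\partial\tilde\varphi\|\le C_k\Psi^{\theta_k}\quad\text{for } j\le k,
\]
which is again of the form $\Psi(R,\lambda^{-1}\,|\,k)$. Scaling back yields \eqref{eq: BK induces C^k bound} at $(x_0,t_0)$, after shrinking $s_0(k),R_0(k)$ and enlarging $\lambda_0(k)$ so that $P$ stays inside the domain where \eqref{eq: initial C^2 closedness} holds. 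The condition $t<\frac{R^2}{4\lambda}$ versus $\frac{R^2}{2\lambda}$ and $r_M\le R$ versus $2R$ is there to supply this buffer.

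The main obstacle is the case of small times $t_0\lesssim\rho^2$. In that case the parabolic neighborhood reaches down to the initial time, so interior parabolic estimates are unavailable and the initial data must be used instead. If $r_M(x_0)^2\ge\lambda t_0$, the pseudolocality bounds \eqref{eq:consequenceofpseudolocality} together with Proposition \ref{rough C^3 estimate} and its higher-order analogues (via the argument of \cite[Claim 3.9]{ConlonDeruelleSun} applied to \eqref{control of kahler potential at time 0}) already give the desired estimate directly. In the remaining expander region the neighborhood can be chosen of size comparable to $\sqrt{t_0}\sim\rho$ backward in time, so interior estimates apply. I expect the bookkeeping of these overlapping regimes, and making the constants uniform across them, to be the delicate point.
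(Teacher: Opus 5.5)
\textbf{Comparison with the paper.} Your route differs from the paper's. The paper does not go back to the Monge--Amp\`ere equation. It observes that $g_s(t)$ is a Ricci--DeTurck flow relative to the reference flow $g_E(t+s)$: the identity map is holomorphic, hence harmonic. It then applies Bamler--Kleiner's local estimates in two regimes.
\begin{itemize}
\item When $t\le\frac12 r(x)^2$, it uses Lemma A.2(b) on $B(x,\frac12 r(x))\times[0,\frac12 r(x)^2]$. This is fed by the $C^{k+1}$-closeness of $g_{s,0}$ to $g_E(s)$ from Propositions \ref{gluing metric; prop; 2} and \ref{gluing metric; prop}.
\item When $r(x)^2\le 2t$, it uses Lemma A.2(a) on balls of radius $\sqrt t$.
\end{itemize}
These bounds are linear in the $C^0$-deviation supplied by \eqref{eq: initial C^2 closedness}, so no interpolation is needed. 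Your scheme is a legitimate alternative in principle: Evans--Krylov/Schauder for uniform bounds, then interpolation against $C^0$-smallness. However, your case analysis does not close as written.

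\textbf{The gap.} You split at $r_M(x_0)^2=\lambda t_0$, using the same large parameter $\lambda$ that produces the smallness. You then assert that $\sqrt{t_0}\sim\rho$ on $\{r_M^2<\lambda t_0\}$. There you only know $\rho\le(1+\sqrt{\lambda})\sqrt{t_0}$.
\begin{itemize}
\item Interior estimates at scale $\sqrt{t_0}$ give $t_0^{j/2}|(\nabla^{g_E(t_0+s)})^j(g_s-g_E)|\le C_j\Psi(R,\lambda^{-1})$.
\item Passing to the weight $(r_M+\sqrt{t_0})^j$ costs a factor up to $(1+\sqrt\lambda)^j$.
\item Since $\Psi(R,\lambda^{-1})$ comes with no rate in $\lambda$, the product need not be small.
\end{itemize}
So the intermediate regime $t_0\lesssim r_M(x_0)^2<\lambda t_0$ is covered by neither of your tools. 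Interior estimates lose the factor above. The pseudolocality-plus-integration argument of Proposition \ref{rough C^3 estimate} produces errors of size $t_0/r_M^2$, which are only $O(1)$ there. Moreover, pseudolocality (Proposition \ref{estimate on the conical region}) is not available at all where $r_M^2<\lambda_0 t$.

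\textbf{How to repair it.} Make the split at a threshold independent of $\lambda$. Either of the following works, and with either one your proof goes through.
\begin{itemize}
\item Split at a fixed multiple of $\lambda_0$ from Proposition \ref{estimate on the conical region}. On the conical side, the integration argument now yields only \emph{bounded} scale-invariant $C^{k+1}$-norms of $g_s(t)-g_E(t+s)$. You then interpolate these against the $C^0$-smallness in \eqref{eq: initial C^2 closedness}. On the other side, interior estimates at scale $\sqrt t$ lose only a fixed factor $(1+\sqrt{\lambda_0})^k$.
\item Split at $r_M^2=2t$ as the paper does. On the side $t\le\frac12 r_M^2$, use parabolic estimates up to $t=0$ at scale $r_M$, fed by the initial closeness from Propositions \ref{gluing metric; prop; 2} and \ref{gluing metric; prop}.
\end{itemize}

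\textbf{A minor point.} The comparison $r_M+\sqrt t\ge\frac12(r_M+\sqrt{t+s})$ is unnecessary. Since $r_M+\sqrt t\le r_M+\sqrt{t+s}$, bounds with the $\sqrt{t+s}$ weight are already the stronger ones.
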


\begin{proof}
    The metrics $g_s(t)$ and $g_{E}(t+s)$ are K\"ahler with respect to the same fixed complex structure, which implies that the identity map between them is holomorphic and, in particular, harmonic. In other words, the K\"ahler-Ricci flow $(g_s(t))_{t\in (0,\min\{T_s,\frac{R^2}{2\lambda}-s\}) }$ is a solution to Ricci-DeTurck flow equation with reference flow $(g_E(s+t))_{t\in (0,\min\{T_s,\frac{R^2}{2\lambda}-s\}) }$. 

    Fix $x\in \mathcal{B}(R)$
    and note that $B_{g_E(s)}(x,\frac{1}{2}r(x))\subset \mathcal{B}(2R)$ if $s>0$ is sufficiently small. By \eqref{eq:quadraticdecayofexpander}, we have
    \begin{equation}\label{eq: curvature bound in prop 4.20}
        \sup_{B_{g_E(s)}(x,\frac{1}{2}r(x))}|(\nabla^{g_E(t+s)})^k\Rm(g_E(t+s))|\le A_kr(x)^{-2-k},
    \end{equation}
    for some uniform constants $A_k \in (0,\infty)$.
    Using Proposition \ref{gluing metric; prop; 2}, Proposition \ref{gluing metric; prop}, \eqref{eq: curvature bound in prop 4.20} and \cite[Lemma A.2(b)]{BKActa}, we conclude that for all $k\in\N$
\begin{equation}\label{eq: BK bound 1}
    \sup_{t\in [0,\frac{1}{2}r(x)^2]}\sum_{j=0}^kr(x)^j|(\nabla^{g_E(t+s)})^j\left(g_s(t)-g_E(t+s)\right)|_{g_E(t+s)}(x)\le C_kH_k.
\end{equation}
Here $C_k>0$ is the dimensional constant appearing in \cite[Lemma A.2]{BKActa} and 
\begin{equation*}
    H_k:=\sup_{B_{g_E(s)}(x,\frac{1}{2}r(x))}|g_s(t)-g_E(t+s)|_{g_E(t+s)}+\max_{0\le l\le k+1}\sup_{B_{g_E(s)}(x,\frac{1}{2}r(x))}r(x)^l|(\nabla^{g_E(s)})^l\left(g_{s,0}-g_E(s)\right)|_{g_E(s)}.
\end{equation*}
By Proposition \ref{gluing metric; prop; 2}, \ref{gluing metric; prop}, we have
\begin{equation*}
    H_k \leq \Psi(R,\lambda^{-1}|k),
\end{equation*}
so that \cite[Lemma A.2]{BamlerKleiner} applies if we choose $R,\lambda^{-1}$ small enough such that $H_k\le \eta_k$ as in \cite[Lemma A.2]{BKActa}.

Fix $t\in (0,\min\{T_s,\frac{R^2}{2\lambda}-s\})$ such that $r(x)^2\le 2t$ and consider the geodesic ball $B_{g_E(\frac{t}{2}+s)}(x, \sqrt{t})$. Then $B_{g_E(\frac{t}{2}+s)}(x, \sqrt{t})\subset\{r^2\le \lambda t\}$ if $\lambda>1$ is sufficiently large. Moreover, on $B_{g_E(\frac{t}{2}+s)}(x, \sqrt{t})$, we have
\begin{equation}\label{eq: curvature bound in prop 4.20 2}
     \sup_{B_{g_E(\frac{t}{2}+s)}(x,\sqrt{t})}|(\nabla^{g_E(t+s)})^k\Rm(g_E(t+s))|\le A_k(t+s)^{-1-\frac{k}{2}}\le A_k t^{-1-\frac{k}{2}},
\end{equation}
 holds for uniform constants $A_k \in (0,\infty)$. Applying \eqref{eq: curvature bound in prop 4.20 2} and \cite[Lemma A.2 (a)]{BamlerKleiner}, we get
 \begin{equation}\label{eq: BK bound 2}
     |(\nabla^{g_E(t+s)})^j\left(g_s(t)-g_E(t+s)\right)|_{g_E(t+s)}(x)\le C_jHt^{-\frac{j}{2}},\quad \forall j\in\N
 \end{equation}
 where $C_j \in (0,\infty)$ are dimensional constants and 
 \begin{equation*}
     H:=\sup_{B_{g_E(\frac{t}{2}+s)}(x,\sqrt{t})}|g_s(t)-g_E(t+s)|_{g_E(t+s)}\le \Psi(R,\lambda^{-1}).
 \end{equation*}
Then \eqref{eq: BK induces C^k bound} follows directly from \eqref{eq: BK bound 1} and \eqref{eq: BK bound 2}.
\end{proof}
Take $R_0(2),\lambda_0(2),s_0(2)>0$ as in Proposition \ref{prop: BK bound}, and choose parameters $R, s, \lambda > 0$ satisfying 
\begin{equation*}
     R \le R_0(2), \quad \lambda \ge \lambda_0(2), \qquad  s \le \min \left\{s_0(2),\frac{1}{\lambda^2},\frac{R^4}{16}\right\}.
\end{equation*}
Using Proposition \ref{prop: BK bound} and Shi's local derivative estimates yields the following curvature bound on a spacetime neighborhood of the singular point

\begin{prop}\label{prop: boundedness of curvature}
    There exist $C_k \in (0,\infty)$ such that for any $s\in (0,\min\{s_0,\frac{R^4}{16}\})$, we have
     \begin{equation}\label{eq: BK induces C^k bound rough}
        \sup_{t\in (0,\min\{T_s,\frac{R^2}{4\lambda}\})}\sup_{\mathcal{B}(R)}(r_M+\sqrt{t})^{2+k}|(\nabla^{g_s(t)})^k\Rm(g_s(t))|_{g_s(t)}\le C_k,\quad \forall k\in\N.
    \end{equation}
\end{prop}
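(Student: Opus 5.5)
Throughout we fix $k$ and $R,\lambda,s$ as in the paragraph preceding the statement. The plan is to compare $g_s(t)$ with the self-similar expander metric $g_E(t+s)$, whose curvature satisfies scale-invariant bounds. The comparison is supplied by Proposition \ref{prop: BK bound}, and the resulting control of $g_s(t)$ is then upgraded to curvature bounds via Shi's local derivative estimates. Concretely, on $\mathcal{B}(R)\times(0,\min\{T_s,\frac{R^2}{4\lambda}\})$ the expander flow satisfies
\begin{equation*}
|(\nabla^{g_E(t+s)})^j\Rm(g_E(t+s))|_{g_E(t+s)}\le A_j\,(r+\sqrt{t+s})^{-2-j}\le A_j\,(r_M+\sqrt{t})^{-2-j}.
\end{equation*}
This follows from \eqref{eq:quadraticdecayofexpander} together with self-similarity $g_E(t)=t\Phi_t^*g_E$ and \eqref{comparison of f and r^2}: the curvature of $g_E(\sigma)$ at $x$ is comparable to $\sigma^{-1}(1+r^2/\sigma)^{-1}\sim (r^2+\sigma)^{-1}$.

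\emph{Step 1: the cases $k=0$ and small $k$.} Apply Proposition \ref{prop: BK bound} with index $k+2$. This gives $C^{k+2}$-closeness of $g_s(t)$ to $g_E(t+s)$, weighted by $(r_M+\sqrt t)^j$, with error at most $\Psi(R,\lambda^{-1}|k+2)\le 1$ once the parameters are small. In particular $g_s(t)$ and $g_E(t+s)$ are uniformly bi-Lipschitz. The curvature of $g_s(t)$ is expressed algebraically through $g_s^{-1}$, $\nabla^{g_E}g_s$ and $(\nabla^{g_E})^2 g_s$, plus the curvature of $g_E(t+s)$:
\begin{equation*}
\Rm(g_s)=\Rm(g_E)*g_s^{-1}*g_s+g_s^{-1}*(\nabla^{g_E})^2g_s+g_s^{-1}*g_s^{-1}*\nabla^{g_E} g_s*\nabla^{g_E} g_s.
\end{equation*}
Each term is bounded by $C(r_M+\sqrt t)^{-2}$. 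The same expansion for $(\nabla^{g_s})^k\Rm(g_s)$ rewrites $\nabla^{g_s}=\nabla^{g_E}+g_s^{-1}*\nabla^{g_E}g_s$ and uses Proposition \ref{prop: BK bound} up to order $k+2$. This yields the bound $C_k(r_M+\sqrt t)^{-2-k}$ directly, with constants depending on $k$.

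\emph{Step 2: uniform parameter choice.} The statement should hold for all $k$ with the parameters fixed using only $k=2$. So instead of invoking Proposition \ref{prop: BK bound} at each order, we get the $k=0$ bound from the $k=2$ case of Step 1. We then obtain the higher derivatives by Shi's local estimates on parabolic cylinders of scale $\rho=\frac{1}{2}(r_M(x)+\sqrt t)$ around $(x,t)$.

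When $t\ge c\,r_M(x)^2$, we use the cylinder $B_{g_s(t)}(x,c\sqrt t)\times[t/2,t]$. By Step 1 applied at nearby points, $|\Rm|\le C t^{-1}$ there.

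When $t< c\, r_M(x)^2$, there are two sub-cases. If the time interval reaches back to time $0$, we need initial derivative bounds. On the conical region these come from Remark \ref{remark on decays of approximation metrics}, and near the expander core from $g_{s,0}=g_E(s)$; there we use the version of Shi's estimates with controlled initial data. Alternatively, in the region $r_M^2\ge\lambda t$ we simply cite Proposition \ref{estimate on the conical region} directly.

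In all cases, rescaling by $\rho^{-2}$ gives unit-scale curvature bounds on a unit cylinder, and Shi's estimates give $|(\nabla^{g_s})^k\Rm|\le C_k\rho^{-2-k}$.

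\emph{Main obstacle.} The delicate point is ensuring that the cylinders stay inside $\mathcal{B}(2R)\times(0,\min\{T_s,\frac{R^2}{2\lambda}-s\})$, where Proposition \ref{prop: BK bound} and \eqref{eq: initial C^2 closedness} hold. This requires the bi-Lipschitz comparison between $g_s(t)$, $g_E(t+s)$ and $g_{\mathcal C}$, so that $g_s(t)$-balls of radius $c\rho$ are contained in $\{r_M\le 2R\}$ and in the comparable range of $r_M$. We also need the transition between the sub-cases $t\gtrless c\,r_M^2$, which is handled by the overlap of the regions $r_M^2\ge\lambda t$ (Proposition \ref{estimate on the conical region}) and $r_M^2\le\lambda t$ (Step 1 together with Shi's estimates).
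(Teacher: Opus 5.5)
Your proposal is correct and is essentially the paper's argument. The paper fixes the parameters via Proposition \ref{prop: BK bound} with $k=2$, reads off the $k=0$ bound from the $C^2$-closeness to $g_E(t+s)$, and gets the higher derivatives from Shi's local derivative estimates (as in the cited \cite[Corollary 5.6]{longteng2}).

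Two small remarks. Your Step 1 with index $k+2$ would need $k$-dependent parameters, which you rightly discard in Step 2. The initial-data sub-case is also unnecessary if you split at $r_M^2=\lambda_0 t$: Proposition \ref{estimate on the conical region} covers $r_M^2\ge\lambda_0 t$, and otherwise $t\gtrsim (r_M+\sqrt t)^2$, so the backward cylinder over $[t/2,t]$ never reaches time $0$.
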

\begin{proof}
  The proof is the same as in \cite[Corollary 5.6]{longteng2}
\end{proof}

\begin{prop} \label{prop:curvatureawayfromsings}
    There exist constants $C=C(R,g_0)>0$ and $T=T(R,g_0)>0$ such that
    \begin{equation*}
         |\Rm(g_{s}(t))|_{g_s(t)}(x)\le \frac{C}{t}
    \end{equation*}
    for all $(x,t)\in (M\setminus\{r_M\le R\})\times (0,\min \{T(R,g_0),T_s,\frac{R^2}{4\lambda}\})$
\end{prop}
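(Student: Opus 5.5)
On the region $\{r_M>R\}$ the initial metrics are uniformly well-behaved, so the plan is to get the bound from pseudolocality plus Shi-type estimates applied at a fixed scale, with constants depending only on $R$ and $g_0$. The point is that Propositions \ref{gluing metric; prop; 2} and \ref{gluing metric; prop} and Remark \ref{remark on decays of approximation metrics} give uniform control of $g_{s,0}$ there: for $s\le s_0$ with $2s^{1/4}<R/2$, we have $g_{s,0}=g_0-s\Ric(g_0)$ on $\{r_M\ge R/2\}$. On this set $g_0$ is a fixed smooth metric, since the closure of $\{r_M\ge R/2\}$ avoids the singular points. Hence there is $\rho=\rho(R,g_0)\in(0,R/4)$ such that for every $x$ with $r_M(x)\ge R$, the ball $B_{g_{s,0}}(x,\rho)$ is contained in $\{r_M\ge R/2\}$. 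On that ball we have $|\Rm(g_{s,0})|\le \rho^{-2}$ and $\operatorname{Vol}_{g_{s,0}}(B_{g_{s,0}}(x,\rho))\ge (1-\delta)\omega_{2n}\rho^{2n}$, where $\delta$ is the pseudolocality constant. Both bounds are uniform in $s$, since $g_{s,0}\to g_0$ smoothly there with all derivative bounds controlled by \eqref{eq: outside the glueing region}.

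First I will apply Perelman's pseudolocality in the form used for Proposition \ref{estimate on the conical region}; see \cite[Lemma 3.1]{Gianniotis-Schulze} and \cite[Lemma A.4]{Topping}. This needs completeness, which holds since $M$ is compact. It yields $\epsilon=\epsilon(n)>0$ with
\begin{equation*}
|\Rm(g_s(t))|(x)\le \frac{1}{\epsilon^{2}\rho^{2}}+\frac{C(n)}{t}\le \frac{C(R,g_0)}{t}
\end{equation*}
for $t\in(0,\min\{(\epsilon\rho)^2,T_s\})$. Alternatively one can use the local Hamilton/Shi-type curvature estimate for flows with bounded initial curvature, which gives $|\Rm|\le C\rho^{-2}$ for $t\le c\rho^2$. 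Either version is enough, since $\rho^{-2}\le C/t$ once $t\le T$.

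So I set $T(R,g_0):=(\epsilon\rho)^2$ and take the minimum with $T_s$ and $\frac{R^2}{4\lambda}$ as in the statement. There is one subtlety: pseudolocality requires the almost-Euclidean isoperimetric or volume condition on the whole ball, not only curvature bounds. I will get it from the uniform smooth convergence $g_{s,0}\to g_0$ on the compact set $\{r_M\ge R/2\}$, after possibly shrinking $\rho$ depending on the injectivity radius of $g_0$ there.

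The main obstacle I anticipate is uniformity in $s$. The balls $B_{g_{s,0}}(x,\rho)$ must stay inside the region where $g_{s,0}$ is controlled independently of $s$. This comes from bi-Lipschitz comparability of $g_{s,0}$ with $g_0$ for small $s$, by \eqref{eq: outside the glueing region} with $k=0$, and the conical comparison near $r_M=R/2$ from Remark \ref{remark on decays of approximation metrics}. Once that is checked, the constant is independent of $s$, as required.
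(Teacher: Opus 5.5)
Your argument is correct, but it takes a different route from the paper. The paper does not use pseudolocality on the outer region. It observes that $g_{s,0}=g_0-s\Ric(g_0)$ has curvature bounded by some $C_1$ there, uniformly in $s$. It then applies the maximum principle to $\left(\frac{\partial}{\partial t}-\Delta_{\omega_s(t)}\right)|\Rm|\le C|\Rm|^2$ on the domain $\{r_M\ge R\}$. The boundary values on $\{r_M=R\}$ come from Proposition \ref{prop: boundedness of curvature}, which gives $|\Rm(g_s(t))|\le C_0R^{-2}$ there for $t<\min\{T_s,\frac{R^2}{4\lambda}\}$. ODE comparison then yields a bounded curvature, hence a fortiori a $C/t$ bound, for $t\le T(R,g_0)$. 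Borrowing boundary data from the expander-region analysis is exactly why $\frac{R^2}{4\lambda}$ appears in the time window of the statement.

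Your approach is self-contained on the outer region. It only needs uniform bounded geometry and almost-Euclidean volume of $g_{s,0}$ at a fixed scale $\rho(R,g_0)$ on $\{r_M\ge R/2\}$. You justify this correctly from \eqref{eq: outside the glueing region} and Remark \ref{remark on decays of approximation metrics}. In return, you get the estimate on $(0,\min\{T,T_s\})$ without the restriction $t<\frac{R^2}{4\lambda}$ and without any input from Section \ref{sectn; uni estim on exp reg}. The paper's route avoids verifying the pseudolocality hypotheses uniformly in $s$, using an elementary maximum principle argument instead, at the cost of depending on Proposition \ref{prop: boundedness of curvature}.

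One small point: the ``alternative'' local Hamilton/Shi-type estimate you mention is not an independent tool. Shi's estimates presuppose a spacetime curvature bound. Propagating a time-zero curvature bound on a ball forward in time is precisely what pseudolocality does, under the volume hypothesis you check. So you should rely on the pseudolocality version.
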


\begin{proof}
    For the initial data, we have 
    \begin{equation*}
        \omega_{s,0}|_{M\setminus\{r_M\le R\}}=\omega_E(s)+\sqrt{-1}\partial\bar\partial\left(\chi\left(\frac{r(\cdot)}{s^{\frac{1}{4}}}\right)(u_{1,s}-u_E(s))\right)= \omega_0-s\Ric(\omega_0).
    \end{equation*}
    A similar computation to the one in Proposition \ref{gluing metric; prop} shows that the curvature of $ \omega_{s,0}$ is uniformly bounded on $M\setminus\{r^2\le R^2\}$ for all $s_0\ge s>0$.
    Then, letting $C_1:=\sup_{M\setminus\{r^2\le R^2\}}|\Rm(g_{s,0})|_{g_{s,0}}$ and recalling the evolution equation for the norm of the curvature tensor along K\"ahler--Ricci flow:
    \begin{equation*}
        \left(\frac{\partial}{\partial t}-\Delta_{\omega_s(t)}\right)|\Rm(g_{s}(t))|_{g_s(t)}\le C|\Rm(g_{s}(t))|_{g_s(t)}^2,
    \end{equation*}
    we can use the maximum principle \cite{ChowLuNi} to obtain the result, where Proposition \ref{prop: boundedness of curvature} is used to control $|\Rm(g_s(t))|_{g_s(t)}$ on the boundary.
\end{proof}

\begin{corollary}
   The maximum existence time $T_s$ satisfies $T_s > T_0:=\min\{T(R,g_0),\frac{R^2}{4\lambda}\}$.
\end{corollary}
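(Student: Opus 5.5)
Argue by contradiction, using Hamilton's blow-up characterization of the maximal time recalled after Remark \ref{remark on decays of approximation metrics}: if $T_s<\infty$, then $\limsup_{t\to T_s}\sup_M|\Rm(g_s(t))|_{g_s(t)}=+\infty$. So suppose $T_s\le T_0$ for some fixed $s$ in the admissible range, and show that the curvature of $g_s(t)$ stays bounded on $[T_0/2,T_s)$. This contradicts the characterization and forces $T_s>T_0$.

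Split $M$ into two regions and use the two curvature bounds already proved, both valid for $t<\min\{T_s,\cdot\}$.

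\emph{Region $\mathcal{B}(R)$.} Since $T_s\le T_0\le \frac{R^2}{4\lambda}$, every $t\in(0,T_s)$ lies in $(0,\min\{T_s,\frac{R^2}{4\lambda}\})$. Proposition \ref{prop: boundedness of curvature} with $k=0$ then gives
\[
|\Rm(g_s(t))|_{g_s(t)}\le C_0(r_M+\sqrt t)^{-2}\le C_0t^{-1}.
\]

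\emph{Region $M\setminus\{r_M\le R\}$.} Since $T_s\le T_0\le T(R,g_0)$, Proposition \ref{prop:curvatureawayfromsings} applies for all $t\in(0,T_s)$ and gives $|\Rm(g_s(t))|\le C/t$.

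Combining the two regions, $\sup_M|\Rm(g_s(t))|\le C'/t\le 2C'/T_0$ for $t\in[T_0/2,T_s)$. This bound is finite and independent of $t$, which is the contradiction. Hence $T_s>T_0$.

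The only delicate point is that the constants in both propositions do not depend on $T_s$. They are uniform, coming from pseudolocality and the maximum principle on regions whose time ranges are truncated by $T_s$. We also need $T_0$ to be independent of $s$; it is, since it depends only on $R,\lambda,g_0$. Finally, the boundary control in Proposition \ref{prop:curvatureawayfromsings} is supplied by Proposition \ref{prop: boundedness of curvature} on the same time interval, so no circularity arises.
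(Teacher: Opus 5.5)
Your argument is correct and is the same as the paper's: combine the blow-up characterization of $T_s$ with the $C/t$ bounds of Propositions \ref{prop: boundedness of curvature} and \ref{prop:curvatureawayfromsings}, both of which hold on all of $(0,T_s)$ once $T_s\le T_0$. Your contradiction set-up actually yields the strict inequality in the statement, whereas the paper's proof only writes $T_s\ge T_0$. One small slip: if $T_s\le T_0/2$ the interval $[T_0/2,T_s)$ is empty, so instead bound the curvature on $[T_s/2,T_s)$ by $2C'/T_s$, which is finite because $T_s>0$ by short-time existence.
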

\begin{proof}
Recall we defined $T_s$ as the maximum existence time of the K\"ahler--Ricci flow. Therefore, we must have
    \begin{equation*}
        \limsup_{t\to T_s}\sup_M|\Rm(g_s(t))|_{g_s(t)}=\infty.
    \end{equation*}
    Along with Proposition \ref{prop:curvatureawayfromsings} and Proposition \ref{prop: boundedness of curvature}, this implies $T_s\ge T_0$.
\end{proof}

\section{Flowing metrics with conical singularities: Proof of Theorem \ref{main theorem}}\label{sectn; constructing the flow}

The aim of this section is to prove Theorem \ref{main theorem} in the case of one conical singularity at $y_1\in Y$ modeled on a K\"ahler cone $(\mathcal{C},g_{\mathcal{C}})$ with smooth canonical model. Since the arguments are local, the case of more than one singularity can be treated similarly. 

Theorem \ref{main theorem}, \ref{thm:mainthm2} follows from the existence of a smooth K\"ahler resolution established in Proposition \ref{prop: existence of Kahler resolution}, together with Proposition \ref{gluing metric; prop; 2} and Proposition \ref{gluing metric; prop}. In Subsection \ref{subsection: existence of KRF}, we establish the existence of the K\"ahler--Ricci flow together with the corresponding curvature bounds appearing in Theorem \ref{main theorem}. The proofs of Theorem \ref{main theorem},\ref{thm:mainthm1} and Theorem \ref{main theorem}, \ref{thm:mainthm4} are given in Subsections \ref{subsection: GH} and \ref{subsection: CG}, respectively. Finally, in Subsection \ref{subsection: uniqueness}, we prove Theorem \ref{main theorem}, \ref{thm:mainthm5} and explain its relation to the Song-Tian solution.

\subsection{Taking the limit}\label{subsection: existence of KRF}
We start by providing an overview of the key estimates proved in the previous section. We have shown the existence of constants $R_0,T_0,C_M=C(R_0,g_0),\{C_k\}_{k\in\N},s_0>0$ such that for all $s\le s_0$, there exists a smooth K\"ahler--Ricci flow $g_s(t)_{t\in [0,T_0]}$ starting from $g_{s,0}$ with the following properties:
\begin{equation}\label{C/t bound for curvature}
    \max_{M}|\Rm(g_s(t))|_{g_s(t)}\le \frac{C_M}{t},\quad \textnormal{for $t\in[0,T_0]$},
\end{equation}
\begin{equation}\label{radial decay for curvature}
   \sup_{t\in(0,T_0]} \max_M (r_M+\sqrt{t})^{2+k}|(\nabla^{g_s(t)})^k\Rm(g_s(t))|_{g_s(t)}\le C_{k},\quad \textnormal{for $t\in[0,T_0]$, $k\in\N$},
\end{equation}
where the radial function $r$ is defined by \eqref{radial function}.

In fact, we have the following result, that says that the estimates above improve in smaller scales.
\begin{lemma}\label{closeness to expander improves in small scales}
    For every $\varepsilon>0$ and integer $k\ge 0$, there exist positive parameters $R(\varepsilon,k),s_0(\varepsilon,k)>0$ small and $\lambda(\varepsilon,k)>0$ large such that for all $R\le R(\varepsilon,k),\lambda\ge \lambda(\varepsilon,k)$, and $s\le \min \{\frac{R^4}{16},\frac{1}{\lambda^2},s_0\},$ we have
    \begin{equation*}
        (r_M+\sqrt{t})^{j}\left|(\nabla^{g_E(t+s)})^j(g_s(t)-g_E(t+s))\right|_{g_E(t+s)}\le \varepsilon\quad\textnormal{for all $j\le k$}
    \end{equation*}
    on $\{r_M\le R\} \times [0,\min\{\frac{R^2}{4\lambda},T_0\}).$
\end{lemma}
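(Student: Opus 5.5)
This lemma is essentially a restatement of Proposition \ref{prop: BK bound}, with the $\Psi$-notation unpacked. The plan is to fix $\varepsilon>0$ and $k$, and then choose the parameters so that the error function appearing there is below $\varepsilon$.

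Recall what Proposition \ref{prop: BK bound} gives. For each $k$ there are thresholds $s_0(k),R_0(k),\lambda_0(k)$ such that, whenever $R\le R_0(k)$, $\lambda\ge\lambda_0(k)$ and $s<\min\{s_0(k),\frac{R^4}{4},\frac{1}{\lambda^2}\}$, the weighted $C^k$ difference between $g_s(t)$ and $g_E(t+s)$ is bounded by $\Psi(R,\lambda^{-1}|k)$. This bound holds on $\mathcal{B}(R)\times(0,\min\{T_s,\frac{R^2}{4\lambda}\})$.

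By the definition of $\Psi$, for fixed $k$ we have $\Psi(R,\lambda^{-1}|k)\to0$ as $(R,\lambda^{-1})\to(0,0)$. The steps are then as follows.

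First, pick $R(\varepsilon,k)\le R_0(k)$ and $\lambda(\varepsilon,k)\ge\lambda_0(k)$ such that $\Psi(R,\lambda^{-1}|k)\le\varepsilon$ for all $R\le R(\varepsilon,k)$ and $\lambda\ge\lambda(\varepsilon,k)$. One subtlety needs checking. We want $\Psi$ to be monotone, or at least to be bounded uniformly on the whole parameter range once $R$ is small and $\lambda$ is large. This is automatic if we read $\Psi$ as a supremum-type quantity, and the proofs of the earlier propositions (boundary data, Theorem \ref{interpolation inequality}, and the Bamler--Kleiner bounds) do produce bounds of this form: each is a function tending to $0$ that controls the estimate for all smaller $R$ and larger $\lambda$. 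If necessary, I replace $\Psi$ by $\sup_{R'\le R,\lambda'\ge\lambda}\Psi(R',\lambda'^{-1}|k)$.

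Second, set $s_0(\varepsilon,k)=s_0(k)$. The constraint $s\le R^4/16$ implies $s<R^4/4$, so the hypotheses of Proposition \ref{prop: BK bound} are met. Note that $s\le \frac{1}{\lambda^2}$ versus the strict inequality in the proposition is a harmless boundary case; I can shrink $s_0$ slightly if needed.

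Third, match the time intervals. By the corollary following Proposition \ref{prop:curvatureawayfromsings}, $T_s>T_0$, so $\min\{\frac{R^2}{4\lambda},T_0\}\le\min\{T_s,\frac{R^2}{4\lambda}\}$. The region $\{r_M\le R\}$ coincides with $\overline{\mathcal{B}}(R)$ under the identification. The closed boundary $r_M=R$ is handled either by applying the proposition with a slightly larger $R$ (still within the allowed thresholds) or by continuity.

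Finally, the time $t=0$ must be included. There the estimate reads
\[
\sum_{j\le k} r_M^j\,\bigl|(\nabla^{g_E(s)})^j(g_{s,0}-g_E(s))\bigr|_{g_E(s)}\le\varepsilon .
\]
This follows either by letting $t\to0^+$ in the smooth flow, or directly from Proposition \ref{gluing metric; prop} together with the fact that $g_{s,0}=g_E(s)$ on $\{r_M\le s^{1/4}\}$ and the decay in Corollary \ref{decay of uE(s)}.

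I expect the main obstacle to be the bookkeeping in the first step. We must ensure that the smallness of $\Psi$ is uniform over all admissible $(R,\lambda,s)$ below the chosen thresholds, rather than holding only at a specific pair. Everything else is a direct invocation of earlier results.
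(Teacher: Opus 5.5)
Your proposal is correct and is essentially the paper's argument, which says only that the lemma follows directly from Proposition~\ref{prop: BK bound}. The uniformity you flag as the main obstacle is automatic, since $\Psi(R,\lambda^{-1}|k)$ does not depend on $s$, and its convergence to $0$ as $(R,\lambda^{-1})\to(0,0)$ already gives $\Psi\le\varepsilon$ on a whole neighborhood. One small slip: shrinking $s_0$ does not handle the boundary case $s=\lambda^{-2}$, because $\lambda^{-2}$ can be smaller than any fixed $s_0$. Instead, apply the proposition with a slightly smaller $\lambda'<\lambda$ that is still above the thresholds (e.g.\ $\lambda'=\lambda/2$ after doubling $\lambda(\varepsilon,k)$), which only enlarges the time interval.
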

\begin{proof}
    The proof follows directly from Proposition \ref{prop: BK bound}.
\end{proof}
Moreover, as a consequence of Corollary \ref{coro C^0 bound}, Proposition \ref{prop: BK bound} and standard Schauder estimates, we obtain the following estimates for the K\"ahler potential.
\begin{corollary}\label{coro: decay of varphi_s}
    For every $\varepsilon>0$ and integer $k\ge 0$, there exist positive parameters $R(\varepsilon,k),s_0(\varepsilon,k)>0$ small and $\lambda(\varepsilon,k)>0$ large such that whenever $R\le R(\varepsilon,k),\lambda\ge \lambda(\varepsilon,k)$, and $s\le \min \{\frac{R^4}{16},\frac{1}{\lambda^2},s_0\},$ we have
    \begin{equation*}
        (r+\sqrt{t+s})^{j-2}\left|(\nabla^{g_E(t+s)})^j\varphi_s(t)\right|_{g_E(t+s)}\le \varepsilon\quad\textnormal{for all $j\le k$}
    \end{equation*}
    on $\{r^2\le \frac{R^2}{2}\} \times [0,\frac{R^2}{4\lambda}].$
\end{corollary}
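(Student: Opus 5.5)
The plan is to pass the estimates for $\psi_s$ back to $\varphi_s$ through the rescalings, and then to lift the $C^0$ and $C^2$ control to all orders using Proposition \ref{prop: BK bound} together with Schauder theory. Recall the chain $\varphi_s(t) = s\,\Phi_{s}^{*}\,\overline\varphi_s(t/s)$ and $\overline\varphi_s(e^\tau-1)=e^{\tau}\Phi_{e^{\tau}}^*\psi_s(\tau)$; more precisely, each is the inverse of the pullbacks in Definitions \ref{definition of MA equation} and \ref{definition of normalized complex monge ampere}. The combined rescaling is parabolic: it sends $g_E(t+s)$ to $g_E$, and it sends $r^2$ to $r^2/(t+s)$ and $\varphi$ to $\varphi/(t+s)$. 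Consequently the weighted quantity $(r+\sqrt{t+s})^{j-2}|(\nabla^{g_E(t+s)})^j\varphi_s(t)|_{g_E(t+s)}$ is comparable to $(f+1)^{\frac{j}{2}-1}|(\nabla^{g_E})^j\psi_s(\tau)|_{g_E}$ at the corresponding point. The comparison uses \eqref{comparison of f and r^2}, which gives $f\approx r^2/2$ up to a bounded error. It therefore suffices to prove the $\psi_s$ version on the relevant image region, which lies in $\mathcal{B}(R/\sqrt{s e^\tau})\times[0,T_s'')$ because $t\le R^2/(4\lambda)$.

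For $j\le 2$ this is exactly \eqref{eq: initial C^2 closedness}, which follows from Theorem \ref{interpolation inequality} and gives the bound $\Psi(R,\lambda^{-1})$. The $C^0$ part can alternatively be read off from Corollary \ref{coro C^0 bound} combined with Propositions \ref{control f with fpsi} and \ref{control fpsi with f}, since these give $|\psi_s|\le\Psi(R,\lambda^{-1})(f+1)$. To handle $j\ge 3$, I would work in the unnormalized picture at a fixed point $(x,t)$. Set $\rho:=r(x)+\sqrt{t+s}$ and rescale the parabolic cylinder of size $\rho$ around $(x,t)$ to unit size, multiplying metrics and potentials by $\rho^{-2}$. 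On this ball the reference metric $g_E(t+s)$ has bounded geometry to all orders, by \eqref{eq:quadraticdecayofexpander} and \eqref{eq: curvature bound in prop 4.20 2}, and its injectivity radius is bounded below by Proposition \ref{linear growth of injectivity radius}. Proposition \ref{prop: BK bound} with index $k$ makes $\sqrt{-1}\partial\bar\partial\varphi_s=g_s(t)-g_E(t+s)$ small in $C^{k}$ in this scale, bounded by $\Psi(R,\lambda^{-1}|k)$. I would then write $\Delta_{g_E(t+s)}\varphi_s=\tr_{\omega_E(t+s)}(\omega_s(t)-\omega_E(t+s))$, whose right-hand side is $C^{k-2,\alpha}$-small. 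Interior Schauder estimates on the rescaled ball, fed by the $C^0$ bound $|\varphi_s|\le\Psi\rho^2$, then give $\rho^{j-2}|\nabla^j\varphi_s|\le C_k\Psi$ for all $j\le k$. Finally I choose $R$ small, $\lambda$ large and $s_0$ small so that $C_k\Psi(R,\lambda^{-1}|k)\le\varepsilon$.

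The main obstacle is ensuring that the relevant balls stay inside the region where the estimates hold. The ball of radius $\sim\rho$ around $x$ must lie in $\{r^2\le R^2\}$, where Proposition \ref{prop: BK bound} applies; this is why the statement restricts to $r^2\le R^2/2$. In addition, for small $t$ the time interval of the parabolic cylinder must not cross $t=0$ badly. To deal with the time direction I would use that only spatial interior Schauder estimates at each fixed time are needed, since the equation above is elliptic at fixed $t$. This avoids parabolic cylinders entirely. The remaining check is the comparison $\frac12 r(x)\le r(y)+\sqrt{t+s}\le 2\rho$ on the ball, which is done exactly as in the proof of Theorem \ref{interpolation inequality}, via the gradient bound $|\nabla\sqrt{f+1}|\le 1/\sqrt2$.
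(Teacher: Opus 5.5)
Your proposal follows the paper's argument: the $C^0$ bound from Corollary \ref{coro C^0 bound} (with Propositions \ref{control f with fpsi} and \ref{control fpsi with f}), higher-order smallness of $\Delta_{g_E(t+s)}\varphi_s=\tr_{\omega_E(t+s)}(\omega_s(t)-\omega_E(t+s))$ from Proposition \ref{prop: BK bound} (equivalently Lemma \ref{closeness to expander improves in small scales}), and fixed-time interior Schauder estimates; your rescaling dictionary to $\psi_s$ and the ball-containment check supply details the paper leaves implicit. One caveat, which the paper's proof also glosses over: Proposition \ref{prop: BK bound} is weighted by $r_M+\sqrt{t}$ rather than $r+\sqrt{t+s}$, so it does not literally give smallness at your scale $\rho$ in the corner $\{r\lesssim\sqrt{s},\ t\lesssim s\}$, and there you should rerun the Lemma A.2(b) argument from that proof on balls of radius comparable to $\sqrt{s}$ — this is harmless, because $g_{s,0}=g_E(s)$ exactly on $\{r_M\le s^{1/4}\}$ and the needed $C^0$ closeness $|g_s(t)-g_E(t+s)|\le\Psi(R,\lambda^{-1})$ already follows from Theorem \ref{interpolation inequality}.
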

\begin{proof}
   The $C^0$ estimate for $\varphi_s$ follows from Corollary~\ref{coro C^0 bound}. Lemma~\ref{closeness to expander improves in small scales} yields higher-order estimates for $\Delta_{g_E(t+s)}\varphi_s(t)$. The corresponding higher-order estimates for $\varphi_s(t)$ then follow from standard Schauder estimates.

\end{proof}
It then follows from Proposition \ref{prop: BK bound} and \ref{prop: boundedness of curvature} that the approximating solutions $g_s(t)$ satisfy
\begin{equation*}
C^{-1}\tilde g(t) \le g_s(t) \le C\tilde g(t)
\end{equation*}
on $M\times (0,T_0],$ where $C>0$ is a uniform constant and $\tilde g(t)$ is a smooth metric interpolating between $g_E(t)$ on the expander region and $g_0$ outside of it, with bounded curvature for every $t\in (0,T_0].$ By \cite[Theorem 3.2.15]{BEG}, we can therefore pass to a subsequence to ensure convergence to a limit Riemannian metric $g(t)$ after letting $s\searrow 0.$ Since we also have analogous higher derivative bounds, $g(t)$ is smooth and is a solution to the K\"ahler--Ricci flow on $M\times (0,T_0].$ Finally, since $(M,g_s(t))_{t\in(0,T_0]}$ satisfies \eqref{C/t bound for curvature}, it follows from the smooth convergence that the limit solution $g(t)$ also satisfies
\begin{equation}\label{C/t curvature bound for limit}
    |\Rm(g(t))|_{g(t)}\le\frac{C_M}{t},
\end{equation}
\begin{equation}\label{spatial decay of curvature for the limit}
   \sup_{t\in (0,T_0]} \max_M(r_M+\sqrt{t})^{2+k}|(\nabla^{g(t)})^k\Rm(g(t))|_{g(t)}\le C_k
\end{equation}
on $M\times (0,T_0]$. By Corollary~\ref{coro: decay of varphi_s}, we may assume that $g_s(t)$ converges to $g(t)$ at the level of K\"ahler potentials: that is, on $\{r_M^2\leq r_0^2\}\times(0,T_0]$, there exists a smooth function $\varphi(t)$ which is a limit of functions $(\varphi_s(t))_{t\in (0,T_0]}$ satisfying
\begin{equation*}
\omega(t)=\omega_E(t)+\sqrt{-1}\partial\bar\partial\varphi(t).
\end{equation*}
Moreover, since $[\omega_s(t)]=[\omega_0]-(s+t)c_1(M)$ by \eqref{eq: cohomological class of w_s,0}, we conclude that
\begin{equation*}
    [\omega(t)]=[\omega_0]-tc_1(M),\quad \lim_{t\to0^+}[\omega(t)]=[\omega_0].
\end{equation*}
\subsection{Convergence to the initial data}\label{subsection: GH}
In this section, we prove two kinds of convergences of $(M,g(t))$ to the singular initial data $Y.$ Firstly, we show that our limit solution, $g(t),$ converges smoothly uniformly to $g_0$ outside the singular point. Define $$Y_l:=Y\setminus\{r^2< 4\sqrt{s_l}\}\hookrightarrow M.$$ 
Here $\{s_l\}_{l\in\N}$ is a positive sequence which tends to 0 such that $g_l(t):=g_{s_l}(t)$ converges to $g(t)$ locally smoothly on $M\times (0,T_0]$.

We argue that for fixed $l_0>0$, $g(t)$ converges smoothly uniformly to $g_0$ on $Y_{l_0}$ as $t$ tends to $0$. For any $k\in\N$, for any fixed $t>0$, we have seen that $g_l(t)$ converges uniformly smoothly to $g(t)$ on $M$. It follows that on $Y_{l_0}$, for any $\varepsilon>0$, there exists a $L=L(k,\varepsilon,t)>0$ such that for all $l\ge L,$
\begin{equation*}
    |g_l(t)-g(t)|_{C^k(g_0)}\le \varepsilon.
\end{equation*}
Moreover, the curvature bound \eqref{radial decay for curvature} implies that for all $j\in\N,$ and $l\ge l_0$, there exists a constant $C_{j,l_0}$ such that on $Y_{l_0}$,
\begin{equation*}
    |(\nabla^{g_l(t)})^j\Rm(g_l(t))|_{g_l(t)}\le C_{j,l_0}
\end{equation*}
holds for all $t\in [0,T_0]$. In particular, from the Ricci flow equation we get that there exists a constant $C=C(l_0,k)>0$ such that on $Y_{l_0},$ 
\begin{equation*}
    |g_l(t)-g_l(0)|_{C^k(g_0)}\le Ct,
\end{equation*}
for all $t\in [0,T_0].$ Since $g_l:=g_l(0)$ converges smoothly uniformly to $g_0$ on $Y_{l_0}$ as $l$ tends to $\infty$, there also exists an $L'=L'(l_0,\varepsilon,k)>0$ such that for all $l\ge L'$, we have 
\begin{equation*}
    |g_l(0)-g_0|_{C^k(g_0)}\le \varepsilon
\end{equation*}
on $Y_{l_0}.$ It then follows from the triangle inequality that
\begin{equation*}
    |g(t)-g_0|_{C^k(g_0)}\le 2\varepsilon+C(k,l_0)t
\end{equation*}
holds on $Y_{l_0}$ for all $\varepsilon>0$, which is enough to obtain the uniform smooth convergence of $g(t)$ to $g_0$ on $Y_{l_0}$ as $t$ tends to $0$. 
Similarly, one can prove that the K\"ahler potential $\varphi(t)$ converges to $u_1$ locally smoothly on $\{0<r^2\le r_0^2\}$ as $t\to 0^+.$

By Lemma \ref{closeness to expander improves in small scales} and the argument of \cite[Section 5]{Gianniotis-Schulze}, we get that for every $\varepsilon>0,$ the resolution map $\pi: M\to Y$ is an $\varepsilon-$isometry between $(Y,d_Y)$ and $(M,d_{g(t)})$ for small $t$. In particular, $(M,d_{g(t)})$ converges to $(Y,d_Y)$ in the Gromov--Hausdorff topology as $t\to 0^+$.

\subsection{Tangent flow at singular points}\label{subsection: CG}
In this section we prove that, for any $p\in  \pi^{-1}(y_1)$, the rescaled pointed K\"ahler--Ricci flow $(M,\tau^{-1}g(\tau t),p)_{t\in (0,\tau^{-1}T_0]}$ converges to $(E,g_E(t),q)_{t\in (0,\infty)}$ in the smooth pointed Cheeger--Gromov topology. First, we need the following result. By taking $s\searrow 0$ on the estimates from Lemma \ref{closeness to expander improves in small scales}, we obtain
\begin{corollary}\label{closeness to expander improves in small scales for the limit}
    For every $\varepsilon>0$ and integer $k\ge 0$, there exist positive parameters $R(\varepsilon,k)$ small and $\lambda(\varepsilon,k)>0$ large such that for all $R\le R(\varepsilon,k),\lambda\ge \lambda(\varepsilon,k)$, we have
    \begin{equation*}
        t^{\frac{j}{2}}\left|(\nabla^{g_E(t)})^j(g(t)-g_E(t))\right|_{g_E(t)}\le \varepsilon\quad\textnormal{for all $j\le k$}
    \end{equation*}
    on $\mathcal{B}(R) \times (0,\frac{R^2}{4\lambda}]$.
\end{corollary}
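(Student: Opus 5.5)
The plan is to pass to the limit $s\searrow 0$ in Lemma \ref{closeness to expander improves in small scales}, using the locally smooth convergence $g_{s_l}(t)\to g(t)$ on $M\times(0,T_0]$ from Subsection \ref{subsection: existence of KRF}, and then to replace $g_E(t+s)$ by $g_E(t)$.

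Fix $\varepsilon>0$ and $k$. Apply Lemma \ref{closeness to expander improves in small scales} with $\varepsilon/2$ in place of $\varepsilon$. For $R\le R(\varepsilon/2,k)$, $\lambda\ge\lambda(\varepsilon/2,k)$ and all sufficiently small $s=s_l$, this gives
\begin{equation*}
(r_M+\sqrt{t})^{j}\bigl|(\nabla^{g_E(t+s_l)})^j(g_{s_l}(t)-g_E(t+s_l))\bigr|_{g_E(t+s_l)}\le \varepsilon/2
\end{equation*}
on $\{r_M\le R\}\times(0,\min\{\tfrac{R^2}{4\lambda},T_0\})$. Since $t^{j/2}\le(r_M+\sqrt t)^j$, the same bound holds with the weight $t^{j/2}$. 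Shrinking $R$ if necessary, we may assume $R^2/(4\lambda)\le T_0$, so the time interval is $(0,R^2/(4\lambda)]$; the endpoint is handled by continuity in $t$.

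Now fix $(x,t)$ with $x\in\mathcal{B}(R)$ and $t>0$. The self-similar flow $g_E(\cdot)$ is smooth in time on $E\times(0,\infty)$, so $g_E(t+s_l)\to g_E(t)$ in $C^k$ near $x$ as $l\to\infty$, and the same holds for the associated connections and norms. Together with $g_{s_l}(t)\to g(t)$ in $C^k_{\mathrm{loc}}$, letting $l\to\infty$ in the pointwise inequality gives
\begin{equation*}
t^{j/2}\bigl|(\nabla^{g_E(t)})^j(g(t)-g_E(t))\bigr|_{g_E(t)}(x)\le\varepsilon/2<\varepsilon .
\end{equation*}
The bound holds at each point with constants independent of $(x,t)$, so no uniformity issue arises.

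The main point to check is that the admissible range of $s$ in Lemma \ref{closeness to expander improves in small scales}, namely $s\le\min\{R^4/16,\lambda^{-2},s_0\}$, contains the tail of the sequence $s_l\to0$ for the fixed $R,\lambda$. This is immediate. One must also make sure that $\mathcal{B}(R)$ is the same set as $\{r_M<R\}\subset M$ under the identification made in Section \ref{sectn; approx soltn}, which it is for $R<r_0$. Finally, the estimate at a single time $t$ uses the convergence at that time only, so the singular behaviour as $t\to0$ plays no role.
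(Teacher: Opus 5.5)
Your proposal is correct and is the paper's own argument: the paper obtains this corollary by letting $s\searrow 0$ in Lemma \ref{closeness to expander improves in small scales}, along the subsequence $s_l$ for which $g_{s_l}(t)\to g(t)$ locally smoothly. The details you add are the ones the paper leaves implicit: the tail of $s_l$ lies in the admissible $s$-range, $g_E(t+s_l)\to g_E(t)$ at fixed $t>0$, the weights compare as $t^{j/2}\le(r_M+\sqrt t)^j$, and the endpoint $t=R^2/(4\lambda)$ follows by continuity.
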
    

Moreover, Corollary~\ref{coro: decay of varphi_s} yields the following estimates.
\begin{prop} \label{prop:citablepotentialbound}
    For every $\varepsilon>0$ and integer $k\ge 0$, there exist positive parameters $R(\varepsilon,k)$ small and $\lambda(\varepsilon,k)>0$ large such that for all $R\le R(\varepsilon,k),\lambda\ge \lambda(\varepsilon,k)$, we have
    \begin{equation*}
        (r_M+\sqrt{t})^{j-2}\left|(\nabla^{g_E(t)})^j\varphi(t)\right|_{g_E(t)}\le \varepsilon\quad\textnormal{for all $j\le k$}
    \end{equation*}
    on $\mathcal{B}(\frac{R}{\sqrt{2}}) \times (0,\frac{R^2}{4\lambda}]$. Moreover, $\varphi(0)=u_1$ on $\{r^2 \leq \frac{R^2}{2}\}$.
\end{prop}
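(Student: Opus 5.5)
The plan is to pass to the limit $s=s_l\searrow 0$ in Corollary \ref{coro: decay of varphi_s}, using that $\varphi(t)$ was constructed as a locally smooth limit of the $\varphi_{s_l}(t)$. Fix $\varepsilon>0$ and $k$. Take $R(\varepsilon,k)$, $\lambda(\varepsilon,k)$ from Corollary \ref{coro: decay of varphi_s}, applied with $k$ replaced by $k+1$ so that we retain one extra derivative for compactness. For each fixed $R\le R(\varepsilon,k)$ and $\lambda\ge\lambda(\varepsilon,k)$, the constraint $s\le\min\{\frac{R^4}{16},\frac{1}{\lambda^2},s_0\}$ holds for all large $l$. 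We therefore obtain, uniformly in $l$, the bounds
\begin{equation*}
(r+\sqrt{t+s_l})^{j-2}\bigl|(\nabla^{g_E(t+s_l)})^j\varphi_{s_l}(t)\bigr|_{g_E(t+s_l)}\le\varepsilon ,\qquad j\le k+1,
\end{equation*}
on $\{r^2\le \frac{R^2}{2}\}\times[0,\frac{R^2}{4\lambda}]$.

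First, on any compact $K\subset \mathcal{B}(\frac{R}{\sqrt 2})\times(0,\frac{R^2}{4\lambda}]$, the metrics $g_E(t+s_l)$ converge smoothly to $g_E(t)$. This follows because $g_E(t)=t\Phi_t^*g_E$ depends smoothly on $t>0$. Hence the weights $(r+\sqrt{t+s_l})^{j-2}$ converge to $(r_M+\sqrt t)^{j-2}$ uniformly on $K$, using $r=r_M$ on this region. Second, the $\varphi_{s_l}$ converge locally smoothly to $\varphi$ on $\{r_M^2\le r_0^2\}\times(0,T_0]$, as in Subsection \ref{subsection: existence of KRF}. If one only has convergence along a subsequence, Arzel\`a--Ascoli together with the $(k+1)$-st order bound provides $C^k$ convergence. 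Passing to the limit pointwise on $K$ gives the desired estimate on $K$, and exhausting the region by compact sets gives it on all of $\mathcal{B}(\frac{R}{\sqrt2})\times(0,\frac{R^2}{4\lambda}]$. One should also check that the limit $\varphi$ does not depend on the choice of subsequence. This is automatic here, because $\varphi$ was fixed once in Subsection \ref{subsection: existence of KRF}, and the estimates are applied to that particular sequence.

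For the identity $\varphi(0)=u_1$, I use that $\varphi(t)\to u_1$ locally smoothly on $\{0<r^2\le r_0^2\}$ as $t\to0^+$, which was established in Subsection \ref{subsection: GH}. The case $j=0$ of the estimate gives $|\varphi(t)|\le\varepsilon(r_M+\sqrt t)^2$, and $u_1$ is continuous at the vertex. So $\varphi(t)$ extends continuously to $t=0$ on $\{r^2\le\frac{R^2}{2}\}$, including at $\pi^{-1}(y_1)$, where the limit value is $0=u_1(o)$ up to the normalization of $u_1$ implicit in \eqref{eq:estimatesforu1} with $j=0$. Defining $\varphi(0)$ as this limit gives $\varphi(0)=u_1$ there.

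The main obstacle is justifying the convergence $\varphi(t)\to u_1$ as $t\to0^+$ near the cone region. The plan is to use the potential equation $\partial_t\varphi=\log\frac{\omega(t)^n}{\omega_E(t)^n}$. By \eqref{spatial decay of curvature for the limit}, the right-hand side is bounded for $r_M\ge\delta$, uniformly as $t\to0$. Hence $\varphi(t)$ is Cauchy as $t\to0$ on $\{r_M\ge\delta\}$. The limit must agree with $\lim_l\varphi_{s_l}(0)=\lim_l\psi_{s_l,0}=u_1$ there, by the same triangle-inequality argument as in Subsection \ref{subsection: GH}. This is because $\psi_{s,0}=u_{1,s}-u_E(s)$ for $r_M\ge 2s^{1/4}$, and by Corollary \ref{decay of uE(s)} this tends to $u_1$ as $s\to0$.
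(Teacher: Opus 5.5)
Your proposal is correct and follows the paper's route: the paper proves the estimate by letting $s=s_l\to 0$ in Corollary~\ref{coro: decay of varphi_s}, and it gets the identity $\varphi(0)=u_1$ from the convergence $\varphi(t)\to u_1$ established in Subsection~\ref{subsection: GH}, just as you do. Your added details are consistent with the paper and fill in what it leaves implicit: keeping one extra derivative for compactness, checking that the weights converge, and using the $j=0$ bound to get continuity across $\pi^{-1}(y_1)$, where $u_1$ vanishes exactly by \eqref{eq:estimatesforu1}.
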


By the $\frac{C}{t}$ curvature bound, and by the forwards uniqueness \cite{ChenZhu} and backward uniqueness \cite{Kotschwar1} of Ricci flow, it suffices to show that $(M,t^{-1}g(t),p)$ converges to $(E,g_E,q)$ in the pointed Cheeger--Gromov sense as $t\to 0^+$.  
Since $\Phi_t(p)\in \pi^{-1}(y_1)$ and $\pi^{-1}(y_1)$ is compact and invariant under the flow of $X$, there exist $t_m\to 0^+$ and $q\in \pi^{-1}(y_1)$ such that $\Phi_{t_m}(p)\to q.$ Moreover, $(E,g_E,X)$ is real analytic in normal coordinates \cite[Corollary 1.3]{Kotschwar2}, and so is the potential function $f.$ Since $(\Phi_{-e^{-\tau}})_{\tau \in \mathbb{R}}$ is the flow of $\frac{1}{2}\nabla^{g_E}f,$ it follows from the argument in \cite[Theorem 2.2]{AMA-DescentMethods} applied to $-f$ in real analytic coordinates near $q$ that $\Phi_t(p)\to q$ as $t\to 0^+.$

For any $\varepsilon>0,k\in\N$, Corollary \ref{closeness to expander improves in small scales for the limit} tells us that there exists $R(\varepsilon,k)>0$ small and $\lambda(\varepsilon,k)>0$ large such that for all $R\le R(\varepsilon,k)$, $\lambda>\lambda(\varepsilon,k)$, we have
\begin{equation*}
    t^{\frac{j}{2}}|(\nabla^{g_E(t)})^j(g_E(t)-g(t))|\le \varepsilon.\quad\textnormal{for all $j\le k$}
\end{equation*}
on $\{r_M^2\le R^2\}$, for all $0<t\le\frac{R^2}{4\lambda}.$ For any $\lambda_0>0$, define the rescaling
\[
\Phi_{t^{-1}}:\{r^2\le\lambda_0\}\subset E\to\{r_M^2\le \lambda_0t\}\subset  (M,t^{-1}g(t)),
\]
where, as before, $\Phi_t$ is the flow of $-\frac{X}{2t}$ for all $t>0$, and we did not distinguish the radial function defined on $E$ and the radial function defined on $M$. In this case, for all $\varepsilon>0,k\in\N$, and for all $t$ such that $t\le\min\{\frac{R^2(\varepsilon,k)}{4\lambda(\varepsilon,k)},\frac{R^2(\varepsilon,k)}{\lambda_0}\}$, it follows that $\{r_M^2\le \lambda_0t\}\subset\{r_M^2\le R^2(\varepsilon,k)\}$, and also we have 
for all $j\le k$, on the region $\{r^2\le\lambda_0\}$ on $E$,
\begin{equation*}
    |(\nabla^{g_E})^j(g_E-t^{-1}\Phi_{t^{-1}}^*g(t))|_{g_E}\le\varepsilon.
\end{equation*}

This implies that $(M,t^{-1}g(t),p)$ converges under the smooth pointed Cheeger--Gromov topology to $(E,g_E,q)$, with $q$ on the exceptional set. 

\subsection{Uniqueness and relation to Song--Tian solutions}\label{subsection: uniqueness}

In this section, we show that the solutions constructed in Theorem \ref{main theorem} are uniquely determined by their initial data, and coincide with previously constructed examples. For simplicity, we denote $\omega(t)$ as $\omega_t$.

Let $(\widetilde{M},\tilde g(t))_{t\in (0,T]}$ be the K\"ahler-Ricci flow on a compact K\"ahler manifold $\widetilde{M}$ as in Theorem \ref{main theorem} \ref{thm:mainthm5}, and suppose there is a modification $\widetilde{\pi}:\widetilde{M}\to Y$  with $\widetilde{\pi}^{\ast}[\omega_0]=\lim_{t\to 0^+}[\widetilde{\omega}(t)]$ and such that $\widetilde{\pi}_*\tilde g(t)$ converges to $g_0$ locally smoothly away from the singular set. Then, for sufficiently small positive $t$, $\widetilde\pi^*[\omega_0]+t[K_{\widetilde{M}}]$ is ample. By \cite[Theorem 1.1]{Collins-Tosatti}, it follows that $K_{\widetilde M }$ is $\widetilde \pi-$ample. Consequently, $\widetilde{M}$ is the smooth canonical model of $Y$. By the uniqueness of smooth canonical models, there exists a biholomorphism $H:M\mapsto \widetilde{M}$ such that $\pi=\widetilde{\pi}\circ H$. We then identify $M$ and $\widetilde{M}$ via the biholomorphism $H$.

Because the only singular points of $Y$ are biholomorphic to K\"ahler cones with smooth links, $Y$ is a normal analytic space \cite[Theorem 1.8]{ConlonHeinI}. We now recall some basic definitions concerning K\"ahler geometry on such spaces. For more details, the reader is referred to \cite[Section 16.3]{CMA}.

\begin{definition} (c.f. \cite[Definitions 16.35-16.38]{CMA})
    \begin{enumerate}
        \item A plurisubharmonic function $\varphi:U\to \mathbb{R} \cup \{-\infty\}$ on an open subset $U\subseteq Y$ is an upper semi-continuous function which is not identically equal to $-\infty$, and which extends to a plurisubharmonic function under a local embedding $U \hookrightarrow \mathbb{C}^N$. It is strongly plurisubharmonic, resp. $C^0$, resp. $C^{\infty}$ if it extends to a strongly plurisubharmonic, resp. $C^0$, resp. $C^{\infty}$ function in a local embedding. We say $\varphi$ is pluriharmonic if $\varphi$ is a continuous plurisubharmonic function which extends under a local embedding to a pluriharmonic function.

        \item A K\"ahler potential on $Y$ is a family $(U_i,(\varphi_i)_{i\in I})$, where $(U_i)_{i\in I}$ is an open cover of $Y$ and $\varphi_i$ are smooth strictly plurisubharmonic functions such that $\varphi_i-\varphi_j$ is pluriharmonic. We define an equivalence relation on K\"ahler potentials by 
        \begin{equation*}
            (U_i,\varphi_i)_{i\in I} \sim (V_j,\psi_j)_{j\in J} \iff \varphi_i-\psi_j \text{ is pluriharmonic on } U_i \cap V_j \text{ for all } i\in I, \ j\in J.
        \end{equation*}
        A K\"ahler metric on $Y$ is an equivalence class of K\"ahler potentials. 

        \item A positive current on $Y$ is an equivalence class of plurisubharmonic potentials. A positive current $(U_i,\varphi_i)_{i\in I}/\sim$ is said to have locally bounded potentials, resp. continuous local potentials if $\varphi_i$ is locally bounded, resp. continuous. We say a positive current on $Y$ is a K\"ahler current if in addition $\varphi_i$ extend under local embeddings to plurisubharmonic functions satisfying $\sqrt{-1}\partial \overline{\partial} \varphi_i \geq C^{-1}\omega_{\mathbb{C}^n}$ for some $C>0$.

        \item If $\omega_Y$ is a smooth K\"ahler metric on $Y$ with K\"ahler potential $(U_i,\varphi_i)_{i\in I}$, then an upper-semicontinuous function $\varphi:Y\to \mathbb{R} \cup \{-\infty\}$ is called $\omega_Y$-plurisubharmonic if $\varphi_i+\varphi$ is plurisubharmonic on $U_i$ for each $i\in I$. In this case, we let $\omega_Y + \sqrt{-1}\partial \overline{\partial}\varphi$ denote the positive current corresponding to $(U_i,\varphi_i+\varphi)_{i\in I}$. 
        
    \end{enumerate}
\end{definition}

\begin{lemma} \label{lem:Kahlercurrent}
\begin{enumerate}
    \item \label{Kahlercurrent0} Each open embedding $\phi_i:\mathcal{C}(S_i) \supseteq B_{g_{\mathcal{C}(S_i)}}(o,r_0)\setminus \{o\}\to Y$ extends to an open holomorphic embedding of $B_{g_{\mathcal{C}(S_i)}}(o,r_0)$. 

    \item \label{Kahlercurrent1} The smooth K\"ahler metric $\omega_0$ naturally extends to a K\"ahler current on $Y$, which we also denote $\omega_0$.

    \item \label{Kahlercurrent2} There is a smooth K\"ahler form $\omega_Y$ on $Y$ and a continuous $\omega_Y$-plurisubharmonic function $w:Y\to \mathbb{R} \cup \{-\infty\}$ such that $\omega_0 = \omega_Y+\sqrt{-1}\partial \overline{\partial}w$. 
\end{enumerate}
\end{lemma}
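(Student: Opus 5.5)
For \ref{Kahlercurrent0}, the plan is to use normality of $Y$ (established just above via \cite[Theorem 1.8]{ConlonHeinI}) together with Definition \ref{defn of sing space}. By that definition, $\phi_i$ is already a holomorphic embedding of $\{r<r_0\}$ with $\phi_i(o_i)=y_i$. So the only thing to check is compatibility with the analytic structure of $Y$ at $y_i$. I would argue as follows. The map $\phi_i$ is a homeomorphism onto an open neighbourhood of $y_i$, and it is holomorphic away from the vertex. Both $\mathcal{C}(S_i)$, as a normal affine variety, and $Y$ are normal. By Riemann's extension theorem on normal spaces, bounded holomorphic functions extend across the codimension $\ge 2$ point. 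Applying this to $\phi_i$ and to its inverse, composed with local coordinate functions of an embedding $Y\supseteq U\hookrightarrow\mathbb{C}^N$, shows that $\phi_i$ is a biholomorphism onto its image near the vertex.

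For \ref{Kahlercurrent1}, I would work near $y_i$ in cone coordinates, where $\phi_i^*\omega_0=\omega_{\mathcal{C}}+\sqrt{-1}\partial\bar\partial u_i=\sqrt{-1}\partial\bar\partial(\tfrac{r^2}{2}+u_i)$. The function $\tfrac{r^2}{2}$ is a continuous plurisubharmonic function on the cone. Indeed, after embedding $\mathcal{C}$ into $\mathbb{C}^N$ by holomorphic functions that are homogeneous under the Reeb/radial action, $r^2$ is comparable to a sum $\sum|z_j|^{2/d_j}$-type function. Moreover, $r^2$ is strictly psh on the cone minus the vertex. Since $u_i$ is continuous with $|u_i|\le k_0(r) r^2$ and $|\sqrt{-1}\partial\bar\partial u_i|_{g_\mathcal{C}}\le k_2(r)$, the function $\varphi_i:=\tfrac{r^2}{2}+u_i$ is continuous. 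It is also psh on the punctured ball (being $\omega_0$-positive there). Bounded psh functions extend across the vertex as psh functions on a normal space, by the Grauert--Remmert extension theorem. This gives local potentials near each $y_i$. Away from the $y_i$ we use smooth local potentials of $\omega_0$. Differences of the potentials are pluriharmonic on overlaps because they are $\sqrt{-1}\partial\bar\partial$-closed on smooth parts and the overlaps avoid $y_i$. For the K\"ahler current condition, I would check $\sqrt{-1}\partial\bar\partial\varphi_i\ge C^{-1}\omega_{\mathbb{C}^N}|_{\mathcal{C}}$ near the vertex. This follows from $\omega_0\ge\tfrac12\omega_{\mathcal{C}}$ for small $r$ together with $\omega_{\mathcal{C}}\ge c\,\omega_{\mathbb{C}^N}$ on the cone near $o$. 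The latter holds because the embedding coordinates have weights $d_j\ge1$, so $|dz_j|_{g_\mathcal{C}}\lesssim r^{d_j-1}$ is bounded.

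For \ref{Kahlercurrent2}, I would construct $\omega_Y$ by gluing. Near $y_i$, I would take the smooth strictly psh potential $\rho_i:=|z|^2$ from a local embedding. I would then set $\omega_Y:=\omega_0+\sqrt{-1}\partial\bar\partial\big(\sum_i\eta_i(\rho_i-\varphi_i)\big)$, where the $\eta_i$ are cutoffs supported in small neighbourhoods of $y_i$ and equal to $1$ near $y_i$, and then set $w:=-\sum_i\eta_i(\rho_i-\varphi_i)$. Then $w$ is continuous, and $\omega_Y=\sqrt{-1}\partial\bar\partial\rho_i$ near $y_i$ is smooth K\"ahler there.

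The main obstacle is positivity of $\omega_Y$ in the transition annulus where $\nabla\eta_i\neq0$. In that annulus everything is smooth, but the error terms are not small a priori. To handle it, I would rescale: replace $\rho_i$ by $\epsilon\rho_i$ plus a constant and choose the cutoff scale carefully. Alternatively, and preferably, I would use a regularized maximum $\widetilde{\max}(\varphi_i,\epsilon\rho_i-c)$ in place of the cutoff. Near $y_i$ we have $\varphi_i\approx r^2/2\to 0$ and $\epsilon\rho_i-c<\varphi_i$ away from $y_i$ for suitable $c$. Choosing $c$ small enough, but positive, makes the regularized max equal $\epsilon\rho_i-c$ only in a tiny neighbourhood of $y_i$ and equal $\varphi_i$ outside a compact set. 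This keeps strict plurisubharmonicity throughout and yields $\omega_Y$ smooth K\"ahler with $w=\varphi_i-\widetilde{\max}(\cdots)$ continuous.
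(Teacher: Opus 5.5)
Your arguments for (0) and (1) follow the paper's proof. You use Riemann extension on the normal spaces $\mathcal{C}(S_i)$ and $Y$, applied to $\phi_i^{\pm1}$ composed with local embeddings. You use Grauert--Remmert to extend $\frac12 r^2+u_i$ across the vertex. You use a Reeb-equivariant embedding to bound $\omega_{\mathcal{C}}$ from below near $o$.

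\textbf{Gap in (1).} The last step does not hold: nothing forces the weights to satisfy $d_j\ge 1$. Homogeneous holomorphic functions on a K\"ahler cone have positive degree, but that degree can be smaller than $1$. Take $(\mathbb{C}^2,\frac{\sqrt{-1}}{2}\partial\bar\partial|z|^4)$, a K\"ahler cone over a Sasakian $S^3$ with $r=|z|^2$.
\begin{itemize}
\item It is its own smooth canonical model.
\item The coordinates have degree $\frac12$.
\item $\omega_{\mathcal{C}}\le C|z|^2\omega_{\mathbb{C}^2}$ near $0$.
\end{itemize}
Since $\mathbb{C}^2$ is smooth there, no local embedding can give $\omega_{\mathcal{C}}\ge c\,\omega_{\mathbb{C}^N}$. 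So the strict lower bound in (1), as defined in the paper, fails for this model.

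The paper makes the same move: it asserts that the embedding components are Lipschitz for the cone metric, which also needs weights $\ge 1$. So you have not missed anything relative to the paper. Still, neither argument proves more than this: $\omega_0$ has continuous plurisubharmonic local potentials that are smooth and strictly plurisubharmonic off $\{y_i\}$.

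\textbf{A different route for (2).} The paper derives (2) from the K\"ahler current property by citing \cite[Theorem 3.1]{smith}. You instead build $\omega_Y$ directly with a regularized maximum. This needs only:
\begin{itemize}
\item continuity of $\varphi_i$;
\item $\varphi_i(y_i)=0$, which follows from $|u_i|\le k_0(r)r^2$;
\item strict plurisubharmonicity of $\varphi_i$ away from $y_i$.
\end{itemize}
Your construction is therefore unaffected by the problem above, and (2) is what the paper uses afterwards. It is also the same device the paper uses in the proof of Corollary \ref{cor:examples}.

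\textbf{Sign error in your construction.} As written, the sign is backwards. With $c>0$ you have $\epsilon\rho_i-c<0=\varphi_i(y_i)$ near $y_i$, so the regularized maximum equals $\varphi_i$ there and nothing is smoothed. Use $\epsilon\rho_i+c$ instead:
\begin{itemize}
\item Fix $r_1$ with $k_0(r_1)\le\frac14$, so that $\varphi_i\ge\frac{r^2}{4}$ on $\{r\le r_1\}$.
\item Take $c=\frac{r_1^2}{64}$, choose $\epsilon$ with $\epsilon\rho_i\le\frac{r_1^2}{64}$ on $\{r\le r_1\}$, and choose regularization parameters whose sum is below $\frac c2$.
\end{itemize}
Then the regularized maximum has three regimes:
\begin{itemize}
\item it equals $\varphi_i$ on $\{\frac{r_1}{2}\le r\le r_1\}$, where the gap is at least $2c$;
\item it equals $\epsilon\rho_i+c$ on the neighbourhood $\{\varphi_i<\frac c2\}\subset\{r<\frac{r_1}{2}\}$ of $y_i$;
\item it is smooth and strictly plurisubharmonic in between, since $y_i$ is excluded there.
\end{itemize}
Let $\omega_Y$ be its $\sqrt{-1}\partial\bar\partial$ on $\{r<r_1\}$ and $\omega_0$ on $\{r>\frac{r_1}{2}\}$. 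Let $w$ be $\varphi_i$ minus the regularized maximum on $\{r<r_1\}$ and $0$ elsewhere. This gives (2). You can drop the cutoff variant.
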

\begin{proof} \ref{Kahlercurrent0} By \eqref{eq:estimatesforu1}, $\phi_i$ extends uniquely to an open topological embedding $B_{g_{\mathcal{C}(S_i)}}(o,r_0) \hookrightarrow Y$. The claim then follows by applying the Riemann extension theorem to $\phi_i,\phi_i^{-1}$ composed with local embeddings of neighborhoods of $y_i$, $o$ into $\mathbb{C}^N$, respectively.

\ref{Kahlercurrent1} Because $(Y\setminus \{y_1,...,y_Q\},\omega_0)$ is a smooth K\"ahler manifold, we can use the local $\partial \overline{\partial}$-lemma to find a cover $(U_i,\varphi_i)_{i\in I}$ of $Y\setminus \{y_1,...,y_Q\}$ of K\"ahler potentials $\varphi_i$ for $\omega_0|_{U_i}$. On the other hand, by using $\phi_i$ to identify a small neighborhood of each $y_i$ with a subset of $\mathcal{C}(S_i)$, we can write $\omega_0 = \sqrt{-1}\partial \overline{\partial} (\frac{1}{2}r^2+u_i)$ on a punctured neighborhood of $y_i$. By \cite[Th\'eor\'eme 2]{GraRem}, $\frac{1}{2}r^2+u_i$ extends to a plurisubharmonic function over a neighborhood $V_i$ of $y_i$, so that adding $(V_i,\frac{1}{2}r^2+u_i)$ to the collection of K\"ahler potentials gives the positive current property of $\omega_0$. It therefore suffices to note that any K\"ahler cone metric $(\mathcal{C}(L),g_{\mathcal{C}(L)})$ (where $C$ is a smooth Sasaki manifold) is naturally a K\"ahler current. In fact, any such cone can be embedded in $\mathbb{C}^N$ in a way which is equivariant with respect to the torus generated by its Reeb vector field \cite[Theorem 3.1]{VanCoe} and a linear action with positive weights on $\mathbb{C}^N$. As a consequence, the components of this embedding are Lipschitz with respect to the cone metric, hence with respect to this embedding, $\omega_{\mathcal{C}(L)}\geq C^{-1}\omega_{\mathbb{C}^N}$ near the vertex, for some $C>0$.

\ref{Kahlercurrent2} Given \ref{Kahlercurrent1}, this follows from \cite[Theorem 3.1]{smith}.
\end{proof}

Letting $\omega_Y$ be as in Lemma \ref{lem:Kahlercurrent}, it follows that $\pi^{\ast}\omega_Y$ is a smooth, big, and semi-ample $(1,1)$-form. Moreover, $\pi^{\ast}\omega_0 = \pi^{\ast}\omega_Y + \sqrt{-1}\partial\bar\partial\varphi_0$, where $\varphi_0:=\pi^{\ast}w$ is a continuous $\pi^{\ast}\omega_Y$-plurisubharmonic function on $M$. Fix $t_0 \in (0,T_0)$, and set $\eta:= \frac{1}{t_0}(\omega_{t_0}-\pi^{\ast}\omega_Y) \in -c_1(M)$, so that 
\begin{equation*}
    \theta_t := \pi^{\ast}\omega_Y + t\eta = \frac{t_0-t}{t_0}\pi^{\ast}\omega_Y +\frac{t}{t_0}\omega_{t_0}
\end{equation*}
are nonnegative smooth $(1,1)$-forms for $t\in [0,t_0]$, which are K\"ahler for $t\in (0,t_0]$, and satisfy $\theta_t \geq \frac{1}{2}\pi^{\ast}\omega_Y$ for $t\in [0,\frac{t_0}{2}]$. Choose a smooth volume form $\Omega$ on $M$ satisfying $\Ric(\Omega)=-\eta$. 

The following is a consequence of an existence/uniqueness theorem proved in \cite{BouGue}, which generalized results from \cite{songtian}. 

\begin{theorem}(\cite[Theorem 4.3.3]{BouGue}) \label{thm:songtian}
There is a unique family $(\widetilde{\omega}_t)_{t\in [0,t_0]}$ of positive currents on $M$ such that the following hold:
\begin{enumerate}
    \item \label{conditionforsongtian1} $\widetilde{\omega}_0 = \pi^{\ast}\omega_0$,
    \item \label{conditionforsongtian2} $(\widetilde{\omega}_t)_{t\in (0,t_0]}$ is a smooth K\"ahler--Ricci flow,
    \item \label{conditionforsongtian4} there exists a smooth and bounded function $\varphi:(M\setminus \pi^{-1}(\{y_1,...,y_Q\})\times [0,t_0] \to \mathbb{R}$ satisfying $\varphi(0)=\varphi_0$, $\widetilde \omega_t = \theta_t + \sqrt{-1}\partial \overline{\partial} \varphi(t)$, and solving the parabolic complex Monge-Amp\`ere equation
    \begin{equation} \label{eq:CMAforuniqueness}
        \frac{\partial \varphi}{\partial t} = \log \left( \frac{(\theta_t+\sqrt{-1}\partial \overline{\partial}\varphi(t))^n}{\Omega}\right).
    \end{equation}
\end{enumerate}
\end{theorem}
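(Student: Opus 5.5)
The statement is a specialization of the existence and uniqueness theorem of Boucksom--Guedj \cite[Theorem 4.3.3]{BouGue}, which extends \cite{songtian}. The plan is therefore not to reprove anything, but to check that the data $(M,\theta_t,\Omega,\varphi_0)$ fixed above satisfy the hypotheses of that theorem, and then to translate its conclusion into items \ref{conditionforsongtian1}--\ref{conditionforsongtian4}.

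\textbf{Step 1: the reference forms.} The first check is that $(\theta_t)_{t\in[0,t_0]}$ is an admissible family of reference forms.
\begin{itemize}
\item It is affine in $t$, with $\partial_t\theta_t=\eta\in -c_1(M)$.
\item For $t\in(0,t_0]$ it is Kähler, being a convex combination of the semipositive form $\pi^*\omega_Y$ and the Kähler form $\omega_{t_0}$ with positive weight on the latter.
\item At $t=0$ it equals $\pi^*\omega_Y$, which is smooth, semipositive and big. Bigness holds because $\pi$ is a modification and $\omega_Y$ is Kähler on $Y$.
\item The ample locus of $\theta_0$ contains $M\setminus\pi^{-1}(\{y_1,\dots,y_Q\})$.
\end{itemize}
Since $\Ric(\Omega)=-\eta$, a direct computation turns \eqref{eq:CMAforuniqueness} into the Kähler--Ricci flow for $\widetilde\omega_t=\theta_t+\sqrt{-1}\partial\bar\partial\varphi(t)$:
\begin{equation*}
\partial_t\widetilde\omega_t=\eta+\sqrt{-1}\partial\bar\partial\log\frac{\widetilde\omega_t^n}{\Omega}=-\Ric(\widetilde\omega_t).
\end{equation*}
This gives the equivalence between item \ref{conditionforsongtian2} and the potential equation in item \ref{conditionforsongtian4}.

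\textbf{Step 2: the initial data.} By Lemma \ref{lem:Kahlercurrent}\ref{Kahlercurrent2}, $w$ is continuous and $\omega_Y$-plurisubharmonic on the compact space $Y$. Hence $\varphi_0=\pi^*w$ is a continuous, and in particular bounded, $\theta_0$-psh function on $M$, with $\pi^*\omega_0=\theta_0+\sqrt{-1}\partial\bar\partial\varphi_0$. Bounded (indeed continuous) potentials have zero Lelong numbers. This is exactly the class of initial data for which \cite[Theorem 4.3.3]{BouGue} produces a unique solution that is smooth for $t>0$ and has bounded potentials, with convergence to the initial datum as $t\to0^+$.

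\textbf{Step 3: matching the uniqueness class.} This is the main obstacle. Their uniqueness is formulated for weak solutions $\varphi(t)$ that are $\theta_t$-psh, uniformly bounded in $(x,t)$, and attain the initial datum in a suitable sense (pointwise or in capacity). Item \ref{conditionforsongtian4} instead asks for a function that is smooth and bounded on $(M\setminus\pi^{-1}(\{y_i\}))\times[0,t_0]$. I would handle this in two directions.
\begin{itemize}
\item \emph{From their solution to ours.} Their solution is smooth on $M\times(0,t_0]$. It extends smoothly to $t=0$ on the complement of the exceptional set, where $\theta_0$ is Kähler and $\varphi_0$ is smooth, by local parabolic smoothing as in \cite{songtian}.
\item \emph{From ours to theirs.} Any bounded potential that is smooth off $\pi^{-1}(\{y_i\})$ and $\theta_t$-psh extends across the analytic set $\pi^{-1}(\{y_i\})$ as a bounded $\theta_t$-psh function on $M$, by the Riemann extension theorem for bounded psh functions. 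Such an extension solves the equation on $M\times(0,t_0]$ in the pluripotential sense. It attains $\varphi_0$ as $t\to0^+$, since it is continuous off a pluripolar set, and this is enough for the comparison principle of \cite{BouGue} to apply.
\end{itemize}
With these identifications the existence and uniqueness statements transfer directly.
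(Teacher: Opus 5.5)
Your proposal is correct and follows the same route as the paper, which gives no argument of its own beyond invoking \cite[Theorem 4.3.3]{BouGue} for the data $(\theta_t,\Omega,\varphi_0)$. Your Steps 1--3 make explicit the hypothesis checks and the matching of uniqueness classes that the paper leaves implicit. One remark: the paper only uses the uniqueness half of the statement, since a solution in the class of item \ref{conditionforsongtian4} (including smoothness up to $t=0$ away from $\pi^{-1}(\{y_1,\dots,y_Q\})$) is supplied by the constructed flow itself in Proposition \ref{prop:uniqueness}. So the local-smoothing claim in the first bullet of your Step 3, the least justified part of your sketch, is not needed for how the statement is used there.
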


Using Theorem \ref{thm:songtian}, we now identify the flow constructed in Theorem \ref{main theorem} with that constructed in \cite{songtian}.

\begin{prop} \label{prop:uniqueness} The flow $(\omega(t))_{t\in [0,t_0]}$ constructed in Theorem \ref{main theorem} coincides with the flow $(\widetilde{\omega}_t)_{t\in [0,t_0]}$ of Theorem \ref{thm:songtian}.
\end{prop}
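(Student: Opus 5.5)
The plan is to check that the flow $\omega(t)$ satisfies conditions \ref{conditionforsongtian1}--\ref{conditionforsongtian4} of Theorem \ref{thm:songtian}. Uniqueness there then forces $\omega(t)=\widetilde{\omega}_t$. Condition \ref{conditionforsongtian2} is immediate from Subsection \ref{subsection: existence of KRF}. The real work is to build a bounded potential $\varphi$ with $\omega(t)=\theta_t+\sqrt{-1}\partial\bar\partial\varphi(t)$ that solves \eqref{eq:CMAforuniqueness} and has the correct initial value $\varphi_0=\pi^*w$.

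First, fix the cohomology. We have $[\omega(t)]=[\omega_0]-tc_1(M)=[\theta_t]$ because $\eta\in -c_1(M)$. Since $\omega(t)$ and $\theta_t$ are both smooth for $t>0$, the $\partial\bar\partial$-lemma on the compact K\"ahler manifold $M$ gives a smooth $\phi(t)$ with $\omega(t)=\theta_t+\sqrt{-1}\partial\bar\partial\phi(t)$. I would normalise it as in Song--Tian. Note that $\partial_t\omega(t)=-\Ric(\omega(t))=\eta+\sqrt{-1}\partial\bar\partial\log(\omega(t)^n/\Omega)$, using $\Ric(\Omega)=-\eta$. Hence
\begin{equation*}
\sqrt{-1}\partial\bar\partial\Big(\partial_t\phi-\log\tfrac{\omega(t)^n}{\Omega}\Big)=0,
\end{equation*}
so $\partial_t\phi-\log(\omega(t)^n/\Omega)=c(t)$ is spatially constant. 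Subtracting $\int c$ gives a solution $\varphi(t)$ of \eqref{eq:CMAforuniqueness} on $(0,t_0]$, determined up to an additive constant $\varphi(t)\mapsto\varphi(t)+a$.

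Next, show $\varphi$ is bounded and choose $a$ so that $\varphi(t)\to\varphi_0$ as $t\to0^+$. Away from $\pi^{-1}(y_1)$, Subsection \ref{subsection: GH} gives locally smooth convergence $\omega(t)\to\omega_0$, and $\theta_t\to\pi^*\omega_Y$ smoothly. In addition, $\partial_t\varphi=\log(\omega(t)^n/\Omega)$ is uniformly bounded there on compact sets, because $g(t)$ is uniformly equivalent to $g_0$ for small $t$. So $\varphi(t)$ converges smoothly on compacts of $M\setminus\pi^{-1}(y_1)$ to some $\varphi(0)$, and $\sqrt{-1}\partial\bar\partial(\varphi(0)-\pi^*w)=0$ there.

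Near the exceptional set I would compare with the expander description, Proposition \ref{prop:citablepotentialbound}. On $\{r_M^2\le R^2/2\}$ we have $\omega(t)=\omega_E(t)+\sqrt{-1}\partial\bar\partial\varphi_{\rm loc}(t)$ with $|\varphi_{\rm loc}(t)|\le\varepsilon(r_M+\sqrt t)^2$ and $\varphi_{\rm loc}(0)=u_1$. By Corollary \ref{coro of comparison of f and r^2}, $\omega_E(t)$ has potential $t\Phi_t^*f$, which stays between $r^2/2$ and $r^2/2+At$. Hence $\varphi(t)-\big(t\Phi_t^*f+\varphi_{\rm loc}(t)-\text{loc. potential of }\theta_t\big)$ is pluriharmonic on this neighbourhood of $\pi^{-1}(y_1)$. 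It is therefore constant on the connected fibre neighbourhood, because pluriharmonic functions are constant on the compact connected exceptional set and locally determined. The same evolution identity shows this constant is $C^1$ in $t$ with bounded derivative. This gives a uniform bound on $\varphi$ near $\pi^{-1}(y_1)$, and the limit at $t=0$ is $\tfrac12r^2+u_1$ minus the potential of $\pi^*\omega_Y$, up to a constant. This is a continuous function equal to $\pi^*w$ up to a pluriharmonic, hence constant, term.

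Finally, $\varphi(0)-\pi^*w$ is bounded and pluriharmonic on $M\setminus\pi^{-1}(y_1)$, so it extends over the exceptional set and is constant on compact $M$. Adjusting $a$ makes it zero, which gives \ref{conditionforsongtian1} and \ref{conditionforsongtian4}, and uniqueness in Theorem \ref{thm:songtian} concludes the proof.

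The main obstacle is the uniform boundedness of $\varphi$, together with continuity of the initial value at $\pi^{-1}(y_1)$. In particular, one must control the time-dependent normalising constant uniformly across the gluing region between the local expander potential and the global one. I would try to handle this via the explicit bounds of Proposition \ref{prop:citablepotentialbound}, and by integrating $\partial_t\varphi$, which is bounded below by $-C$ and above by $C\log(1/t)$ using the $C/t$ curvature bound.
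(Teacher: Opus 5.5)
Your overall strategy (check the hypotheses of Theorem \ref{thm:songtian} and invoke its uniqueness) is the paper's. Obtaining $\varphi(t)$ for $t>0$ from the $\partial\bar\partial$-lemma and normalising the time-dependent constant is fine, and so is your treatment away from $\pi^{-1}(y_1)$.

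The gap is the boundedness argument near $\pi^{-1}(y_1)$, which rests on two false claims.
\begin{enumerate}
\item $t\Phi_t^*f$ is the rescaled \emph{soliton} potential, not a K\"ahler potential. The soliton equation gives $\omega_E(t)=\sqrt{-1}\partial\bar\partial(t\Phi_t^*f)-t\Ric(\omega_E(t))$. Worse, for $t>0$ both $\omega_E(t)$ and $\theta_t$ are K\"ahler forms, so they have positive integral over any compact curve in $\pi^{-1}(y_1)$. Hence, when the exceptional set is positive-dimensional, neither admits any potential on $\{r_M^2\le R^2/2\}$. Only differences such as $\omega(t)-\theta_t$ are $\sqrt{-1}\partial\bar\partial$-exact there.
\item A pluriharmonic function on a neighbourhood of $\pi^{-1}(y_1)$ is constant on $\pi^{-1}(y_1)$, but not on the neighbourhood. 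For a nonconstant holomorphic function $u$ on the affine cone, $\Re(u\circ\pi)$ is a counterexample. So you cannot reduce the discrepancy to a function of $t$ alone.
\end{enumerate}
Both points are repairable. A correct local potential of $\omega_E(t)-\theta_t$ is $t\Phi_t^*f+t\log(\omega_E(t)^n/\Omega)$ minus a local potential of $\pi^*\omega_Y$, and a pluriharmonic remainder is controlled by its boundary values. But this whole local analysis is unnecessary.

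The missing step is the one you relegate to your final sentence, and it is the entire content of the paper's proof. Since $\partial_t\log\omega(t)^n=-R_{\omega(t)}$ and $|R_{\omega(t)}|\le C/t$, integrating from $\tau$ to $t_0$ gives $|\log(\omega(\tau)^n/\Omega)|\le C(1+|\log\tau|)$ on all of $M$. This is a two-sided logarithmic bound. A lower bound by $-C$, as you state, does not follow from the curvature bound, but none is needed because $|\log\tau|$ is integrable on $(0,t_0]$.

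The paper simply sets $\varphi(t):=\varphi_0+\int_0^t\log(\omega(\tau)^n/\Omega)\,d\tau$. Then $\varphi(0)=\varphi_0$ by construction. The identity $\omega(t)=\theta_t+\sqrt{-1}\partial\bar\partial\varphi(t)$ holds on $M\setminus\pi^{-1}(y_1)$, because the time derivative of the difference vanishes and the identity holds at $t=0$. Boundedness is exactly the integrability above. No $\partial\bar\partial$-lemma, normalising constant or expander comparison is needed.

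In your own normalisation, the same estimate makes $\varphi(t)$ converge uniformly on $M$ to a continuous $\varphi(0)$ as $t\to0^+$. Your closing argument then finishes the proof: $\varphi(0)-\pi^*w$ is bounded and pluriharmonic off $\pi^{-1}(y_1)$, hence extends and is constant on $M$.
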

\begin{proof} Observe that the flow $(\omega_t)_{t\in [0,t_0]}$ from Theorem \ref{main theorem} satisfies conditions \ref{conditionforsongtian1} and \ref{conditionforsongtian2}, it suffices to verify that \ref{conditionforsongtian4} holds for some $\varphi \in C^{\infty}(M \setminus \pi^{-1}(y_1,...,y_Q\})$. We define 
\[\varphi_t:=\varphi_0+\int_0^t\log\frac{\omega^n(\tau)}{\Omega}d\tau,\]
for all $t\in (0,t_0]$. Then $\omega_0 = \theta_0 + \sqrt{-1}\partial \overline{\partial}\varphi_0$, and we have
\begin{equation*}
    \frac{\partial}{\partial t}(\omega(t) - \theta_t - \sqrt{-1}\partial \overline{\partial}\varphi(t)) = -\Ric(\omega(t)) - \eta -\sqrt{-1}\partial \overline{\partial}\log\frac{\omega(t)^n}{\Omega}=0,
\end{equation*}
so that $\omega(t) = \theta_t + \sqrt{-1}\partial \overline{\partial} \varphi(t)$ on $(M\setminus \pi^{-1}(\{y_1,...,y_Q\})\times [0,t_0]$. It therefore suffices to show that $\int_0^t\log\frac{\omega_\tau^n}{\Omega}d\tau$ is uniformly bounded on $M\setminus\pi^{-1}(y_1)\times (0,t_0]$. We compute for all $t\in (0,t_0]$
\begin{equation*}
    \int_0^t \log\frac{\omega^n(\tau)}{\omega^n(t_0)}d\tau=-\int_0^t \int_\tau^{t_0}\frac{\partial}{\partial\mu}(\log\omega^n(\mu))d\mu d\tau=\int_0^t\int_\tau^{t_0}R_{\omega(\mu)}d\mu d\tau.
\end{equation*}
Hence
\begin{equation*}
     \left|\int_0^t\log\frac{\omega^n(\tau)}{\omega^n(t_0)}d\tau \right|\le C\int_0^t (|\log t_0| +|\log\tau|)d\tau.
\end{equation*}
Therefore, $\int_0^t\log\frac{\omega^n(\tau)}{\Omega}d\tau$ is uniformly bounded.
\end{proof}
\begin{remark}
   For any K\"ahler–Ricci flow appearing in Theorem \ref{main theorem} \ref{thm:mainthm5}, because $R_{\widetilde{\omega}_t}\geq -\frac{n}{t}$ automatically, the same argument as in the proof of Theorem \ref{prop:uniqueness} shows that $(\widetilde{\omega}_t)$ coincides with the flow constructed in Proposition \ref{thm:songtian}. Consequently, it agrees with the flow constructed in Theorem \ref{main theorem}.
\end{remark}

\subsection{Examples}

In this subsection, we construct examples of smooth K\"ahler Ricci flows with singular initial data and prescribed asymptotics near a given point in spacetime.

\begin{proof}[Proof of Corollary \ref{cor:examples}] Let $(\mathcal{C},g_{\mathcal{C}})$ be the asymptotic cone of $(E,g_E)$. Let $o \in \mathcal{C}$ and $r$ denote the radius function. By \cite[Proof of Theorem 3.1]{VanCoe}, there exist holomorphic functions $u_1,...,u_N \in \mathcal{O}_{\mathcal{C}}(\mathcal{C})$ such that $F:=(u_1,...,u_N):\mathcal{C}\to \mathbb{C}^N$ is a proper holomorphic embedding, and for each $i\in \{1,...,N\}$, there exists $\lambda_i \in (0,\infty)$ such that
\begin{equation*}
    \mathcal{L}_{r\partial_r} u_i=\lambda_i u_i,
\end{equation*}
where $\partial_r$ is the radial vector field of the cone. 
Let 
\begin{equation*}
    \Phi:\mathbb{C}^N \hookrightarrow \mathbb{P}^N, (z_1,...,z_N)\mapsto [1:z_1:\cdots :z_N]
\end{equation*}
be the standard embedding, and let $\omega_{FS}$ be the Fubini-Study metric on $\mathbb{P}^N$, so that 
\begin{equation*}
    (\Phi\circ F)^{\ast}\omega_{FS} = \sqrt{-1}\partial \overline{\partial}\varphi_{FS}, \qquad \varphi_{FS}:=\log \left( 1+\sum_{i=1}^N |u_i|^2 \right).
\end{equation*}
We now fix a nonnegative function $\theta \in C_c^{\infty}(\mathbb{R})$ with $-1\leq \theta \leq 1$, $\int_{\mathbb{R}}\theta(t)dt=1$ and $\int_{\mathbb{R}}t\theta(t)dt=0$. We then recall the definition of the regularized maximum function from \cite[Lemma I.5.18]{Demailly2012}, with parameter $\eta=(\frac{1}{2},\frac{1}{2})$:
\begin{equation*}
M_{\eta}(t_1,t_2):=\frac{1}{\eta_1 \eta_2}\int_{\mathbb{R}^2} \max\{t_1+h_1,t_2+h_2\}\theta\left(\frac{h_1}{\eta_1}\right) \theta \left( \frac{h_2}{\eta_2}\right) dh_1dh_2.
\end{equation*}
By \cite[Lemma I.5.18(e)]{Demailly2012}, it follows that for any $A\in (0,\infty)$,
\begin{equation*}
    \varphi := M_{\eta}(A\varphi_{FS}-2,\frac{1}{2}r^2) : B_{g_{\mathcal{C}}}(o,2)\to \mathbb{R}
\end{equation*}
is strictly plurisubharmonic. Since $\varphi_{FS}$ is a continuous function satisfying $\varphi_{FS}(o)=0$ and $\min_{\frac{1}{2}\leq r \leq 2}\varphi_{FS}>0$, we can choose $A \in (0,\infty)$ such that $A\varphi_{FS}-2\geq 4$ on $\{\frac{1}{2} \leq r \leq 2\}$. 
It follows from \cite[Lemma I.5.18(c)]{Demailly2012} that $\varphi=\frac{1}{2}r^2$ on a neighborhood of $o$ while $\varphi=A\varphi_{FS}-2$ on $\{\frac{1}{2}<r<2\}$. Thus
\begin{equation*}
    \widetilde{\varphi}:= \begin{cases}
        \varphi, &  r<2\\
        A\varphi_{FS}-2, & r>\frac{1}{2},
    \end{cases}
\end{equation*}
is a strictly plurisubharmonic function on $\mathcal{C}$, smooth on $\mathcal{C}\setminus \{o\}$, hence $\widetilde{\omega}:= \sqrt{-1}\partial \overline{\partial}\widetilde{\varphi}$ defines a K\"ahler current on $\mathcal{C}$ which agrees with $A(\Phi\circ F)^{\ast}\omega_{FS}$ outside a compact set. In particular, $(\Phi \circ F)_{\ast}\widetilde{\omega}$ extends to a K\"ahler current $\widetilde{\omega}_0$ on the projective closure $\overline{\mathcal{C}}$ of $\Phi(\mathcal{C})$, which is smooth on $\overline{\mathcal{C}}\setminus \{o\}$, and equal to $\omega_{\mathcal{C}}$ on a neighborhood of $o$. 

Finally, by \cite[Proof of Lemma 2.2]{CMM}, there is a normal projective variety $Y$ and a map $\sigma:Y\to \overline{\mathcal{C}}$ satisfying the following:
\begin{enumerate}
    \item $\sigma$ is a composition of blowups along smooth centers,

    \item there is a neighborhood $U \subseteq \overline{\mathcal{C}}$ of $o$ such that $\sigma|_{\sigma^{-1}(U)}:\sigma^{-1}(U)\to U$ is a biholomorphism,

    \item $Y\setminus \sigma^{-1}(o)$ is smooth,
    
    \item there is a K\"ahler current $\omega_0$ on $Y$ which is smooth on $Y\setminus \sigma^{-1}(o)$ and is equal to $\sigma^{\ast}\widetilde{\omega}_0$ on a neighborhood of $\sigma^{-1}(o)$. 
\end{enumerate}

It follows that $(Y,\omega_0)$ is a compact analytic space with a single isolated conical singularity at $ \sigma^{-1}(o)$, modeled on the K\"ahler cone $(\mathcal{C},\omega_{\mathcal{C}})$. The existence of a flow $(M,(g(t))_{t\in (0,T]})$ with the desired properties is therefore a consequence of Theorem \ref{main theorem}.
\end{proof}

\bibliographystyle{alpha}
\bibliography{references}
\end{document}

%% file: normalised_spacetime.tex
\tikzset{every picture/.style={line width=0.75pt}} 

\begin{tikzpicture}[x=0.75pt,y=0.75pt,yscale=-1,xscale=1]

\draw  [color={rgb, 255:red, 255; green, 255; blue, 255 }  ,draw opacity=1 ][fill={rgb, 255:red, 80; green, 227; blue, 194 }  ,fill opacity=1 ] (56.6,355.2) -- (271,261) -- (56.6,261) -- cycle ;
\draw  (25,355.2) -- (341,355.2)(56.6,127.5) -- (56.6,380.5) (334,350.2) -- (341,355.2) -- (334,360.2) (51.6,134.5) -- (56.6,127.5) -- (61.6,134.5)  ;
\draw [line width=0.75]    (57,182) -- (307,182) ;
\draw [line width=0.75]    (57,261) -- (307,261) ;
\draw [color={rgb, 255:red, 208; green, 2; blue, 27 }  ,draw opacity=1 ][line width=1.5]    (56.6,355.2) -- (271,261) ;
\draw  [dash pattern={on 4.5pt off 4.5pt}]  (271,261) -- (271,355.5) ;
\draw  [color={rgb, 255:red, 255; green, 255; blue, 255 }  ,draw opacity=1 ][fill={rgb, 255:red, 80; green, 227; blue, 194 }  ,fill opacity=1 ] (409.6,357.2) -- (624,263) -- (409.6,263) -- cycle ;
\draw  (378,357.2) -- (694,357.2)(409.6,129.5) -- (409.6,382.5) (687,352.2) -- (694,357.2) -- (687,362.2) (404.6,136.5) -- (409.6,129.5) -- (414.6,136.5)  ;
\draw [line width=0.75]    (410,184) -- (660,184) ;
\draw [line width=0.75]    (410,263) -- (660,263) ;
\draw [color={rgb, 255:red, 208; green, 2; blue, 27 }  ,draw opacity=1 ][line width=1.5]    (409.6,357.2) -- (624,263) ;
\draw  [dash pattern={on 4.5pt off 4.5pt}]  (624,263) -- (624,357.5) ;

\draw (147,153.4) node [anchor=north west][inner sep=0.75pt]    {$r_M^{2} =r_{0}^{2}$};
\draw (145,234.4) node [anchor=north west][inner sep=0.75pt]    {$r_M^{2} =R^{2}$};
\draw (165.8,311.5) node [anchor=north west][inner sep=0.75pt]    {$r_M^{2} =\lambda t$};
\draw (70,285) node [anchor=north west][inner sep=0.75pt]   [align=left] {Conical region};
\draw (-17,144) node [anchor=north west][inner sep=0.75pt]   [align=left] {space $\displaystyle r^{2}$};
\draw (302,366) node [anchor=north west][inner sep=0.75pt]   [align=left] {time $\displaystyle t$};
\draw (233,356.4) node [anchor=north west][inner sep=0.75pt]    {$t=\frac{R^{2}}{\lambda }$};
\draw (501,147.4) node [anchor=north west][inner sep=0.75pt]    {$r^{2} =\frac{r_{0}^{2}}{s}$};
\draw (500,230.4) node [anchor=north west][inner sep=0.75pt]    {$r^{2} =\frac{R^{2}}{s}$};
\draw (518.8,313.5) node [anchor=north west][inner sep=0.75pt]    {$r^{2} =\lambda t$};
\draw (416,275) node [anchor=north west][inner sep=0.75pt]   [align=left] {Normalised\\conical region};
\draw (336,146) node [anchor=north west][inner sep=0.75pt]   [align=left] {space $\displaystyle r^{2}$};
\draw (650,367) node [anchor=north west][inner sep=0.75pt]   [align=left] {time $\displaystyle t$};
\draw (585,357.4) node [anchor=north west][inner sep=0.75pt]    {$t=\frac{R^{2}}{\lambda s}$};

\end{tikzpicture}

%% file: modified_spacetime.tex
 
\tikzset{
pattern size/.store in=\mcSize, 
pattern size = 5pt,
pattern thickness/.store in=\mcThickness, 
pattern thickness = 0.3pt,
pattern radius/.store in=\mcRadius, 
pattern radius = 1pt}
\makeatletter
\pgfutil@ifundefined{pgf@pattern@name@_g0w2vfdc5}{
\pgfdeclarepatternformonly[\mcThickness,\mcSize]{_g0w2vfdc5}
{\pgfqpoint{0pt}{0pt}}
{\pgfpoint{\mcSize+\mcThickness}{\mcSize+\mcThickness}}
{\pgfpoint{\mcSize}{\mcSize}}
{
\pgfsetcolor{\tikz@pattern@color}
\pgfsetlinewidth{\mcThickness}
\pgfpathmoveto{\pgfqpoint{0pt}{0pt}}
\pgfpathlineto{\pgfpoint{\mcSize+\mcThickness}{\mcSize+\mcThickness}}
\pgfusepath{stroke}
}}
\makeatother
\tikzset{every picture/.style={line width=0.75pt}} 

\begin{tikzpicture}[x=0.75pt,y=0.75pt,yscale=-1,xscale=1]

\draw  [color={rgb, 255:red, 255; green, 255; blue, 255 }  ,draw opacity=1 ][fill={rgb, 255:red, 248; green, 231; blue, 28 }  ,fill opacity=1 ] (153.1,274.53) -- (542,274.53) -- (542,417.85) -- (153.1,417.85) -- cycle ;
\draw  (105,417.85) -- (586,417.85)(153.1,61) -- (153.1,457.5) (579,412.85) -- (586,417.85) -- (579,422.85) (148.1,68) -- (153.1,61) -- (158.1,68)  ;
\draw    (153,109) -- (542,207.5) ;
\draw [color={rgb, 255:red, 208; green, 2; blue, 27 }  ,draw opacity=1 ][line width=1.5]    (153.02,274.53) -- (541.22,274.53) ;
\draw    (541.22,274.53) -- (542,417.85) ;
\draw  [color={rgb, 255:red, 0; green, 0; blue, 0 }  ,draw opacity=1 ][pattern=_g0w2vfdc5,pattern size=3.75pt,pattern thickness=0.75pt,pattern radius=0pt, pattern color={rgb, 255:red, 155; green, 155; blue, 155}] (153.1,172) -- (541.22,274.53) -- (541.22,315.32) -- (153.1,417.85) -- cycle ;

\draw (471,141.4) node [anchor=north west][inner sep=0.75pt]    {$r^{2} =\frac{r_{0}^{2}}{se^{\tau }}$};
\draw (471,212.4) node [anchor=north west][inner sep=0.75pt]    {$r^{2} =\frac{R^{2}}{se^{\tau }}$};
\draw (86,78) node [anchor=north west][inner sep=0.75pt]   [align=left] {space $\displaystyle r^{2}$};
\draw (594,408) node [anchor=north west][inner sep=0.75pt]   [align=left] {time $\displaystyle \tau $};
\draw (101,263.4) node [anchor=north west][inner sep=0.75pt]    {$r^{2} =\lambda $};
\draw (432,360.4) node [anchor=north west][inner sep=0.75pt]    {$r^{2} =\frac{\lambda \left( e^{\tau } -1\right)}{e^{\tau }}$};
\draw (476,424.4) node [anchor=north west][inner sep=0.75pt]    {$\tau =\log\left(\frac{R^{2}}{\lambda s}\right)$};
\draw (293,393) node [anchor=north west][inner sep=0.75pt]   [align=left] {\textit{Expanding region}};

\end{tikzpicture}